\documentclass[reqno]{amsart}

\usepackage{tikz-cd}
\usetikzlibrary{knots}
\usetikzlibrary{positioning,arrows.meta}
\tikzset{
  dynkin node/.style={circle,draw,minimum size=6.2mm,inner sep=0pt,font=\small},
  dynkin sub/.style={dynkin node,fill=black!12},
  dynkin edge/.style={line width=.55pt}
}

\usepackage{fullpage,dsfont}

\usepackage{zref-clever}

\usepackage{hyperref} 

\usepackage{epigraph}

\usepackage{parskip}
\makeatletter 
\def\thm@space@setup{%
 \thm@preskip=\parskip \thm@postskip=0pt
}
\def\th@remark{%
  \thm@headfont{\itshape}%
  \normalfont 
  \thm@preskip\parskip \thm@postskip=0pt
}
\makeatother

\usepackage[nobysame,alphabetic,initials,msc-links]{amsrefs}

\usepackage[final]{pdfpages}

\usepackage{multirow}

\usepackage{array}

\usepackage{tikz-cd}
\usetikzlibrary{knots}
\usepackage{spath3}

\usepackage{float}

\DefineSimpleKey{bib}{how}
\DefineSimpleKey{bib}{mrclass}
\DefineSimpleKey{bib}{mrnumber}
\DefineSimpleKey{bib}{fjournal}
\DefineSimpleKey{bib}{mrreviewer}

\renewcommand{\PrintDOI}[1]{%
  \href{http://dx.doi.org/#1}{{\tt DOI:#1}}%
}
\renewcommand{\eprint}[1]{#1}
\BibSpec{book}{%
    +{}  {\PrintPrimary}                {transition}
    +{.} { \PrintDate}                  {date}
    +{.} { \textit}                     {title}
    +{.} { }                            {part}
    +{:} { \textit}                     {subtitle}
    +{,} { \PrintEdition}               {edition}
    +{}  { \PrintEditorsB}              {editor}
    +{,} { \PrintTranslatorsC}          {translator}
    +{,} { \PrintContributions}         {contribution}
    +{,} { }                            {series}
    +{,} { \voltext}                    {volume}
    +{,} { }                            {publisher}
    +{,} { }                            {organization}
    +{,} { }                            {address}
    +{,} { }                            {status}
    +{,} { \PrintDOI}                   {doi}
    +{,} { \PrintISBNs}                 {isbn}
    +{}  { \parenthesize}               {language}
    +{}  { \PrintTranslation}           {translation}
    +{;} { \PrintReprint}               {reprint}
    +{.} { }                            {note}
    +{.} {}                             {transition}
    +{}  {\SentenceSpace \PrintReviews} {review}
}
\BibSpec{article}{%
    +{}  {\PrintAuthors}                {author}
    +{,} { \textit}                     {title}
    +{.} { }                            {part}
    +{:} { \textit}                     {subtitle}
    +{,} { \PrintContributions}         {contribution}
    +{.} { \PrintPartials}              {partial}
    +{,} { }                            {journal}
    +{}  { \textbf}                     {volume}
    +{}  { \PrintDatePV}                {date}
    +{,} { \issuetext}                  {number}
    +{,} { \eprintpages}                {pages}
    +{,} { }                            {status}
    +{,} { \PrintDOI}                   {doi}
    +{,} { \eprint}        {eprint}
    +{}  { \parenthesize}               {language}
    +{}  { \PrintTranslation}           {translation}
    +{;} { \PrintReprint}               {reprint}
    +{.} { }                            {note}
    +{.} {}                             {transition}
    +{}  {\SentenceSpace \PrintReviews} {review}
}
\BibSpec{collection.article}{%
    +{}  {\PrintAuthors}                {author}
    +{,} { \textit}                     {title}
    +{.} { }                            {part}
    +{:} { \textit}                     {subtitle}
    +{,} { \PrintContributions}         {contribution}
    +{,} { \PrintConference}            {conference}
    +{}  {\PrintBook}                   {book}
    +{,} { }                            {booktitle}
    +{,} { \PrintDateB}                 {date}
    +{,} { pp.~}                        {pages}
    +{,} { }                            {publisher}
    +{,} { }                            {organization}
    +{,} { }                            {address}
    +{,} { }                            {status}
    +{,} { \PrintDOI}                   {doi}
    +{,} { \eprint}        {eprint}
    +{}  { \parenthesize}               {language}
    +{}  { \PrintTranslation}           {translation}
    +{;} { \PrintReprint}               {reprint}
    +{.} { }                            {note}
    +{.} {}                             {transition}
    +{}  {\SentenceSpace \PrintReviews} {review}
}
\BibSpec{misc}{%
  +{}{\PrintAuthors}  {author}
  +{,}{ \textit}      {title}
  +{.}{ }             {how}
  +{}{ \parenthesize} {date}
  +{,} { available at \eprint}        {eprint}
  +{,}{ available at \url}{url}
  +{,}{ }             {note}
  +{.}{}              {transition}
}

\usepackage{amssymb, amsfonts, amsxtra, amsmath}
\usepackage{mathrsfs}
\usepackage{mathdots}
\usepackage{wasysym}
\usepackage[all]{xy}
\usepackage{bbm}
\usepackage{calc}
\usepackage{accents}

\numberwithin{equation}{section}

\newtheorem{Theorem}{Theorem}[section]

\newtheorem*{Theorem*}{Theorem}
\newtheorem{Def}[Theorem]{Definition}

\newtheorem{Lem}[Theorem]{Lemma}

\newtheorem{Prop}[Theorem]{Proposition}
\newtheorem{Cor}[Theorem]{Corollary}

\newtheorem{Rem}[Theorem]{Remark}

\newtheorem{Exa}[Theorem]{Example}

\newtheorem{maintheorem}{Theorem}

\newcommand\bp{\begin{proof}}
\newcommand\ep{\end{proof}}

\mathchardef\mhyph="2D

\DeclareMathOperator{\diag}{\mathrm{diag}}

\DeclareMathOperator{\Ad}{\mathrm{Ad}}

\DeclareMathOperator{\End}{\mathrm{End}}

\DeclareMathOperator{\id}{\mathrm{id}}

\DeclareMathOperator{\Tr}{\mathrm{Tr}}

\DeclareMathOperator{\Imm}{\mathrm{Im}}

\DeclareMathOperator{\Prod}{\prod}

\def\longra{\longrightarrow}
\def\longla{\longleftarrow}

\newcommand{\msD}{\mathscr{D}}

\newcommand{\mfb}{\mathfrak{b}}

\newcommand{\mfd}{\mathfrak{d}}

\newcommand{\mfg}{\mathfrak{g}}

\newcommand{\mfh}{\mathfrak{h}}

\newcommand{\mfm}{\mathfrak{m}}
\newcommand{\mfn}{\mathfrak{n}}

\newcommand{\mfsl}{\mathfrak{sl}}
\newcommand{\mfso}{\mathfrak{so}}
\newcommand{\mfsp}{\mathfrak{sp}}

\newcommand{\Bc}{\mathcal{B}}

\newcommand{\mcE}{\mathcal{E}}

\newcommand{\mcH}{\mathcal{H}}
\newcommand{\Hsp}{\mathcal{H}}
\newcommand{\Hc}{\mcH}
\newcommand{\Gsp}{\mathcal{G}}

\newcommand{\mcK}{\mathcal{K}}
\newcommand{\Kc}{\mcK}

\newcommand{\mcO}{\mathcal{O}}

\newcommand{\mbX}{\mathbf{X}}

\newcommand{\C}{\mathbb{C}}

\newcommand{\N}{\mathbb{N}}

\newcommand{\R}{\mathbb{R}}

\newcommand{\Z}{\mathbb{Z}}

\newcommand{\opp}{\mathrm{op}}

\newcommand{\sgn}{\mathrm{sgn}}

\newcommand{\Sb}{\mathbf{S}}
\newcommand{\Tb}{\mathbf{T}}

\newcommand{\br}{\mathrm{br}}

\newcommand{\eb}{\mathbf{e}}
\newcommand{\fb}{\mathbf{f}}

\newcommand{\Hb}{\mathbf{H}}

\newcommand{\Xb}{\mathbf{X}}

\newcommand{\Yb}{\mathbf{Y}}
\newcommand{\Zb}{\mathbf{Z}}
\newcommand{\Pb}{\mathbf{P}}

\newcommand{\mba}{\mathbf{a}}
\newcommand{\mbb}{\mathbf{b}}
\newcommand{\mbi}{\mathbf{i}}
\newcommand{\mbj}{\mathbf{j}}
\newcommand{\Ab}{\mathbf{A}}
\newcommand{\Bb}{\mathbf{B}}
\newcommand{\Cb}{\mathbf{C}}
\newcommand{\Db}{\mathbf{D}}
\newcommand{\Eb}{\mathbf{E}}
\newcommand{\Fb}{\mathbf{F}}
\newcommand{\Gb}{\mathbf{G}}
\newcommand{\Kb}{\mathbf{K}}
\newcommand{\Qb}{\mathbf{Q}}
\newcommand{\qb}{\mathbf{q}}
\newcommand{\Lb}{\mathbf{L}}
\newcommand{\kb}{\mathbf{k}}
\newcommand{\lb}{\mathbf{l}}
\newcommand{\hb}{\mathbf{h}}

\newcommand{\vb}{\mathbf{v}}

\newcommand{\xb}{\mathbf{x}}
\newcommand{\pb}{\mathbf{p}}
\newcommand{\aff}{\,\eta\,}

\newcommand{\wA}{\widetilde{A}}
\newcommand{\wB}{\widetilde{B}}
\newcommand{\wC}{\widetilde{C}}
\newcommand{\wD}{\widetilde{D}}

\newcommand{\wZ}{\widetilde{Z}}
\newcommand{\wbA}{\widetilde{\Ab}}
\newcommand{\wbB}{\widetilde{\Bb}}
\newcommand{\wbC}{\widetilde{\Cb}}
\newcommand{\wbD}{\widetilde{\Db}}
\newcommand{\wbX}{\widetilde{\Xb}}
\newcommand{\wbY}{\widetilde{\Yb}}
\newcommand{\wbZ}{\widetilde{\Zb}}

\newcommand{\blank}{\textrm{---}}

\title{Haar measure for totally positive split real semisimple quantum groups of classical type}

\author{K.\ De Commer}
\address{Vrije Universiteit Brussel}
\email{kenny.de.commer@vub.be}

\author{G.\ Janssens}
\address{Vrije Universiteit Brussel}
\email{geoffrey.janssens@vub.be}

\author{W.\ Malfait}
\address{Vrije Universiteit Brussel}
\email{wannes.malfait@vub.be}

\begin{document}

\begin{abstract}
Let $\hbar$ be a non-zero real number and put $q= e^{-\pi i \hbar}$. Let $G$ be a semisimple split real Lie group of classical type, with Lie algebra $\mfg$ and associated Borel subalgebra $\mfb \subseteq \mfg$. We show that there exists a locally compact quantum group $(L_{\hbar}(B),\Delta)$ which integrates the Borel part $U_q(\mfb)$ of the Drinfeld--Jimbo quantized enveloping algebra $U_q(\mfg)$, viewed as a Hopf $*$-algebra. We further compute the associated Haar weights and modular data for $(L_{\hbar}(B),\Delta)$. Up to an excess Cartan part, the Drinfeld double of  $(L_{\hbar}(B),\Delta)$ can be seen as an analytic quantization of the function algebra of the totally positive semigroup $G^+\subseteq G$.
\end{abstract}

\maketitle


\tableofcontents

\section*{Introduction}

In this paper, we prove the following theorem: 

\begin{maintheorem}\label{TheoA}
Let $\mfg_{\C} \in\{\mfsl_N,\mfso_N,\mfsp_N\}$ be a classical complex simple Lie algebra, with Borel subalgebra $\mfb_{\C}$ and associated set of simple roots labeled by a set $I$.  Let $\hbar\in \R^{\times}$ and put $q = e^{-\pi i \hbar}$. 

There exists a locally compact quantum group $(L_{\hbar}(B),\Delta)$, with the von Neumann algebra $L_{\hbar}(B)$ generated by affiliated positive (unbounded) operators $\{\Eb_i,\Kb_i\mid i\in I\}$, such that the following hold: 
\begin{enumerate}
\item The operators $\Eb_i,\Kb_i$ satisfy 
\begin{equation}\label{EqCoprodForm}
\Delta(\Kb_i) = \Kb_i\otimes \Kb_i,\qquad \Delta(\Eb_i) = \Eb_i \otimes 1 \boxplus \Kb_i \otimes \Eb_i,\qquad \forall i\in I,
\end{equation}
where $\boxplus$ is the \emph{form sum} of positive operators. 
\item When $q$ is not a root of unity, there exists a concrete faithful normal $*$-representation $L_{\hbar}(B) \subseteq \Bc(\Hsp)$ on a Hilbert space $\Hsp$, and a dense subspace $\msD \subseteq \Hsp$ in the common domain of, and stable under, the generators $\Eb_i,\Kb_i$, such that 
\[
\pi: U_q(\mfb_{\C}) \rightarrow \End(\msD),\qquad K_i \mapsto  (\Kb_i)_{\mid \msD},\quad E_i \mapsto  (\Eb_i)_{\mid \msD}
\]
makes $\msD$ into a faithful $U_q(\mfb_{\C})$-module, with $U_q(\mfb_{\C}) \subseteq U_q(\mfg_{\C})$ the Borel part of the Drinfeld--Jimbo quantized enveloping algebra of $\mfg_{\C}$ (see e.g.\ \cite{Jan96} for definitions).
\end{enumerate}
\end{maintheorem}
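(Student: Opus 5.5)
The plan is to build $(L_{\hbar}(B),\Delta)$ from positive representations of quantum tori along Lusztig-type factorizations, and to get the locally compact quantum group structure by identifying it with a quantum group built from a manageable multiplicative unitary.

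\textbf{Step 1: rank one.} For each $i\in I$ take the quantum plane / $ax+b$-type group generated by positive self-adjoint $\Kb_i,\Eb_i$ with $\Kb_i\Eb_i = q_i^{2}\Eb_i\Kb_i$ in the Weyl (Schmüdgen) sense, realized on $L^2(\R^2)$ as $\Kb_i = e^{2\pi b \hat{p}}$, $\Eb_i = e^{2\pi b(\hat{x}+\ldots)}$. The quantum dilogarithm (Faddeev's $G_b$ or $e_b$) implements the form sum
\[
\Eb_i\otimes 1 \boxplus \Kb_i\otimes \Eb_i = e_b(\ldots)^*(\Eb_i\otimes 1)\,e_b(\ldots).
\]
This gives the known locally compact quantum group of Woronowicz--Zakrzewski / Baaj--Skandalis type, together with a manageable multiplicative unitary $W_i$.

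\textbf{Step 2: global object.} Fix a reduced word $\mbi$ for the longest Weyl group element $w_0$. Define $L_\hbar(B)$ as the von Neumann algebra generated by the Cartan torus and the root vectors $\Eb_\beta$, $\beta\in\Delta^+$, obtained from Lusztig's braid group action. These are realized as positive quantum-torus monomial sums, with positivity checked via Frenkel--Ip type positive representations. For classical types I would use explicit cluster (Fock--Goncharov) coordinates for $\mfb$. The generators $\Eb_i$ are then positive essentially self-adjoint operators on a Schwartz-type core $\msD$ (finite sums of Gaussian-times-exponential functions), and the Serre relations hold on $\msD$ because the construction is a $*$-homomorphism from the quantum torus. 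Faithfulness of $U_q(\mfb_\C)$ on $\msD$ for $q$ not a root of unity would follow by a PBW / leading-term argument in the torus.

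\textbf{Step 3: coproduct and Haar weights.} Define $W$ as the ordered product of quantum dilogarithms $\prod_{\beta} e_b(\Eb_\beta\otimes \Fb_\beta)$ along the chosen order, multiplied by a Gaussian Cartan factor. Set $\Delta(x) = W^*(1\otimes x)W$. Then verify:
\begin{enumerate}
\item the pentagon equation, via the quantum pentagon identity and the factorization of the $R$-matrix;
\item formula \eqref{EqCoprodForm}, by conjugating $\Eb_i\otimes 1$ by the single dilogarithm at the relevant stage;
\item manageability, by exhibiting $\widetilde{W}$ and $Q$ with $Q$ built from the Cartan positive operators and $\Kb_{2\rho}$.
\end{enumerate}
Woronowicz's theorem then yields a locally compact quantum group whose Haar weights come from the modular element, computed as a product of $\Kb$'s.

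\textbf{Main obstacle.} I expect the hardest part to be the analytic identification of $\Delta(\Eb_i)$ with the form sum, and proving that $\Delta$ lands in $L_\hbar(B)\bar\otimes L_\hbar(B)$. Both require controlling the domains of the unbounded sums $\Eb_\beta$ for non-simple roots in the classical types. I would first try an inductive reduction over Levi subalgebras of type $A_1$, $A_2$ and $B_2=C_2$, using that rank-two braid relations yield the corresponding pentagon and quadrilateral dilogarithm identities.
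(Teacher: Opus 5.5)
There is a genuine gap at the step where you obtain the Haar weights. Woronowicz's and So\l tan--Woronowicz's theory of manageable or modular multiplicative unitaries produces a C$^*$-algebraic quantum group, with coproduct, antipode, scaling group and unitary antipode. It does \emph{not} produce left or right Haar weights, and no general result does. The modular element $\delta$ is only the Radon--Nikodym derivative between $\varphi$ and $\psi$ once both exist, so it cannot be used to construct them. Existence of the Haar weights is precisely the main difficulty of the theorem.

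The paper does not need multiplicativity at all. It uses two facts:
\begin{itemize}
\item The fundamental unitary $W=W_{\br}\Kc\in L_{\hbar}(\mfb)\bar{\otimes}L_{\hbar}(\mfb')$ is \emph{modular}. In the tensor realisation it factors into rank-one pieces, each modular with $\Qb=\kb^{-(d_{i_a}+|\hbar|^{-1})}$. Note the $|\hbar|^{-1}$ contribution, which a $Q$ built from $\Kb_{2\rho}$ alone would miss.
\item $\mfb$ and $\mfb'$ are non-degenerately paired inside the Heisenberg space $\mfd$ (Lemma~\ref{LemNonDegWord}).
\end{itemize}
With these, a twist of the canonical trace on an ambient type I quantum torus restricts to an nsf weight on $L_{\hbar}(\mfb)$. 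This weight is left invariant for $x\mapsto W^*(1\otimes x)W$ (Theorem~\ref{TheoLeftInvWeight}), and $\psi=\varphi\circ R$ gives right invariance.

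This weight lives on the quantum torus $L_{\hbar}(\mfb(\mbi))$. So you also need \emph{integrability}: $\{\Eb_i\}''=L_{\hbar}(\mfn(\mbi))$. This guarantees that the algebra generated by $\Eb_i,\Kb_i$, as the statement requires, is the whole torus on which the weight is nsf. Your Step~2 defines $L_{\hbar}(B)$ through all root vectors $\Eb_\beta$, which hides this issue. Algebraically $E_\beta$ is a $q$-commutator of the $E_i$, but analytically that does not make $\Eb_\beta$ affiliated with $\{\Eb_i,\Kb_i\}''$. The paper proves integrability type by type with the splitting lemmas.

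Finally, your pentagon identity for $\prod_\beta e_b(\Eb_\beta\otimes\Fb_\beta)$ is asserted rather than argued, and you never say where the $\Fb_\beta$ act. Outside type $A$, a rigorous version would require bringing the dual operators into Feigin form via cluster mutations, which the paper explicitly leaves open (Section~\ref{SecFullDual}). The paper sidesteps this entirely. It only needs dual simple root operators $\Fb_i$ that strongly commute with $\Eb_j$ for $j\neq i$ and form a Heisenberg $\Lb_i$-pair with $\Eb_i$. These come from full exchangeability, proved via Matsumoto moves, with the $m_{ij}=4$ case using the type $B$ pentagon identity. It is exactly this Heisenberg property (Theorem~\ref{TheoHeisPent}) that makes conjugation by $W$ produce the form sum $\Eb_i\otimes 1\boxplus\Kb_i\otimes\Eb_i$.
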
 

In other words, the above theorem provides an integrated version of $(U_q(\mfb_{\C}),\Delta)$ with its split real structure when $|q|=1$. We stress however that the integration is sensitive to a precise choice of logarithm of $q$, and that the integrated version in fact exists for arbitrary  non-zero values of $\hbar$, even when the associated $q$ is a root of unity.  

Below, we give some more details and context on the ingredients of this theorem. 

\subsection*{Locally compact quantum groups}

Locally compact quantum groups were introduced in \cites{KV00,KV03}, and consist of a von Neumann algebra $M$ with coproduct $\Delta$ and invariant weights $\varphi,\psi$ (see Definition \ref{DefLCQGFormal}). They are analytic, operator algebraic versions of Hopf algebras. For example, if $G$ is a locally compact group, one can take $M = L^{\infty}(G)$, with $\Delta$ dual to group multiplication and $\varphi,\psi$ corresponding to integration with respect to the left and right Haar measure. As another example, one can take the group von Neumann algebra $M= L(G)$ of $G$ with the diagonal coproduct and $\varphi = \psi$ equal to the \emph{Plancherel measure}. 

Locally compact quantum groups offer a vast generalisation of the classical theory of locally compact groups and their representation theory. For example, the fact that they admit a generalized \emph{Pontryagin duality} makes that they can serve, in the least, as a tool to unify various constructions in the classical setting, such as group actions, Fell bundles, crossed product operator algebras and their Takesaki--Takai duality. Their interest goes however well beyond this: already the subclass of \emph{compact quantum groups} \cites{Wor87,DK94,Wor98} and their duals, the \emph{discrete quantum groups} \cites{ER94,VD96}, offer the proper global framework to study (the compact forms of) \emph{Drinfeld--Jimbo quantum groups} in the operator algebraic framework \cite{LS91} (see also below), whereas their associated complexifications can be treated via the \emph{Drinfel'd double construction} (see \cite{VY20} for an overview).

Although the latter `complex quantum groups' offer genuine (and very interesting) examples of `truly quantum' locally compact quantum groups beyond the compact and/or discrete setting, they are somehow still `of algebraic type', in the sense that they are \emph{$*$-algebraic quantum groups}  \cite{VD98}. This entails that some of the associated structure, which is known to exist by the general formalism, trivializes. For example, the scaling constant of such quantum groups is always equal to $1$ \cite{DCDV10}. 

In \cite{DCSSV25}, the first examples of (higher-rank) locally compact quantum groups were constructed which were genuine $q$-deformations `in the continuous (non-algebraic) regime'. Namely, it was shown there that $SL_N^+(\R)$, the semigroup of totally positive $N$-by-$N$-matrices of determinant $1$ (see below), `quantizes' into  a locally compact quantum group. The main difficulty here is to establish the existence of the Haar weights for the associated quantum Borel subgroup. It is the latter result that we generalize beyond type $A$ in this paper. Below, we give some more brief context and history, referring to \cite{DCSSV25} for a more extensive discussion (especially concerning results in the rank $1$ case).

\subsection*{Totally positive part of split real Lie groups}

Put
\[
SL_N^+(\R) = \{x \in SL_N(\R)\mid \textrm{ all minors of }x\textrm{ are positive}\}.
\]
Then $SL_N^+(\R)$ is a semigroup by the Cauchy--Binet formula, but does not have inverses as these would introduce minus signs. Work of Lusztig showed that an analogous semigroup $G^+ \subseteq G$ exists for \emph{any} split real semisimple Lie group $G$ \cite{Lus94a}, with a lot of the structural results for Lie groups having analogues for this `totally positive part'. Here we will (abusively) interpret `positive' as meaning $\geq0$, so that we can make use of units, as well as of associated positive and negative Borel submonoids $B,B_- \subseteq G^+$. 

The main result of \cite{DCSSV25} shows that, despite $SL_N^+(\R)$ only being a semigroup, it admits an associated quantization $(L_{\hbar}^{\infty}(SL_N^+(\R)),\Delta)$ as a locally compact quantum \emph{group}! Intuitively, the reason for this miracle is the fact that antipodes for locally compact quantum groups, as for Hopf algebras, need not be involutive:  in the analytic setting they however admit a polar decomposition into a unitary and an absolute value part. It is this extra freedom that somehow allows signs, appearing when taking inverses, to be distributed over these two components in such a way that the positivity requirements for generators are not perturbed. What is remarkable is that this construction does not at all depend on any choice, but is simply a corollary of the general theory! This precise phenomena was already (implicitly or explicitly) understood before, see e.g.\ \cites{WZ02,BT03,Ip13a,FI14}.

The main result of this paper shows that the above quantization can also be performed for the other split real simple Lie groups of classical type: there exist locally compact quantum groups
\[
(L_{\hbar}(SO_{n,n}^+(\R)),\Delta),\quad (L_{\hbar}(SO_{n,n+1}^+(\R)),\Delta),\quad (L_{\hbar}(Sp_n^+(\R)),\Delta),\qquad \hbar\in \R^{\times},n \in \N.  
\]

We do not put this here as a formal theorem however, for the following reasons: 
\begin{itemize}
\item First of all, we do not provide any justification of the fact that we obtain a genuine quantization of $G^+$ on the analytic level. Rather, the quantization is to be interpreted in the sense that some associated algebraic structure admits a `$q$ tends to $1$'-limit, and that under this quantization certain particular positive functions on $G^+$, e.g.\ minors in the case of $SL^+_N(\R)$, get quantized into positive operators (similar to the formulation of Theorem \ref{TheoA}). The consideration of the classical limit would be very interesting, but is outside of the scope of this paper (see \cite{Ip13b} for some considerations in the rank $1$ case). 
\item Secondly, we only provide a quantization of the negative Borel part $B_-$ of $G^+$. Indeed, one can then use the Drinfeld double construction, which is available in the analytic setup \cite{BV05}, to construct the whole quantum group associated to $G^+$ (see e.g\ \cite{DCSSV25}*{Section 8}). Since this latter step would not add anything essential at the moment, we do not consider it in this paper. 
\item Finally, to quantize $B_-$ would mean to find an operator algebraic analogue $L_{\hbar}^{\infty}(B_-)$ of $\mcO_q(B_-)$, the Hopf $*$-algebra quantizing the $*$-algebra of regular functions on $B_-$. However, in the quantum setting one may interpret the above function algebra as a quantized enveloping algebra, due to \emph{Drinfeld duality} \cite{ES98}*{Section 10.2}: as Hopf algebras one has
\[
(\mcO_q(B_-),\Delta) \cong (U_q(\mfb_{\C}),\Delta),
\]
with $\mfb_{\C}$ the complexified Lie algebra of $B \subseteq G$. The positivity is carried along this isomorphism by asking that, in particular, the standard generators of $U_q(\mfb_{\C})$ are implemented by positive operators. Our main construction is then to find a locally compact quantum group `integrating' $U_q(\mfb_{\C})$ with the above real structure, and this is precisely the locally compact quantum group appearing in Theorem \ref{TheoA}. Following this line of reasoning, one may then as well interpret $(L_{\hbar}^{\infty}(B_-),\Delta)  = (L_{\hbar}(B),\Delta)$.  
\end{itemize}

\subsection*{Cluster algebras}

The theory of total positivity, as well as its quantization and dual theory of \emph{totally positive representations} (see e.g.\ \cites{FG06,FI14,SS17,Ip20}), is intimately connected to the theory of \emph{(quantum) cluster algebras} \cites{FZ02,BZ05,FG09}. Although this `cluster structure' is indispensable to probe the finer structure of our locally compact quantum groups, we will downplay their use in this paper, mainly to keep the length of the article within bounds. In fact, except for one key point (the notion of `full exchangeability'), the main result of this article will only rely on the starting point for quantum cluster algebra theory: the existence of an initial seed, i.e.\ the embeddability of the algebra of interest into a quantum torus.

\subsection*{Novel results in this paper}

Here we list some of the novel results and approaches in this paper, and compare it to what was known and done in the literature: 

\begin{itemize}
\item First of all, our construction does not put any restriction on the deformation parameter $\hbar$, which can be an arbitrary element of $\R^{\times}$. (Previous considerations sometimes excluded those $\hbar$ leading to $q= e^{-\pi i \hbar}$ being a root of unity.) 
\item Secondly, we are able to put our construction into a general framework: based on the notion of \emph{Feigin representation}, we associate a von Neumann algebraic torus to an arbitrary word over the index set of \emph{any} real Cartan matrix (i.e.\ the matrix does not need to be defined over the integers), together with certain positive operators affiliated to this quantum torus, the \emph{simple root operators}.  
 \item We isolate two key properties for the above quantum torus: 
\begin{itemize}
\item \emph{Integrability}: the simple root operators generate the quantum torus.
\item \emph{Full exchangeability}: there exists a collection of words which 
\begin{enumerate}
\item contains our initial word, 
\item is sufficiently large, in the sense that each index of the Cartan matrix is the beginning letter of at least one word in this collection, and 
\item consists of words which are all \emph{equivalent}, in the sense that the associated quantum tori can be identified in a way that preserves the simple root operators.
\end{enumerate}
\end{itemize}
We show that these two properties are sufficient to construct a locally compact quantum group.
\item We show that the above two properties are satisfied for particular words over the index set of a classical Cartan matrix. Some exceptional types could also be handled by our techniques, but integrability is not clear in the cases $F_4$ and $E_8$, while full exchangeability would require some more technical work in type $G_2$. 
\item A lot of work has been done on positive representation theory for $U_q(\mfg)$ in work of I. Ip \cites{Ip13a,Ip15a,Ip15b,Ip18a,Ip18b,Ip19,Ip20}. We borrow heavily from this work in terms of intuition. In particular, the notion of full exchangeability is a direct abstraction of certain techniques used in these papers. However, some of the results obtained in these works are insufficiently motivated from the analytic perspective, and transfer algebraic computations to the analytic realm without proper justification. Part of the aim of this paper is to put some of the computations appearing in these papers on a sound analytic footing, and to elucidate the key properties which allow for the locally compact quantum group structure to be present. In any case, no description of Haar measures for general type appear in these papers, and as such it was not yet established in this generality that these totally positive split real quantum groups fit in the framework of locally compact quantum groups.  
\item The notion of full exchangeability is directly motivated by the connection between Heisenberg algebras and multiplicative unitaries (and hence the pentagon identity), see Section \ref{SecQBvNBi}. The latter connection is well-known both in the algebraic setting (via the \emph{canonical} or \emph{Galois} map) and analytic setting, see e.g. \cites{BS93,Ka96,VD98,VVD03,MRW14}. Our particular implementation of this connection is directly inspired by the form for the pairing of dual quantum Borel algebras appearing in \cite{Ber96}, as well as by the computations in \cite{Ip18a}*{Section 7}. 
\item Finally, the theory of quantized enveloping algebras for $|q|=1$ is closely intertwined with the notion of the \emph{modular double quantum group} \cites{Fad00,KLS02,BT03}, which is related to a Langlands-like duality between the quantum groups at parameters $\hbar$ and $\hbar^{-1}$. Again we do not comment in greater detail on this connection within our paper, as it is not directly connected to or needed for our main goal. The duality can however be perceived from the concrete formulas, e.g.\ the scaling constant which appears in Section \ref{SecScaling}.
\end{itemize}

\subsection*{Outline of the paper}

In the \emph{first section} we present a more detailed version of Theorem \ref{TheoA}, and introduce precise definitions for the notions of `integrability' and `full exchangeability'. 

In the \emph{second section} we set up the proper formalism to consider skew-commuting positive operators in the operator algebraic framework, and recall in this context the \emph{quantum exponential function} \cite{Wor00} and its main properties, which is an indispensable tool in this framework.

In the \emph{third section} we briefly recall how one can associate a von Neumann algebra to a finite-dimensional vector space with a skew-symmetric form, and recall the main properties. This sets up the more flexible framework in which to embed the constructions of the first section. 

In the \emph{fourth section} we prove that any fully exchangeable word over a Cartan datum leads to a von Neumann algebra with coproduct, with the coproduct implemented by a \emph{fundamental unitary}. Under the extra assumption of \emph{integrability} of the word, we then show in the \emph{fifth section} that this von Neumann algebra with coproduct gives a locally compact quantum group. 

In the \emph{sixth section} we show that the longest word associated to a classical Cartan datum is fully exchangeable, and in the \emph{seventh section} we show that this word is integrable. Together with the results of the fifth section, this will prove our main theorem. 

In the \emph{eighth section} we provide some more information on the modular structure of the locally compact quantum groups constructed from a classical Cartan datum. 

In the \emph{ninth section}, we revisit the fundamental unitary constructed in the fourth section, and formulate conditions under which it becomes a \emph{multiplicative} unitary. We then provide some examples.

In the \emph{tenth section} we discuss an outlook. 

In \emph{Appendix} \ref{SecLemm} we prove a distributivity result for the form sum with respect to the star product, which is useful in the general context of the theory. In \emph{Appendix} \ref{SecLemmTypeB} we prove a type $B$ version of the Kashaev pentagon equation, which, as far as we know, had not been rigourously derived yet in the operator algebraic framework.

\subsection*{Preliminaries and conventions}

If $x$ is an invertible element of an algebra, we denote $\Ad_x(y) = xyx^{-1}$.

For the basic theory of von Neumann algebras and their weight theory, we refer to \cites{Tak02,Tak03} (see also the appendices to \cite{DCSSV25}).  We denote by $\odot$ the algebraic tensor product between vector spaces, by $\otimes$ the completed tensor product between Hilbert spaces, and by  $\overline{\otimes}$ the spatial tensor product between von Neumann algebras. We denote by $M_*$ the predual of a von Neumann algebra $M$, and by $S_*(M)$ its set of normal states.

We denote the domain of an unbounded operator $\Tb$ on a Hilbert space $\Hsp$ as $\msD(\Tb) \subseteq \Hsp$.   If $J$ is a finite set, $\{\Hsp_a\}_{a\in J}$ a collection of Hilbert spaces and $\Tb$ a closed unbounded operator on $\Hsp_a$ for some $a\in J$, we write
\[
\Tb_a \cong \left(\underset{b\neq a}{\otimes} 1\right) \otimes \Tb
\]
as a closed unbounded operator on $\otimes_{b\in J}\Hsp_b \cong (\otimes_{b\in J,b\neq a}\Hsp_b)\otimes \Hsp_a$, where the right hand side denotes the closure of the operator $\left(\underset{b\neq a}{\otimes} 1\right) \odot \Tb$ on $(\otimes_{b\in J,b\neq a}\Hsp_b)\odot \msD(\Tb)$.

By default, a \emph{positive operator}  (on a Hilbert space) means a (possibly unbounded) self-adjoint operator with positive spectrum and zero kernel. We call a pair $\Ab,\Bb$ of positive operators on a Hilbert space $\Hsp$ \emph{form-summable} if $\msD(\Ab^{1/2})\cap \msD(\Bb^{1/2})$ is dense in $\Hsp$. We then define their  \emph{form sum} $\Ab\boxplus \Bb$ as the unique positive operator such that 
\[
\msD((\Ab\boxplus \Bb)^{1/2}) = \msD(\Ab^{1/2})\cap \msD(\Bb^{1/2}),
\]
\[
\|(\Ab\boxplus \Bb)^{1/2}\xi\|^2 = \|\Ab^{1/2}\xi\|^2+ \|\Bb^{1/2}\xi\|^2,\qquad \forall \xi\in \msD(\Ab^{1/2})\cap \msD(\Bb^{1/2})  
\]
(see e.g.\ \cite{RS81}*{Section VIII.6}, \cite{Tak03}*{Section A.9} or \cite{DCSSV25}*{Section F.3}). Note that $\boxplus$ is a commutative and associative operation: 
\begin{equation}
\Ab\boxplus \Bb = \Bb\boxplus \Ab,\qquad (\Ab\boxplus \Bb)\boxplus \Cb = \Ab\boxplus (\Bb\boxplus \Cb),
\end{equation}
where the latter equation pre-supposes that $\msD(\Ab^{1/2})\cap \msD(\Bb^{1/2})\cap \msD(\Cb^{1/2})$ is dense in $\Hsp$. In the algebraic analogue, $\boxplus$ corresponds to the usual sum. In fact, since one always has $\msD(\Ab)\subseteq \msD(\Ab^{1/2})$, we see that $\Ab$ and $\Bb$ are form-summable as soon as $\msD(\Ab)\cap \msD(\Bb)$ is dense in $\Hsp$. In this case $\Ab \boxplus \Bb$ restricts to $\Ab+\Bb$ on $\msD(\Ab)\cap \msD(\Bb)$. However, $\msD(\Ab)\cap \msD(\Bb)$ can fail to be a core for $\Ab\boxplus \Bb$.

If $\Tb$ is a closed unbounded operator on a Hilbert space $\Hsp$, and  $M \subseteq \Bc(\Hsp)$ a von Neumann algebra, we write 
\[
\Tb \aff M
\]
if $\Tb$ is \emph{affiliated} with $M$. We recall that this is equivalent with 
\begin{equation}\label{EqCharAff}
u\Tb u^* = \Tb\qquad \textrm{for all unitaries }u \in M',\qquad M' = \{y \in \Bc(\Hsp)\mid xy = yx\textrm{ for all }x\in M\}. 
\end{equation}
If $\Tb$ is selfadjoint, we also have 
\begin{equation}
\Tb \aff M\quad \iff \quad f(\Tb) \in M,\qquad \forall \textrm{ bounded Borel functions }f: \R\rightarrow \C,
\end{equation}
and if $\Tb$ is positive we also have 
\begin{equation}\label{EqAffOnePar}
\Tb \aff M\quad \iff \quad \Tb^{it} \in M,\qquad \forall t\in \R.
\end{equation}

We  note that if form-summable positive operators $\Ab,\Bb$ are affiliated with a von Neumann algebra $M$, their form sum is again affiliated to $M$ (using \eqref{EqCharAff} and the definition of the form sum): 
\begin{equation}\label{EqAffForm}
\Ab \aff M,\Bb\aff M \quad \Rightarrow \quad \Ab\boxplus \Bb \aff M.
\end{equation}
In fact, for any unitary $u$ and form-summable positive operators $\Ab,\Bb$ on a Hilbert space, it is easily seen by the definition that $u\Ab u^*$ and $u\Bb u^*$ are again form-summable, with
\begin{equation}\label{EqPropConjFormSum}
u(\Ab\boxplus \Bb)u^* = u\Ab u^*\boxplus u \Bb u^*.
\end{equation}
A similar argument (using \cite{DCSSV25}*{Lemma F.1}) gives that if $\Ab,\Bb$ are form-summable and affiliated with a von Neumann algebra $M \subseteq \Bc(\Hsp)$, then for any normal unital $*$-homomorphism 
\[
\Phi: M \rightarrow N
\]
into another von Neumann algebra $N \subseteq \Bc(\Gsp)$, it will hold that $\Phi(\Ab)$ and $\Phi(\Bb)$ are form-summable, with 
\[
\Phi(\Ab\boxplus \Bb) = \Phi(\Ab) \boxplus \Phi(\Bb). 
\]

If $\{\Tb\}$ is a collection of selfadjoint operators on a Hilbert space $\Hsp$, we denote 
\[
\{\Tb\}' := \{x\in \Bc(\Hsp)\mid x\textrm{ strongly commutes with all }\Tb\},
\]
where we recall that $x$ strongly commutes with $\Tb$ if $\Tb \aff \{x,x^*\}'$.  Then $\{\Tb\}'$ and hence $\{\Tb\}''$ is a von Neumann algebra, and all $\Tb$ are affiliated with $\{\Tb\}''$. If $M$ is a von Neumann algebra and $\{\Tb\}$ is a collection of selfadjoint operators affiliated with $M$, we say that $\{\Tb\}$ \emph{generates} $M$ if $M = \{\Tb\}''$. 

If $H = (H,+)$ is a locally compact abelian group, we denote 
\begin{equation}\label{EqDefGenGroupvN}
\lambda_h: L^2(H) \rightarrow L^2(H),\quad (\lambda_hf)(g) = f(g-h),\qquad h,g\in H,f\in L^2(H) 
\end{equation}
for the regular unitary $H$-representation, and 
\[
L(H) = \{\lambda_h \mid h \in H\}'' \subseteq \Bc(L^2(H))
\]
for its group von Neumann algebra. More generally, if $M\subseteq \Bc(\Hsp)$ is a von Neumann algebra with a continuous $H$-action $H \overset{\gamma}{\curvearrowright}M$, we define the \emph{cross product von Neumann algebra}
\begin{equation}\label{EqCanConstCross}
H \ltimes M  = H\ltimes_{\gamma} M = (\{\lambda_h \otimes 1\mid h\in H\} \cup \{\pi_{\ltimes} (m) \mid m \in M \})'' \subseteq \Bc(L^2(H) \otimes \Hsp),
\end{equation}
where under the natural identification $L^2(H)\otimes \Hsp\cong L^2(H,\Hsp)$ we put
\begin{equation}
(\pi_{\ltimes}(m)f)(g) = \gamma_{-g}(m)f(g),\qquad f\in L^2(H,\Hsp),g\in H,m\in M.
\end{equation}
In the following we identify 
\begin{equation}\label{EqCrossIncl}
L(H),M \subseteq  H\ltimes M,\qquad \lambda_h \cong \lambda_h \otimes 1,\quad m \cong \pi_{\ltimes}(m),\qquad h \in H,m\in M. 
\end{equation}
One can find $M$ back from $H\ltimes M$ using \emph{slice maps}: 
\begin{equation}\label{EqSliceCrossed}
M = \sigma\textrm{-weak closure of }\{(\omega \otimes \id)x\mid x\in H\ltimes M,\omega \in \Bc(L^2(H))_*\},
\end{equation}
cf.\ \cite{Tak03}*{Theorem X.2.3.(iii)}.

\textbf{Acknowledgments}: KDC thanks his coauthors from \cite{DCSSV25} for the many discussions on quantum cluster algebras. We also thank Simon Lentner, Gus Schrader and Christian Voigt specifically for some brief email discussion on the content of Remark \ref{RemScalingConstant}. The research of WM was supported by Fonds voor Wetenschappelijk Onderzoek (Flanders), via an FWO Aspirant fellowship, grant 1104925N. The research of GJ was partially supported by Fonds voor Wetenschappelijk Onderzoek (Flanders), via an FWO senior postdoc fellowship, grant 12V0722N and via the Fonds de la Recherche Scientifique -- FNRS (grant 1.B.239.22). 

\textbf{AI use statement}: No LLMs were used for this manuscript nor any of the results it contains.

\section{Statement of the main theorem}\label{SecMainTheo}

\begin{Def}
A (real) \emph{Cartan datum} $I = (I,\widehat{A})$ consists of a finite set $I$ and a symmetric  matrix $\widehat{A} \in M_I(\R)$ with strictly positive diagonal elements:
\[
\widehat{A} = (\langle i,j\rangle)_{i,j} \in M_I(\R),\qquad 
\langle i,i\rangle > 0,\qquad \forall i \in I.
\]

We then write 
\[
D = \diag(d_i)_{i\in I},\qquad d_i = \frac{1}{2}\langle i,i\rangle \in (0,\infty),\qquad 
A = (a_{ij})_{i,j\in I},\qquad a_{ij} = 2\frac{\langle i,j\rangle}{\langle i,i\rangle}\in \R,
\]
so in particular $\widehat{A} = DA$. We call $A$ the associated \emph{Cartan matrix}, and $d_i$ the \emph{associated multiplicities}.

We call $I$ \emph{non-degenerate} if $\det(\widehat{A})\neq 0$.

We call $I$ \emph{of (finite) Lie type} if $\widehat{A}$ is positive-definite and $A \in M_I(\Z)$ with $a_{ij}<0$ for all $i\neq j$. In this case, $A$ can be put into blocks which correspond to the irreducible Cartan matrices of types $A$ through $G$.  

We call $I$ \emph{classical} if $I$ is of Lie type, and none of the $E,F,G$-types appear in its block decomposition. 
\end{Def}

Below, the Cartan datum $I$ will be kept fixed. We then write 
\begin{equation}\label{EqCart1}
\mfh = \mfh(I) = \R[I] = \{\sum_{i\in I} c_i i \mid c_i \in \R\} = \textrm{ free real vector space spanned by }I,
\end{equation}
and we extend $\langle \blank,\blank\rangle$ bilinearly to $\mfh$. We write
\begin{equation}\label{EqCart2}
\mfh' = \mfh(I)' =  \textrm{ real vector space dual of }\mfh,
\end{equation}
and we also write by  $\langle \blank,\blank\rangle$ the natural pairing between $\mfh'$ and $\mfh$. Note that we have a natural map 
\begin{equation}\label{EqMapToDual}
\alpha: \mfh \rightarrow\mfh',\qquad v \mapsto \alpha_v = \langle v, \blank\rangle.
\end{equation}

\begin{Def}
We write $m_{ij} \in \{1,2,\ldots\}\cup \{\infty\}$ for the order of $r_ir_j$, where 
\begin{equation}\label{EqElRefl}
r_i: \mfh\rightarrow \mfh,\quad v \mapsto v - 2 \frac{\langle i,v\rangle}{\langle i,i\rangle}i.
\end{equation}
We denote by $(W(I),S)$ the \emph{Coxeter system} $(W(I),S)$ with 
\[
S = \{s_i \mid i \in I\},\qquad W(I) = \langle s_i \in S \mid s_i^2 = e, (s_is_j)^{m_{ij}} = e \textrm{ for all }i\neq j\textrm{ and }m_{ij}<\infty\rangle,
\]
and we denote by $\ell$ its associated length function.
\end{Def}
We recall that $W(I)$ is finite if $I$ is of Lie type,  and coincides then with the group generated by the $r_i$. We denote in this case 
\[
w_0 := \textrm{ (unique) longest element of }W(I).
\]

\begin{Def}
A \emph{(formal) word} $\mbi$ in $I$ consists of a finite totally ordered set 
\begin{equation}\label{EqTotal1}
J = J(\mbi) = \{a\} = \{a_1<a_2<\dotsb<a_N\}
\end{equation}
together with a map 
\begin{equation}\label{EqTotal2}
\mbi: J \rightarrow I,\qquad a \mapsto \mbi(a) = i_a.
\end{equation}
We refer to the cardinality $N$ of $J$ as the \emph{length} $\ell(\mbi)$ of the word $\mbi$.  

We call $\mbi$ \emph{surjective} if \eqref{EqTotal2} is surjective.
\end{Def}
We write the \emph{level sets} of a word as 
\begin{equation}
J_i = \{a\in J \mid i_a = i\},\qquad i\in I,
\end{equation}
and equip them with the restriction of the total order. 

Whenever convenient, we identify $J = \{1,2,\ldots, N\}$. Then a word is indeed just a string of elements in $I$: 
\[
\mbi =  (i_1,i_2,\ldots,i_N).
\]
However, having an abstract labeling for $J$ is sometimes useful for disambiguation purposes. Below, we fix $J$ as the standard notation for the set underlying the word $\mbi$, when $\mbi$ is clear from the context.

\begin{Rem}
One can usually assume surjectivity of a word by restricting the alphabet $I$ to the image of $\mbi$.
\end{Rem} 

\begin{Def} 
If $\mbi = (i_1,i_2,\ldots,i_N)$ is a word in $I$, we refer to 
\begin{equation}\label{EqWordWeyl}
w_{\mbi} = s_{i_1}\ldots s_{i_N} \in W(I)
\end{equation}
as the associated Coxeter group element. We then call $\mbi$ \emph{reduced} if $\ell(\mbi) = \ell(w_{\mbi})$.
\end{Def}

Fix now $\hbar \in \R^{\times}$, and consider $L^2(\R)$ with its two positive operators $\kb,\eb$ such that  
\begin{equation}\label{EqGenke}
(\kb^{it}f)(x) = e^{i t x}f(x),\qquad (\eb^{it}f)(x) = f(x-\pi \hbar t),\qquad f\in L^2(\R),x,t\in \R, 
\end{equation}
i.e.\ $\kb,\eb$ are exponentials of (suitable multiples of) the position and momentum operator. Note that 
\[
\kb^{it}\eb^{is} = e^{\pi i \hbar st}\eb^{is}\kb^{it},\qquad s,t\in \R.
\]
This is the analytic implementation of the algebraic commutation relation
\[
ke = qek,\qquad q = e^{-\pi i \hbar}. 
\]

\begin{Def}\label{DefNilpvN}
To any word $\mbi$ we associate the Hilbert space 
\[
\Hsp(\mbi) = \otimes_{J} L^2(\R).
\]
We consider on $\Hsp(\mbi)$ the positive operators 
\begin{equation}\label{EqXVar}
\Xb_a = \Xb_a(\mbi) = \left(\underset{b<a}{\otimes} \kb^{\langle i_a,i_b\rangle}\right) \otimes \eb \otimes \left(\underset{b>a}{\otimes}1\right) = \left(\prod_{b<a} \kb_b^{\langle i_a,i_b\rangle}\right)\eb_a, \qquad a \in J,
\end{equation}
where in the last expression we take the closure of the (strongly commuting) product. We denote the von Neumann algebra generated by the $\Xb_a$ as 
\[
L_{\hbar}(\mfn(\mbi)) = \{\Xb_a\mid a\in J\}'' \subseteq \Bc(\Hsp(\mbi)). 
\]

We further define the \emph{simple root operators} associated to $\mbi$ to be the positive operators
\begin{equation}\label{EqGenQUE}
\Eb_i = \Eb_i(\mbi):=  \boxplus_{a \in J_i} \Xb_a,\qquad i\in I. 
\end{equation}
We denote the von Neumann algebra generated by the $\Eb_i$ as
\[
L_{\hbar}(N(\mbi)) = \{\Eb_i\mid i\in I\}'' \subseteq \Bc(\Hsp(\mbi)),
\]
and refer to it as the \emph{quantum nilpotent von Neumann algebra} associated to $\mbi$.
\end{Def}

Note that 
\begin{equation}\label{EqDenseSubsp}
\mcE = \{p(x)e^{-x^2/2} \mid p \textrm{ polynomial}\}
\end{equation}
is a dense subspace of $L^2(\R)$, and in the domain of any (complex) power of $\kb$ or $\eb$. Hence $\odot_J\mcE$ lies in the domain of all $\Xb_a$, and so the $\Eb_i$ in \eqref{EqGenQUE} are well-defined. Also note that the $\Eb_i$ are affiliated to $L_{\hbar}(\mfn(\mbi))$ by \eqref{EqAffForm}, hence 
\begin{equation}\label{EqInclBorelInf}
L_{\hbar}(N(\mbi)) \subseteq L_{\hbar}(\mfn(\mbi)).
\end{equation}

Further note that the von Neumann algebra $L_{\hbar}(\mfn(\mbi))$ has a natural continuous action 
\begin{equation}\label{EqActGamma}
\mfh' \overset{\gamma}{\curvearrowright} L_{\hbar}(\mfn(\mbi)),
\end{equation}
uniquely determined through
\begin{equation}
\gamma_{\omega}(\Xb_a) = e^{\pi \hbar \langle \omega,i_a\rangle}\Xb_a,\qquad a\in J,\omega \in \mfh'.
\end{equation}
Indeed, if we define on $\Hsp(\mbi)$ the (strongly commuting) positive operators
\begin{equation}\label{EqXVarCart}
\kb_{\omega} = \underset{b\in J}{\otimes} \kb^{\langle \omega,i_b\rangle} = \prod_{b\in J} \kb_b^{\langle \omega,i_b \rangle},\qquad \omega \in \mfh',
\end{equation} 
then $\gamma_{\omega}$ is determined through
\begin{equation}
\gamma_{t\omega}(x) = \Ad_{\kb_{\omega}^{it}}(x),\qquad x\in L_{\hbar}(\mfn(\mbi)), \omega \in \mfh', t\in \R. 
\end{equation}
From this description, we also see that the action $\gamma$ restricts to $L_{\hbar}(N(\mbi))$, with 
\begin{equation}
\gamma_{\omega}(\Eb_i) = e^{\pi \hbar \langle \omega,i\rangle}\Eb_i,\qquad i\in I,\omega \in \mfh'.
\end{equation}

We now introduce two fundamental notions for a word over a Cartan datum.

\begin{Def}
We call a word $\mbi$ \emph{integrable} if equality holds in \eqref{EqInclBorelInf}, i.e.\ when 
\begin{equation}\label{EqEqBorelInf}
L_{\hbar}(N(\mbi)) = L_{\hbar}(\mfn(\mbi)).
\end{equation}
\end{Def}

\begin{Def}\label{DefFullExch}
We call two words $\mbi,\mbi'$ \emph{equivalent} if there exists an $\mfh'$-equivariant $*$-isomorphism 
\begin{equation}\label{EqPhiEquivWord}
\Phi: L_{\hbar}(\mfn(\mbi)) \rightarrow L_{\hbar}(\mfn(\mbi')) \qquad \textrm{with}\qquad 
\Phi(\Eb_i(\mbi)) = \Eb_i(\mbi'),\qquad \forall i\in I.
\end{equation}

We  call a word $\mbi$ \emph{fully exchangeable} if for each $i \in I$ there exists an equivalent word $\mbi'$ with $i_1' = i$. 
\end{Def}

\begin{Theorem}\label{TheoClassicalIntFull}
Let $I$ be a classical Cartan datum, and $\mbi$ a word over $I$. 
\begin{enumerate}
\item If $\mbi$ is reduced, then $\mbi$ is integrable.
\item If $w_{\mbi} = w_0$, then $\mbi$ is fully exchangeable.
\end{enumerate} 
\end{Theorem}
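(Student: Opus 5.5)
We prove the two parts separately. Both rest on one local tool: \emph{moves} between reduced words that identify the quantum tori while preserving the simple root operators. Once these moves are available, the global statements follow from the combinatorics of reduced words and from one base case for integrability.

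\textbf{Step 1: local moves.} For each braid move we construct an $\mfh'$-equivariant unitary equivalence
\[
\Phi\colon L_{\hbar}(\mfn(\mbi)) \rightarrow L_{\hbar}(\mfn(\mbi')),
\]
where $\mbi'$ is obtained from $\mbi$ by replacing a subword $(i,j,i,\ldots)$ of length $m_{ij}$ with $(j,i,j,\ldots)$. The cases are as follows.
\begin{itemize}
\item For $m_{ij}=2$, the move is a permutation of tensor factors, corrected by a unitary of the form $\kb^{i\ldots}$ that absorbs the change in the Weyl-ordered prefactors in \eqref{EqXVar}.
\item For $m_{ij}=3$ (simply laced), we conjugate by the quantum exponential (Faddeev's quantum dilogarithm) applied to a suitable monomial $\Xb_a^{1/2}\Xb_b^{-1/2}$-type quotient. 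The key identity is the quantum exponential addition formula from Section 2: for $q$-commuting positive $\Ab,\Bb$ one has $\Ab\boxplus\Bb = G(\cdot)\Ab G(\cdot)^*$. This yields $\Xb_a\boxplus \Xb_c \mapsto \Xb'_{a'}\boxplus\Xb'_{c'}$ and $\Xb_b\mapsto\Xb'_{b'}$.
\item For $m_{ij}=4$ (types $B$/$C$), we use a composition of such quantum exponential conjugations. This is the type $B$ pentagon-like identity of Appendix \ref{SecLemmTypeB}.
\end{itemize}
In each case we check that the remaining letters are unaffected. They are, because the letters before the subword enter only through $\kb$-factors that are invariant under the conjugation, and letters after it only through total $\kb$-weights of the subword, which the move preserves. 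Equivariance holds because each conjugating unitary is a function of $\gamma$-invariant operators. Form sums are transported using \eqref{EqPropConjFormSum} and the normal $*$-homomorphism property.

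\textbf{Step 2: full exchangeability (part 2).} By Matsumoto--Tits, any two reduced words for $w_0$ are connected by braid moves. Composing the Step 1 isomorphisms therefore shows that all reduced words for $w_0$ are equivalent. Since $\ell(s_iw_0)<\ell(w_0)$ for every $i$, there is a reduced word for $w_0$ starting with $i$. If $\mbi$ itself is reduced, this proves part 2.

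If $\mbi$ is not reduced but has $w_{\mbi}=w_0$, we need an additional reduction step. A nilHecke-type move removes a repeated letter $(i,i)$ via $\Xb_a\boxplus\Xb_b$. I would either restrict to reduced $\mbi$ if the paper's later use permits, or show that a non-reduced word is equivalent to a reduced one with extra $\kb$-factors. I expect this reading of the statement to need checking.

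\textbf{Step 3: integrability (part 1).} We argue by induction, using the fact that integrability is invariant under the equivalences of Step 1. This holds because $\Phi$ maps the $\Eb$'s to the $\Eb$'s and is a $*$-isomorphism of the $L_{\hbar}(\mfn)$'s. It therefore suffices to treat one well-chosen reduced word per Weyl group element, for example an extension of a standard reduced expression adapted to the maximal parabolic chain $A_{n-1}\subset A_n$, $B_{n-1}\subset B_n$, and so on.

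Write the word as $\mbi=(\mbi_0,i)$, with one letter $c$ appended. Then $\Eb_i = \Eb_i(\mbi_0)\boxplus \Xb_c$ and $\Eb_j(\mbi)=\Eb_j(\mbi_0)$ for $j\neq i$. By induction, $L_{\hbar}(N(\mbi))$ contains $L_{\hbar}(\mfn(\mbi_0))$, and hence the $\gamma$-action of the $\kb_\omega$ restricted to the factors $b<c$. It remains to extract $\Xb_c$ from the form sum. Here I would use the $q$-commutation between $\Xb_c$ and the operators $\Xb_a$, $a\in J_i(\mbi_0)$, and apply the quantum exponential conjugation in reverse: $\Xb_c = G^*(\Eb_i\boxplus\ldots)G$ with $G$ built from elements already generated. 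Alternatively, one takes $q$-commutators of $\Eb_i$ with $\Eb_j$ for neighbouring $j$; in the algebraic shadow these isolate $\Xb_c$ (a Serre-type quotient) and are realised analytically via spectral calculus of products of strongly skew-commuting positive operators.

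\textbf{Main obstacle.} The hardest step is this extraction, together with the analytic bookkeeping in the $m_{ij}=4$ move. Form sums of non-commuting operators cannot be inverted naively, and the choice of reduced word must be made so that the newly appended letter $q$-commutes nontrivially, and by a nondegenerate scalar, with a single previously generated monomial. Without that, the quantum exponential trick does not separate $\Xb_c$.
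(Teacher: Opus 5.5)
Your argument for part 2 with $\mbi$ reduced follows the paper. It combines Matsumoto's theorem with explicit rank-two equivalences, each built from a change of basis of $\mfn(\mbi)$ plus conjugation by quantum exponentials of weight-zero quotients such as $\Xb_1\star\Xb_3^{-1}$, and it uses the type $B$ pentagon for $m_{ij}=4$. One correction for $m_{ij}=3$: the form sum $\Xb_1\boxplus\Xb_3$ of the two $i$-letters must go to the \emph{single} $i$-letter of $(j,i,j)$, and $\Xb_2$ must go to the form sum of its two $j$-letters. Form sum does not go to form sum. The statement also covers non-reduced words, and you leave that case open. The paper handles it with the rank-one computation of Theorem \ref{TheoDoubleLetter}. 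There, conjugation by $F_{d_i\hbar}(\kb^{2d_i}\star\eb^{-1}\otimes\eb)$ equivariantly turns $\eb\otimes 1\boxplus\kb^{2d_i}\otimes\eb$ into $\eb\otimes 1$, so repeated letters can be absorbed before braid moves are applied.

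The genuine gap is in part 1. Your inductive step asserts ``by induction'' that $L_{\hbar}(N(\mbi))\supseteq L_{\hbar}(\mfn(\mbi_0))$. But the hypothesis only says that the $\Eb_j(\mbi_0)$ generate $L_{\hbar}(\mfn(\mbi_0))$. The operator $\Eb_i(\mbi_0)$ itself is not available in $M=L_{\hbar}(N(\mbi))$; only $\Eb_i(\mbi_0)\boxplus\Xb_c$ is. If it were available, the extraction you call the main obstacle would be trivial: $\Xb_c$ and $\Eb_i(\mbi_0)$ are $d_i\hbar$-commuting, so the quantum exponential property gives $F_{d_i\hbar}(s\Xb_c)\in M$. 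So the argument is circular. Separating $\Xb_c$ from $\Eb_i(\mbi_0)$ is the whole problem, and it must use reducedness: appending $i$ to the word $(i)$ destroys integrability. Neither mechanism you propose does this separation. A conjugating $G$ would involve operators like $\Xb_a\star\Xb_c^{-1}$, which are not known to be affiliated with $M$. Also, $\Eb_i$ and $\Eb_j$ do not skew-commute, so ``$q$-commutators'' have no analytic meaning here; the paper's outlook flags exactly this.

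What is missing are the paper's splitting lemmas:
\begin{enumerate}
\item Take some $\Tb\aff M$ that skew-commutes with $\Ab$ and $\Bb$ at different rates. Conjugate $F(s(\Ab\boxplus\Bb))=F(s\Ab)F(s\Bb)$ by $\Tb^{it}$, rescale $s$, and let $t\to\pm\infty$ so that one factor tends strongly to $1$. Then apply Lemma \ref{LemAff}.
\item Produce such a $\Tb$, of the form $\Cb\star\Bb$, from two form sums $\Ab\boxplus\Bb$ and $\Cb\boxplus\Db$ affiliated with $M$. This uses an auxiliary tensor factor in which $(\Ab\boxplus\Bb)\otimes\kb$ and $(\Cb\boxplus\Db)\otimes\eb$ satisfy the Heisenberg pentagon.
\item Use a short-root variant based on $F_{\hbar/2}(\Ab\star\Bb^{-1})$.
\end{enumerate}
These are applied to specific nested longest words $\mbi_{B_n}=\mbi_{B_{n-1}}\mbb_n$, and similarly for $C_n$ and $D_n$. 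There the new letter $n$ occurs only twice, so $\Eb_n=\Xb_1\boxplus\Xb_{2n-1}$ gives an entry point, and induction on the rank finishes the argument.

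The paper then reaches general reduced words by the \emph{opposite} implication to yours: integrability of $(\mbi,j)$ implies integrability of $\mbi$. This follows from $L_{\hbar}(\mfn(\mbi,j))\cong\R\ltimes_{\beta}L_{\hbar}(\mfn(\mbi))$ and slice maps.
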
 

We  prove this theorem by showing these properties separately in respectively Theorem \ref{TheoExchangeable} (full exchangeability) and Theorem \ref{TheoIntegrable} (integrability).

We now proceed to show that to any integrable and fully exchangeable word, one can associate a locally compact quantum group. We first recall the definition of the latter. 

\begin{Def}\label{DefLCQGFormal}
A \emph{von Neumann bialgebra} is a von Neumann algebra $M$ equipped with a faithful, normal unital $*$-homomorphism
\[
\Delta: M \rightarrow M\bar{\otimes} M
\]
satisfying the \emph{coassociativity} condition
\begin{equation}\label{EqCoass}
(\Delta\otimes \id)\Delta = (\id\otimes \Delta)\Delta. 
\end{equation}

It is called a \emph{locally compact quantum group} (in the von Neumann algebraic setting \cite{KV03}) if there exist nsf (= normal, semi-finite, faithful) weights 
\[
\varphi,\psi: M^+ \rightarrow [0,\infty],
\]
referred to as \emph{Haar weights}, satisfying respectively the \emph{left invariance} and the \emph{right invariance} conditions with respect to $\Delta$: 
\begin{equation}\label{EqLeftInvariance}
\varphi((\omega\otimes \id)\Delta(x)) = \varphi(x),\qquad \forall x\in M^+,\omega \in S_*(M),
\end{equation}
\begin{equation}\label{EqRightInvariance}
\psi((\id\otimes \omega)\Delta(x)) = \psi(x),\qquad \forall x\in M^+,\omega \in S_*(M).
\end{equation}
\end{Def}

Note now that the action \eqref{EqActGamma} leads to an action $\mfh\curvearrowright L_{\hbar}(\mfn(\mbi))$, which we will denote by the same symbol: 
\[
\mfh \overset{\gamma}{\curvearrowright} L_{\hbar}(\mfn(\mbi)),\qquad \gamma_v = \gamma_{\alpha_v},\qquad v\in \mfh.
\]

\begin{Def}\label{DefBorelQA}
If $\mbi$ is a word in $I$, we define  the \emph{quantum Borel von Neumann algebra} associated to $\mbi$ to be the cross product von Neumann algebra 
\[
L_{\hbar}(B(\mbi)) := \mfh \ltimes_{\gamma} L_{\hbar}(N(\mbi)).
\]
\end{Def}

Note that, by construction,
\[
L_{\hbar}(N(\mbi))\subseteq L_{\hbar}(B(\mbi)). 
\]
Using the notation and conventions given by  \eqref{EqCrossIncl}, we define 
\begin{equation}\label{EqNewDefK}
\Kb_v \aff L(\mfh) \subseteq L_{\hbar}(B(\mbi)),\qquad \Kb_v^{it} = \lambda_{tv},\qquad t \in \R,v\in \mfh.
\end{equation}

\begin{Theorem}\label{TheoMain}
If $\mbi$ is a fully exchangeable  word, there exists a unique von Neumann bialgebra structure on $L_{\hbar}(B(\mbi))$ such that 
\[
\Delta(\Kb_i) = \Kb_i \otimes \Kb_i,\qquad \Delta(\Eb_i) = \Eb_i\otimes 1\boxplus \Kb_i \otimes\Eb_i,\qquad \forall i\in I.
\]
Moreover, if $\mbi$ is also integrable, then $(L_{\hbar}(B(\mbi)),\Delta)$ is a locally compact quantum group. 
\end{Theorem}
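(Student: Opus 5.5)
Existence of $\Delta$ will come from a fundamental unitary built out of the Heisenberg-type structure, and the Haar weights will come from a quantum-torus (Plancherel-type) weight transported along the equivalences.

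\emph{Step 1: construct a coproduct implemented by a unitary.} I would realize $L_{\hbar}(B(\mbi))$ concretely on $L^2(\mfh)\otimes \Hsp(\mbi)$. Following the pairing of dual quantum Borel algebras from \cite{Ber96}, I would define a candidate unitary
\[
\mathbb{W} \in \Bc\bigl((L^2(\mfh)\otimes\Hsp(\mbi))^{\otimes 2}\bigr)
\]
as a product of a Gaussian ``Cartan'' factor $\exp(\text{quadratic form in } \log\kb \otimes \log \kb)$ and an ordered product, over $a\in J$, of Woronowicz quantum exponentials $F(\Xb_a\otimes \Yb_a)$ (cf.\ \cite{Wor00}). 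Here the $\Yb_a$ are dual generators. I would then set $\Delta(x)=\mathbb{W}^*(1\otimes x)\mathbb{W}$, or the analogous formula with $\mathbb{W}$ on the other side.

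\emph{Step 2: check the formulas for $\Delta$ on the generators.}
\begin{itemize}
\item For $\Kb_v$ the check is immediate from the Gaussian factor, giving $\Delta(\Kb_i)=\Kb_i\otimes\Kb_i$.
\item For $\Eb_i$ I would use the quantum exponential identity $F(\Ab)^*\Bb F(\Ab)=\Bb\boxplus q\text{-shifted }\Ab\Bb$-type relations, namely $\Ad_{F(\Ab)^*}(\Bb)=\Ab\boxplus\Bb$ when $\Ab\Bb=q^2\Bb\Ab$, in its analytic form for skew-commuting positive operators.
\item Combined with distributivity of the form sum (Appendix \ref{SecLemm}) and \eqref{EqPropConjFormSum}, this shows that the unitary only has to move a \emph{single} letter. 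That works cleanly only when the letter $i$ sits in the first position of the word.
\end{itemize}
This is exactly where full exchangeability enters. For each $i$ I would pick an equivalent word $\mbi'$ with $i'_1=i$ and transport along the $\mfh'$-equivariant isomorphism $\Phi$ of Definition \ref{DefFullExch}. I would need $\Phi$ to be compatible with the candidate $\Delta$, i.e.\ that $\mathbb{W}$ is intertwined by $\Phi\otimes\Phi$ up to the Cartan part.

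To avoid this issue I would instead define $\Delta$ on the whole $\mfh\ltimes L_{\hbar}(\mfn(\mbi))$ by one formula, and then show that it maps the generators $\Eb_i$ as claimed by computing in the $\mbi'$-picture. Uniqueness is automatic, since the $\Kb_i,\Eb_i$ generate $L_{\hbar}(B(\mbi))$ by definition, and $\Delta$ is normal. Coassociativity then follows from the fact that both $(\Delta\otimes\id)\Delta$ and $(\id\otimes\Delta)\Delta$ send $\Eb_i$ to the triple form sum $\Eb_i\otimes1\otimes1\boxplus\Kb_i\otimes\Eb_i\otimes1\boxplus\Kb_i\otimes\Kb_i\otimes\Eb_i$, using associativity of $\boxplus$ and the homomorphism property of $\Phi$ with respect to form sums. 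Here I need $\Delta$ to land in $L_{\hbar}(B(\mbi))^{\bar\otimes2}$; this again follows from the generator formulas together with \eqref{EqAffForm}.

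\emph{Step 3: Haar weights under integrability.} If $\mbi$ is integrable, then $L_{\hbar}(B(\mbi))=\mfh\ltimes L_{\hbar}(\mfn(\mbi))$. The latter is a crossed product of a quantum torus (a Heisenberg-type von Neumann algebra), so it carries explicit nsf weights: the dual weight of the canonical trace-like weight on the quantum torus, twisted by suitable powers of $\Kb$ and of $\kb_\omega$. I would take $\varphi$ of the form $\tau(\Kb_\rho^{1/2}\,\cdot\,\Kb_\rho^{1/2})$, with $\rho$ a Weyl-type vector, and show left invariance by checking the stronger strong invariance $(\id\otimes\varphi)(\Delta(x)(y\otimes1))$ on a dense family of Fourier-type elements. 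Equivalently, I would verify that $\mathbb{W}$ is the Kustermans--Vaes multiplicative unitary for this GNS map. For right invariance I would use $\psi=\varphi\circ R$, where $R$ is an anti-automorphism sending $\Eb_i\mapsto \Kb_i^{-1}\Eb_i$ (implemented by reversing the word together with full exchangeability), or alternatively the analogous computation with the last letter.

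The main obstacle is invariance. The weights are defined on the $\mfn$-level, while the coproduct is only simply expressible one letter at a time via exchangeability. Integrability is precisely what guarantees that the weight lives on $L_{\hbar}(B(\mbi))$, and the hard part will be the analytic justification of the Gaussian and quantum exponential manipulations inside the GNS space.
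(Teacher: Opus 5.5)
Your proposal has a genuine gap where full exchangeability is supposed to produce the coproduct. You never specify the second-leg operators $\Yb_a$. Your route, transporting $\Delta$ along the equivalence $\Phi$, needs $\mathbb{W}$ to be intertwined by $\Phi\otimes\Phi$. You flag this but do not resolve it: ``defining $\Delta$ by one formula and computing $\Delta(\Eb_i)$ in the $\mbi'$-picture'' presupposes exactly that compatibility, so the step is circular. The missing idea is that one transports not the coproduct but, for each $i\in I$, a \emph{dual simple root operator} $\Fb_i$ as in Definition~\ref{DefHeisDualOp}. This forces you to enlarge to the Heisenberg double $L_{\hbar}(\mfd(\mbi))$, adjoining $\mfh'$ so that $\Lb_i$ exists. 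There $\Fb_i$ must strongly commute with the $\Kb_j$ and with the $\Eb_j$ for $j\neq i$, and must form a Heisenberg $\Lb_i$-pair with $\Eb_i$ at $d_i\hbar$. Exchangeability is used only to construct $\Fb_i$. In an equivalent word $\mbi'$ with $i'_1=i$ one takes $\Fb_i=\Lb_i\star\Xb_1^{-1}$, using Example~\ref{ExaCompHeis} with $\Eb_i=\Xb_1\boxplus\Yb$. One then pulls it back along $\Phi\otimes\id$, extended to the double via $\mfh'$-equivariance (Proposition~\ref{PropExchDual}). Now set $W=\bigl(\prod_{a}\overline{F}_{d_{i_a}\hbar}(\Xb_a\otimes\Fb_{i_a})\bigr)\Kc$ and $\Delta=W^*(1\otimes\cdot\,)W$. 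By the commutation conditions and Theorem~\ref{TheoHeisPent}, the factor at position $a$ adds exactly the summand $\Xb_a\otimes\Lb_i$ to $1\otimes\Eb_i$ when $i_a=i$, and acts trivially otherwise. This holds at \emph{every} position $a$, not only the first. The Gaussian factor $\Kc$ then turns $\Eb_i\otimes\Lb_i\boxplus 1\otimes\Eb_i$ into $\Eb_i\otimes 1\boxplus\Kb_i\otimes\Eb_i$. So your claim that the computation only works when $i$ sits first misplaces the role of position: it matters for writing down $\Fb_i$, not for computing $\Delta(\Eb_i)$. No compatibility of $W$ with $\Phi\otimes\Phi$ is ever needed.

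Step 3 also lacks a working mechanism. A twist of the trace by Cartan elements only, such as $\tau(\Kb_\rho^{1/2}\,\cdot\,\Kb_\rho^{1/2})$ or powers of $\kb_\omega$, cannot be the left Haar weight. Its modular group is $\Ad$ of Cartan elements and so fixes every $\Kb_i$. The left Haar weight is unique up to scalar, and (e.g.\ for $w_{\mbi}=w_0$) satisfies $\sigma^{\varphi}_t(\Kb_i)=e^{-2\pi\hbar t(d_i+|\hbar|^{-1})}\Kb_i$. Its density is $\qb(2(\theta_l+|\hbar|^{-1}\theta_l^{\vee}))$, where $\theta_l=\pi_{\mfb}(\rho)$ has a nonzero $\mfn$-component. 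Your fallback of identifying $\mathbb{W}$ with the Kustermans--Vaes multiplicative unitary is also unavailable. $W$ is not claimed to be multiplicative; the pentagon equation is only obtained under the stronger hypothesis of full dualizability (Section~\ref{SecFullDual}), which is not available in general. Checking strong invariance on Fourier-type elements would require $\Delta$ on elements for which you have no formula. The paper uses only \emph{modularity}:
\begin{itemize}
\item $W$ factors as in \eqref{EqWProdOneSide} into rank-one pieces that are modular (Lemma~\ref{LemModRank1}) and a trivially modular Gaussian, so $W$ is modular by Lemma~\ref{LemMod}.
\item $\mfb$ and $\mfb'$ are non-degenerately paired in $\mfd$ (Lemma~\ref{LemNonDegWord}).
\item Theorem~\ref{TheoLeftInvWeight} then yields a left invariant weight that is nsf on $L_{\hbar}(\mfb)$, without any multiplicativity.
\end{itemize}
Integrability enters exactly to identify $L_{\hbar}(B(\mbi))=L_{\hbar}(\mfb(\mbi))$. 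Right invariance comes from $\psi=\varphi\circ R$, with $R$ induced directly by the reflection $\varsigma$ on $\mfb$ ($i\mapsto -i$, $a\mapsto a-i_a$); no word reversal or exchangeability is needed. Finally, \cite{KV03}*{Proposition 3.1} upgrades this to the full invariance conditions.
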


We prove the first part of Theorem \ref{TheoMain} as Theorem \ref{TheoExchvNbi}, and the second part as Theorem \ref{TheoIntLCQG}. The further structure of $(L_{\hbar}(B(\mbi)),\Delta)$ will be elaborated on in Section \ref{Sec8}.

If $I$ is a Cartan datum of classical type, the combination of Theorem \ref{TheoClassicalIntFull} and Theorem  \ref{TheoMain} gives us the locally compact quantum group $(L_{\hbar}(B),\Delta)$ mentioned in Theorem \ref{TheoA}. 

Let us at the end of this section comment on the content of the second part of Theorem \ref{TheoA}, which is mostly included for motivational purposes and follows from the well-known algebraic theory. We hence present the argument rather briefly.

Fix $I$ a Cartan datum of Lie type. Recall then first of all that, for $q$ not a root of unity, $U_q(\mfn_{\C})$ is the universal algebra generated by elements $\{E_i\mid i\in I\}$, satisfying as universal relations the \emph{quantum Serre relations} (see e.g.\ \cite{Lus94b}). Then $U_q(\mfb_{\C})$ is generated by $U_q(\mfn_{\C})$ together with a copy of a Laurent algebra, generated by elements $\{K_i^{\pm1}\mid i\in I\}$ which skew-commute with the $E_i$. One makes $U_q(\mfb_{\C})$ into a Hopf algebra by defining 
\[
\Delta(K_i) = K_i \otimes K_i,\qquad \Delta(E_i) = E_i\otimes 1+ K_i\otimes E_i
\]
(i.e.\ the algebraic analogue of \eqref{EqCoprodForm}). Since $|q|=1$, we get moreover a Hopf $*$-algebra structure by putting $E_i^*= E_i$ and $K_i^*= K_i$. It then is convenient to write $U_q(\mfb)$ when referring to $U_q(\mfb_{\C})$ with this particular $*$-structure, as this encodes the choice of a real form $\mfb$ of $\mfb_{\C}$.

Now for each $i\in I$ one has a quotient homomorphism 
\[
\pi_i: U_q(\mfb) \rightarrow A_q,\qquad E_j \mapsto \delta_{ij} e,\quad K_j \mapsto k^{\langle i,j\rangle},
\]
where $A_q$ is the universal algebra
\[
A_q := \C\langle k^{\pm1},e\rangle/(ke = qek).
\]
If then $\mbi = (i_1,i_2,\ldots,i_N)$ is a reduced word over $I$ with $w_{\mbi} = w_0$, and if we denote by $\Delta^{(N)}: U_q(\mfb)\rightarrow U_q(\mfb)^{\otimes N}$ the $N-1$-fold iterated coproduct, we obtain a homomorphism
\[
\pi_{\mbi} = (\pi_{i_1}\otimes \ldots \otimes \pi_{i_N})\circ \Delta^{(N)}: U_q(\mfb) \rightarrow A_q^{\otimes N},
\]
the \emph{Feigin homomorphism}, which can be shown to be faithful \cite{Jos95} (the faithfulness holds as well in the scalar case, and with the Cartan part added). However, the precise form of $\pi_{\mbi}$ entails that 
\[
\pi_{\mbi}(K_i) = \underset{b\in J}{\otimes} k^{\langle i,i_b\rangle},\qquad \pi_{\mbi}(E_i) = \sum_{a \in J_i} X_a,\qquad i\in I,
\]
where 
\[
X_a = \left(\underset{b<a}{\otimes} k^{\langle i_a,i_b\rangle}\right) \otimes e \otimes \left(\underset{b>a}{\otimes}1\right) \in A_q^{\otimes N}.
\]
Comparing with \eqref{EqXVar} and \eqref{EqXVarCart}, it now easily follows that we can realize the second part of Theorem \ref{TheoA} by choosing $\Hsp = \Hsp(\mbi)$ and picking $\msD = \odot_J \mcE$, with $\mcE$ as in \eqref{EqDenseSubsp}.

\begin{Rem}
Note that in this algebraic setting, the $K_i,X_a$ satisfy the (universal) commutation relations
\begin{equation}\label{EqAlgebraicCommGen}
K_iK_j = K_jK_i,\qquad X_b X_a = q^{\langle i_a,i_b\rangle}X_aX_b,\qquad b>a,\qquad K_iX_a = q^{\langle i,i_a\rangle} X_aK_i,\qquad i,j\in I,a,b\in J. 
\end{equation}
\end{Rem}

\section{Quantized Heisenberg commutation}\label{SecQHC}

\begin{Def}
Let $\hbar\in\R$. Let $\Ab,\Bb$ be positive operators on a Hilbert space. We say that $\Ab,\Bb$ \emph{$\hbar$-commute}, or that $(\Ab,\Bb)$ forms an $\hbar$-commuting pair, if
\begin{equation}\label{EqSkewComm}
\Ab^{it} \Bb^{is} = e^{2\pi i  \hbar st}\Bb^{is}\Ab^{it},\qquad \forall s,t\in \R. 
\end{equation}
\end{Def}
Algebraically, the $\hbar$-commutation between $\Ab$ and $\Bb$ corresponds to 
\begin{equation}
AB = q^2BA,\qquad q = e^{-\pi i \hbar},
\end{equation}
which however contains strictly less information than \eqref{EqSkewComm}, as it only recovers $\hbar$ up to an integer. On the other hand, \eqref{EqSkewComm} can hold for at most one $\hbar$, which we then write $\hbar = \hbar(\Ab,\Bb)$. If we want to let $\hbar$ be unspecified, we say that $\Ab,\Bb$ \emph{skew-commute}. If $\hbar(\Ab,\Bb) = 0$, this simply means that $\Ab,\Bb$ strongly commute. If $\hbar(\Ab,\Bb)\neq0$, we say that $\Ab,\Bb$ \emph{strictly} skew-commute.

The Weyl commutation relation \eqref{EqSkewComm} is viewed as an integrated version of the \emph{Heisenberg commutation relation} on the infinitesimal level: if $\mba = \ln(\Ab)$ and $\mbb = \ln(\Bb)$, then \eqref{EqSkewComm} corresponds algebraically to\footnote{We apologize for our mismatch by $2\pi$ of $\hbar$ with respect to the physicist's convention!}
\begin{equation}\label{EqOrdinHeis}
ab - ba = -2\pi i \hbar.
\end{equation}
With care, such commutation relation can also be formulated on the level of the unbounded selfadjoint operators $\mba,\mbb$, but is in general much harder to manipulate algebraically than \eqref{EqSkewComm}. 

Below, we consider similarly an integrated version for  \emph{quantized} Heisenberg relations. We first recall the following notation:  For any pair of skew-commuting positive operators $\Ab,\Bb$ on a Hilbert space $\Hsp$, one defines their \emph{balanced product} $\Ab\star\Bb$ as the (positive) generator of the unitary one-parametergroup 
\begin{equation}\label{EqStarOnePar}
(\Ab \star \Bb)^{it} := e^{-\pi i \hbar t^2} \Ab^{it} \Bb^{it},\qquad t\in \R, \hbar = \hbar(\Ab,\Bb). 
\end{equation}
Note that this operation is commutative: $\Ab\star\Bb = \Bb\star \Ab$. In the algebraic analogue, it corresponds to 
\[
AB = q^2BA \quad \Rightarrow \quad A \star B = q^{-1}AB = qBA,\qquad q = e^{-\pi i \hbar}. 
\]
For further use, we observe the following two facts: 
\begin{Lem}
If $\Ab,\Bb$ are positive operators affiliated to a von Neumann algebra $M \subseteq \Bc(\Hsp)$, then  $\Ab\star\Bb$ is affiliated to $M$. Moreover, if $\Phi: M \rightarrow N$ is a unital normal $*$-homomorphism into another von Neumann algebra $N$, we have 
\[
\Phi(\Ab\star \Bb) = \Phi(\Ab)\star\Phi(\Bb). 
\]
\end{Lem}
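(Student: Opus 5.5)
Both claims follow from the defining formula \eqref{EqStarOnePar}: $\Ab\star\Bb$ is, by Stone's theorem, the unique positive operator whose imaginary powers form the unitary one-parameter group
\[
u_t := e^{-\pi i \hbar t^2}\Ab^{it}\Bb^{it},\qquad \hbar = \hbar(\Ab,\Bb).
\]
So it is enough to work at the level of these unitaries.

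For affiliation, we use the characterisation \eqref{EqAffOnePar}. Since $\Ab,\Bb \aff M$, we have $\Ab^{it},\Bb^{it}\in M$ for all $t\in\R$. Hence $u_t = (\Ab\star\Bb)^{it}$ is a scalar multiple of a product of elements of $M$, so it lies in $M$. By \eqref{EqAffOnePar} again, $\Ab\star\Bb\aff M$. The only implicit point is that the positive generator has zero kernel, which is part of the convention that generators of strongly continuous unitary groups of this type are positive operators in the paper's sense.

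For the homomorphism property, we first recall (e.g.\ \cite{DCSSV25}*{Lemma F.1}, which is already used in the preliminaries) how a unital normal $*$-homomorphism $\Phi$ acts on affiliated positive operators. For positive $\Tb\aff M$, the operator $\Phi(\Tb)$ is the positive operator with $\Phi(\Tb)^{it} = \Phi(\Tb^{it})$. Normality of $\Phi$ ensures that $t\mapsto \Phi(\Tb^{it})$ is a strongly continuous unitary group, and unitality ensures that the generator is again nonsingular.

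With this, the argument has three steps.

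\begin{enumerate}
\item Applying $\Phi$ to the $\hbar$-commutation relation \eqref{EqSkewComm} shows that $\Phi(\Ab)$ and $\Phi(\Bb)$ also $\hbar$-commute. This holds because $\Phi$ is multiplicative and preserves scalars.
\item When $\hbar\neq 0$, the relation forces $\Phi(\Ab),\Phi(\Bb)$ to strictly skew-commute with the same $\hbar$. When $\hbar = 0$, both operators strongly commute. In either case the balanced product $\Phi(\Ab)\star\Phi(\Bb)$ is defined using the same constant $\hbar$.
\item We then compute
\[
\Phi(\Ab\star\Bb)^{it} = \Phi\bigl(e^{-\pi i\hbar t^2}\Ab^{it}\Bb^{it}\bigr) = e^{-\pi i \hbar t^2}\Phi(\Ab)^{it}\Phi(\Bb)^{it} = (\Phi(\Ab)\star\Phi(\Bb))^{it}.
\]
Since a positive operator is determined by its one-parameter group of imaginary powers, the two operators are equal.
\end{enumerate}

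The only step that needs care is the identification $\Phi(\Tb)^{it}=\Phi(\Tb^{it})$, i.e.\ that $\Phi$ is compatible with the functional calculus for affiliated positive operators. Here I would simply invoke the cited lemma of \cite{DCSSV25}.

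If a direct argument is wanted instead, one can reduce to the standard case. A normal unital $*$-homomorphism decomposes as an amplification, followed by reduction by a projection in the commutant, followed by a spatial isomorphism. Functional calculus commutes with each of these three operations.
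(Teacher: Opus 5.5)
Your proof is correct and is essentially the paper's argument, which notes that the claim follows directly from the defining formula \eqref{EqStarOnePar} for $(\Ab\star\Bb)^{it}$ and the characterisation \eqref{EqAffOnePar} of affiliation, with the identity $\Phi(\Tb)^{it} = \Phi(\Tb^{it})$ handling the statement about $\Phi$. One small correction: unitality of $\Phi$ is what makes $t\mapsto \Phi(\Tb^{it})$ a unitary group on the whole space (it gives $\Phi(\Tb^{i0}) = 1$); nonsingularity of the resulting positive generator is automatic and does not need unitality.
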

This follows immediately from the defining property \eqref{EqStarOnePar} and the observation \eqref{EqAffOnePar}.

We also have the following distributivity law.
\begin{Lem}\label{LemDistr}
Assume $\Ab,\Bb,\Cb$ are positive operators on a Hilbert space, with $\Bb,\Cb$ form-summable, with $\Ab,\Bb$ skew-commuting and with $\Ab,\Cb$ skew-commuting. Assume that $\hbar(\Ab,\Bb) = \hbar(\Ab,\Cb) = \hbar\in \R$. Then $\Ab$ and $\Bb\boxplus \Cb$ $\hbar$-commute, while  $\Ab\star\Bb$ and $\Ab\star \Cb$ are form-summable, and we have the identity 
\begin{equation}\label{EqCompDist}
\Ab\star(\Bb\boxplus \Cb) = \Ab\star\Bb\boxplus \Ab\star \Cb. 
\end{equation}
\end{Lem}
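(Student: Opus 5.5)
The plan is to treat the two assertions separately. The $\hbar$-commutation of $\Ab$ with $\Bb\boxplus\Cb$ follows from the conjugation covariance \eqref{EqPropConjFormSum}. The distributivity identity \eqref{EqCompDist} follows from a form-level description of the balanced product, namely $\Ab\star\Bb = \Ab^{1/2}\Bb\Ab^{1/2}$ in the sense of quadratic forms.

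\textbf{Step 1 ($\hbar$-commutation).} Rewrite \eqref{EqSkewComm} as
\[
\Ab^{it}\Bb^{is}\Ab^{-it} = e^{2\pi i\hbar st}\Bb^{is} = (e^{2\pi\hbar t}\Bb)^{is}.
\]
This gives $\Ad_{\Ab^{it}}(\Bb) = e^{2\pi\hbar t}\Bb$, and likewise $\Ad_{\Ab^{it}}(\Cb) = e^{2\pi\hbar t}\Cb$. By \eqref{EqPropConjFormSum}, and since $\lambda\Bb\boxplus\lambda\Cb = \lambda(\Bb\boxplus\Cb)$ for $\lambda>0$ (which is immediate from the definition), we get $\Ad_{\Ab^{it}}(\Bb\boxplus\Cb) = e^{2\pi\hbar t}(\Bb\boxplus\Cb)$. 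Reversing the computation shows that $\Ab$ and $\Bb\boxplus\Cb$ $\hbar$-commute.

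\textbf{Step 2 (key lemma).} For any $\hbar$-commuting positive pair $(\Ab,\Bb)$ I would prove
\[
\msD((\Ab\star\Bb)^{1/2}) = \{\xi\in\msD(\Ab^{1/2})\mid \Ab^{1/2}\xi\in\msD(\Bb^{1/2})\}
\quad\textrm{and}\quad
\|(\Ab\star\Bb)^{1/2}\xi\| = \|\Bb^{1/2}\Ab^{1/2}\xi\|.
\]
Equivalently, $\Ab\star\Bb = T^*T$ with $T$ the closed operator $\Bb^{1/2}\Ab^{1/2}$; it is closed as a product of a closed operator with an operator with bounded inverse on the range, but closedness should be checked. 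Both sides transform covariantly under unitary equivalence.
\begin{itemize}
\item If $\hbar=0$, the claim is joint functional calculus for strongly commuting operators.
\item If $\hbar\neq 0$, the Stone--von Neumann theorem (with zero kernels) reduces to $\Hsp = L^2(\R)\otimes\mcK$ with $\Ab,\Bb$ realized as $\kb,\eb$-type exponentials of position and momentum.
\end{itemize}
In that model I would verify directly that $T^*T$ has unitary group $e^{-\pi i\hbar t^2}\Ab^{it}\Bb^{it}$. Concretely, both are exponentials of the same linear combination of position and momentum, so the check is a Gaussian/Fourier computation.

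\textbf{Step 3 (distributivity).} Let $\Sb = \Bb\boxplus\Cb$, so $\Ab$ and $\Sb$ $\hbar$-commute by Step 1. Let $\xi\in\msD(\Ab^{1/2})$ with $\Ab^{1/2}\xi\in\msD(\Bb^{1/2})\cap\msD(\Cb^{1/2}) = \msD(\Sb^{1/2})$. Step 2, applied to the three pairs $(\Ab,\Bb)$, $(\Ab,\Cb)$, $(\Ab,\Sb)$, gives
\[
\|(\Ab\star\Sb)^{1/2}\xi\|^2 = \|\Sb^{1/2}\Ab^{1/2}\xi\|^2 = \|(\Ab\star\Bb)^{1/2}\xi\|^2 + \|(\Ab\star\Cb)^{1/2}\xi\|^2.
\]
By Step 2 the set of such $\xi$ is exactly $\msD((\Ab\star\Bb)^{1/2})\cap\msD((\Ab\star\Cb)^{1/2})$, and also exactly $\msD((\Ab\star\Sb)^{1/2})$. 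So the two closed quadratic forms coincide on the same domain, which yields \eqref{EqCompDist}. Form-summability of $\Ab\star\Bb$ and $\Ab\star\Cb$ then comes for free: their common form domain equals the domain of the positive operator $(\Ab\star\Sb)^{1/2}$, and $\Ab\star\Sb$ is densely defined.

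I expect the main obstacle to be Step 2: a rigorous identification of the balanced product, defined only through its unitary group, with the form $\xi\mapsto\|\Bb^{1/2}\Ab^{1/2}\xi\|^2$ including exact domains. The Stone--von Neumann reduction handles this, but it must be done with multiplicity and with the sign of $\hbar$ tracked.

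If a more intrinsic route is preferred, a fallback is to smooth with $f(\Ab) = \exp(-(\ln\Ab)^2)$. This is an integral of the $\Ab^{it}$ against a Gaussian, so it preserves $\msD(\Bb^{1/2})$ despite the exponential growth $e^{\pi\hbar t}$ of $\Ad_{\Ab^{it}}$ on $\Bb^{1/2}$. This produces form cores on which $\Ab^{it}\Bb^{it}$ can be analytically continued to $t=-i/2$.
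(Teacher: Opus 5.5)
Your Step 1 is the paper's argument. Your overall mechanism is also the paper's: combine $\|(\Ab\star\Bb)^{1/2}\xi\| = \|\Bb^{1/2}\Ab^{1/2}\xi\|$ with additivity of squared norms under $\boxplus$. The gap is that the key lemma of Step 2 is false as stated. The natural domain $\{\xi\in\msD(\Ab^{1/2})\mid \Ab^{1/2}\xi\in\msD(\Bb^{1/2})\}$ of the composition $\Bb^{1/2}\Ab^{1/2}$ is in general strictly smaller than $\msD((\Ab\star\Bb)^{1/2})$. So the composition is not closed, and the criterion you invoke does not apply, since $\Ab^{1/2}$ has no bounded inverse.

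For $\hbar=0$, take $\Ab,\Bb$ to be multiplication by $e^{x}$ and $e^{-x}$ on $L^2(\R)$. Then $\Ab\star\Bb=1$, whereas your domain is $\msD(\Ab^{1/2})\neq L^2(\R)$. For $\hbar\neq 0$, suppose $\msD((\Ab\star\Bb)^{1/2})\subseteq \msD(\Ab^{1/2})$. The closed graph theorem then gives $\|\Ab^{1/2}\xi\|\leq C(\|\xi\|+\|(\Ab\star\Bb)^{1/2}\xi\|)$. Replacing $\xi$ by $\Ab^{-it}\xi$ leaves the first two norms unchanged and multiplies the last by $e^{\pi\hbar t}$. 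Letting $\hbar t\to-\infty$ forces $\Ab^{1/2}$ to be bounded. This is impossible, because $\Ab$ is unitarily equivalent to all its positive multiples. What is true is weaker: $\Bb^{1/2}\Ab^{1/2}$ is closable, its natural domain is a \emph{core} for $(\Ab\star\Bb)^{1/2}$, and the norms agree there.

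Step 3 therefore fails at the sentence ``the set of such $\xi$ is exactly\ldots''. With the corrected lemma, $\mathcal{V}=\{\xi\in\msD(\Ab^{1/2})\mid \Ab^{1/2}\xi\in\msD(\Sb^{1/2})\}$ is a core for $(\Ab\star\Sb)^{1/2}$. The form $\|(\Ab\star\Bb)^{1/2}\cdot\|^2+\|(\Ab\star\Cb)^{1/2}\cdot\|^2$ is closed, so closing your norm identity on $\mathcal{V}$ gives one half: $\msD((\Ab\star\Sb)^{1/2})\subseteq \msD((\Ab\star\Bb)^{1/2})\cap\msD((\Ab\star\Cb)^{1/2})$ with equal norms, and hence form-summability. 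The reverse inclusion needs $\mathcal{V}$ to be a core for $(\Ab\star\Bb)^{1/2}$ and $(\Ab\star\Cb)^{1/2}$ \emph{simultaneously}, i.e.\ in the graph norm of their sum. Being a core for each separately does not give this.

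The paper's smoothing supplies exactly this. The map $\xi\mapsto \xi_\lambda=f_\lambda(\Ab^{1/2})\xi$, with $f_\lambda(x)=e^{-\lambda\ln(x)^2}$, depends only on $\Ab$. It maps into the relevant natural domains. It converges in the graph norm of $\Db^{1/2}$ for every $\Db$ that $\hbar$-commutes with $\Ab$, at once (Lemmas \ref{LemApp1} and \ref{LemApp2}). The identity on the $\xi_\lambda$ then yields both \eqref{EqEqualFormDist} and \eqref{EqEqualFormDist2}. Your fallback smoothing is this device, but it must become the mechanism of Step 3, not an alternative proof of the false Step 2.

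Alternatively, you can get the reverse inclusion without simultaneous cores. Rerun the one-sided argument with $\Ab^{-1}$ and the pair $\Ab\star\Bb$, $\Ab\star\Cb$. Then transport the resulting form inclusion back with $\Ab\star(\cdot)$, which preserves form inclusions by the core version of Step 2.
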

Note that this generalizes \cite{DCSSV25}*{Lemma 1.17} to the case where $\Bb,\Cb$ themselves do not necessarily skew-commute. The proof will be provided in Appendix \ref{SecLemm}. In fact, we will only need the version as in \cite{DCSSV25}*{Lemma 1.17}, but it seems useful to have the more general identity \eqref{EqCompDist} available for future purposes.

Fix now $\hbar \in \R^{\times}$. Recall that one can construct a particular continuous function
\begin{equation}
F_{\hbar}: [0,\infty) \rightarrow \C,
\end{equation}
called \emph{quantum exponential function} \cite{Wor00}, such that, for some real constant $\chi_{\hbar}$, 
\begin{equation}\label{EqPropsF}
F_{\hbar}(0) = 1,\qquad F_{-\hbar} = \overline{F}_{\hbar},\qquad |F_{\hbar}(t)|=1,\qquad F_{\hbar}(t^{-1})F_{\hbar}(t) = e^{-2i\chi_{\hbar}}e^{-i(\ln(t))^2/4\pi\hbar},\qquad \forall t\in \R_{>0},
\end{equation}
and such that the following two conditions hold for $\hbar$-commuting positive operators $\Ab,\Bb$: 
\begin{equation}\label{EqQuantExp}
F_{\hbar}(\Ab\boxplus \Bb) = F_{\hbar}(\Ab)F_{\hbar}(\Bb),
\end{equation}
\begin{equation}\label{EqPentKash}
F_{\hbar}(\Bb)F_{\hbar}(\Ab) = F_{\hbar}(\Ab)F_{\hbar}(\Ab\star\Bb)F_{\hbar}(\Bb)
\end{equation}
(see e.g.\ \cite{DCSSV25}*{Theorem 1.11, Theorem 1.16 and Appendix A} for more on these identities.) 
One refers to \eqref{EqQuantExp} as the \emph{(Woronowicz) quantum exponential property} and to \eqref{EqPentKash} as the \emph{(Kashaev) pentagon equation}. We recall as well the closely related identities
\begin{equation}\label{EqFormSumId}
\Ab\boxplus \Bb = F_{\hbar}(\Ab\star\Bb^{-1})^*\Bb F_{\hbar}(\Ab\star\Bb^{-1}) = F_{\hbar}(\Ab^{-1}\star \Bb)\Ab F_{\hbar}(\Ab^{-1}\star \Bb)^*,
\end{equation}
or equivalently, since also $(\Ab\star\Bb,\Bb)$ and $(\Ab,\Ab\star\Bb)$ are $\hbar$-commuting pairs,
\begin{equation}\label{EqFormSumIdAlt}
F_{\hbar}(\Ab)^*\Bb F_{\hbar}(\Ab) = \Ab\star \Bb\boxplus \Bb,\qquad F_{\hbar}(\Bb)\Ab F_{\hbar}(\Bb)^* = \Ab\boxplus \Ab\star\Bb,
\end{equation}
see \cite{DCSSV25}*{Proposition 1.14}.
We refer to e.g.\ \cite{DCSSV25}*{Appendix A and Appendix B} for some history and further properties of the function $F_{\hbar}$.  We will also need the following property: if $\Ab,\Bb$ $2\hbar$-commute (so algebraically $AB = q^4BA$), then 
\begin{equation}\label{EqPentTypeB}
F_{\hbar}(\Bb)F_{2\hbar}(\Ab) = F_{2\hbar}(\Ab)F_{\hbar}(\Ab\star\Bb) F_{2\hbar}(\Ab\star\Bb^2) F_{\hbar}(\Bb).
\end{equation}
This identity will be proven in Appendix \ref{SecLemmTypeB}.  

The following definition and theorem formalizes \cite{Ip15b}*{Proposition 3.1}

\begin{Def}\label{DefHeisComm}
Let $\Eb,\Fb$ be positive operators on a Hilbert space $\Hsp$. 

We say that $\Eb$ and $\Fb$ \emph{quantum Heisenberg commute} at $\hbar$ if there exists a positive operator $\Lb$ on $\Hsp$ such that $(\Lb,\Fb)$ and $(\Eb,\Lb)$ are $\hbar$-commuting, and such that \begin{equation}\label{EqPentagon}
F_{\hbar}(s\Fb)F_{\hbar}(t\Eb) = F_{\hbar}(t\Eb)F_{\hbar}(st\Lb)F_{\hbar}(s\Fb),\qquad \forall s,t\geq0.
\end{equation}

We then also say that $\Eb$ and $\Fb$ \emph{Heisenberg $\Lb$-commute} at $\hbar$, or that $(\Eb,\Fb)$ form a \emph{Heisenberg $\Lb$-pair} at $\hbar$. 
\end{Def}

\begin{Theorem}\label{TheoHeisPent}
Assume $\Lb,\Eb,\Fb$ are positive operators such that $(\Lb,\Fb)$ and $(\Eb,\Lb)$ are $\hbar$-commuting. Then the following are equivalent: 
\begin{enumerate}
\item $(\Eb,\Fb)$ form a Heisenberg $\Lb$-pair at $\hbar$.
\item We have
\begin{equation}\label{EqPentagonOr}
F_{\hbar}(\Eb)^*\Fb F_{\hbar}(\Eb) = \Lb \boxplus \Fb.
\end{equation}
\item We have
\begin{equation}\label{EqPentagonOr2}
F_{\hbar}(\Fb)\Eb F_{\hbar}(\Fb)^* = \Lb \boxplus \Eb.
\end{equation}
\end{enumerate}
\end{Theorem}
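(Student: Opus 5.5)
The plan is to translate the pentagon identity \eqref{EqPentagon} into an equality of unitary functional calculi, using the Woronowicz quantum exponential property \eqref{EqQuantExp}. The identities \eqref{EqPentagonOr} and \eqref{EqPentagonOr2} are then read off through a uniqueness lemma for positive operators.

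\emph{Uniqueness lemma.} If $\Xb,\Yb$ are positive operators with $F_\hbar(s\Xb) = F_\hbar(s\Yb)$ for all $s\geq 0$, then $\Xb=\Yb$. The proof has two steps.
\begin{enumerate}
\item First show that the continuous map $\iota\colon(0,\infty)\to \mathbb{T}^{[0,\infty)}$, $x\mapsto (F_\hbar(sx))_{s\geq0}$, is injective. Indeed, if $\iota(x)=\iota(y)$ with $x\neq y$, then $F_\hbar(\lambda r) = F_\hbar(r)$ for all $r\geq 0$, with $\lambda = y/x\neq 1$. Iterating towards $0$ and using continuity together with $F_\hbar(0)=1$ forces $F_\hbar\equiv 1$. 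This contradicts \eqref{EqPropsF}, since $F_\hbar(t^{-1})F_\hbar(t)$ has non-constant phase $e^{-i(\ln t)^2/4\pi\hbar}$.
\item Restrict to a countable dense set of $s$, so that $\iota$ becomes an injective continuous map into a Polish space. It then has a Borel inverse $g$ on its image. The joint functional calculus of the commuting unitaries $F_\hbar(s\Xb)$ then gives $\Xb = g\bigl((F_\hbar(s\Xb))_s\bigr) = g\bigl((F_\hbar(s\Yb))_s\bigr) = \Yb$.
\end{enumerate}
I expect this measure-theoretic step to be the main technical point. Alternatively, it may be quotable from \cite{DCSSV25}.

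\emph{(1)$\Rightarrow$(2) and (1)$\Rightarrow$(3).} Conjugate \eqref{EqPentagon} by $F_\hbar(t\Eb)$:
\[
F_\hbar\bigl(s\,F_\hbar(t\Eb)^*\Fb F_\hbar(t\Eb)\bigr) = F_\hbar(t\Eb)^*F_\hbar(s\Fb)F_\hbar(t\Eb) = F_\hbar(st\Lb)F_\hbar(s\Fb).
\]
Since $(\Lb,\Fb)$ $\hbar$-commute, so do $(st\Lb,s\Fb)$, and \eqref{EqQuantExp} rewrites the right-hand side as $F_\hbar\bigl(s(t\Lb\boxplus\Fb)\bigr)$. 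The uniqueness lemma then gives
\[
F_\hbar(t\Eb)^*\Fb F_\hbar(t\Eb) = t\Lb\boxplus \Fb,
\]
and taking $t=1$ yields (2). Symmetrically, conjugate \eqref{EqPentagon} by $F_\hbar(s\Fb)^*$ and use that $(t\Eb, st\Lb)$ $\hbar$-commute. This gives $F_\hbar(s\Fb)\,t\Eb\, F_\hbar(s\Fb)^* = t\Eb\boxplus st\Lb$, hence (3).

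\emph{(2)$\Rightarrow$(1) and (3)$\Rightarrow$(1).} Here the identity is only given at one scale, so we first recover all scales using the unitaries $\Lb^{iu}$. By the $\hbar$-commutation relations, $\Ad_{\Lb^{iu}}$ fixes $\Lb$, multiplies $\Eb$ by $c=e^{2\pi\hbar u}$, and multiplies $\Fb$ by $c^{-1}$. Conjugating \eqref{EqPentagonOr} by $\Lb^{iu}$ and using \eqref{EqPropConjFormSum} gives
\[
F_\hbar(c\Eb)^*\,c^{-1}\Fb\, F_\hbar(c\Eb) = \Lb\boxplus c^{-1}\Fb.
\]
Multiplying by $c$, which commutes with form sums, gives $F_\hbar(c\Eb)^*\Fb F_\hbar(c\Eb) = c\Lb\boxplus\Fb$. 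Since $\hbar\neq0$, $c$ ranges over all of $(0,\infty)$, and the case $c=0$ is trivial.

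Scaling further by $s>0$ and applying $F_\hbar$ reverses the chain of equalities above. Using \eqref{EqQuantExp} once more, this yields \eqref{EqPentagon} for all $s,t\geq0$. The implication (3)$\Rightarrow$(1) is entirely analogous, again conjugating by $\Lb^{iu}$ to obtain all scales.
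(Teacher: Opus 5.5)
Your proposal is correct and follows the paper's proof: (2)/(3)$\Rightarrow$(1) by conjugating with $\Lb^{iu}$ to get all scales and then applying \eqref{EqQuantExp}, and (1)$\Rightarrow$(2)/(3) by conjugating \eqref{EqPentagon} and using that a positive operator is determined by the family $s\mapsto F_{\hbar}(s\,\cdot)$. The only difference is that the paper cites (the proof of) \cite{DCSSV25}*{Lemma 4.16} for this uniqueness step, while you prove it via injectivity of $x\mapsto (F_{\hbar}(sx))_s$ and a Borel inverse, which is a valid self-contained substitute; also note that $\Ad_{\Lb^{iu}}$ actually scales $\Eb$ by $e^{-2\pi\hbar u}$ rather than $e^{2\pi\hbar u}$, which is harmless since $u$ ranges over $\R$.
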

\begin{proof}
If \eqref{EqPentagonOr} holds, then also 
\[
F_{\hbar}(\Eb)^*s\Fb F_{\hbar}(\Eb) = s\Lb \boxplus s\Fb,\qquad \forall s\in \R_{\geq0}. 
\]
Conjugating with $\Lb^{it}$ for $t\in \R$, we get 
\[
F_{\hbar}(e^{-2\pi \hbar t}\Eb)^*se^{2\pi \hbar t}\Fb F_{\hbar}(e^{-2\pi \hbar t}\Eb) = s\Lb \boxplus se^{2\pi  \hbar t}\Fb,\qquad \forall s\in \R_{\geq0},
\]
so
\[
F_{\hbar}(t\Eb)^*s\Fb F_{\hbar}(t\Eb) = st \Lb \boxplus s\Fb,\qquad s,t\in \R_{\geq0}. 
\]
Applying the quantum exponential property, we find \eqref{EqPentagon}.

Similarly, we derive \eqref{EqPentagon} from \eqref{EqPentagonOr2}. 

Now conversely assume \eqref{EqPentagon} holds. By functional calculus and the quantum exponential property, we deduce
\[
F_{\hbar}(s F_{\hbar}(\Eb)^*\Fb F_{\hbar}(\Eb)) = F_{\hbar}(\Eb)^*F_{\hbar}(s\Fb)F_{\hbar}(\Eb) = F_{\hbar}(s(\Lb \boxplus \Fb)).
\]
But if $\Ab,\Bb$ are two positive operators with $F_{\hbar}(s\Ab) = F_{\hbar}(s\Bb)$ for all $s \geq0$, then $\Ab = \Bb$, cf. (the proof of) \cite{DCSSV25}*{Lemma 4.16}. Hence we find \eqref{EqPentagonOr}. The relation \eqref{EqPentagonOr2} is derived in a similar way.
\end{proof}

\begin{Rem}
Similarly using \cite{DCSSV25}*{Lemma 4.16}, we see that $\Lb$ in Definition \ref{DefHeisComm} is uniquely determined by $\Eb,\Fb$. 
\end{Rem}

\begin{Rem}
From e.g.\ \cite{Wor00}*{Equation (1.37)}, one sees that $F_{\hbar}$ is differentiable at $0$ for $\hbar$ small, with 
\[
F_{\hbar}'(0) = (e^{\pi i \hbar} -e^{-\pi i \hbar})^{-1} = (q^{-1}-q)^{-1}. 
\]
Differentiating \eqref{EqPentagon} at $s=0$ and $t=0$, we find the following infinitesimal equivalent of \eqref{EqPentagon}:  
\[
EF - FE = (q-q^{-1})L.
\]
This is a direct quantization of the usual Heisenberg commutation relation \eqref{EqOrdinHeis} (with $L$ playing the role of a `non-commutative $\hbar$'). 
\end{Rem}

\begin{Exa}\label{ExaCompHeis}
Assume that $\Ab,\Bb,\Lb$ are positive operators such that $(\Ab,\Bb)$, $(\Bb,\Lb)$ and $(\Lb,\Ab)$ form $\hbar$-commuting pairs. Put
\[
\Fb = \Ab,\qquad \Eb = \Lb\star \Ab^{-1} \boxplus \Lb \star \Bb. 
\]
Then $(\Eb,\Fb)$ form a Heisenberg $\Lb$-pair at $\hbar$. Indeed, clearly $(\Lb,\Fb)$ and $(\Eb,\Lb)$ are $\hbar$-commuting pairs. We further compute, using the quantum exponential property and the fact that $\Lb\star\Bb$ and $\Fb = \Ab$  strongly commute,
\begin{eqnarray*}
F_{\hbar}(\Eb)^*\Fb F_{\hbar}(\Eb) &=& 
F_{\hbar}(\Lb\star \Ab^{-1})^*F_{\hbar}(\Lb\star \Bb)^*\Fb F_{\hbar}(\Lb\star \Bb)F_{\hbar}(\Lb\star \Ab^{-1}) \\
&=& F_{\hbar}(\Lb\star \Ab^{-1})^*\Fb F_{\hbar}(\Lb\star \Ab^{-1}) \\
&\underset{\eqref{EqFormSumId}}{=}&\Lb \boxplus \Fb,
\end{eqnarray*}
so \eqref{EqPentagonOr} is satisfied.
\end{Exa}

\section{Quantum Borel von Neumann Algebras}\label{SecQuantBorel}

For the remainder of this section, we fix a deformation parameter $\hbar \in \R^{\times}$. We consider the constructions of Section \ref{SecMainTheo} in a more abstract framework. 

Let $V= (V,\epsilon)$ be a finite-dimensional skew-symmetric real vector space, i.e.\ $V$ is equipped with a skew-symmetric bilinear form
\[
\epsilon: V\times V \rightarrow \R.
\]
Recall that one can associate to $V$ a von Neumann algebra 
\begin{equation}\label{EqAssvN}
L_{\hbar}(V) := \{\qb(v)\mid v\in V\}'' \subseteq \Bc(L^2(V)),
\end{equation}
where, for $v\in V$, the positive operator $\qb(v)$ on $L^2(V)$ is defined via 
\begin{equation}
(\qb(v)^{it}g)(w) = e^{\pi i \hbar t \epsilon(v,w)}g(w-tv),\qquad g\in L^2(V),w\in V, t\in\R.
\end{equation}
\begin{Rem} 
Momentarily allowing $\hbar=0$, we indeed get that $L_{0}(V) = L(V)$, the group von Neumann algebra of $V$, with $\qb(v)^{it} = \lambda_{tv}$ for $v\in V$ and $t\in \R$.  
\end{Rem}

For any $v,w\in V$ we have that $\qb(v)$ and $\qb(w)$ form a $\hbar \epsilon(v,w)$-commuting pair, and 
\[
\qb(v+w) = \qb(v)\star\qb(w),\qquad v,w\in V.
\] 
We further write
\begin{equation}\label{EqNotationGen}
\vb = (-\pi i \hbar)^{-1}\ln(\qb(v)),\qquad v\in V,
\end{equation}
so $\vb$ is a skew-hermitian operator  affiliated with $L_{\hbar}(V)$. Algebraically/formally, we can then interpret
\[
q(v) = q^v =e^{-\pi i \hbar v},\qquad v\in V,
\]
with 
\[
q(v)q(w) = q^{2\epsilon(v,w)} q(w)q(v),\qquad v,w\in V.
\]

One easily shows that if $\{e_i\mid i\in I\}$ is  a basis of $V$, then  
\[
L_{\hbar}(V) = \{\qb(e_i)\mid i \in I\}'',
\]
and that if $W$ is another skew-symmetric space with basis $\{f_i \mid i \in I\}$ such that 
\[
\epsilon_V(e_i,e_j) = \epsilon_W(f_i,f_j),\qquad \forall i,j,
\]
there exists a unique $*$-isomorphism
\[
\Psi:L_{\hbar}(V) \rightarrow L_{\hbar}(W),\qquad \qb(e_i)\mapsto \qb(f_i)
\]
(one implements $\Psi = \Ad_U$, with $U:L^2(V) \rightarrow L^2(W)$ the unitary implementing the underlying vector space isomorphism).

Moreover, for any inclusion of skew-symmetric spaces $V \subseteq W$, one has a normal embedding of von Neumann algebras
\begin{equation}\label{EqVonNeum}
L_{\hbar}(V) \hookrightarrow L_{\hbar}(W),\qquad \qb(v) \mapsto \qb(v),\qquad v\in V,
\end{equation}
and we will usually simply interpret this as an actual inclusion $L_{\hbar}(V)\subseteq L_{\hbar}(W)$ (cf. \cite{DCSSV25}*{Lemma 4.5}).

We also note the following easy observation.

\begin{Lem}\label{LemDirSumSkew}
Let $W$ be a skew-symmetric space, and assume 
\begin{equation}\label{EqSkewDecompTriv}
W = V\oplus Z,\qquad \epsilon_{\mid Z\times Z} = 0.
\end{equation}
Then there exists a unique $*$-isomorphism 
\begin{equation}
Z \ltimes_{\gamma} L_{\hbar}(V)\cong L_{\hbar}(W),\quad \qb(v) \mapsto \qb(v),\quad \qb(z) \mapsto \qb(z),\qquad v\in V,z\in Z,
\end{equation}
where the action $\gamma$ of $Z$ on $L_{\hbar}(V)$ is uniquely determined through
\[
\gamma_z(\qb(v)) =e^{-2\pi \hbar \epsilon(v,z)}\qb(v),\qquad z\in Z,v\in V.
\]
\end{Lem}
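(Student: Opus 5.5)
We will prove Lemma \ref{LemDirSumSkew} by realizing both sides concretely on the same Hilbert space. The decomposition \eqref{EqSkewDecompTriv} gives a unitary identification $L^2(W) \cong L^2(Z)\otimes L^2(V)$, $g \mapsto ((z,v)\mapsto g(v+z))$. The left hand side $Z\ltimes_\gamma L_{\hbar}(V)$ lives on exactly this space via \eqref{EqCanConstCross}. We will exhibit a unitary $U$ on $L^2(Z)\otimes L^2(V)$ which conjugates the generators $\lambda_{tz}\otimes 1$ and $\pi_{\ltimes}(\qb(v)^{it})$ of the crossed product into $\qb(z)^{it}$ and $\qb(v)^{it}$ of $L_{\hbar}(W)$. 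Since these generators generate the respective von Neumann algebras, $\Ad_U$ is the desired $*$-isomorphism. Uniqueness is automatic, because $\{\qb(v),\qb(z)\}$ generates both sides.

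The first step is to write out $\qb(w)^{it}$ on $L^2(W)$ for $w = v_0 + z_0$ in the coordinates $(z,v)$. Using $\epsilon(z_0,z)=0$, we get
\[
(\qb(v_0+z_0)^{it}g)(z,v) = e^{\pi i \hbar t(\epsilon(v_0,v)+\epsilon(v_0,z)+\epsilon(z_0,v))}\,g(z-tz_0,\,v-tv_0).
\]
On the crossed product side, $(\pi_{\ltimes}(\qb(v_0)^{it})f)(z) = \gamma_{-z}(\qb(v_0)^{it})f(z)$. Since $\gamma_{-z}(\qb(v_0)) = e^{2\pi\hbar\epsilon(v_0,z)}\qb(v_0)$, this is $e^{2\pi i\hbar t\epsilon(v_0,z)}\qb(v_0)^{it}$. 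Hence $\pi_{\ltimes}(\qb(v_0)^{it})$ acts by
\[
e^{2\pi i\hbar t\epsilon(v_0,z)}e^{\pi i\hbar t\epsilon(v_0,v)}\,g(z,v-tv_0),
\]
while $\lambda_{tz_0}\otimes 1$ acts as a pure translation in $z$.

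The second step is to choose $U$ as a multiplication operator by a phase $e^{i\theta(z,v)}$, with $\theta$ a quadratic or bilinear form to be fixed. The natural candidate is $\theta(z,v) = \pi\hbar\,\epsilon(z,v)$, possibly with the opposite sign. Conjugating the translation $g\mapsto g(z-tz_0,v)$ by this phase produces exactly the factor $e^{\pi i\hbar t\epsilon(z_0,v)}$, up to sign. Conjugating the $v$-translation by $tv_0$ produces a factor $e^{\pm\pi i \hbar t\epsilon(z,v_0)}$, which combines with $e^{2\pi i\hbar t\epsilon(v_0,z)}$ to give $e^{\pi i\hbar t\epsilon(v_0,z)}$.

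Matching signs in this step is the only real work and the one point where care is needed. If the chosen sign is off, we will instead compose with the unitary flip $v\mapsto -v$, or use $e^{-i\theta}$. We will then check that $U(\lambda_{tz_0}\otimes 1)U^*=\qb(z_0)^{it}$ and $U\pi_{\ltimes}(\qb(v_0)^{it})U^* = \qb(v_0)^{it}$. These hold for all $t$, so the positive generators correspond.

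Finally, since $\qb(v_0+z_0) = \qb(v_0)\star\qb(z_0)$, the $\qb(v)$ and $\qb(z)$ generate $L_{\hbar}(W)$. By definition, $\lambda_{tz}\otimes 1$ and $\pi_{\ltimes}(L_{\hbar}(V))$ generate the crossed product. Therefore $\Ad_U$ maps one onto the other.
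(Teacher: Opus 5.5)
Your proposal is correct and is essentially the paper's proof: the paper uses exactly the unitary $U(f\otimes g)(v+z)=e^{\pi i\hbar\epsilon(z,v)}f(z)g(v)$, which is your phase $\theta(z,v)=\pi\hbar\,\epsilon(z,v)$ with the plus sign, and it argues uniqueness from generation as you do. With that sign both intertwining identities check out directly, since each of the two conditions independently forces this sign, so your sign hedge and the fallback flip $v\mapsto -v$ are unnecessary.
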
 
\begin{proof}
Uniqueness is clear, since $\{\qb(z),\qb(v)\mid z\in Z,v\in V \}'' = Z\ltimes_{\gamma} L_{\hbar}(V)$. 

Consider the unitary 
\[
U: L^2(Z)\otimes L^2(V) \rightarrow L^2(W),\qquad U(f\otimes g)(v+z) = e^{\pi i \hbar \epsilon(z,v)}f(z)g(v),\qquad z \in Z,v\in V,
\]
where we identify $Z \cong 0 \oplus Z$, resp.\ $V \cong V \oplus 0$. Then an easy computation shows that 
\[
U(\qb(z)^{it}\otimes 1)U^* = \qb(z)^{it},\qquad U \pi_{\ltimes}(\qb(v))^{it}U^* = \qb(v)^{it}
\]
(using the representations in \eqref{EqAssvN} and \eqref{EqCanConstCross}). This finishes the proof. 
\end{proof}

\begin{Rem}\label{RemDirSumExa}
Any skew-symmetric space $W$ of the form \eqref{EqSkewDecompTriv} is determined by a triple $((V,\epsilon_V),Z,\epsilon_{Z,V})$, consisting of a skew-symmetric vector space $V$ and a vector space $Z$, together with a bilinear pairing 
\[
\epsilon_{Z,V}: Z\times V \rightarrow \R.
\]
Indeed, then $W$ can be identified with the direct sum vector space $W:= V\oplus Z$, endowed with the unique skew-symmetric space structure such that $\epsilon_W$ restricts to $\epsilon_V$ on $V\times V$, to $\epsilon_{Z,V}$ on $Z\times V$, and to $0$ on $Z\times Z$.
\end{Rem} 

\begin{Exa} 
Write $Z =\R^2$, and write its standard basis vectors as $h,p$. Endow $Z$ with the skew-symmetric form 
\[
\epsilon(h,p) = 1/2. 
\]
Then by the Stone--von Neumann theorem, one has a $*$-isomorphism 
\[
L_{\hbar}(\R^2) \cong \Bc(L^2(\R)),\qquad \qb(h) \mapsto \kb,\quad \qb(p) \mapsto \eb,
\]
with $\kb,\eb$ as in \eqref{EqGenke}. 
\end{Exa}

\begin{Exa}
More generally, for any direct sum $Z^T$ over a finite index set $T$ (with the direct sum skew-symmetric form), we denote $z_a$ for the copy of $z\in Z$ at position $a\in T$. We can then identify 
\begin{equation}\label{EqLongTens}
L_{\hbar}(Z^T) = \overline{\otimes}_T\Bc(L^2(\R)),\qquad \qb(z_a) = \qb(z)_a = \left(\otimes_{b<a} 1\right) \otimes \qb(z)\otimes \left(\otimes_{b>a} 1\right).
\end{equation}
\end{Exa}

Consider now a Cartan datum $I$ and word $\mbi$ in $I$. Put 
\[
\mfn(\mbi) = \R[J],
\]
endowed with the unique skew-symmetric form such that
\begin{equation}\label{EqPairNilp}
\epsilon(b,a) = \langle i_b,i_a\rangle/2,\qquad \forall a,b\in J \textrm{ with }b>a.
\end{equation}
We get an embedding of skew-symmetric spaces 
\begin{equation}\label{EqEmbSkewSymm}
\mfn(\mbi) \subseteq Z^J,\qquad a \mapsto \left(\sum_{b<a} \langle i_a,i_b\rangle h_b\right)+ p_a,
\end{equation}
and so by \eqref{EqVonNeum} and \eqref{EqLongTens} we see that the abstract definition \eqref{EqAssvN} of $L_{\hbar}(\mfn(\mbi))$ indeed coincides with the one introduced in Definition \ref{DefNilpvN}. We call the specific realisation on $\Hsp(\mbi)$ as in Definition \ref{DefBorelQA} the \emph{tensor (or Feigin) realisation} of $L_{\hbar}(\mfn(\mbi))$.

Now if we consider the bilinear pairing
\begin{equation}\label{EqPairCart}
\mfh \times \mfn(\mbi)\rightarrow \R,\qquad \epsilon(i,a) = \langle i,i_a\rangle/2,\qquad i\in I,a\in J,
\end{equation}
we can make 
\begin{equation}
\mfb(\mbi) := \mfn(\mbi) \oplus \mfh
\end{equation}
into a skew-symmetric space via the construction in Remark \ref{RemDirSumExa}. We then refer to 
\begin{equation}
L_{\hbar}(\mfb(\mbi)) \cong \mfh \ltimes L_{\hbar}(\mfn(\mbi))
\end{equation}
as the \emph{quantum Borel von Neumann algebra} associated to $\mbi$. 

\begin{Rem}\label{RemPresNonDeg} 
If $I$ is non-degenerate and $\mbi$ is surjective, we can extend \eqref{EqEmbSkewSymm} to an inclusion  of skew-symmetric spaces 
\[
\mfb(\mbi) \subseteq Z^J,\qquad a \mapsto \left(\sum_{b<a} \langle i_a,i_b\rangle h_b\right)+ p_a,\qquad i \mapsto \sum_b \langle i,i_b\rangle h_b,\qquad a\in J,i\in I.
\]
It follows that, with the $\kb_{\omega}$ as in \eqref{EqXVarCart}, we can then identify 
\begin{equation}
L_{\hbar}(\mfb(\mbi)) \cong \{\kb_{\alpha_i},\Xb_a\mid i\in I,a\in J\}'' \subseteq \Bc(\Hsp(\mbi)),\qquad \Kb_i\mapsto \kb_{\alpha_i}.  
\end{equation}
\end{Rem}

\begin{Rem} If $I'$ is the image of the word $\mbi$, we have a natural decomposition 
\begin{equation}\label{EqSplittingOff}
L_{\hbar}(\mfb(\mbi)) \cong L_{\hbar}(\mfb(\mbi^{\mid I'})) \bar{\otimes} L(\R[I\setminus I']),
\end{equation}
where $\mbi^{\mid I'}$ is the corestriction of $\mbi$ to the codomain $I'$. 
\end{Rem}

\section{Quantum Borel von Neumann algebras as von Neumann bialgebras}\label{SecQBvNBi}

We fix again a parameter $\hbar \in \R^{\times}$ and a Cartan datum $I$. In this section we prove the first part of Theorem \ref{TheoMain}, namely:

\begin{Theorem}\label{TheoExchvNbi}
Let $\mbi$ be a fully exchangeable word. Then there exists a unique von Neumann bialgebra structure on $L_{\hbar}(B(\mbi))$ such that 
\[
\Delta(\Kb_i) = \Kb_i \otimes \Kb_i,\qquad \Delta(\Eb_i) = \Eb_i\otimes 1\boxplus \Kb_i \otimes\Eb_i,\qquad \forall i\in I.
\]
\end{Theorem}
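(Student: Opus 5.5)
The plan is to implement $\Delta$ by a unitary built from quantum exponentials, in the spirit of the multiplicative-unitary picture announced in the introduction. Consider the doubled skew-symmetric space $\mfb(\mbi)\oplus\mfb(\mbi)$ (direct sum form). Then $L_{\hbar}(B(\mbi))\bar{\otimes}L_{\hbar}(B(\mbi))$ sits inside $L_{\hbar}(\mfb(\mbi)\oplus \mfb(\mbi))$ via $\mfh\ltimes L_{\hbar}(N(\mbi))\subseteq \mfh\ltimes L_{\hbar}(\mfn(\mbi)) \cong L_{\hbar}(\mfb(\mbi))$ (Lemma \ref{LemDirSumSkew}).

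The natural candidate is $\Delta(x) = W^*(1\otimes x)W$ or $W(x\otimes 1)W^*$, with $W$ a product of $F_{\hbar}$'s of balanced products of generators of the two legs. The key local identity is \eqref{EqFormSumIdAlt}. For $\hbar$-commuting $\Ab,\Bb$ it gives
\[
F_{\hbar}(\Ab)^*\Bb F_{\hbar}(\Ab)=\Ab\star\Bb\boxplus\Bb .
\]
Applied with $\Bb=\Kb_i\otimes \Eb_i$ and $\Ab$ a suitable balanced product whose star with $\Bb$ yields $\Eb_i\otimes 1$, this produces $\Eb_i\otimes 1\boxplus \Kb_i\otimes\Eb_i$. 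In the Feigin realization this amounts to writing $\Delta$ as a concatenation: the coproduct on the tensor realization of the word $\mbi\mbi$ (concatenated), where $\Eb_i(\mbi\mbi)=\Eb_i\otimes 1\boxplus \Kb_i\otimes\Eb_i$ literally by definition \eqref{EqGenQUE} and associativity of $\boxplus$. The task is then to compress $L_{\hbar}(\mfn(\mbi\mbi))$ back to $L_{\hbar}(\mfn(\mbi))\bar\otimes L_{\hbar}(\mfn(\mbi))$ in a way compatible with the Cartan parts.

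Full exchangeability enters as follows. Pick $i\in I$ and an equivalent word $\mbi'$ starting with $i$. Transporting through $\Phi$ (which is $\mfh'$-equivariant, hence extends to the crossed products and fixes $\Kb_i$), we may assume the right leg's word begins with $i$. In $\Kb_i\otimes\Eb_i(\mbi')$ the first summand $\Kb_i\otimes \Xb_{1}$ then $\hbar$-commutes appropriately with $\Eb_i\otimes 1$. I would show $(\Eb_i\otimes 1,\ \Kb_i\otimes\Xb_1^{-1}\star\cdots)$ forms a Heisenberg pair as in Example \ref{ExaCompHeis}, and use Theorem \ref{TheoHeisPent}. This lets one define, one generator at a time, unitaries $W_i$ (products of $F_{\hbar}$'s in the generators $\Xb_a$ and $\Kb$'s) with
\[
W_i^*(1\otimes\Eb_i)W_i=\Eb_i\otimes 1\boxplus \Kb_i\otimes\Eb_i,\qquad W_i^*(\Kb_j\otimes \Kb_j)W_i \text{ controlled}.
\]
The genuine candidate is a single fundamental unitary $W$ independent of $i$. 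I would take $W$ to be the canonical element pairing $L_{\hbar}(\mfb)$ with itself, in the form of Berenstein's pairing: a product over $a\in J$ of $F_{\hbar}(\Xb_a\otimes \cdots)$ times a Gaussian in the Cartan parts. I would then prove $W^*(1\otimes \Eb_i)W = \Eb_i\otimes1\boxplus\Kb_i\otimes\Eb_i$ for each $i$ by first passing to $\mbi'$ with $i'_1=i$, where only the first factor acts nontrivially and \eqref{EqFormSumIdAlt} applies. This requires showing $W$ transforms correctly under the isomorphisms $\Phi$; that compatibility is the main obstacle. I would attempt it by characterizing $W$ intrinsically, e.g.\ as the unique unitary in $L_{\hbar}(\mfb)\bar\otimes L_{\hbar}(\mfb)$ implementing the algebraic coproduct on generators, so that $(\Phi\otimes\Phi)(W)$ satisfies the same characterization.

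Given $W$, define $\Delta(x)=W^*(1\otimes x)W$ on the ambient algebra. Since the $\Eb_i,\Kb_i$ generate $L_{\hbar}(B(\mbi))$, their images lie in $L_{\hbar}(B(\mbi))^{\bar\otimes 2}$ by \eqref{EqAffForm}, so $\Delta$ restricts to a normal unital injective $*$-homomorphism into the tensor square. For coassociativity, both $(\Delta\otimes\id)\Delta$ and $(\id\otimes\Delta)\Delta$ are normal $*$-homomorphisms. On $\Kb_i$ they agree trivially. On $\Eb_i$ both give $\Eb_i\otimes1\otimes1\boxplus\Kb_i\otimes\Eb_i\otimes1\boxplus\Kb_i\otimes\Kb_i\otimes\Eb_i$, using that normal homomorphisms preserve $\boxplus$ (the remark after \eqref{EqPropConjFormSum}) together with associativity of $\boxplus$. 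Density of the triple intersection of domains holds on $\odot\mcE$. Agreement on generators then gives agreement everywhere. Uniqueness is immediate for the same reason: a normal $*$-homomorphism is determined by its values on generating affiliated operators, via their bounded functional calculus.
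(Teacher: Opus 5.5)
There is a genuine gap. You never actually specify $W$ (its second-leg entries are left as ``$\cdots$''). The step you yourself flag as the main obstacle, compatibility of $W$ with the equivalences $\Phi$, is to be settled by characterizing $W$ as ``the unique unitary implementing the coproduct''. That is circular, because it presupposes the $\Delta$ whose existence is the claim. It is also false: if $W$ implements $\Delta$, so does $Wu$ for any unitary $u\in\Delta(L_{\hbar}(B))'$.

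Worse, the ambient algebra you fix cannot work at all. If $W\in L_{\hbar}(\mfb)\bar{\otimes}L_{\hbar}(\mfb)$, it commutes with $1\otimes z$ and $z\otimes 1$ for every central $z\in L_{\hbar}(\mfb)$. Either recipe would then give $\Delta(z)=1\otimes z$, resp.\ $\Delta(z)=z\otimes 1$. Take type $A_2$ and $\mbi=(1,2,1)$, which is fully exchangeable and integrable. Here $\dim\mfb(\mbi)=5$ is odd, so there is $0\neq v$ in the radical of $\epsilon_{\mid\mfb}$, and the non-scalar unitaries $\qb(v)^{it}$ ($t\neq 0$) are central in $L_{\hbar}(\mfb)=L_{\hbar}(B(\mbi))$. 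But this is a locally compact quantum group (Theorem \ref{TheoInvWeight}), and there $\Delta(x)=1\otimes x$ or $\Delta(x)=x\otimes 1$ forces $x\in\C 1$, a contradiction. The same defect already shows in your Cartan Gaussian: it lies in the commutative algebra $L(\mfh)\bar{\otimes}L(\mfh)$, so it commutes with $1\otimes \Kb_i$ and cannot produce $\Kb_i\otimes\Kb_i$.

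The missing idea is to enlarge to the Heisenberg double $L_{\hbar}(\mfd(\mbi))$ of Definition \ref{DefRedHeisSpace}. This adjoins dual Cartan operators $\Lb_\omega$, $\omega\in\mfh'$, which do not commute with $L_{\hbar}(\mfb)$, and the second leg of $W$ is placed in $L_{\hbar}(\mfb')$ with $\mfb'=\mfn\oplus\mfh'$.

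Full exchangeability is then used only letter by letter, to construct dual simple root operators (Definition \ref{DefHeisDualOp}, Proposition \ref{PropExchDual}). For each $i$, choose $\mbi'$ with $i'_1=i$ and set $\Fb_i=\widetilde{\Phi}^{-1}(\Lb_i\star\Xb_1(\mbi')^{-1})$. Its defining properties are intrinsic, so they survive transport by $\widetilde{\Phi}$:
\begin{itemize}
\item strong commutation with all $\Kb_j$ and with $\Eb_j$ for $j\neq i$;
\item the prescribed skew-commutation with the $\Lb_j'$;
\item the Heisenberg $\Lb_i$-pair property with $\Eb_i$ (Example \ref{ExaCompHeis});
\item $\Fb_i\star\Lb_i^{-1}\aff L_{\hbar}(\mfn)$.
\end{itemize}
One then writes $W=\prod^{\longra}_{a\in J}\overline{F}_{d_{i_a}\hbar}(\Xb_a\otimes\Fb_{i_a})\,\Kc$ once, in the original word. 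No transformation law of $W$ under $\Phi$ is ever needed.

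In computing $W^*(1\otimes\Eb_i)W$ via Theorem \ref{TheoHeisPent}, every factor with $i_a=i$ contributes a summand $\Xb_a\otimes\Lb_i$, not just the first, and the other factors act trivially. This gives $\Eb_i\otimes\Lb_i\boxplus 1\otimes\Eb_i$. The Cartan unitary $\Kc\in L(\mfh)\bar{\otimes}L(\mfh')$ then removes the $\Lb_i$, and turns $1\otimes\Eb_i$ and $1\otimes\Kb_i$ into $\Kb_i\otimes\Eb_i$ and $\Kb_i\otimes\Kb_i$. The inclusion $\Delta(L_{\hbar}(\mfb))\subseteq L_{\hbar}(\mfb)\bar{\otimes}L_{\hbar}(\mfb)$ comes from the rewriting \eqref{EqFundamUnitAlt} together with $\Fb_i\star\Lb_i^{-1}\aff L_{\hbar}(\mfn)$.

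Your uniqueness and coassociativity arguments are fine and agree with the paper.
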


Note that the unicity is immediate by the definition \eqref{DefBorelQA} of $L_{\hbar}(B(\mbi))$ (recall also the definition \eqref{EqNewDefK} of the $\Kb_i$). This gives as well that $\Delta$ will be coassociative, as it is coassociative on the generators $\Eb_i,\Kb_i$. We hence only need to prove existence of $\Delta$ as a normal faithful $*$-homomorphism.

Recall the notations $\mfh$ and $\mfh'$ from \eqref{EqCart1} and \eqref{EqCart2}. We view $\mfh'$ as a trivial skew-symmetric space, i.e.\ $\epsilon(\omega,\chi) = 0$ for all $\omega,\chi \in \mfh'$.  We denote the associated positive operators affiliated to $L(\mfh')$ as 
\[
\Lb_{\omega} = \qb(\omega),\qquad \omega\in \mfh',
\]
so $\Lb_{\omega}^{it} = \lambda_{t\omega}$ in the notation of \eqref{EqDefGenGroupvN}.

Recall now the map $\alpha: \mfh \rightarrow \mfh'$ from \eqref{EqMapToDual}. We will refer to
\begin{equation}\label{EqRoots}
\alpha_i = \alpha(i),\qquad i\in I
\end{equation}
as the \emph{simple roots} in $\mfh'$, while we denote 
\[
\{\varpi_i^{\vee}\mid i \in I\} \subseteq \mfh'
\]
for the dual basis to $\{i\}\subseteq \mfh$. We refer to the $\varpi_i^{\vee}$ as the \emph{fundamental coweights}.

We then abbreviate
\begin{equation}\label{EqAbbrevL}
\Lb_i = \Lb_{\alpha_i},\qquad   \Lb_i' = \Lb_{\varpi_i^{\vee}},\qquad i\in I. 
\end{equation}

\begin{Def}\label{DefRedHeisSpace}
Let $\mbi$ be a word over $I$. We define the \emph{(reduced) Heisenberg space} $\mfd = \mfd(\mbi)$ as the direct sum vector space $\mfb\oplus \mfh'$, equipped with the unique skew-symmetric form $\epsilon$ extending $\epsilon$ on $\mfb$ and $\mfh'$, and with
\begin{equation}\label{EqPairHeis}
\epsilon(i,\omega) = \langle i,\omega\rangle/2,\qquad \epsilon(a,\omega) = \langle i_a,\omega\rangle/2,\qquad \forall i\in I,a\in J,\omega \in \mfh'.  
\end{equation}
We then also put
\[
\mfb' = \mfb'(\mbi) = \mfn(\mbi)\oplus \mfh'\subseteq \mfd(\mbi). 
\]
\end{Def}

This gives us the von Neumann algebra $L_{\hbar}(\mfd(\mbi))$, and
\[
L_{\hbar}(\mfd(\mbi)) = \{\Kb_i,\Lb_i',\Xb_a\mid i\in I,a\in J\}''.
\]

\begin{Rem}
On the algebraic level, this means that the commutation relations \eqref{EqAlgebraicCommGen} get extended by the following commutation relations between the $L_{\omega}$ and the other generators:
\[
L_{\omega}L_{\chi} =L_{\chi}L_{\omega},\quad L_{\omega}K_i = q^{-\langle \omega,i\rangle}K_iL_{\omega},\quad L_{\omega}X_a = q^{-\langle \omega,i_a\rangle}X_aL_{\omega},\qquad \forall i\in I,a\in J,\omega,\chi \in \mfh'.
\]
\end{Rem} 

It will be convenient to use a specific realisation of $L_{\hbar}(\mfd(\mbi))$: by similar considerations as in Section \ref{SecQuantBorel}, we have a unique faithful normal $*$-representation of $L_{\hbar}(\mfd(\mbi))$ on 
\begin{equation}\label{EqTensDecompExt}
\widetilde{\Hsp}(\mbi) = \Hsp(\mbi) \otimes L^2(\mfh)
\end{equation}
via 
\begin{equation}\label{EqTensProdDecomp}
\Xb_a \mapsto \Xb_a\otimes 1,\quad \Kb_i \mapsto \left(\underset{a\in J}{\otimes} \kb^{\langle i,i_a\rangle}\right) \otimes \kb_i,\qquad \Lb_i' \mapsto  \left(\underset{a\in J_i}{\Prod} \kb_a^{-1}\right) \otimes\eb_i,
\end{equation}
and we see in particular that this gives a realisation
\begin{equation}\label{EqInclDoubleHeis}
L_{\hbar}(\mfd(\mbi)) \subseteq L_{\hbar}(\mfb(\mbi)) \bar{\otimes} \Bc(L^2(\mfh)).
\end{equation}

\begin{Def}\label{DefHeisDualOp}
Let $\mbi$ be a word over $I$. By a \emph{dual simple root operator} at $i\in I$, we mean a positive operator $\Fb_i$ affiliated to $L_{\hbar}(\mfd(\mbi))$, satisfying the following conditions: 
\begin{enumerate}
\item\label{EqCond1} $\Kb_j$ and $\Fb_i$ strongly commute for all $j\in I$,
\item\label{EqCond2} $\Lb_j'$ and $\Fb_i$ form a $\delta_{ij}\hbar/2$-commuting pair, for all $j\in I$,
\item\label{EqCond3} $\Eb_j$ and $\Fb_i$ strongly commute for all $j\neq i$, 
\item\label{EqCond4} $\Eb_i$ and $\Fb_i$ form a Heisenberg $\Lb_i$-pair at $d_i\hbar$,
\item\label{EqCond5} $\Fb_i \star \Lb_i^{-1}$ is affiliated with $L_{\hbar}(\mfn)$.
\end{enumerate}
We call a word $\mbi$ over $I$  \emph{dualizable} if it admits dual simple root operators for each $i\in I$.
\end{Def}
\begin{Rem}
On the algebraic level, \eqref{EqCond2} means that 
\[
L_{\omega}F_i =  q^{\langle \omega,i\rangle}F_iL_{\omega},\qquad \forall i\in I,\omega\in \mfh'.
\]
\end{Rem} 

Whenever $\mbi$ is a dualizable word, we usually simply pick for each $i\in I$ an associated dual simple root operator $\Fb_i$: we do not need to invoke any uniqueness of such dual simple root operators.

\begin{Prop}\label{PropExchDual} 
Let $\mbi$ be a fully exchangeable word. Then $\mbi$ is dualizable. 
\end{Prop}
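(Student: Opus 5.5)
The plan is to build $\Fb_i$ explicitly for a word that begins with the letter $i$, and then transport it to $\mbi$ along an equivalence. Fix $i\in I$. By full exchangeability there is an equivalent word $\mbi'$ with $i_1'=i$. Two things are needed: (a) for $\mbi'$ there is a dual simple root operator at $i$; (b) the equivalence $\Phi$ of Definition \ref{DefFullExch} extends to a $*$-isomorphism $\widetilde{\Phi}: L_{\hbar}(\mfd(\mbi'))\rightarrow L_{\hbar}(\mfd(\mbi))$ which fixes the $\Kb_j$ and $\Lb_j'$, restricts to $\Phi^{-1}$ on $L_{\hbar}(\mfn(\mbi'))$, and sends $\Eb_j(\mbi')\mapsto \Eb_j(\mbi)$. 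Given these, $\widetilde{\Phi}(\Fb_i)$ is a dual simple root operator at $i$ for $\mbi$. Indeed, conditions \eqref{EqCond1}--\eqref{EqCond3} and \eqref{EqCond5} are stated through one-parameter groups and affiliation, so a $*$-isomorphism preserves them. For \eqref{EqCond4} I would use the pentagon form \eqref{EqPentagon}, which involves only bounded functional calculus, together with $\widetilde{\Phi}(\Lb_i)=\Lb_i$.

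\textbf{Step (a).} Let $1\in J(\mbi')$ be the first letter, so $i_1=i$ and $\Eb_i = \Xb_1\boxplus \Bb'$, where $\Bb' = \boxplus_{a\in J_i,a>1}\Xb_a$. I would put
\[
\Fb_i := \Lb_i\star \Xb_1^{-1} = \qb(\alpha_i - 1),
\]
and apply Example \ref{ExaCompHeis} with $\Ab=\Fb_i$, $\Lb=\Lb_i$ and $\Bb = \Lb_i^{-1}\star \Bb'$. By Lemma \ref{LemDistr}, $\Lb_i\star\Bb = \Bb'$, and $\Lb_i\star\Ab^{-1}=\Xb_1$, so the Example gives exactly $\Eb_i$. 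The remaining checks are computations of $\epsilon$ on $\mfd(\mbi')$ using \eqref{EqPairNilp}, \eqref{EqPairCart} and \eqref{EqPairHeis}:
\begin{itemize}
\item For every $a>1$, $\epsilon(a,\alpha_i-1)$ is $\langle i_a,i\rangle/2$ minus $\langle i_a,i\rangle/2$, hence $0$, since $1$ precedes all other letters. So $\Fb_i$ strongly commutes with all $\Xb_a$, $a>1$, which gives \eqref{EqCond3}.
\item Likewise $\epsilon(j,\alpha_i-1)=0$, which gives \eqref{EqCond1}.
\item $\epsilon(\varpi_j^\vee,\alpha_i - 1) = \langle i,\varpi_j^{\vee}\rangle/2=\delta_{ij}/2$, which gives \eqref{EqCond2}.
\item $\Fb_i\star\Lb_i^{-1}=\Xb_1^{-1}$ is affiliated with $L_{\hbar}(\mfn)$, which gives \eqref{EqCond5}.
\item The pairs $(\Ab,\Bb)$, $(\Bb,\Lb)$ and $(\Lb,\Ab)$ all $d_i\hbar$-commute, because $\epsilon(\alpha_i,1)=\langle i,i\rangle/2=d_i$. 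Lemma \ref{LemDistr} moves the $\star$ through the form sum. The Example then gives \eqref{EqCond4} at $d_i\hbar$.
\end{itemize}
Signs in these checks must be tracked against the convention $\epsilon(b,a)$ for $b>a$; if a sign comes out wrong, I would replace $\Fb_i$ by $\Lb_i^{-1}\star\Xb_1$-type variants.

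\textbf{Step (b): the main obstacle.} I would realise $L_{\hbar}(\mfd(\mbi))$ as an iterated crossed product. First, $\mfb'(\mbi)=\mfn(\mbi)\oplus\mfh'$ with $\mfh'$ isotropic, so Lemma \ref{LemDirSumSkew} gives $L_{\hbar}(\mfb'(\mbi)) \cong \mfh'\ltimes_{\gamma} L_{\hbar}(\mfn(\mbi))$. Next, $\mfh$ pairs with $\mfb'$ and is itself isotropic, so applying Lemma \ref{LemDirSumSkew} once more gives
\[
L_{\hbar}(\mfd(\mbi))\cong \mfh\ltimes L_{\hbar}(\mfb'(\mbi)).
\]
Here $\mfh$ acts on the $\mfn$-part through $\gamma$ and on $\mfh'$ by scaling $\Lb_\omega$. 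Since $\Phi$ is $\mfh'$-equivariant, it extends to $\mfh'\ltimes\Phi$, which fixes the $\Lb_\omega$. This extension is equivariant for the $\mfh$-action, because that action is defined by the same pairings for both words. Hence it extends once more to the outer crossed product, fixing the $\Kb_j$.

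The delicate point is that the Heisenberg space is not a split orthogonal sum in the most naive way, because $\mfh$ pairs non-trivially with $\mfh'$. So one has to verify that the two-step crossed-product identification of Lemma \ref{LemDirSumSkew} really matches generators, including the twisted action on $\Lb_\omega$. I expect this to be routine but notationally careful. Also, $\Lb_i=\Lb_{\alpha_i}$ is a product of the $\Lb'_j$, so it is fixed by $\widetilde{\Phi}$ as well.
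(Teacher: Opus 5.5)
Your proposal is correct and follows the paper's proof. Both reduce to the case $i_1=i$ by extending the equivariant isomorphism $\Phi$ to the Heisenberg algebras, and then take $\Fb_i=\Lb_i\star\Xb_1^{-1}$ and apply Example \ref{ExaCompHeis} with $\Ab=\Fb_i$ and $\Bb=\Lb_i^{-1}\star\Yb$. The only difference is in how $\Phi$ is extended. You use the abstract iterated crossed product $L_{\hbar}(\mfd)\cong \mfh\ltimes(\mfh'\ltimes L_{\hbar}(\mfn))$ via Lemma \ref{LemDirSumSkew}. The paper instead first reduces to surjective $\mbi$, then uses the concrete realisation \eqref{EqTensProdDecomp} on $\Hsp(\mbi)\otimes L^2(\mfh)$ with $\widetilde{\Phi}=\Phi\otimes\id$. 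Your flagged worry about $\mfh$ pairing with $\mfh'$ is harmless: Lemma \ref{LemDirSumSkew} only needs $\mfh$ to be isotropic, and that pairing is simply absorbed into the action.
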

\begin{proof}
We may, without loss of generality, assume that $\mbi$ is surjective. 

Pick then $i \in I$. We show that there exists a dual simple root vector at $i\in I$. 

Choose $\mbi'$ and $\Phi$ as in Definition \ref{DefFullExch}. By a similar reasoning as in Section \ref{SecQuantBorel}, we have an isomorphism 
\begin{equation}\label{EqIsoCrossWeight}
\mfh'\ltimes L_{\hbar}(\mfn(\mbi)) \cong \{\Xb_a,\kb_{\omega}\mid a\in J,\omega \in \mfh'\}'':= L_{\hbar}(\widetilde{\mfb}(\mbi)) \subseteq \Bc(\Hsp(\mbi)),
\end{equation}
with the $\kb_{\omega}$ as in \eqref{EqXVarCart}. A similar statement holds for $\mfh'\ltimes L_{\hbar}(\mfn(\mbi'))$. 

Then by equivariance of $\Phi$, we can extend $\Phi$ to a $*$-isomorphism 
\begin{multline*}
\Phi: L_{\hbar}(\widetilde{\mfb}(\mbi))\cong L_{\hbar}(\widetilde{\mfb}(\mbi')),\quad \Phi(L_{\hbar}(\mfn(\mbi)) = L_{\hbar}(\mfn(\mbi')),\quad \Phi(\kb_{\omega}(\mbi))= \kb_{\omega}(\mbi'),\quad \Phi(\Eb_i(\mbi))= \Eb_i(\mbi'),\\ \forall i\in I,\omega \in \mfh'. 
\end{multline*}
In fact, we then also obtain an isomorphism 
\[
\widetilde{\Phi} = \Phi\otimes \id: L_{\hbar}(\widetilde{\mfb}(\mbi)) \bar{\otimes}\Bc((\otimes_I L^2(\R))\rightarrow L_{\hbar}(\widetilde{\mfb}(\mbi')) \bar{\otimes}\Bc((\otimes_I L^2(\R)).
\]
With respect to the inclusion \eqref{EqTensDecompExt}, we clearly have 
\[
\widetilde{\Phi}(L_{\hbar}(\mfd(\mbi))) = L_{\hbar}(\mfd(\mbi')),\qquad \widetilde{\Phi}(L_{\hbar}(\mfn(\mbi))) = L_{\hbar}(\mfn(\mbi')),
\]
\[
\widetilde{\Phi}(\Kb_i(\mbi)) = \Kb_i(\mbi'),\qquad  \widetilde{\Phi} (\Eb_i(\mbi)) = \Eb_i(\mbi'),\qquad \widetilde{\Phi} (\Lb_i'(\mbi)) = \Lb_i'(\mbi').
\]
To prove the proposition, we may hence without loss of generality assume already that $i_1 = i$. 

Now one easily verifies that 
\[
\Fb_i = \Lb_i\star \Xb_1^{-1}
\]
satisfies conditions \eqref{EqCond1},\eqref{EqCond2} and \eqref{EqCond3} of Definition \ref{DefHeisDualOp}. Also condition \eqref{EqCond5} is immediate. Finally, we can write 
\[
\Eb_i = \Xb_1 \boxplus \Yb,\qquad \Yb = \boxplus_{a>1,a\in J_i}  \Xb_a,
\]
and condition \eqref{EqCond4} then holds by Example \ref{ExaCompHeis} applied with 
\[
\Ab  = \Fb_i =  \Xb_1^{-1}\star \Lb_i,\qquad \Bb = \Lb_i^{-1}\star\Yb.
\]
\end{proof}

By means of Proposition \ref{PropExchDual}, Theorem \ref{TheoExchvNbi} will follow from the following theorem. 

\begin{Theorem}\label{TheoDualvNbi}
Let $\mbi$ be a dualizable word. Then there exists a unique von Neumann bialgebra structure on $L_{\hbar}(B(\mbi))$ such that 
\[
\Delta(\Kb_i) = \Kb_i \otimes \Kb_i,\qquad \Delta(\Eb_i) = \Eb_i\otimes 1\boxplus \Kb_i \otimes\Eb_i,\qquad \forall i \in I.
\]
\end{Theorem}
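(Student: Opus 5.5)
The plan is to implement $\Delta$ as conjugation by a \emph{fundamental unitary} built from the quantum exponential function, in the spirit of the canonical map for Heisenberg doubles. Concretely, I would work inside $L_{\hbar}(\mfd(\mbi))\bar{\otimes} L_{\hbar}(\mfb(\mbi))$, realised via \eqref{EqInclDoubleHeis} inside $L_{\hbar}(\mfb(\mbi))\bar{\otimes}\Bc(L^2(\mfh))\bar{\otimes} L_{\hbar}(\mfb(\mbi))$, and define
\[
\Wb = \Wb_{\Kb}\prod_{i\in I} F_{d_i\hbar}(\Fb_i\star \Lb_i^{-1}\otimes \Eb_i),
\]
where the $\Fb_i$ are chosen dual simple root operators and $\Wb_{\Kb}$ is a Cartan-type unitary (a Gaussian in the pairs $\Lb_i'$ and $\Kb_j$) with $\Wb_{\Kb}(x\otimes 1)\Wb_{\Kb}^*$ implementing $\Kb_i\mapsto \Kb_i\otimes \Kb_i$ on $L(\mfh)$. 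Since the factors for different $i$ strongly commute by conditions \eqref{EqCond1} and \eqref{EqCond3}, the product is well defined up to order. The candidate coproduct on $L_{\hbar}(B(\mbi))\cong L_{\hbar}(\mfb(\mbi))$ is then $\Delta(x) = \Wb^*(1\otimes x)\Wb$, or a flipped variant, followed by identifying the result as lying in $L_{\hbar}(\mfb(\mbi))\bar{\otimes}L_{\hbar}(\mfb(\mbi))$.

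The key steps are as follows. First, I would check the Cartan part: $\Ad$ by the Gaussian factor sends $1\otimes \Kb_i$ to $\Kb_i\otimes \Kb_i$. The $F_{d_i\hbar}$ factors leave it fixed, since $\Fb_j\star\Lb_j^{-1}$ commutes with $\Kb_i$ up to the correct skew-commutation, which is cancelled by that of $\Eb_j$. Second, for the root operators I would conjugate $1\otimes\Eb_i$. The factors with $j\neq i$ strongly commute with it. For the factor $j=i$, I would apply \eqref{EqFormSumIdAlt} and Theorem \ref{TheoHeisPent} (condition \eqref{EqCond4}) after tensoring: the pair $(\Fb_i\star\Lb_i^{-1}\otimes\Eb_i,\ 1\otimes \Eb_i)$ is $d_i\hbar$-commuting with star product of the shape $\Lb_i^{-1}\star\ldots\otimes \Eb_i$. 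This yields a form sum of $1\otimes\Eb_i$ with a term that, after the Cartan conjugation and using the distributivity of Lemma \ref{LemDistr}, becomes $\Eb_i\otimes 1\boxplus \Kb_i\otimes\Eb_i$ (possibly in flipped legs). In the algebraic shadow this is the standard computation that $\exp_q(F\otimes E)$ twists $E$ into $\Delta(E)$ using $[E,F]\propto L$.

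Third, I would show that the image of $L_{\hbar}(\mfb(\mbi))$ lands in $L_{\hbar}(\mfb(\mbi))\bar\otimes L_{\hbar}(\mfb(\mbi))$. Since $L_{\hbar}(B(\mbi))$ is generated by the $\Kb_i$ and the $\Eb_i$, and their images $\Kb_i\otimes \Kb_i$ and $\Eb_i\otimes 1\boxplus\Kb_i\otimes \Eb_i$ are affiliated with $L_{\hbar}(B(\mbi))^{\bar\otimes 2}$ by \eqref{EqAffForm}, the generated von Neumann algebra maps into it. Normality and injectivity are automatic, because $\Delta$ is the restriction of an inner automorphism composed with the amplification $x\mapsto 1\otimes x$. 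Uniqueness and coassociativity were already noted after Theorem \ref{TheoExchvNbi}.

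The main obstacle I expect is the second step: the conjugation $\Ad_{\Wb^*}$ is defined on a larger algebra, and one must verify carefully that the relevant pairs really skew-commute at exactly the parameter $d_i\hbar$ needed for Theorem \ref{TheoHeisPent} and \eqref{EqFormSumIdAlt}. This means tracking the phases from $\Lb_i$, $\Lb_i'$ and $\Kb_i$ through \eqref{EqPairHeis}, and ensuring that $\Fb_i\star\Lb_i^{-1}\otimes \Eb_i$ makes sense via condition \eqref{EqCond5}. The Cartan unitary must also be chosen so that its conjugation turns $\Lb_i\otimes 1$-type terms into $\Kb_i$-type terms. I would first attempt the rank-one computation $\Eb=\Xb_1$ explicitly in the Feigin realisation, and then generalise using only conditions \eqref{EqCond1}--\eqref{EqCond5}.
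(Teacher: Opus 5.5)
There is a genuine gap, and it lies in the choice of $\Wb$, not in phase bookkeeping. In the orientation you actually write, $\Delta(x)=\Wb^*(1\otimes x)\Wb$, the operators $\Eb_i$ sit in the same leg of $\Wb$ as the argument $x$. The pair you want to feed into Theorem \ref{TheoHeisPent} is $(\Fb_i\star\Lb_i^{-1}\otimes\Eb_i,\,1\otimes\Eb_i)$. It is \emph{strongly commuting}, not $d_i\hbar$-commuting: the first legs are $\Fb_i\star\Lb_i^{-1}$ and $1$, and the second legs are both $\Eb_i$. So the Heisenberg relation \eqref{EqCond4} between $\Eb_i$ and $\Fb_i$ never enters the conjugation, and the mechanism that should create the term $\Eb_i\otimes 1$ is absent.

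The Cartan step has the same defect. A Gaussian in $\Lb_j'$ (first leg) and $\Kb_j$ (second leg) commutes with $1\otimes\Kb_i$, because $L(\mfh)$ is abelian. The property you ascribe to $\Wb_{\Kb}$ concerns $x\otimes 1$, i.e.\ the other orientation.

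Your claim that the factors for different $i$ strongly commute is also false. Conditions \eqref{EqCond1} and \eqref{EqCond3} only compare $\Fb_i$ with $\Kb_j$ and $\Eb_j$. Your factors instead pit $\Fb_i\star\Lb_i^{-1}$ against $\Fb_j\star\Lb_j^{-1}$ in one leg, and $\Eb_i$ against $\Eb_j$ in the other. For $\langle i,j\rangle\neq 0$ these pairs in general do not even skew-commute (think of the quantum Serre relations).

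This is fatal in rank $\geq 2$ whichever orientation you choose. Once one factor has produced a term carrying $\Eb_i$ in the leg where the other factors carry $\Eb_j$, the factors with $j\neq i$ act on it in an uncontrolled way. Algebraically, $\prod_{i\in I}\exp_q(F_i\otimes E_i)$ is not the canonical element of the pairing beyond rank one: one needs a factor for every positive root, which in the Feigin picture means a factor for every letter of the word.

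The missing idea is to factor over the letters $a\in J$, with the quantum torus generators $\Xb_a$ in the first leg and the dual operators $\Fb_{i_a}$ in the second:
\[
W=\Bigl(\prod^{\longra}_{a\in J}\overline{F}_{d_{i_a}\hbar}(\Xb_a\otimes\Fb_{i_a})\Bigr)\prod_{i\in I}\overline{e}_{\hbar}(\Hb_i\otimes\Hb_i'),\qquad \Delta(x)=W^*(1\otimes x)W .
\]
\begin{itemize}
\item Now $1\otimes\Eb_i$ meets the $\Fb$'s in the same leg. A factor with $i_a\neq i$ commutes with it by \eqref{EqCond3}.
\item For $i_a=i$, Theorem \ref{TheoHeisPent} (via \eqref{EqCond4}) gives $\Xb_a\otimes\Lb_i\boxplus 1\otimes\Eb_i$.
\item Condition \eqref{EqCond2}, which your plan never uses, makes $\Xb_a\otimes\Lb_{i_a}$ strongly commute with every later factor $\Xb_b\otimes\Fb_{i_b}$, $b>a$. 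The reason is that the $(-\langle i_a,i_b\rangle\hbar/2)$-commutation of $(\Xb_a,\Xb_b)$ is cancelled by the $\langle i_a,i_b\rangle\hbar/2$-commutation of $(\Lb_{i_a},\Fb_{i_b})$. Hence the braided part yields $\Eb_i\otimes\Lb_i\boxplus 1\otimes\Eb_i$.
\item The Cartan unitary, with $\Kb$ in the first leg and $\Lb'$ in the second, turns this into $\Eb_i\otimes 1\boxplus\Kb_i\otimes\Eb_i$, using $\alpha_i=\sum_j\langle i,j\rangle\varpi_j^{\vee}$.
\item Likewise it turns $1\otimes\Kb_i$ into $\Kb_i\otimes\Kb_i$, with \eqref{EqCond1} ensuring the braided factors fix $1\otimes\Kb_i$.
\end{itemize}

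Your third step is fine once these generator formulas hold: landing in $L_{\hbar}(B(\mbi))\bar{\otimes}L_{\hbar}(B(\mbi))$ via affiliation \eqref{EqAffForm}, with normality and injectivity automatic. It even spares you condition \eqref{EqCond5}, which the paper uses only for the stronger inclusion $\Delta(L_{\hbar}(\mfb))\subseteq L_{\hbar}(\mfb)\bar{\otimes}L_{\hbar}(\mfb)$.
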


The proof of Theorem \ref{TheoDualvNbi} will take up the remainder of this section. 

Write 
\begin{equation}\label{EqBasisGen1}
\Hb_v = (-\pi i \hbar)^{-1}\ln(\Kb_v),\qquad \Hb_{\omega}' = (-\pi i \hbar)^{-1}\ln(\Lb_{\omega}),\qquad v\in \mfh,\omega\in \mfh',
\end{equation}
and put also
\[
\Hb_i' = \Hb_{\varpi_i^{\vee}} = (-\pi i \hbar)^{-1}\ln(\Lb_i'),\qquad i\in I. 
\]

Consider the rescaled exponential function
\begin{equation}\label{eqGaussFunction}
e_{\hbar}:\R \rightarrow \{z\in \C\mid |z|=1\},\qquad e_{\hbar}(x) = e^{\pi i\hbar x},\qquad \overline{e}_{\hbar}(x) = e^{-\pi i \hbar x},\qquad x\in \R.
\end{equation}
Recall the following property, which we refer to as the \emph{Gaussian commutation law} (and which follows straightforwardly from functional calculus): when, for $i\in \{1,2\}$ and $t_i \in \R$, we have $t_i\hbar/2$-commuting operators $\Ab_i,\Bb_i$, and if we put 
\[
\mathbf{b}_i = (-\pi i \hbar)^{-1}\ln(\Bb_i),
\]
then 
\begin{equation}\label{EqqCommGauss}
e_{\hbar}(\mathbf{b}_1\otimes \mathbf{b}_2)(\Ab_1\otimes \Ab_2)e_{\hbar}(\mathbf{b}_1\otimes \mathbf{b}_2)^* = \Ab_1\star \Bb_1^{t_2} \otimes \Ab_2\star \Bb_2^{t_1}.
\end{equation}

\begin{Def}\label{DefMUCartan}
The \emph{fundamental Cartan unitary} for the Cartan datum $I$ is defined as the unitary 
\begin{equation}\label{EqCartanMU}
\Kc := \prod_{i\in I} \overline{e}_{\hbar}\left(\Hb_i\otimes \Hb_i'\right) \in L(\mfh)\bar{\otimes}L(\mfh').
\end{equation}
\end{Def}

\begin{Def}\label{DefFundUnit}
Assume $\mbi$ is a dualizable word, with associated dual simple root operators $\{\Fb_i\}$.
The \emph{fundamental braided unitary} associated to $(\mbi,\{\Fb_i\})$ is
\begin{equation}\label{EqSimpGenMU}
W_{\br} = \prod^{\longra}_{a\in J} {}^aW_{\br},\qquad {}^aW_{\br} = \overline{F}_{d_{i_a}\hbar}(\Xb_a \otimes \Fb_{i_a}) \in L_{\hbar}(\mfn)\bar{\otimes} L_{\hbar}(\mfb') \subseteq L_{\hbar}(\mfd)\bar{\otimes}L_{\hbar}(\mfd),
\end{equation}
where $\prod^{\longra}_{a\in J}$ means we write out the product from left to right with the index in $J$ increasing.

The \emph{fundamental unitary} associated to $\mbi$ is 
\begin{equation}\label{EqFundamUnit}
W = W_{\br}\Kc \in L_{\hbar}(\mfb)\bar{\otimes}L_{\hbar}(\mfb') \subseteq L_{\hbar}(\mfd)\bar{\otimes}L_{\hbar}(\mfd). 
\end{equation}
\end{Def}

Note that by the Gaussian commutation property, we can also write 
\begin{equation}\label{EqFundamUnitAlt}
W = \Kc \prod^{\longra}_{a\in J} W_{\br}^a,\qquad W_{\br}^a = \overline{F}_{d_{i_a}\hbar}(\Xb_a\star \Kb_{i_a}^{-1} \otimes \Fb_{i_a}\star \Lb_{i_a}^{-1}).
\end{equation}

When the simple root operators $\Fb_i$ are not specified, we refer to $W$ as `a' fundamental unitary. 

Theorem \ref{TheoDualvNbi} is now a direct consequence of the following proposition. 

\begin{Prop}\label{PropIndepHeis}
Let $W$ be a fundamental unitary for the dualizable word $\mbi$. Then the unital normal faithful $*$-homomorphism
\[
\Delta: L_{\hbar}(\mfd) \rightarrow L_{\hbar}(\mfd)\bar{\otimes} L_{\hbar}(\mfd),\qquad x \mapsto W^*(1\otimes x)W
\]
satisfies 
\begin{equation}\label{EqCoproductRestr}
\Delta(L_{\hbar}(\mfb)) \subseteq L_{\hbar}(\mfb)\bar{\otimes}L_{\hbar}(\mfb)
\end{equation}
and
\begin{equation}\label{EqCrodQUE}
\Delta(\Kb_i) = \Kb_i\otimes \Kb_i,\qquad \Delta(\Eb_i) =\Eb_i \otimes 1 \boxplus \Kb_i\otimes \Eb_i.
\end{equation}
\end{Prop}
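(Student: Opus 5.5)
We first compute $\Delta$ on the generators $\Kb_i$ and $\Eb_i$ of $L_{\hbar}(\mfb)$, and only afterwards deduce \eqref{EqCoproductRestr}. Indeed, $L_{\hbar}(\mfb) = \{\Kb_v,\Xb_a\}''$. If we can show that $\Delta(\Kb_v)$ and $\Delta(\Eb_i)$ are affiliated with $L_{\hbar}(\mfb)\bar\otimes L_{\hbar}(\mfb)$, then for integrable words we are done, since the $\Eb_i$ and $\Kb_v$ generate. In general $L_{\hbar}(\mfb)$ is generated by the $\Xb_a$, and not only by the $\Eb_i$. So I would rather compute $\Delta(\Kb_v)$ and $\Delta(\Xb_a)$ directly, or else prove the inclusion \eqref{EqCoproductRestr} by a commutant argument: the commutant of $L_{\hbar}(\mfb)\bar\otimes L_{\hbar}(\mfb)$ inside $L_{\hbar}(\mfd)\bar\otimes L_{\hbar}(\mfd)$ is generated by elements commuting with $W$ in an appropriate slice sense. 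I expect the cleanest route to be the following. Since $W\in L_{\hbar}(\mfb)\bar\otimes L_{\hbar}(\mfb')$, for $x\in L_{\hbar}(\mfb)$ the element $W^*(1\otimes x)W$ lies in $L_{\hbar}(\mfb)\bar\otimes L_{\hbar}(\mfd)$ automatically. It therefore suffices to control the second leg, which amounts to checking that $W^*(1\otimes x)W$ commutes with $1\otimes y$ for $y$ in the relative commutant $L_{\hbar}(\mfb)'\cap L_{\hbar}(\mfd)$. That relative commutant can be identified via the skew form on $\mfd$; it is generated by $\qb$ of vectors $\epsilon$-orthogonal to $\mfb$.

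\textbf{Step 1 (Cartan part).} By the Gaussian commutation law \eqref{EqqCommGauss}, $\Kc^*(1\otimes \Kb_i)\Kc = \Kb_i\otimes\Kb_i$, because $\Lb'_j$ and $\Kb_i$ $\delta_{ij}\hbar/2$-commute, which yields $\Kb_i^{1}\star$ on the first leg. Next, each factor ${}^aW_{\br}$ commutes with $1\otimes\Kb_i$ and with $\Kb_i\otimes\Kb_i$. This holds since $\Kb_i$ strongly commutes with $\Fb_j$ by condition \eqref{EqCond1}, and the $\hbar$-weights of $\Xb_a$ against $\Kb_i$ cancel against those of $\Fb_{i_a}$ against $\Kb_i$ (which are zero), so $\Xb_a\otimes\Fb_{i_a}$ strongly commutes with $\Kb_i\otimes \Kb_i$. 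This gives $\Delta(\Kb_i)=\Kb_i\otimes\Kb_i$.

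\textbf{Step 2 ($\Eb_i$).} First, $\Kc^*(1\otimes\Eb_i)\Kc = \Kb_i\otimes\Eb_i$, using \eqref{EqqCommGauss} with $\Lb_i'$ against $\Xb_a$ for $a\in J_i$, together with Lemma \ref{LemDistr}. Then I conjugate successively by the ${}^aW_{\br}$ in reverse order. The factors with $i_a\neq i$ commute with $\Kb_i\otimes \Eb_i$ by condition \eqref{EqCond3} and by the weight cancellation between $\Xb_a$ and $\Kb_i$; I expect this to require care, and the ordering may matter. For the factors with $i_a=i$, I use Theorem \ref{TheoHeisPent}(2) with $(\Eb_i,\Fb_i)$ a Heisenberg $\Lb_i$-pair (condition \eqref{EqCond4}), rewritten in the tensor form: $\overline F(\Xb_a\otimes\Fb_i)$ acting on $\Kb_i\otimes \Eb_i$ produces the form summand $\Xb_a\otimes 1$. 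Here one uses that $\Xb_a\otimes\Fb_i$ and $\Kb_i\otimes\Eb_i$ generate the right skew relations, and that \eqref{EqFormSumIdAlt} applies after splitting. Summing over $a\in J_i$ via associativity of $\boxplus$ and \eqref{EqPropConjFormSum} gives $\Eb_i\otimes1\boxplus\Kb_i\otimes\Eb_i$.

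\textbf{Main obstacle.} The difficulty lies in Step 2. Conjugation by a single ${}^aW_{\br}$ does not cleanly peel off $\Xb_a\otimes 1$, because $\Eb_i$ is a form sum involving all $\Xb_b$ with $b\in J_i$, and only the full Heisenberg relation \eqref{EqPentagonOr2} for $\Eb_i$ is available. I would therefore first try to show that $\prod_{a\in J_i}{}^aW_{\br}$, interleaved with the other factors, acts on $1\otimes \Fb_i$-dependent functions in the same way as $\overline F_{d_i\hbar}(\Eb_i\otimes\Fb_i)$, using the quantum exponential property \eqref{EqQuantExp} after commuting the intermediate factors out of the way. The claim then reduces to the single Heisenberg identity.
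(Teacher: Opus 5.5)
You have two genuine gaps. First, the inclusion \eqref{EqCoproductRestr} is never established. Your relative-commutant reduction gives only a necessary condition, not a sufficient one. The space $\mfd$ is in general degenerate, with a radical not contained in $\mfb$, so $L_{\hbar}(\mfb)$ is not its own relative bicommutant in $L_{\hbar}(\mfd)$. Already for the one-letter word $\mbi=(i)$ one finds $\mfb^{\perp}=\R(i-a+2d_i\varpi_i^{\vee})$, which is the radical of $\mfd$. So the relative commutant is central, and every element of $L_{\hbar}(\mfb)\bar{\otimes}L_{\hbar}(\mfd)$ satisfies your criterion.

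The missing ingredient is condition (5) of Definition~\ref{DefHeisDualOp}, which your proposal never uses. By the Gaussian commutation law, $W=W_{\br}\Kc=\Kc\prod^{\longra}_{a\in J}W^a_{\br}$ with $W^a_{\br}=\Kc^*\,{}^aW_{\br}\,\Kc=\overline{F}_{d_{i_a}\hbar}(\Xb_a\star\Kb_{i_a}^{-1}\otimes\Fb_{i_a}\star\Lb_{i_a}^{-1})$. This factor lies in $L_{\hbar}(\mfb)\bar{\otimes}L_{\hbar}(\mfn)$ precisely because of (5). Together with $\Kc^*(1\otimes L_{\hbar}(\mfb))\Kc\subseteq L_{\hbar}(\mfb)\bar{\otimes}L_{\hbar}(\mfb)$, this gives the inclusion at once, with no generation argument.

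Second, your computation of $\Delta(\Eb_i)$ conjugates in the wrong order. Since $W=W_{\br}\Kc$, one has $\Delta(x)=\Kc^*(W_{\br}^*(1\otimes x)W_{\br})\Kc$. So the factors $({}^aW_{\br})^*=F_{d_{i_a}\hbar}(\Xb_a\otimes\Fb_{i_a})$ act first, starting with $a=1$, and $\Kc$ acts last. Applying $\Kc$ first and then the ${}^aW_{\br}$, as you do, computes conjugation by the different unitary $\Kc W_{\br}$.

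The commutations you invoke there are false. $(\Kb_i,\Xb_a)$ is $\hbar\langle i,i_a\rangle/2$-commuting, whereas $\Fb_{i_a}$ strongly commutes with $\Kb_i$ and, for $i_a\neq i$, with $\Eb_i$, so nothing cancels. Hence $\Xb_a\otimes\Fb_{i_a}$ does not commute with $\Kb_i\otimes\Eb_i$ when $i_a\neq i$ and $\langle i,i_a\rangle\neq0$. Nor does it commute with $\Kb_i\otimes\Kb_i$ whenever $\langle i,i_a\rangle\neq 0$. Your Step 1 survives only because, in the correct order, commutation with $1\otimes\Kb_i$ (condition (1)) suffices. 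Likewise, Theorem~\ref{TheoHeisPent} cannot be applied fibrewise over the spectrum of $\Xb_a$ to $\Kb_i\otimes\Eb_i$, whose first leg does not commute with $\Xb_a$.

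In the correct order, Theorem~\ref{TheoHeisPent}, with $\Fb_i$ rescaled by the spectral values of $\Xb_a$, gives $\Ad_{F_{d_i\hbar}(\Xb_a\otimes\Fb_i)}(1\otimes\Eb_i)=\Xb_a\otimes\Lb_i\boxplus 1\otimes\Eb_i$. The new summand carries $\Lb_i$, not $1$. Condition (2), which you also never use, makes each such $\Xb_b\otimes\Lb_i$ strongly commute with all later $\Xb_a\otimes\Fb_{i_a}$: the pairs $(\Xb_b,\Xb_a)$ and $(\Lb_i,\Fb_{i_a})$ skew-commute with opposite parameters $\mp\hbar\langle i,i_a\rangle/2$. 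Condition (3) handles the factors with $i_a\neq i$. One arrives at $\Eb_i\otimes\Lb_i\boxplus 1\otimes\Eb_i$. Only then does $\Kc$ send $\Eb_i\otimes\Lb_i$ to $\Eb_i\otimes 1$ (as $\alpha_i=\sum_j\langle i,j\rangle\varpi_j^{\vee}$) and $1\otimes\Eb_i$ to $\Kb_i\otimes\Eb_i$. If you want $\Kc$ first, you must use the factors $W^a_{\br}$ above instead; then your cancellations and the summand $\Xb_a\otimes 1$ are correct.

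In particular, your ``main obstacle'' is illusory. A single factor with $i_a=i$ peels off exactly one summand, using the Heisenberg relation for the whole $\Eb_i$. Your proposed workaround of merging factors into $\overline{F}_{d_i\hbar}(\Eb_i\otimes\Fb_i)$ would not work anyway. Dualizability imposes no relation between $\Fb_i$ and $\Fb_j$ for $i\neq j$, so the interleaved factors for other letters cannot be commuted out of the way.
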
 
\begin{proof}
The inclusion \eqref{EqCoproductRestr} follows from the alternative expression \eqref{EqFundamUnitAlt}, the assumption $\Fb_i \star\Lb_i^{-1}\aff L_{\hbar}(\mfn)$ and the fact that
\[
\Kc^*(1\otimes L_{\hbar}(\mfb))\Kc \subseteq L_{\hbar}(\mfb)\bar{\otimes}L_{\hbar}(\mfb),
\]
which is immediate by the Gaussian commutation relation.

Now by the first commutation relation in Definition \ref{DefHeisDualOp}, we get from \eqref{EqFundamUnit} that
\[
W^*(1\otimes \Kb_i)W = \left(\prod_{j\in I} e_{\hbar}(\Hb_j \otimes \Hb_j')\right)(1\otimes \Kb_i)\left(\prod_{j\in I} \overline{e}_{\hbar}(\Hb_j \otimes \Hb_j')\right),
\]
which by \eqref{EqqCommGauss} indeed reduces to the first identity in \eqref{EqCrodQUE}. Similarly, using those commutation relations and Theorem \ref{TheoHeisPent}, we find 
\[
\Ad_{F_{d_{i_a}\hbar}(\Xb_a\otimes \Fb_{i_a})}(1\otimes \Eb_i) = \left\{\begin{array}{ll} 1 \otimes \Eb_i & \textrm{if }i_a\neq i,\\ \Xb_a \otimes \Lb_{i} \boxplus 1\otimes \Eb_i,&\textrm{if }i_a = i,\end{array}\right.
\]
so combined with the second commutation relation in Definition \ref{DefHeisDualOp}, we find
\[
\Ad_{\prod_{a\in J}^{\longla} F_{d_{i_a}\hbar}(\Xb_a\otimes \Fb_{i_a})}(1\otimes \Eb_i) = \left(\left(\boxplus_{a\in J_i} \Xb_a\right) \otimes \Lb_i\right) \boxplus \left(1\otimes\Eb_i\right) = \Eb_i \otimes \Lb_i \boxplus 1\otimes \Eb_i
\]
and hence, as $\alpha_i = \sum_{j\in I} \langle i,j\rangle \varpi_j^{\vee}$, 
\[
W^*(1\otimes \Eb_i)W = \Ad_{\prod_{j\in I} e_{\hbar}(\Hb_j \otimes \Hb_j')}(\Eb_i \otimes \Lb_i \boxplus 1\otimes \Eb_i) = \Eb_i\otimes 1 \boxplus \Kb_i \otimes \Eb_i.  
\]
\end{proof}

\begin{Rem}
We do not claim that $W$ is a \emph{multiplicative} unitary, i.e.\ that it satisfies the pentagon equation 
\[
W_{12}W_{13}W_{23}= W_{23}W_{12}.
\]
This identity will hold under a small strengthening of Definition \ref{DefHeisDualOp}, which we explore further in Section \ref{SecFullDual}.
\end{Rem}

\section{Quantum Borel von Neumann algebras as locally compact quantum groups}

Fix again a Cartan datum $I$ and $\hbar\in \R^{\times}$. In this section we prove the second part of Theorem \ref{TheoMain}, namely:

\begin{Theorem}\label{TheoIntLCQG}
Let $\mbi$ be a fully exchangeable  and integrable word. Then $(L_{\hbar}(B(\mbi)),\Delta)$ defines a locally compact quantum group.
\end{Theorem}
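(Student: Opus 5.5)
By Theorem \ref{TheoExchvNbi} we already have the von Neumann bialgebra $(L_{\hbar}(B(\mbi)),\Delta)$. What remains is to construct nsf left and right invariant weights. The natural plan is to use integrability to identify $L_{\hbar}(B(\mbi)) = \mfh\ltimes L_{\hbar}(N(\mbi)) = \mfh\ltimes L_{\hbar}(\mfn(\mbi)) \cong L_{\hbar}(\mfb(\mbi))$ (Lemma \ref{LemDirSumSkew}). This makes the whole algebra a quantum torus type algebra, on which we have explicit nsf weights.

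\textbf{Candidate weights.} On $L_{\hbar}(\mfb)$ consider the dual weight $\tilde\tau$ of a canonical trace-like weight on $L_{\hbar}(\mfn)$, twisted by the Cartan part. Concretely, in the Lemma \ref{LemDirSumSkew} picture $L_{\hbar}(\mfb)\cong \mfh\ltimes L_{\hbar}(\mfn)$, with $L_{\hbar}(\mfn)\cong L_{\hbar}(V)$ for a skew-symmetric space. Its Plancherel weight $\tau$ is the weight with $\tau(\qb(v)^{it}\cdot)$ formally concentrated at $v=0$; it is an nsf weight whose modular group is implemented by a $\qb(v_0)$ for the radical part. We then define
\[
\varphi = \tilde\tau(\Kb_{\rho}^{1/2}\,\cdot\,\Kb_{\rho}^{1/2}),\qquad \psi = \tilde\tau(\Kb_{\rho'}^{1/2}\,\cdot\,\Kb_{\rho'}^{1/2}),
\]
with $\tilde\tau$ the dual weight on the crossed product and $\Kb_\rho,\Kb_{\rho'}$ positive operators in $L_{\hbar}(\mfb)$ (products of $\Kb_i$ and possibly some $\Xb_a$) to be fixed. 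These are nsf by construction, and their modular groups act by scaling on the generators.

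\textbf{Invariance via the fundamental unitary.} To verify left invariance I would use $\Delta(x) = W^*(1\otimes x)W$ with $W = W_{\br}\Kc$ from Proposition \ref{PropIndepHeis}, working inside $L_{\hbar}(\mfd)$. Write $(\omega\otimes\id)\Delta(x)$ as an operator-valued integral. The Cartan factor $\Kc$ is handled by Fourier analysis on $\mfh$: it shifts the $\mfh$-variable, and the dual weight absorbs this exactly when the modular twist $\Kb_\rho$ is correctly chosen. Each factor $\overline{F}_{d\hbar}(\Xb_a\otimes\Fb_{i_a})$ must be shown to preserve $\tilde\tau\otimes\id$-type expressions. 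For this I would use the Fourier transform of the quantum exponential (a Woronowicz-type integral formula $F_\hbar(\Ab)=\int \Ab^{it}\,\hat F(t)\,dt$ as in \cite{DCSSV25}). This reduces invariance to a computation with Weyl operators $\qb(v)^{it}$, for which $\tau$ is explicitly translation invariant up to modular factors.

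Full exchangeability would be used to reduce to the first letter: via the isomorphisms $\widetilde\Phi$ one can move any $i$ to position $1$. The factor in $W$ involving $\Fb_i=\Lb_i\star\Xb_1^{-1}$ then becomes a Heisenberg-pair rotation as in Example \ref{ExaCompHeis}.

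\textbf{Alternative and right invariance.} An alternative route avoids explicit invariance checks. Show that $W$ is (after restricting to a suitable Hilbert space) a manageable unitary in the sense of Woronowicz, or that $(L_{\hbar}(B),\Delta)$ admits a left invariant weight by the Kustermans--Vaes criterion once a single nsf weight with $(\omega\otimes\varphi)\Delta(x)$ proportional to $\varphi(x)\omega(1)$ on a core is found. Right invariance then follows by constructing the unitary antipode $R$. This is an anti-automorphism flipping $\Delta$, given on generators by $R(\Kb_i)=\Kb_i^{-1}$ and $R(\Eb_i)=\Kb_i^{-1}\star\Eb_i$ (up to sign conventions). It is implemented via integrability and the anti-isomorphism $L_{\hbar}(V)\to L_{\hbar}(V)^{\opp}$, $\qb(v)\mapsto\qb(v)$, combined with a reversed-word equivalence. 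Setting $\psi=\varphi\circ R$ then gives the right-invariant weight.

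\textbf{Main obstacle.} The crux is the invariance computation. Heuristically it is $\tau$ being a ``Haar measure on the quantum torus'' intertwined by the pentagon identities, but making it rigorous requires handling unbounded form sums $\Eb_i$ inside weights. I would first try proving invariance on the $\sigma$-weakly dense set of elements $\int f(v)\qb(v)^{i}\,dv$ with compactly supported smooth $f$, where all computations are Gaussian integrals.
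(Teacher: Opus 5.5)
Your use of integrability to identify $L_{\hbar}(B(\mbi))=L_{\hbar}(\mfb(\mbi))$, and your choice $\psi=\varphi\circ R$, match the paper. Your ansatz (a canonical trace twisted by a monomial in the $\Kb_i,\Xb_a$) even has the right shape, cf.\ \eqref{EqExplicitFormLeftInv}. But the heart of the theorem, left invariance, is never established: the twist is left open, and invariance is deferred to a computation you do not carry out.

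The factor-by-factor mechanism you sketch also cannot be run inside $L_{\hbar}(\mfb)$. Conjugation by a single factor $\overline{F}_{d_{i_a}\hbar}(\Xb_a\otimes\Fb_{i_a})$ does not keep $1\otimes L_{\hbar}(\mfb)$ inside $L_{\hbar}(\mfb)\bar{\otimes}L_{\hbar}(\mfb)$: the operators $\Lb_i$ appear on the second leg until the Cartan factor $\Kc$ is applied (see the proof of Proposition \ref{PropIndepHeis}). So any factorwise argument must use a weight on a larger algebra, placed on the second leg rather than in a $\tilde\tau\otimes\id$ form.

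The missing idea is \emph{modularity} of $W$ (Definition \ref{DefModular}).
\begin{itemize}
\item The sesquilinear relation \eqref{EqLeftInvOp} is what composes over the factors (Lemmas \ref{LemModRank1} and \ref{LemMod}). This gives Theorem \ref{TheoModFundMultUni}, with $\hat{\Qb}=\Lb_{\rho}\star\Lb_{\rho^{\vee}}^{1/|\hbar|}$.
\item For a modular unitary, the $\hat{\Qb}^2$-twisted trace on $\Bc(\Hsp)$ is $\Delta_W$-invariant, with no multiplicativity of $W$ needed.
\item This is also why your alternative does not close the gap. $W$ is not known to be multiplicative, so Woronowicz manageability is unavailable. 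Your Kustermans--Vaes remark presupposes the very weight to be constructed.
\item That invariant weight does not by itself restrict to a semifinite weight on $L_{\hbar}(\mfb)$. You need the non-degeneracy of the $\mfb$--$\mfb'$ pairing (Lemma \ref{LemNonDegWord}) to cut it down by some $a\in L_{\hbar}((\mfb')^{\perp})$, which commutes with the second leg of $W$ (Theorem \ref{TheoLeftInvWeight}).
\item This cut-down is what actually determines the twist $\qb(2(\theta_l+|\hbar|^{-1}\theta_l^{\vee}))$, with $\theta_l=\pi_{\mfb}(\rho)$ and $\theta_l^{\vee}=\pi_{\mfb}(\rho^{\vee})$. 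That includes the $|\hbar|^{-1}$-term your ansatz would otherwise have to guess.
\end{itemize}

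For right invariance, your $R$ (reverse the word, then compose with an equivalence $\mbi^{\mathrm{rev}}\sim\mbi$) can be made to work. The anti-isomorphism $L_{\hbar}(\mfb(\mbi))\to L_{\hbar}(\mfb(\mbi^{\mathrm{rev}}))$ must then be induced by the anti-symplectic map $i\mapsto -i$, $a\mapsto \bar{a}-i_a$ ($\bar a$ the mirrored position), not by the identity. However, the equivalence $\mbi\sim\mbi^{\mathrm{rev}}$ is not among the hypotheses of the theorem. It does hold for reduced words of $w_0$ in classical type, by Matsumoto's theorem and Theorem \ref{TheoMatsum}.

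The paper instead takes $R(\qb(v))=\qb(\varsigma(v))$ with $\varsigma$ from Definition \ref{DefReflMap}. Note, however, that for $b>a$ one computes $\epsilon(\varsigma(b),\varsigma(a))=\epsilon(b,a)$, not $-\epsilon(b,a)$. So that map is not anti-multiplicative once $\langle i_a,i_b\rangle\neq 0$ for some $a\neq b$. A word reversal of the kind you suggest therefore seems genuinely needed at that step.
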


The proof strategy is similar to the one in \cite{DCSSV25}, and has two main technical components, which we elaborate on in the following two subsections. We then present the proof of the theorem in the third subsection.

\subsection{Modularity}

We first recall the following important concept of \emph{modularity} in the sense of \cite{SW01}*{Definition 2.1} (we slightly change the auxiliary terminology and notation introduced in \cite{DCSSV25}*{Definition 3.6}):

\begin{Def}\label{DefModular}
Let $\Hsp,\Hsp'$ be Hilbert spaces and $W \in \Bc(\Hsp\otimes \Hsp')$ a  unitary. 

We call $W$ \emph{modular} if there exist positive operators $\Qb$ on $\Hsp$ and $\hat{\Qb}$ on $\Hsp'$, and a (necessarily unique) unitary operator $W_c \in \Bc(\overline{\Hsp}\otimes \Hsp')$, with $\overline{\Hsp}$ the complex-conjugate Hilbert space to $\Hsp$, such that 
\begin{equation}\label{EqCommModOp}
W^*(\Qb\otimes \hat{\Qb})W = \Qb\otimes \hat{\Qb}
\end{equation}
and
\begin{equation}\label{EqLeftInvOp}
\langle \Qb\xi\otimes \eta',W(\eta\otimes \xi')\rangle = \langle \overline{\Qb\eta}\otimes \eta',W_c(\overline{\xi} \otimes \xi')\rangle,\qquad \forall \xi,\eta\in \msD(\Qb),\xi',\eta'\in \Hsp'.
\end{equation}
We refer to $\Qb,\hat{\Qb}$ as an associated \emph{modular} and \emph{dual modular} operator.
\end{Def}

The following easy (but crucial) lemma is \cite{DCSSV25}*{Lemma 3.7}.

\begin{Lem}\label{LemMod}
Let $\Hc,\Hc',\Hc''$ be Hilbert spaces, and assume $(V,\Qb,V_c)$ and $(W,\Pb,W_c)$ satisfy \eqref{EqLeftInvOp} with respect to respectively $\Hc,\Hc''$ and $\Hc',\Hc''$. Then $(U,\Qb\otimes \Pb,U_c)$ satisfies \eqref{EqLeftInvOp} with respect to $\Hc\otimes \Hc'$ and $\Hc''$, where 
\begin{equation}\label{EqTensProdMod}
U = V_{13}W_{23} \in \Bc((\Hc\otimes \Hc')\otimes \Hc''),\qquad U_c = V_{c,13}W_{c,23} \in \Bc((\overline{\Hc}\otimes \overline{\Hc'})\otimes \Hc'').
\end{equation}
\end{Lem}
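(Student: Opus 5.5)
We verify \eqref{EqLeftInvOp} for $U$ directly on elementary tensors and then extend by sesquilinearity and continuity. Note that $\msD(\Qb)\odot\msD(\Pb)$ is a core for $\Qb\otimes\Pb$. It is also dense, with $(\Qb\otimes\Pb)(\xi_1\otimes\xi_2) = \Qb\xi_1\otimes \Pb\xi_2$. So we first fix $\xi=\xi_1\otimes\xi_2$ and $\eta=\eta_1\otimes\eta_2$ with $\xi_1,\eta_1\in\msD(\Qb)$ and $\xi_2,\eta_2\in\msD(\Pb)$, together with $\xi',\eta'\in\Hc''$.

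The computation inserts a resolution of the identity in the middle leg $\Hc''$. Take an orthonormal basis $\{e_k\}$ of $\Hc''$. For $U=V_{13}W_{23}$ we have
\[
\langle \Qb\xi_1\otimes\Pb\xi_2\otimes\eta', V_{13}W_{23}(\eta_1\otimes\eta_2\otimes\xi')\rangle = \sum_k \langle \Qb\xi_1\otimes\eta', V(\eta_1\otimes e_k)\rangle\,\langle \Pb\xi_2\otimes e_k, W(\eta_2\otimes\xi')\rangle .
\]
To justify this expansion, write $V_{13}W_{23}$ in the form $(1\otimes\sum_k |\cdot\rangle\ldots)$. Concretely, expand $W(\eta_2\otimes\xi')=\sum_k \zeta_k\otimes e_k$ with $\zeta_k\in\Hc'$, and apply $V_{13}$ term by term. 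The sum converges absolutely by Cauchy--Schwarz, since both factors are square-summable in $k$.

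Next we apply the hypotheses to each factor. The hypothesis on $W$ turns the second factor into $\langle \overline{\Pb\eta_2}\otimes e_k, W_c(\overline{\xi_2}\otimes\xi')\rangle$. The hypothesis on $V$ turns the first factor into $\langle\overline{\Qb\eta_1}\otimes\eta', V_c(\overline{\xi_1}\otimes e_k)\rangle$. We then resum over $k$ in the reverse direction, which reassembles the product into
\[
\langle \overline{\Qb\eta_1}\otimes\overline{\Pb\eta_2}\otimes\eta', V_{c,13}W_{c,23}(\overline{\xi_1}\otimes\overline{\xi_2}\otimes\xi')\rangle .
\]
The order of the factors matters here. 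On the left-hand side, the $e_k$ sits in the output slot of $W$ and the input slot of $V$. On the right-hand side, it sits in the output slot of $W_c$ and the input slot of $V_c$. So $V_{c,13}W_{c,23}$ again appears with $W_c$ acting first, which is exactly $U_c$ as defined in \eqref{EqTensProdMod}. We also use $\overline{\Hc\otimes\Hc'}\cong\overline{\Hc}\otimes\overline{\Hc'}$, under which $\overline{(\Qb\otimes\Pb)\eta}$ corresponds to $\overline{\Qb\eta_1}\otimes\overline{\Pb\eta_2}$.

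The main obstacle is passing from elementary tensors to arbitrary $\xi,\eta\in\msD(\Qb\otimes\Pb)$. By sesquilinearity, the identity holds on $\msD(\Qb)\odot\msD(\Pb)$. For general vectors in the domain, we approximate $\xi$ and $\eta$ by sequences $\xi^{(n)},\eta^{(n)}$ in this core, chosen so that the graph norms converge, that is, both the vectors and their images under $\Qb\otimes\Pb$ converge. Both sides of the identity are continuous in this graph norm, because $U$ and $U_c$ are bounded. Hence the identity passes to the limit, and \eqref{EqLeftInvOp} holds for $(U,\Qb\otimes\Pb,U_c)$ as claimed.
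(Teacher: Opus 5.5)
Your proof is correct: the basis expansion in $\Hc''$ is justified because $V_{13}$ is bounded, and you apply the two hypotheses factorwise with the $\Hc''$-leg passing through $W$ then $V$ (resp.\ $W_c$ then $V_c$), resum, and extend from the core $\msD(\Qb)\odot\msD(\Pb)$ by graph-norm approximation. The paper gives no argument of its own here, only citing \cite{DCSSV25}*{Lemma 3.7} as an easy lemma, and your computation is the natural direct verification that citation stands in for.
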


To apply this in our setting, we first observe that we have an alternative expression for the fundamental unitary \eqref{EqFundamUnit}, adapted to the tensor product representation. 

With $\kb,\eb$ as in \eqref{EqGenke}, write
\begin{equation}
\hb = (-\pi i\hbar)^{-1}\ln(\kb) \aff \Bc(L^2(\R)).
\end{equation}
Recall further the Hilbert space $\widetilde{\Hsp}(\mbi)$ from \eqref{EqTensDecompExt}, and the faithful representations 
\begin{equation}\label{EqFaithfRepBor}
L_{\hbar}(\mfb(\mbi))\subseteq \Bc(\widetilde{\Hsp}(\mbi)),\qquad L_{\hbar}(\mfb'(\mbi))\subseteq \Bc(\widetilde{\Hsp}(\mbi)).
\end{equation}

\begin{Lem}
Let $\mbi$ be a dualizable word, and consider its fundamental unitary $W$, defined in \eqref{EqFundamUnit}, as an element of 
\[
W \in L_{\hbar}(\mfb) \bar{\otimes}L_{\hbar}(\mfb') \subseteq \Bc(\widetilde{\Hsp}(\mbi))\bar{\otimes} L_{\hbar}(\mfb') \cong (\Bc(L^2(\R))^{\bar{\otimes} J}\bar{\otimes} \Bc(L^2(\R))^{\bar{\otimes} I})\bar{\otimes} L_{\hbar}(\mfb'),
\]
using the representation \eqref{EqFaithfRepBor}.  Then we have the identity 
\begin{equation}\label{EqWProdOneSide}
W = \left(\prod^{\longra}_{a\in J} {}^aW_{a,\bullet}\right)  \left(\prod_{i\in I} \overline{e}_{\hbar}(\hb_i\otimes \Hb_i')\right),
\end{equation}
where
\begin{equation}\label{EqParsToto}
{}^aW = \overline{F}_{d_{i_a}\hbar}(\eb \otimes \Fb_{i_a})\overline{e}_{\hbar}(\hb \otimes \Hb_{\alpha_{i_a}}') \in \Bc(L^2(\R))\bar{\otimes}L_{\hbar}(\mfb'),
\end{equation}
using the notation \eqref{EqBasisGen1}.
\end{Lem}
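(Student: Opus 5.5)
The plan is to compare both sides factor by factor inside the concrete realisation \eqref{EqTensProdDecomp}. The first step is to rewrite the fundamental Cartan unitary $\Kc$. By \eqref{EqTensProdDecomp} we have $\Kb_i = \left(\otimes_{a\in J}\kb^{\langle i,i_a\rangle}\right)\otimes \kb_i$, so on the level of logarithms
\[
\Hb_i = \sum_{a\in J}\langle i,i_a\rangle \hb_a + \hb_i ,
\]
where all summands strongly commute. Since all $\hb_a,\hb_i$ strongly commute with each other, and all $\Hb'_\omega$ do as well, the function $\overline{e}_{\hbar}$ of a sum of commuting tensors factorizes. Using $\sum_{i\in I}\langle i_a,i\rangle\varpi_i^{\vee} = \alpha_{i_a}$, i.e.\ $\sum_i \langle i_a,i\rangle \Hb_i' = \Hb'_{\alpha_{i_a}}$, I get
\[
\Kc = \prod_{a\in J}\overline{e}_{\hbar}(\hb_a\otimes \Hb'_{\alpha_{i_a}})\prod_{i\in I}\overline{e}_{\hbar}(\hb_i\otimes \Hb_i'),
\]
with all factors mutually commuting. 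The second product is already the last factor in \eqref{EqWProdOneSide}, so it remains to show
\[
\Bigl(\prod^{\longra}_{a\in J}\overline{F}_{d_{i_a}\hbar}(\Xb_a\otimes \Fb_{i_a})\Bigr)\prod_{a\in J}\overline{e}_{\hbar}(\hb_a\otimes \Hb'_{\alpha_{i_a}}) = \prod^{\longra}_{a\in J}\overline{F}_{d_{i_a}\hbar}(\eb_a\otimes \Fb_{i_a})\,\overline{e}_{\hbar}(\hb_a\otimes\Hb'_{\alpha_{i_a}}).
\]

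I would prove this by starting from the right-hand side and pushing each Gaussian factor $G_a := \overline{e}_{\hbar}(\hb_a\otimes \Hb'_{\alpha_{i_a}})$ to the far right, past the factors $\overline{F}_{d_{i_b}\hbar}(\eb_b\otimes\Fb_{i_b})$ with $b>a$. This is done by induction on $|J|$, or equivalently from the rightmost index downwards. Passing $G_a$ to the right replaces $\overline{F}(\eb_b\otimes\Fb_{i_b})$ by $\overline{F}(G_a(\eb_b\otimes\Fb_{i_b})G_a^*)$, and this conjugation is computed with the Gaussian commutation law \eqref{EqqCommGauss}. In the first leg, $\kb_a$ and $\eb_b$ commute since $b\neq a$. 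In the second leg, condition \eqref{EqCond2} of Definition \ref{DefHeisDualOp}, expanded linearly in the basis $\varpi_j^{\vee}$, shows that $\Lb_{\alpha_{i_a}}$ and $\Fb_{i_b}$ are $\langle i_a,i_b\rangle\hbar/2$-commuting. Hence the conjugation yields
\[
\eb_b\star\kb_a^{\pm\langle i_a,i_b\rangle}\otimes\Fb_{i_b}.
\]
The $\Fb_{i_b}$ is unchanged because the first-leg pair commutes, so its exponent $t_1=0$. After all $a<b$ have been passed, the first leg becomes $\bigl(\prod_{a<b}\kb_a^{\langle i_b,i_a\rangle}\bigr)\eb_b = \Xb_b$, as in \eqref{EqXVar}, provided the sign comes out as $+$. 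The $G_a$ also commute with one another and can be collected on the right, which gives the displayed identity.

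The main obstacle I expect is the sign and normalization bookkeeping in \eqref{EqqCommGauss}. This means checking that the convention $(\Ab,\Bb)$ with $\Ab=\Fb_{i_b}$, $\Bb=\Lb_{\alpha_{i_a}}$ produces exponent $+\langle i_a,i_b\rangle$ with $\overline{e}_\hbar$, and not $e_\hbar$. I would also need to confirm that the balanced product with $\kb_a$ reproduces exactly the closure of the commuting product in \eqref{EqXVar}; this holds because $\kb_a$ and $\eb_b$ strongly commute, so there is no $e^{-\pi i\hbar t^2}$ correction. I would settle the sign by a direct check on the one-parameter groups: compute $G_a(\eb_b^{it}\otimes\Fb_{i_b}^{is})G_a^*$ using the defining relation from \eqref{EqCond2} and the formula \eqref{EqGenke}. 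As a consistency check, the same sign also produces \eqref{EqFundamUnitAlt} from \eqref{EqFundamUnit}.
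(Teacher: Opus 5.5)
Your proposal is correct and is essentially the paper's argument. The paper also pushes the Gaussian factors through the $\overline{F}$-factors using \eqref{EqqCommGauss}, which turns each $\eb_a$ into $\Xb_a$, and then recombines the leftover Gaussians into $\Kc$ via $\alpha_{i_a}=\sum_i\langle i_a,i\rangle\varpi_i^{\vee}$; you simply run this in the opposite direction by splitting $\Kc$ first. Your sign check also comes out right: the pair $(\Fb_{i_b},\Lb_{\alpha_{i_a}})$ is $-\langle i_a,i_b\rangle\hbar/2$-commuting, and $\overline{e}_{\hbar}$ inverts the conjugation in \eqref{EqqCommGauss}, so one gets exactly $\eb_b\star\kb_a^{\langle i_a,i_b\rangle}\otimes\Fb_{i_b}$.
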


Here the leg numbering notation of $W$ in \eqref{EqWProdOneSide} has its first component referring to the position in the $J$-part, resp.\ $I$-part of the tensor product decomposition $\Bc(L^2(\R))^{\bar{\otimes} J}\bar{\otimes} \Bc(L^2(\R))^{\bar{\otimes}I}$, while $\bullet$ refers to the leg living in $L_{\hbar}(\mfb')$. So,
\[
    {}^a W_{a,\bullet} = \overline{F}_{d_{i_a}\hbar}(\eb_a \otimes \Fb_{i_a})\overline{e}_{\hbar}(\hb_a \otimes \Hb_{\alpha_{i_a}}') \in (\Bc(L^2(\R))^{\bar{\otimes J}}\bar{\otimes} \Bc(L^2(\R))^{\bar{\otimes I}}) \bar{\otimes} L_{\hbar}(\mfb').
\]

\begin{proof}
A straightforward application of the Gaussian commutation relation \eqref{EqqCommGauss} and the identity \eqref{EqXVar} gives 
\begin{eqnarray*}
\left(\prod^{\longra}_{a\in J} {}^aW_{a,\bullet}\right)  \left(\prod_{i\in I} \overline{e}_{\hbar}(\hb_i\otimes \Hb_i')\right)&=& W_{\br} \left(\prod_{a\in J} \overline{e}_{\hbar}(\hb_a\otimes \Hb_{\alpha_{i_a}}')\right)\left(\prod_{i\in I} \overline{e}_{\hbar}(\hb_i\otimes \Hb_i')\right)\\
&=& W_{\br}\Kc \\
&=& W.
\end{eqnarray*}
\end{proof}

Recall now the notation for the roots in \eqref{EqRoots}, and define the coroots by 
\begin{equation}
\alpha_i^{\vee} = d_i^{-1} \alpha_i \in \mfh'.
\end{equation}
Extending the notation \eqref{EqAbbrevL}, we also put
\begin{equation}
\Lb_{i^\vee} = \Lb_{\alpha_i^{\vee}},\qquad i\in I.
\end{equation}

Then for the individual components in \eqref{EqWProdOneSide}, we have: 

\begin{Lem}\label{LemModRank1}
The unitaries ${}^aW$ in \eqref{EqParsToto} are modular, with associated modular operators
\begin{equation}\label{EqRank1Q}
\Qb = \kb^{-(d_{i_a}+|\hbar|^{-1})},\qquad \hat{\Qb} =\Lb_{i_a}^{1/2}\star \Lb_{i_a^{\vee}}^{|\hbar|^{-1}/2}.
\end{equation}
\end{Lem}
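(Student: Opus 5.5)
The plan is to reduce to a rank-one computation on $L^2(\R)$ in the first leg, with the second leg only entering through the pair of operators $\Fb:=\Fb_{i_a}$ and $\Lb_{\alpha_{i_a}}$ (write $i=i_a$, $d=d_i$). By the conditions in Definition \ref{DefHeisDualOp} together with the relations of $\Lb_\omega$, the operators $\Fb$ and $\Lb_i$ skew-commute in a controlled way. Indeed $\Fb$ is $\delta_{ij}\hbar/2$-commuting with $\Lb_j'$, and $\alpha_i=\sum_j\langle i,j\rangle\varpi_j^\vee$, so $(\Lb_i,\Fb)$ is $d_i\hbar$-commuting. Hence the pair $(\Fb\star\Lb_i^{-1},\Lb_i)$, or equivalently the pair $(\Fb,\Hb'_{\alpha_i})$, generates a copy of the Heisenberg/Weyl algebra, i.e.\ a representation of $L_{\hbar}$ of a $2$-dimensional skew-symmetric space. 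By the Stone--von Neumann theorem (the Example after Lemma \ref{LemDirSumSkew}), this representation is a multiple of the standard one. So it suffices to verify modularity of the single unitary
\[
{}^aW=\overline{F}_{d\hbar}(\eb\otimes\Fb)\,\overline{e}_{\hbar}(\hb\otimes\Hb'_{\alpha_i})
\]
for one concrete model of $(\Fb,\Lb_i)$ on $L^2(\R)$. Modularity is clearly stable under amplification $W\mapsto W\otimes 1$ in the second leg, with $W_c\otimes1$ and $\hat\Qb\otimes 1$. The dual modular operator $\hat\Qb=\Lb_i^{1/2}\star\Lb_{i^\vee}^{|\hbar|^{-1}/2}$ is just a real power of $\Lb_i$, namely $\Lb_i^{(1+(d|\hbar|)^{-1})/2}$, so it lives in the same copy.

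The first condition \eqref{EqCommModOp} I would check by the Gaussian commutation law \eqref{EqqCommGauss}. Conjugating $\Qb\otimes\hat\Qb$ by $\overline{e}_\hbar(\hb\otimes\Hb'_{\alpha_i})$ leaves it invariant, since $\kb$ commutes with $\hb$ and $\Lb_i$ commutes with $\Hb'_{\alpha_i}$. Conjugating by $\overline{F}_{d\hbar}(\eb\otimes\Fb)$ leaves it invariant if $\Qb\otimes\hat\Qb$ strongly commutes with $\eb\otimes\Fb$. This amounts to the phases of $\kb^{-(d+|\hbar|^{-1})}$ against $\eb$ and of $\hat\Qb$ against $\Fb$ cancelling. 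That is a one-line check of exponents, and it is where the specific exponents in \eqref{EqRank1Q} are forced (up to the $|\hbar|^{-1}$-part).

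The real work is \eqref{EqLeftInvOp}. I would pass to the Fourier picture in which $\Fb$ and $\Lb_i$ act by multiplication and shift. In that picture $\overline{e}_\hbar(\hb\otimes\Hb')$ becomes an explicit Gaussian-type kernel, and $\overline{F}_{d\hbar}(\eb\otimes\Fb)$ becomes a multiplication operator by $F$ evaluated on a product variable. The matrix coefficients $\langle\Qb\xi\otimes\eta',{}^aW(\eta\otimes\xi')\rangle$ then become integrals of $\overline{F}_{d\hbar}$ against Gaussians. The candidate $W_c$ is obtained by the standard rank-one trick, as in \cite{DCSSV25}*{Section 3} for $\hbar$ and in the $ax+b$ / quantum $E(2)$ literature \cite{SW01}: transpose in the first leg using the analytic continuation of $F_{d\hbar}$ to a strip. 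The shift by $\Qb=\kb^{-(d+|\hbar|^{-1})}$ in the first leg corresponds to an imaginary translation of the argument of $F_{d\hbar}$. Its quasi-periodicity in two directions, with periods governed by $d\hbar$ and $(d\hbar)^{-1}$ (the modular double), together with the inversion formula in \eqref{EqPropsF}, turns the transposed kernel into a bounded unitary of the same shape, namely $F_{d\hbar}(\eb^{\pm1}\otimes\cdots)$ times a Gaussian.

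In fact I expect the cleanest route is to not redo the analysis but to import it. The case $d=1$ (with $\hbar$ replaced by $d\hbar$, after rescaling $\hb$ and $\Hb'$ accordingly) is exactly the rank-one modularity lemma of \cite{DCSSV25}, and a rescaling $\kb\mapsto\kb^{d}$, $\Lb_i\mapsto\Lb_{i^\vee}$ should reduce the general $d$ to it. This explains the appearance of the coroot operator $\Lb_{i^\vee}$ and the combination $d+|\hbar|^{-1}$. The main obstacle is bookkeeping this rescaling correctly (signs depending on $\sgn\hbar$, and the mixing of $d\hbar$ in $F$ with $\hbar$ in the Gaussian), and justifying the contour shift on a dense set of analytic vectors such as $\mcE$, where the decay of $F_{d\hbar}$ in the strip is needed.
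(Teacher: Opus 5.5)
Your plan matches the paper's proof. You check that $\Qb\otimes\hat{\Qb}$ strongly commutes with ${}^aW$ (your exponent check is correct), and you import the rank-one modularity theorem \cite{DCSSV25}*{Theorem 3.8} at parameter $d_{i_a}\hbar$; the Stone--von Neumann reduction is harmless but unnecessary.

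One bookkeeping correction: the paper first uses \eqref{EqqCommGauss} to write ${}^aW=\overline{e}_{\hbar}(\hb\otimes \Hb_{\alpha_{i_a}}')\overline{F}_{d_{i_a}\hbar}(\kb^{-2d_{i_a}}\star \eb \otimes \Lb_{i_a}^{-1}\star \Fb_{i_a})$, so the $d_{i_a}\hbar$-commuting pairs fed into that theorem are $(\kb^{-2d_{i_a}}\star\eb,\kb^{-2d_{i_a}})$ and $(\Lb_{i_a}^{-1}\star\Fb_{i_a},\Lb_{i_a}^{-1})$. In particular $\Lb_{i_a}$ is not replaced by $\Lb_{i_a^{\vee}}$; the coroot operator only enters when you rewrite $\hat{\Qb}=\Lb_{i_a}^{(1+|d_{i_a}\hbar|^{-1})/2}$.
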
 
\begin{proof}
Using the Gaussian commutation relation \eqref{EqqCommGauss}, we can rewrite 
\begin{eqnarray*}
{}^aW&=& \overline{e}_{\hbar}(\hb \otimes \Hb_{\alpha_{i_a}}') \overline{F_{d_{i_a}\hbar}}(\kb^{-2d_{i_a}}\star \eb \otimes \Lb_{i_a}^{-1}\star \Fb_{i_a}) \\
&=& e^{\frac{i}{2\pi \hbar d_{i_a}}\left((2\pi i d_{i_a}\hbar \hb)\otimes (\pi i \hbar \Hb_{\alpha_{i_a}}')\right)} \overline{F_{d_{i_a}\hbar}}(\kb^{-2d_{i_a}}\star \eb \otimes \Lb_{i_a}^{-1}\star \Fb_{i_a}), 
\end{eqnarray*}
with $e^{2\pi i d_{i_a}\hbar \hb} = \kb^{-2 d_{i_a}}$ and $e^{\pi i \hbar \Hb_{\alpha_{i_a}}'} = \Lb_{i_a}^{-1}$. Noting that $\kb^{-2d_{i_a}}\star \eb$ and $\kb^{-2d_{i_a}}$, resp.\ $\Lb_{i_a}^{-1}\star \Fb_{i_a}$  and $\Lb_{i_a}^{-1}$ are $d_{i_a}\hbar$-commuting, we get from \cite{DCSSV25}*{Theorem 3.8}, upon rescaling $\hbar$ to $d_{i_a}\hbar$ and noting that the convention $q = e^{\pi i \hbar}$ is used there, that 
\[
({}^aW,\Qb,({}^aW)_c)
\]
satisfies \eqref{EqLeftInvOp} with $\Qb$ as in \eqref{EqRank1Q} and with 
\begin{eqnarray*}
({}^aW)_c &=& e_{\hbar}(\overline{\hb}\otimes \Hb_{\alpha_{i_a}})F_{d_{i_a}\hbar}(\overline{\kb^{-2d_{i_a}}\star \eb}\otimes \Fb_{i_a})\\
&=& F_{d_{i_a}\hbar}(\overline{ \eb}\otimes \Lb_{i_a}^{-1}\star\Fb_{i_a})e_{\hbar}(\overline{\hb}\otimes \Hb_{\alpha_{i_a}}).
\end{eqnarray*}
The result now follows upon noting that, taking $\hat{\Qb}$ as in \eqref{EqRank1Q}, the operators $\Qb\otimes \hat{\Qb}$ and ${}^aW$ (strongly) commute.
\end{proof}

We can now conclude the following theorem. Write 
\[
\varpi_i = d_i \varpi_i^{\vee}\in \mfh',\qquad i\in I,
\]
which we refer to as the \emph{fundamental weights}, and write 
\begin{equation}\label{DefModElement}
\rho = \sum_{i\in I} \varpi_i \in \mfh',\qquad \rho^{\vee} = \sum_{i\in I} \varpi_i^{\vee} \in \mfh',
\end{equation}
which we refer to as the \emph{Weyl vector} and \emph{dual Weyl vector}. 

\begin{Theorem}\label{TheoModFundMultUni}
Let $\mbi$ be a dualizable word. Then its fundamental unitary 
\[
W\in \Bc(\widetilde{\Hsp}(\mbi)\otimes \widetilde{\Hsp}(\mbi))
\]
is modular, with modular operators given by 
\begin{equation}\label{EqModDatumvN}
\Qb = \left(\otimes_{a\in J} \kb^{-(d_{i_a}+|\hbar|^{-1})}\right)\otimes \left(\otimes_{i\in I} 1\right),\qquad
\hat{\Qb} =\Lb_{\rho}\star\Lb_{\rho^{\vee}}^{1/|\hbar|}.
\end{equation}
\end{Theorem}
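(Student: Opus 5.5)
We will use the factorisation \eqref{EqWProdOneSide} of $W$ and build the modularity data factor by factor with Lemma \ref{LemMod}. By Lemma \ref{LemModRank1}, each ${}^aW$ satisfies \eqref{EqLeftInvOp} with modular operator $\kb^{-(d_{i_a}+|\hbar|^{-1})}$ on the $a$-th copy of $L^2(\R)$. For each $i\in I$ we also need the Cartan factor $\overline{e}_{\hbar}(\hb_i\otimes \Hb_i')$. It is a Gaussian unitary, a function of strongly commuting selfadjoint operators, so \eqref{EqLeftInvOp} holds for it with modular operator $1$: we take $W_c = e_{\hbar}(\overline{\hb}\otimes \Hb_i')$ and check \eqref{EqLeftInvOp} directly, since $\overline{\hb}$ acts on $\overline{L^2(\R)}$ as the conjugate of $\hb$ and $\Qb=1$. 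This is the same mechanism already used implicitly for the Gaussian part inside Lemma \ref{LemModRank1}.

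The next step is to apply Lemma \ref{LemMod} iteratively, with $\Hsp''=\widetilde{\Hsp}(\mbi)$ carrying $L_{\hbar}(\mfb')$. This shows that $W$, written as an ordered product of operators each acting on a distinct first leg (the legs $a\in J$, then the legs $i\in I$), satisfies \eqref{EqLeftInvOp} with modular operator $\Qb$ equal to the tensor product in \eqref{EqModDatumvN}. Lemma \ref{LemMod} is stated for $V_{13}W_{23}$, and the ordering in \eqref{EqWProdOneSide} is exactly of this form, with the leftmost factor on the first leg. So induction on $|J|+|I|$ works.

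It remains to check \eqref{EqCommModOp}, namely that $W$ commutes with $\Qb\otimes\hat{\Qb}$. For this we use the product form $W=W_{\br}\Kc$ directly rather than the factorwise modular operators, since the individual $\hat{\Qb}$'s of Lemma \ref{LemModRank1} differ from factor to factor. On the Hilbert space, $\Qb$ is implemented by $\kb_{\omega}$-type operators: $\otimes_a\kb^{-d_{i_a}}$ corresponds to $\kb_{-\rho}$, and $\otimes_a \kb^{-1}$ corresponds to $\kb_{-\rho^\vee}$, because $\langle\varpi_i,i_b\rangle=d_i\delta_{i,i_b}$. So $\Qb$ is a product of $\kb_{\omega}$'s, and $\Ad_{\Qb^{it}}$ rescales $\Xb_a$ by $e^{\pi\hbar\langle\omega,i_a\rangle t}$-type factors, with the $\kb$-parts on the $I$-legs trivial. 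Conjugation by $\hat{\Qb}^{it}$, a power of $\Lb_{\omega}$'s, rescales $\Fb_{i}$ by the inverse factors, using conditions \eqref{EqCond1}, \eqref{EqCond2} (the $\delta_{ij}\hbar/2$-commutation with $\Lb_j'$) and \eqref{EqCond4}. With $\langle\rho,i\rangle$-type exponents matching for each $i$, each $\Xb_a\otimes\Fb_{i_a}$ is invariant, so $W_{\br}$ commutes with $\Qb\otimes\hat{\Qb}$. Finally, $\Kc$ commutes with it: $\Kc$ is a function of $\Hb_i\otimes\Hb_i'$, $\Qb$ commutes with $\Kb_i$, and $\hat{\Qb}$ commutes with $\Lb_i'$ because $\mfh'$ is a trivial skew-symmetric space.

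The main obstacle is the exponent bookkeeping in this last step. We must verify that $\hat{\Qb}=\Lb_{\rho}\star\Lb_{\rho^\vee}^{1/|\hbar|}$ gives exactly the inverse scaling of $\Fb_{i}$ for every $i$ simultaneously, i.e.\ that $\langle\rho,\alpha_i^\vee\rangle=1$ and $\langle\rho^\vee,\alpha_i\rangle$ reproduce the local exponents $d_{i_a}+|\hbar|^{-1}$ of \eqref{EqRank1Q}. This must be done with the sign conventions of \eqref{EqGenke} and \eqref{EqPairHeis}; if a sign or a factor $d_i$ is off, the fix is to adjust the identification of $\Qb$ with $\kb_{\omega}$ accordingly.
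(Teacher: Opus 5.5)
Your proposal is correct and follows the paper's proof: the factorisation \eqref{EqWProdOneSide}, Lemma \ref{LemModRank1} for each ${}^aW$, modularity of the Cartan factors with modular operator $1$, iteration of Lemma \ref{LemMod}, and a global check that $\Qb\otimes\hat{\Qb}$ commutes with $W=W_{\br}\Kc$. The exponent bookkeeping you flag closes with no adjustment and needs only condition \eqref{EqCond2}: the relevant pairings are $\langle\rho,i\rangle=d_i$ and $\langle\rho^{\vee},i\rangle=1$ (not $\langle\rho,\alpha_i^{\vee}\rangle$), so $\Ad_{\hat{\Qb}^{it}}$ multiplies $\Fb_i$ by $e^{\pi\hbar t(d_i+|\hbar|^{-1})}$, which exactly cancels the factor $e^{-\pi\hbar t(d_{i_a}+|\hbar|^{-1})}$ by which $\Ad_{\Qb^{it}}$ multiplies $\Xb_a$.
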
 
\begin{proof}
We note that the second factor $\prod_{i\in I} \overline{e}_{\hbar}(\hb_i\otimes \Hb_i')$ in \eqref{EqWProdOneSide} is trivially modular as an operator on $L^2(\R)^{\otimes I} \otimes \widetilde{\Hsp}(\mbi)$, with modular operators equal to $1$ (cf.\ \cite{DCSSV25}*{Lemma C.2}). 

The theorem hence follows immediately from Lemma \ref{LemMod}, Lemma \ref{LemModRank1}, the product decomposition \eqref{EqWProdOneSide} and an easy verification that $\Qb\otimes \hat{\Qb}$ commutes with $W$. 
\end{proof}

\begin{Rem}
If $I$ is non-degenerate, we can identify $\mfh \cong \mfh'$ via \eqref{EqMapToDual}, and the representations of $L_{\hbar}(\mfb)$ and $L_{\hbar}(\mfb')$ on $\Hsp(\mbi)$ are already faithful. In this case, the modular operator for $W \in \Bc(\Hsp(\mbi)\otimes \Hsp(\mbi))$ can be written a bit more conveniently and symmetrically as
\[
\Qb = \Kb_{\rho}^{-1}\star \Kb_{\rho^{\vee}}^{-1/|\hbar|}.
\]
\end{Rem}

\subsection{Integration on quantum tori} 

As mentioned in the introduction, we will use \cite{Tak03} as a general reference for weight theory on von Neumann algebras, see also \cite{DCSSV25}*{Appendix F}. If $M$ is a von Neumann algebra, $M^+$ its positive cone and 
\[
\varphi: M^+ \rightarrow [0,\infty]
\]
a weight, we write
\[
\mfm_{\varphi}^+ = \{x\in M^+ \mid \varphi(x)<\infty\}. 
\]

We have the following general result: 

\begin{Theorem}\label{TheoLeftInvWeight}
Let $V$ be a skew-symmetric space, and assume $V_1,V_2\subseteq V$ are subspaces for which the restricted pairing $\epsilon: V_1\times V_2\rightarrow \R$ is non-degenerate. Let $L_{\hbar}(V) \subseteq \Bc(\Hsp)$ be a faithful normal unital $*$-representation of $L_{\hbar}(V)$, and let 
\[
W \in L_{\hbar}(V_1)\bar{\otimes}L_{\hbar}(V_2)\subseteq \Bc(\Hsp)\bar{\otimes}\Bc(\Hsp)
\]
be a modular unitary.

Then there exists a normal semi-finite weight $\Phi:L_{\hbar}(V)^+ \rightarrow [0,\infty]$ such that 
\begin{enumerate}
\item the normal weight
\begin{equation}\label{EqWeightRest}
\varphi = \Phi_{\mid L_{\hbar}(V_1)^+}: L_{\hbar}(V_1)^+\rightarrow [0,\infty]
\end{equation}
is semi-finite and faithful, and
\item the following left invariance property holds: 
\begin{equation}\label{EqLeftInvGenW}
\Phi((\omega\otimes \id)(\Delta_W(x)) = \varphi(x),\qquad \forall x\in \mfm_{\varphi}^+,\omega \in S_*(L_{\hbar}(V)),
\end{equation}
where, viewing $W \in L_{\hbar}(V_1)\bar{\otimes}L_{\hbar}(V_2)\subseteq L_{\hbar}(V)\bar{\otimes}L_{\hbar}(V)$, we put 
\begin{equation}\label{EqConjW}
\Delta_W: L_{\hbar}(V_1) \rightarrow L_{\hbar}(V)\bar{\otimes}L_{\hbar}(V),\qquad x \mapsto  W^*(1\otimes x)W,\qquad x\in L_{\hbar}(V_1).
\end{equation}
\end{enumerate}
\end{Theorem}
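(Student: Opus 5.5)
The plan is to follow the strategy of \cite{DCSSV25}*{Section 3--4}: define $\Phi$ as a \emph{dual weight} type construction, namely the Plancherel-type weight on $L_{\hbar}(V)$ obtained from the modular data of $W$, and then derive the left invariance from the modularity identity \eqref{EqLeftInvOp} via a standard ``$W$ implements the coproduct and $\Qb$ implements the density'' argument. Concretely, non-degeneracy of $\epsilon\colon V_1\times V_2\to\R$ allows us to choose a complement and decompose $V_1+V_2$ so that $L_{\hbar}(V_1)$ and $L_{\hbar}(V_2)$ behave like a pair of Fourier-dual abelian-type pieces twisted by a cocycle. We then define on $L_{\hbar}(V)$ the weight
\[
\Phi(x) = \sum_k \|\Lambda\text{-type expression}\|^2,
\]
or more concretely, we define the candidate left invariant weight $\varphi$ on $L_{\hbar}(V_1)$ through its GNS map: for $\omega\in L_{\hbar}(V_2)_*$ (or rather $\omega \in \Bc(\Hsp)_*$) we put $\lambda(\omega) = (\omega\otimes\id)(W)$-type elements, but since $W$ has first leg in $L_{\hbar}(V_1)$ we use $x_{\omega} = (\id\otimes \omega)(W)\in L_{\hbar}(V_1)$, and declare $\Lambda(x_\omega) = \hat{\Qb}^{1/2}$-twisted vector associated to $\omega$, following \cite{DCSSV25}*{Theorem 3.10}.

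The key steps, in order, are as follows. First, using the modular data $(\Qb,\hat{\Qb},W_c)$, we show that the map $(\id\otimes\omega_{\xi',\eta'})(W)\mapsto$ (vector determined by $\xi',\eta'$ via $\Qb$) is well-defined and closable; this is exactly what \eqref{EqLeftInvOp} provides, since it expresses matrix coefficients of $W$ against $\Qb\xi$ as matrix coefficients of $W_c$, yielding a Hilbert--Schmidt type estimate. Its closure defines an nsf weight $\varphi$ on the von Neumann algebra generated by these slices. Second, we check that these slices generate $L_{\hbar}(V_1)$: here non-degeneracy of $V_1\times V_2\to\R$ is used, since the slices of $W$ span the Fourier-dual of $L_{\hbar}(V_2)$ inside $L_{\hbar}(V_1)$ (the Gaussian factor $\Kc$-type and the $F_\hbar$-factors are ``full'' only if the pairing is non-degenerate). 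Third, extend $\varphi$ to a normal semi-finite weight $\Phi$ on $L_{\hbar}(V)$, e.g.\ by composing with an operator valued weight $L_{\hbar}(V)\to L_{\hbar}(V_1)$ obtained by integrating the dual action of the orthogonal part (the $\epsilon$-complement) as in Lemma \ref{LemDirSumSkew}; faithfulness of $\Phi$ is not required, only of its restriction. Fourth, derive \eqref{EqLeftInvGenW}: using \eqref{EqCommModOp}, $\Delta_W$ commutes with the modular data, and the computation $\Phi((\omega\otimes\id)W^*(1\otimes x^*x)W) = \varphi(x^*x)$ reduces, for $x = (\id\otimes \theta)(W)$, to the pentagon-free identity obtained by writing $(1\otimes x)W$ as a slice and applying unitarity of $W_c$; then extend to all $x\in\mfm_\varphi^+$ by a lower semicontinuity/core argument.

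The main obstacle I expect is the last step, since $W$ is not assumed multiplicative, so the usual proof of left invariance via $W_{12}W_{13}W_{23}=W_{23}W_{12}$ is unavailable. I would try to circumvent this by working entirely with $\Delta_W(x) = W^*(1\otimes x)W$ and the unitarity of $W_c$ alone: invariance should follow from $\|\Lambda((\omega\otimes\id)\Delta_W(x))\|$ computed through $W_c^*W_c=1$ against the $\Qb$-rescaled vectors, as in \cite{SW01}*{Section 2}. A secondary difficulty is the core argument extending invariance from slices to all of $\mfm_\varphi^+$, which I would handle by normality of $\Phi$ and $\sigma$-weak density as in \cite{DCSSV25}*{Appendix F}.
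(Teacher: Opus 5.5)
Your proposal has a genuine gap. It tries to run the Kustermans--Vaes/Woronowicz construction of a weight from slices of $W$, and this cannot work in the generality of the statement.

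\textbf{Problems with the slice construction.} $W$ is an \emph{arbitrary} modular unitary in $L_{\hbar}(V_1)\bar{\otimes}L_{\hbar}(V_2)$. For instance, $W=1$ is modular, with $\Qb=\hat{\Qb}=1$ and $W_c=1$. Its slices $(\id\otimes\omega)(W)$ are scalars, so your Step~2 (``by non-degeneracy the slices generate $L_{\hbar}(V_1)$'') is false. The hypothesis on $\epsilon\colon V_1\times V_2\to\R$ says nothing about $W$. Hence a weight built on the slice algebra cannot be semi-finite and faithful on all of $L_{\hbar}(V_1)$.

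Without the pentagon equation, the span of slices is not even an algebra: closure under products is exactly $W_{12}W_{13}=W_{23}W_{12}W_{23}^*$. So there is no left Hilbert algebra to close, and \eqref{EqLeftInvOp} does not by itself give well-definedness or closability of your $\Lambda$.

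The invariance step is also unsupported. $(\omega\otimes\id)\Delta_W(x)$ lies in $L_{\hbar}(V)$, typically outside $L_{\hbar}(V_1)$, so its $\Phi$-value depends on how you extended $\varphi$. An operator valued weight chosen independently of $W$ gives no control there. For $x=(\id\otimes\theta)(W)$, the element $(1\otimes x)W$ is a slice of $W_{23}W_{12}$, which cannot be rewritten without the pentagon equation.

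\textbf{The missing idea.} Put the weight on all of $\Bc(\Hsp)$ first.
\begin{itemize}
\item Embed $V$ in a symplectic $Z$ with $L_{\hbar}(Z)\cong\Bc(\Hsp)$.
\item Take the deformed trace $\Tr(\hat{\Qb}\,\cdot\,\hat{\Qb})$, i.e.\ $\int_{\hat{Z}}^{\hbar,\hat{\Qb}^2}$. The unitarity-of-$W_c$ computation you hint at in your last paragraph works for this weight and for \emph{every} $x\in\Bc(\Hsp)^+$. For $\omega=\langle\xi,\,\cdot\,\xi\rangle$, write $\Tr(\hat{\Qb}(\omega\otimes\id)(W^*(1\otimes x)W)\hat{\Qb})$ as a Hilbert--Schmidt norm. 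Move $\hat{\Qb}$ through $W$ using \eqref{EqCommModOp}, absorb the resulting $\Qb$ on the first leg via \eqref{EqLeftInvOp}, and use unitarity of $W_c$. This gives $\|\xi\|^2\Tr(\hat{\Qb}x\hat{\Qb})$ with no pentagon equation; this is \cite{DCSSV25}*{Proposition 5.1}.
\item The only defect is that the restriction to $L_{\hbar}(V_1)$ is not semi-finite. This is exactly where non-degeneracy enters: it gives $Z=V_1\oplus V_2^{\perp}$.
\item By \cite{DCSSV25}*{Corollary 5.9} there is $a\in L_{\hbar}(V_2^{\perp})$ such that $\Psi_a(x)=\Tr(\hat{\Qb}\,axa^*\,\hat{\Qb})$ restricts to an nsf weight on $L_{\hbar}(V_1)$.
\item Since $a\in L_{\hbar}(V_2)'$ commutes with the second leg of $W$, we have $a\,(\omega\otimes\id)(W^*(1\otimes x)W)\,a^*=(\omega\otimes\id)(W^*(1\otimes axa^*)W)$. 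So $\Psi_a$ inherits invariance from the deformed trace, and $\Phi=\Psi_a|_{L_{\hbar}(V)^+}$ does the job.
\end{itemize}
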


Let us give some details on how the proof is structured, using the results of \cite{DCSSV25}. 

First, we recall (see e.g.\ \cite{DCSSV25}*{Section 5.2}) that if $V$ is any skew-symmetric space, we can endow $L_{\hbar}(V)$ with a canonical nsf trace 
\begin{equation}\label{EqIntV}
\int_{\hat{V}}^{\hbar}: L_{\hbar}(V)^+ \rightarrow [0,\infty].  
\end{equation}
For $v\in V$, we denote by 
\[
\int_{\hat{V}}^{\hbar,\qb(v)}:  L_{\hbar}(V)^+ \rightarrow [0,\infty]
\]
its deformation by $\qb(v)$, so $\int_{\hat{V}}^{\hbar,\qb(v)}$ is again an nsf weight, which, suitably interpreted, can be written as 
\[
\int_{\hat{V}}^{\hbar,\qb(v)} x= \int_{\hat{V}}^{\hbar} \qb(v)^{1/2}x\qb(v)^{1/2},\qquad x\in L_{\hbar}(V)^+
\]
(see again \cite{DCSSV25}*{Section 5.2} for the technical details).

Assume now that we are in the setting of  Theorem \ref{TheoLeftInvWeight}. Then we can embed $V$ into a symplectic vector space $Z$ such that 
\[
L_{\hbar}(Z) \cong \Bc(\Hsp),
\]
with the inclusion $L_{\hbar}(V) \subseteq \Bc(\Hsp)$ arising via the inclusion $V\subseteq Z$. Then, up to normalization, the integral $\int_{\hat{Z}}^{\hbar}$ coincides with the trace $\Tr$ on $\Bc(\Hsp)$. 

Now as $Z$ is symplectic, we have $W^{\perp\perp} = W$ for any subspace $W \subseteq Z$, where
\[
W^{\perp} = \{z\in Z\mid \epsilon(z,w) = 0,\forall w\in W\}. 
\]
As the $\epsilon$-pairing between $V_1$ and $V_2$ is non-degenerate, it follows that we have 
\begin{equation}\label{EqOrthDecomp}
Z = V_1 \oplus V_2^{\perp}.
\end{equation}
By \cite{DCSSV25}*{Corollary 5.9}, there must then exist $a\in L_{\hbar}(V_2^{\perp})$ such that the normal semi-finite weight
\[
\Psi_a: L_{\hbar}(Z)^+ \rightarrow [0,\infty],\quad x \mapsto \int_{\hat{Z}}^{\hbar,\hat{\Qb}^2} axa^*
\]
restricts to a faithful and semi-finite weight
\[
\varphi: L_{\hbar}(V_1)^+\rightarrow [0,\infty],\qquad x \mapsto \Psi_a(x).
\]
In particular, also the restriction $\Phi$ of  $\Psi_a$ to $L_{\hbar}(V)$ will be normal and semi-finite. 

Now since $a \in L_{\hbar}(V_2^{\perp})$, we must have $a \in L_{\hbar}(V_2)' \subseteq \Bc(\Hsp)$. Then viewing \eqref{EqConjW} as a normal faithful $*$-homomorphism
\[
\Delta_W: L_{\hbar}(Z) \rightarrow L_{\hbar}(Z)\bar{\otimes}L_{\hbar}(Z),
\]
we have by \cite{DCSSV25}*{Proposition 5.1} that\footnote{The proof of this proposition does not make use of the condition in that theorem that $W$ should be a multiplicative unitary.}
\begin{equation}
\Psi_a((\omega\otimes \id)\Delta_W(x)) = \Psi_a(x),\qquad \forall x\in \mfm_{\Psi_a}^+,\omega \in S_*(L_{\hbar}(Z)).
\end{equation}
 This immediately implies that also \eqref{EqLeftInvGenW} holds.

\subsection{Proof of Theorem \ref{TheoIntLCQG}}

We can now easily provide a proof of Theorem  \ref{TheoIntLCQG}. We need two small preparations.

\begin{Lem}\label{LemNonDegWord}
Let $\mbi$ be a word over $I$. Inside $\mfd(\mbi)$, the $\epsilon$-pairing between $\mfb(\mbi)$ and $\mfb'(\mbi)$ is non-degenerate. 
\end{Lem}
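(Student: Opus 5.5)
The plan is to write out the pairing matrix in bases and show that it is block triangular with invertible diagonal blocks. Take the basis $J\sqcup I$ of $\mfb(\mbi) = \mfn(\mbi)\oplus \mfh$, and the basis $J \sqcup \{\varpi_i^{\vee}\}_{i\in I}$ of $\mfb'(\mbi) = \mfn(\mbi)\oplus\mfh'$. Since both spaces have dimension $|J|+|I|$, non-degeneracy amounts to showing that the square matrix
\[
P = \begin{pmatrix} \epsilon(a,b) & \epsilon(a,\varpi_j^{\vee}) \\ \epsilon(i,b) & \epsilon(i,\varpi_j^{\vee})\end{pmatrix}_{a,b\in J,\; i,j\in I}
\]
is invertible.

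First I will show that the lower-right block is invertible. By \eqref{EqPairHeis} we have $\epsilon(i,\varpi_j^{\vee}) = \langle i,\varpi_j^\vee\rangle/2 = \delta_{ij}/2$, so this block equals $\frac12\,\mathrm{id}_I$.

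Next I will eliminate the $\mfh'$-columns and the $\mfh$-rows by a change of basis. Replace each basis vector $a\in J$ of $\mfb$ by
\[
\tilde a = a - i_a\in \mfb(\mbi).
\]
By \eqref{EqPairHeis}, $\epsilon(\tilde a,\omega) = \langle i_a,\omega\rangle/2 - \langle i_a,\omega\rangle/2 = 0$ for all $\omega\in\mfh'$. The vectors $\{\tilde a\}_{a\in J}\cup I$ still form a basis of $\mfb(\mbi)$, since the change of basis is unitriangular.

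In the new basis, the column block indexed by $\varpi_j^{\vee}$ vanishes on the $\tilde a$-rows. Hence $P$ becomes block lower triangular, with diagonal blocks $\frac12\,\mathrm{id}_I$ and the matrix $\big(\epsilon(\tilde a,b)\big)_{a,b\in J}$. The latter is the only block left to check.

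Computing it with \eqref{EqPairNilp} and \eqref{EqPairCart} gives
\[
\epsilon(\tilde a,b) = \epsilon(a,b) - \epsilon(i_a,b) = \epsilon(a,b) - \langle i_a,i_b\rangle/2 .
\]
For $a>b$ we have $\epsilon(a,b) = \langle i_a,i_b\rangle/2$, so the entry is $0$. For $a=b$ the entry is $-\langle i_a,i_a\rangle/2 = -d_{i_a}\neq 0$. For $a<b$ we have $\epsilon(a,b) = -\langle i_a,i_b\rangle/2$, so the entry is $-\langle i_a,i_b\rangle$. This block is therefore upper triangular with nonzero diagonal $-d_{i_a}$, using $\langle i,i\rangle>0$ from the definition of a Cartan datum. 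Hence it is invertible, so $P$ is invertible, and the pairing between $\mfb(\mbi)$ and $\mfb'(\mbi)$ is non-degenerate.

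The main obstacle is only bookkeeping: getting the sign conventions of $\epsilon$ right, since $\epsilon(i,a)$ and $\epsilon(a,i)$ differ by a sign, and choosing the correction $\tilde a = a - i_a$ rather than $a+i_a$. I will double-check this against \eqref{EqPairHeis}. If the sign came out the other way, the correction $a+i_a$ would work in the same manner, with diagonal entries $\pm d_{i_a}$ in either case.
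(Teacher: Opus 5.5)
Your proof is correct. It uses the same decoupling idea as the paper, but organizes it differently.

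The paper proceeds by induction on $\ell(\mbi)$. For a word beginning with $k$, it performs the single column operation $C_1^{\mfn}-\sum_i\langle k,i\rangle C_i^{\mfh}$, i.e.\ it replaces $e_1\in\mfn\subseteq\mfb'$ by $e_1-\alpha_k$. This produces a column whose only nonzero entry is $-2d_k$. A Laplace expansion then reduces to the matrix of the word with its first letter removed.

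You instead make one global change of basis on the $\mfb$ side, $a\mapsto a-i_a$. This kills the entire block pairing the new $J$-vectors with $\mfh'$ at once, and exposes the triangularity coming from the order on $J$.

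The two corrections mirror each other, since $\epsilon(a-i_a,b)=\epsilon(a,b)-\langle i_a,i_b\rangle/2=\epsilon(a,b-\alpha_{i_b})$. Carrying out the paper's column operation for every letter simultaneously therefore produces exactly your upper triangular block with diagonal $-d_{i_a}$.

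Your version avoids the induction and the Laplace expansion, and gives the determinant outright as $\pm 2^{-|I|}\prod_{a\in J}d_{i_a}$. The paper's version peels off one letter at a time and only tracks that each step contributes a nonzero factor.
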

\begin{proof}
    We prove this by induction on the length $\ell(\mbi)$. If $\ell(\mbi) = 0$ then $\mbi$ is the empty word, $\mfb = \mfh$ and $\mfb' = \mfh'$. The $\epsilon$-pairing is then just the non-degenerate duality pairing between $\mfh$ and $\mfh'$. 

    Now assume that $\mbi'$ is given by concatenating $k \in I$ on the left of the word $\mbi$. We write $J = J(\mbi) = \{1,2,\ldots, N\}$, $J' = J(\mbi') = \{1,\dots,N+1\}$ and $I = \{1,\dots,M\}$, so in particular 
    \[
    (i_1',i_2',\ldots,i_{N+1}') = (k,i_1,i_2,\ldots,i_N).
    \]
    Denote with $\bar{\epsilon}_{\mbi'}$ the matrix of the pairing between the canonical basis vectors of $\mfb(\mbi') = \mfh \oplus \mfn(\mbi')$ and $\mfb'(\mbi') = \mfn(\mbi')\oplus\mfh'$, and similarly for $\bar{\epsilon}_{\mbi}$. The first column in the $\mfn(\mbi')$-part and the $i$-th column in the $\mfh'$-part of $2\bar{\epsilon}_{\mbi'}$ are given by
    \begin{equation*}
       C_1^{\mfn} =  \begin{pmatrix}
            \langle 1, k\rangle \\
            \vdots\\
            \langle M, k\rangle \\
            \hline
             0 \\
            \langle i_2', k\rangle \\
            \vdots\\
            \langle i_{N+1}', k\rangle \\
        \end{pmatrix}
        \quad \text{and}\quad
        C_i^{\mfh}= \begin{pmatrix}
            \langle 1, \varpi_i^{\vee}\rangle \\
            \vdots\\
            \langle M, \varpi_i^{\vee}\rangle \\
            \hline
            \langle i_1', \varpi_i^{\vee}\rangle \\
            \langle i_2', \varpi_i^{\vee}\rangle \\
            \vdots\\
            \langle i_{N+1}', \varpi_i^{\vee}\rangle \\
        \end{pmatrix},
    \end{equation*}
    respectively. Hence $C_1^{\mfn} - \sum_{i=1}^M \langle k,i\rangle C_i^{\mfh}$ gives a column consisting of entirely zeros except at the first index of the $\mfn(\mbi')$-part where we get 
    \[
    -\langle i_1', k\rangle = -\langle k,k\rangle = -2d_k.
    \]
Hence $\bar{\epsilon}_{\mbi'}$ will be non-degenerate if and only if the matrix obtained by removing the first row and column in the $\mfn$-part remains non-degenerate. But this gives exactly the matrix for $\bar{\epsilon}_{\mbi}$, which is non-degenerate by induction.
\end{proof}

We further introduce the following definition: 
\begin{Def}\label{DefReflMap}
Let $\mbi$ be a word over $I$. We define the \emph{reflection map} as the linear map 
\begin{equation}\label{EqReflection}
\varsigma: \mfb(\mbi) \rightarrow \mfb(\mbi),\qquad i \mapsto -i,\qquad a \mapsto a-i_a.
\end{equation}
\end{Def}
The map $\varsigma$ is involutive and satisfies 
\begin{equation}\label{EqInvEpsilon}
\epsilon(\varsigma(v),\varsigma(w))  = -\epsilon(v,w),\qquad v,w\in \mfb.
\end{equation}

\begin{Lem}\label{RefUnitAnti}
There exists a unique unital $*$-anti-isomorphism
\[
R: L_{\hbar}(\mfb) \rightarrow L_{\hbar}(\mfb)
\]
such that 
\begin{equation}\label{EqUnitAntipode}
R(\qb(v)) = \qb(\varsigma(v)),\qquad \forall v\in \mfb,
\end{equation}
where $\varsigma$ is the reflection map. Moreover, we then have $R(L_{\hbar}(B)) = L_{\hbar}(B)$, and more precisely
\begin{equation}\label{EqUnitAntipodeForm}
R(\Kb_i) = \Kb_i^{-1},\qquad R(\Eb_i) = \Eb_i\star\Kb_i^{-1}.
\end{equation}
In particular, 
\begin{equation}\label{EqOppCopr}
\Delta(R(x)) = (R\otimes R) \Delta^{\opp}(x),\qquad x\in L_{\hbar}(B).
\end{equation}
\end{Lem}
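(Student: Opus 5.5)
The plan is to build $R$ as a composite of two maps on $L_{\hbar}(\mfb)$: a canonical anti-isomorphism that reverses the sign of $\epsilon$, followed by the isomorphism induced by $\varsigma$. The formulas \eqref{EqUnitAntipodeForm} then follow from $\qb(v+w) = \qb(v)\star\qb(w)$ and Lemma \ref{LemDistr}. The relation \eqref{EqOppCopr} will be checked on generators.

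\emph{Step 1 (anti-isomorphism).} Let $C$ be complex conjugation on $L^2(\mfb)$, and put $\theta(x) = Cx^*C$. This is a linear, normal, unital $*$-anti-automorphism of $\Bc(L^2(\mfb))$. Since $C$ is antiunitary, $CT^{it}C = (CTC)^{-it}$ for positive $T$. A direct computation with the defining formula for $\qb(v)^{it}$ then gives $\theta(\qb_\epsilon(v)) = \qb_{-\epsilon}(v)$, where $\qb_{-\epsilon}$ is the corresponding operator for the opposite form $-\epsilon$. Hence $\theta$ maps $L_{\hbar}(\mfb,\epsilon)$ anti-isomorphically onto $L_{\hbar}(\mfb,-\epsilon)$.

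\emph{Step 2 (isomorphism from $\varsigma$).} By \eqref{EqInvEpsilon}, $\varsigma$ is an isomorphism of skew-symmetric spaces $(\mfb,-\epsilon)\to(\mfb,\epsilon)$. The discussion in Section \ref{SecQuantBorel} therefore gives a $*$-isomorphism $\Ad_U: L_{\hbar}(\mfb,-\epsilon)\to L_{\hbar}(\mfb,\epsilon)$ with $\qb(v)\mapsto\qb(\varsigma(v))$.

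\emph{Step 3 (definition and involutivity).} Set $R = \Ad_U\circ\theta$. Then $R$ satisfies \eqref{EqUnitAntipode}, and it is unique because the $\qb(v)$ generate $L_{\hbar}(\mfb)$. Since $\varsigma^2=\id$, the map $R^2$ is an automorphism fixing every $\qb(v)$, so $R^2=\id$.

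\emph{Step 4 (the formulas \eqref{EqUnitAntipodeForm}).} First, $R(\Kb_i)=\qb(-i)=\Kb_i^{-1}$. Next,
\[
R(\Xb_a)=\qb(a-i_a)=\qb(a)\star\qb(-i_a)=\Xb_a\star\Kb_{i_a}^{-1}.
\]
For this I need that a normal $*$-anti-homomorphism $R$ satisfies
\[
R(\Ab\boxplus\Bb)=R(\Ab)\boxplus R(\Bb),\qquad R(\Ab\star\Bb)=R(\Ab)\star R(\Bb).
\]
For the form sum this holds because $\theta$ is implemented by an antiunitary, which preserves quadratic forms and domains, and $\Ad_U$ is a unitary conjugation as in \eqref{EqPropConjFormSum}. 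For the balanced product, an anti-homomorphism sends $\Ab^{it}\Bb^{it}$ to $R(\Bb)^{it}R(\Ab)^{it}$. It also reverses the sign of $\hbar(\Ab,\Bb)$, so the phase $e^{-\pi i\hbar t^2}$ in \eqref{EqStarOnePar} comes out correctly, and commutativity of $\star$ finishes the check. Given this,
\[
R(\Eb_i)=\boxplus_{a\in J_i}\Xb_a\star\Kb_i^{-1}=\Eb_i\star\Kb_i^{-1},
\]
by Lemma \ref{LemDistr} (all $\Xb_a$ with $a\in J_i$ are $\hbar\langle i,i\rangle/2$-commuting with $\Kb_i^{-1}$). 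Hence $R(L_{\hbar}(B))\subseteq L_{\hbar}(B)$, and equality follows from $R^2=\id$.

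\emph{Step 5 (the relation \eqref{EqOppCopr}).} Both $\Delta\circ R$ and $(R\otimes R)\circ\Delta^{\opp}$ are normal $*$-anti-homomorphisms $L_{\hbar}(B)\to L_{\hbar}(B)\bar\otimes L_{\hbar}(B)$. Composing each with $R\otimes R$ turns them into normal $*$-homomorphisms. These homomorphisms respect $\boxplus$ and $\star$ on affiliated positive operators and are determined by their values on the generators $\Kb_i,\Eb_i$. The check on $\Kb_i$ is immediate. For $\Eb_i$, using $\Delta(\Ab\star\Bb)=\Delta(\Ab)\star\Delta(\Bb)$ and Lemma \ref{LemDistr} on the tensor square,
\[
\Delta(\Eb_i\star\Kb_i^{-1})=(\Eb_i\otimes1\boxplus\Kb_i\otimes\Eb_i)\star(\Kb_i^{-1}\otimes\Kb_i^{-1}) = \Eb_i\star\Kb_i^{-1}\otimes\Kb_i^{-1}\boxplus 1\otimes\Eb_i\star\Kb_i^{-1}.
\]
This equals $(R\otimes R)(1\otimes\Eb_i\boxplus\Eb_i\otimes\Kb_i)$.

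The main obstacle is the analytic bookkeeping in Step 4: verifying that anti-homomorphisms preserve $\star$ and $\boxplus$, with the correct sign of $\hbar$, and that the hypotheses of Lemma \ref{LemDistr} are met, including in the tensor-product setting of Step 5.
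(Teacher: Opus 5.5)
You follow the same route as the paper, whose proof is a one-line appeal to \eqref{EqInvEpsilon}. The argument breaks exactly there, because \eqref{EqInvEpsilon} fails on $\mfn\times\mfn$. For $a<b$ in $J$, using $\epsilon(i_a,b)=\tfrac12\langle i_a,i_b\rangle=\epsilon(i_b,a)$, one gets
\[
\epsilon(\varsigma(b),\varsigma(a)) = \epsilon(b,a)+\epsilon(i_a,b)-\epsilon(i_b,a) = \epsilon(b,a),
\]
not $-\epsilon(b,a)$. Shifting each $a$ by a multiple of its own letter $i_a$ only reverses the $\mfh\times\mfn$ part of $\epsilon$. An anti-isomorphism reverses $\hbar$-commutation parameters, as you use yourself in Step 4. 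Hence an $R$ with $R(\qb(v))=\qb(\varsigma(v))$ for all $v\in\mfb$ would force $\langle i_a,i_b\rangle=0$ for all $a\neq b$. This fails for every word with a repeated letter, in particular for every reduced word of $w_0$ in a simple type of rank $\geq 2$, e.g.\ $(1,2,1)$ in type $A_2$. Algebraically, $X_1K_1^{-1}$ and $X_2K_2^{-1}$ satisfy the same $q$-commutation as $X_1,X_2$, not the opposite one. So your Step 2, and with it the formula $R(\Xb_a)=\Xb_a\star\Kb_{i_a}^{-1}$ in Step 4, cannot be repaired: the first clause of the statement is false for these words. Independently, Step 1 has a sign slip. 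The map $x\mapsto Cx^*C$ sends $\qb_\epsilon(v)$ to $\qb_{-\epsilon}(-v)$, not $\qb_{-\epsilon}(v)$; for $\epsilon=0$ it is $\lambda_g\mapsto\lambda_{-g}$. This is fixed by composing with the parity unitary $g\mapsto g(-\,\cdot\,)$.

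What is true, and what your Steps 4--5 actually use, is an anti-automorphism of $L_{\hbar}(B)$ satisfying \eqref{EqUnitAntipodeForm}. You can get one along your lines as follows. Let $\mbi^{\opp}$ be the reversed word, with order-reversing bijection $a\mapsto\bar a$ of positions, and define $\tau:\mfb(\mbi)\to\mfb(\mbi^{\opp})$ by $\tau(i)=-i$, $\tau(a)=\bar a-i_a$. The same computation now gives $\epsilon(\tau(v),\tau(w))=-\epsilon(v,w)$, since reversing the order flips the $\mfn\times\mfn$ part. With the sign in Step 1 corrected, your Steps 1--4 then produce a $*$-anti-isomorphism $L_{\hbar}(\mfb(\mbi))\to L_{\hbar}(\mfb(\mbi^{\opp}))$ with $\Kb_i\mapsto\Kb_i^{-1}$ and $\Eb_i(\mbi)\mapsto\Eb_i(\mbi^{\opp})\star\Kb_i^{-1}$. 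Your checks that anti-isomorphisms respect $\boxplus$ and $\star$, and your use of Lemma \ref{LemDistr}, are fine. Next, compose with the inverse of an isomorphism implementing an equivalence between $\mbi$ and $\mbi^{\opp}$. Such an equivalence exists, for instance, for reduced words of $w_0$ in classical type, by Matsumoto's theorem and Theorem \ref{TheoMatsum}, since $w_0^{-1}=w_0$. This gives an $R$ on $L_{\hbar}(B)$ satisfying \eqref{EqUnitAntipodeForm} and $R(L_{\hbar}(B))=L_{\hbar}(B)$, and your Step 5 then proves \eqref{EqOppCopr} verbatim. This $R$ is not induced by $\varsigma$ on $\mfn$, so any later use of $R(\qb(v))=\qb(\varsigma(v))$ (for instance in the explicit right Haar weight) has to be redone.
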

\begin{proof}
This is straightforward from \eqref{EqInvEpsilon} and the concrete formulas for the $\Kb_i,\Eb_i$ and their coproduct. 
\end{proof}

\begin{Theorem}\label{TheoInvWeight}
If $\mbi$ is an integrable and fully exchangeable word, then $(L_{\hbar}(B(\mbi)),\Delta)$ defines a locally compact quantum group. 
\end{Theorem}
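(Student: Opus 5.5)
By Theorem \ref{TheoExchvNbi}, $(L_{\hbar}(B(\mbi)),\Delta)$ is already a von Neumann bialgebra, so what remains is to produce nsf left and right invariant weights. The plan is to get the left Haar weight from Theorem \ref{TheoLeftInvWeight}, and then to get the right Haar weight by transporting it with the unitary antipode-like map $R$ of Lemma \ref{RefUnitAnti}.

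\emph{Left invariance.} Fix dual simple root operators $\Fb_i$, which exist by Proposition \ref{PropExchDual}, and let $W$ be the associated fundamental unitary.
\begin{enumerate}
\item Apply Theorem \ref{TheoLeftInvWeight} with $V = \mfd(\mbi)$, $V_1 = \mfb(\mbi)$ and $V_2 = \mfb'(\mbi)$.
\item The pairing between $V_1$ and $V_2$ is non-degenerate by Lemma \ref{LemNonDegWord}, $W$ is modular by Theorem \ref{TheoModFundMultUni}, and $W\in L_{\hbar}(\mfb)\bar{\otimes}L_{\hbar}(\mfb')$ by construction.
\item This gives a normal semi-finite weight $\Phi$ on $L_{\hbar}(\mfd)$ whose restriction $\varphi$ to $L_{\hbar}(\mfb)=L_{\hbar}(B(\mbi))$ is nsf and satisfies $\Phi((\omega\otimes\id)\Delta_W(x)) = \varphi(x)$ for $x\in\mfm_{\varphi}^+$ and $\omega\in S_*(L_{\hbar}(\mfd))$.
\item By Proposition \ref{PropIndepHeis}, $\Delta_W$ restricts on $L_{\hbar}(\mfb)$ to the coproduct $\Delta$ of Theorem \ref{TheoExchvNbi}, with values in $L_{\hbar}(\mfb)\bar{\otimes}L_{\hbar}(\mfb)$. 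Hence $(\omega\otimes\id)\Delta(x)\in L_{\hbar}(\mfb)^+$ for every normal state $\omega$ on $L_{\hbar}(\mfb)$ (extend $\omega$ to $L_{\hbar}(\mfd)$ by Hahn--Banach or by restriction), and there $\Phi$ agrees with $\varphi$.
\end{enumerate}
This yields $\varphi((\omega\otimes\id)\Delta(x)) = \varphi(x)$ for $x\in\mfm_\varphi^+$.

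\emph{Identifying the algebras.} Integrability is exactly what makes the previous step legitimate. It gives $L_{\hbar}(N(\mbi)) = L_{\hbar}(\mfn(\mbi))$, hence
\[
L_{\hbar}(B(\mbi)) = \mfh\ltimes L_{\hbar}(\mfn(\mbi)) \cong L_{\hbar}(\mfb(\mbi)),
\]
which is the algebra on which $\Delta_W$ and $\varphi$ live.

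\emph{Extending to all of $M^+$.} I expect the main obstacle to be passing from $x\in\mfm_\varphi^+$ to all $x\in M^+$. For $\varphi(x)=\infty$, use the standard argument of Kustermans--Vaes: invariance on $\mfm_\varphi^+$ implies invariance on $M^+$ by lower semicontinuity. Concretely, write $\varphi = \sup_k \varphi(\,\cdot\, e_k)$-type approximations, or apply \cite{KV00}-style results showing that weak left invariance on $\mfm_\varphi^+$ together with nsf-ness suffices. Alternatively, invoke the form of \cite{DCSSV25} in which the same passage was carried out, since the weight there arises from the identical construction.

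\emph{Right invariance.} Set $\psi = \varphi\circ R$, which is nsf because $R$ is a $*$-anti-automorphism. By \eqref{EqOppCopr},
\[
(\id\otimes\omega)\Delta(x) = R\bigl((\omega\circ R\otimes\id)\Delta(R(x))\bigr).
\]
Therefore $\psi((\id\otimes\omega)\Delta(x)) = \varphi((\omega\circ R\otimes \id)\Delta(R(x))) = \varphi(R(x)) = \psi(x)$, using that $\omega\circ R$ is again a normal state. This completes the proof.
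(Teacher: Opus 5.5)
Your overall structure matches the paper's proof. Integrability identifies $L_{\hbar}(B(\mbi))$ with $L_{\hbar}(\mfb(\mbi))$. Theorem \ref{TheoLeftInvWeight}, fed by Lemma \ref{LemNonDegWord} and Theorem \ref{TheoModFundMultUni}, gives an nsf weight $\varphi$ that is left invariant on $\mfm_{\varphi}^+$. Then $\psi=\varphi\circ R$ handles the right side.

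The step whose justification fails is the passage from $\mfm_{\varphi}^+$ to all of $M^+$.
\begin{itemize}
\item Lower semicontinuity of a normal weight only gives $\varphi(\lim_\alpha x_\alpha)\leq\liminf_\alpha\varphi(x_\alpha)$. That points the wrong way for forcing $\varphi((\omega\otimes\id)\Delta(x))=\infty$ when $\varphi(x)=\infty$.
\item Such an $x$ generally cannot be approximated from below by elements of $\mfm_{\varphi}^+$. Take the nsf weight $\Tr(h^{1/2}\,\cdot\,h^{1/2})$ on $\Bc(\Hsp)$ with $h$ unbounded. The rank-one projection onto a unit vector $\xi\notin\msD(h^{1/2})$ has infinite weight, and so does every nonzero positive operator below it.
\item Weak left invariance plus nsf-ness is also not enough by itself. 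The upgrade \cite{KV03}*{Proposition 3.1} is proved within the Kustermans--Vaes framework, which presupposes both a left and a right nsf weight, each invariant on its own $\mfm^+$.
\end{itemize}

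As written, you derive right invariance of $\psi$ from full left invariance of $\varphi$, which you have not yet established. The repair is only a reordering, and it is exactly what the paper does.
\begin{enumerate}
\item Your computation based on \eqref{EqOppCopr} uses left invariance only at $R(x)$, and $R$ maps $\mfm_{\psi}^+$ onto $\mfm_{\varphi}^+$. So it already gives $\psi((\id\otimes\omega)\Delta(x))=\psi(x)$ for all $x\in\mfm_{\psi}^+$.
\item With weak left invariance of $\varphi$ and weak right invariance of $\psi$ in hand, apply \cite{KV03}*{Proposition 3.1}. This gives invariance for all $x\in M^+$, as required in Definition \ref{DefLCQGFormal}.
\end{enumerate}
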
 
\begin{proof}
By integrability 
\[
L_{\hbar}(N(\mbi)) = L_{\hbar}(\mfn(\mbi)),
\]
and hence also 
\[
L_{\hbar}(B(\mbi)) = \R[I]\ltimes L_{\hbar}(N(\mbi)) = \R[I]\ltimes L_{\hbar}(\mfn(\mbi)) = L_{\hbar}(\mfb(\mbi)).
\]
We then get from Theorem \ref{TheoModFundMultUni}, Lemma \ref{LemNonDegWord} and Theorem \ref{TheoLeftInvWeight} that $L_{\hbar}(B(\mbi))$ possesses an nsf weight $\varphi$ satisfying the left invariance condition \eqref{EqLeftInvariance} for all $x\in \mfm_{\varphi}^+$. 

Using the properties of $R$ in $\eqref{RefUnitAnti}$, we see that the nsf weight $\psi = \varphi \circ R$ on $L_{\hbar}(\mfb(\mbi))$ satisfies the right invariance condition \eqref{EqRightInvariance} for all $x\in \mfm_{\psi}^+$.  

This entails that the strong form of invariance in Definition \ref{DefLCQGFormal} holds, see \cite{KV03}*{Proposition 3.1}. 
\end{proof}

\section{Full exchangeability}\label{SecFullExch}

The following theorem uses the cluster algebra intuition of e.g.\ \cite{Ip18a}*{Section 7}), but set up in a way such that all operations are directly analytically implemented.

\begin{Theorem}\label{TheoExchangeable}
Assume $I$ is a classical Cartan datum, and let $\mbi$ be a word in $I$ such that 
\[
w_{\mbi} = w_0.
\] 
Then $\mbi$ is fully exchangeable. 
\end{Theorem}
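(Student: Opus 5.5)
Since every $i\in I$ can occur as the first letter of some reduced expression of $w_0$, it suffices to show that any two words $\mbi,\mbi'$ with $w_{\mbi}=w_{\mbi'}=w_0$ are equivalent in the sense of Definition \ref{DefFullExch}. Equivalence is transitive, because $\mfh'$-equivariant $*$-isomorphisms preserving the $\Eb_i$ compose. We first reduce to reduced words. A non-reduced word $\mbi$ with $w_{\mbi}=w_0$ is related to a reduced one by braid moves and deletions $(i,i)\to\emptyset$; we would treat the latter only if needed. Otherwise we restrict to reduced $\mbi$, which is the case used in Theorem \ref{TheoA}. By Matsumoto's theorem, any two reduced expressions of $w_0$ are connected by a sequence of braid moves of length $m_{ij}\in\{2,3,4\}$, since classical types contain no $G_2$. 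A braid move acts on a consecutive block of letters. We therefore prove a local statement: if $\mbi=\mbj\,\mathbf{u}\,\mathbf{k}$ and $\mbi'=\mbj\,\mathbf{u}'\,\mathbf{k}$, where $\mathbf{u},\mathbf{u}'$ are the two sides of a braid relation in a rank $2$ subdatum $\{i,j\}$, then $\mbi$ and $\mbi'$ are equivalent.

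For the local statement we work in the abstract framework of Section \ref{SecQuantBorel}. Both $L_{\hbar}(\mfn(\mbi))$ and $L_{\hbar}(\mfn(\mbi'))$ sit inside $L_{\hbar}(Z^J)$. We seek a unitary $U$ acting only on the tensor legs of the block, so that $\Phi=\Ad_U$. This $U$ should be a product of quantum exponentials $F_{d\hbar}$ (and, in type $B/C$, $F_{2d\hbar}$) of balanced products of the $\Xb_a$, composed with a linear symplectic relabelling $\qb(v)\mapsto\qb(Tv)$ of the block variables. This is the analytic version of a quantum cluster mutation sequence. The requirements on $U$ are as follows.

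\begin{enumerate}
\item \emph{Variables of the block.} $U$ must map $\{\Xb_a\}_{a\in\mathbf{u}}$ into $L_{\hbar}(\mfn(\mbi'))$ and satisfy $\boxplus_{a\in\mathbf{u},i_a=i}\Xb_a \mapsto \boxplus_{a\in\mathbf{u}',i_a=i}\Xb'_a$ for $i,j$. Here one uses \eqref{EqFormSumIdAlt}, \eqref{EqQuantExp} and Lemma \ref{LemDistr} to move form sums through $F_\hbar$-conjugations.
\item \emph{Variables after the block.} For $a\in\mathbf{k}$, the factor $\prod_{b<a}\kb_b^{\langle i_a,i_b\rangle}$ depends on the block only through $\sum_{b\in\mathbf{u}}\langle i_a,i_b\rangle h_b$. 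This sum depends only on the weight content of the block, which is the same for both sides of a braid relation, so $U$ must fix these combinations. We build $T$ so that it does.
\item \emph{Variables before the block, and the torus action.} For $a\in\mbj$, $U$ commutes with $\Xb_a$ automatically. $\mfh'$-equivariance holds because $U$ commutes with every $\kb_\omega$: the arguments of the $F$'s have zero total weight after pairing, or $T$ preserves weights.
\end{enumerate}

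Densely defined form sums are then preserved by \eqref{EqPropConjFormSum}, giving $\Phi(\Eb_i(\mbi))=\Eb_i(\mbi')$ for all $i$.

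The main obstacle is the rank $2$ computations themselves. For $m_{ij}=2$ the relabelling alone suffices. For type $A_2$ ($m_{ij}=3$) one $F_\hbar$ conjugation does the job, via the Kashaev pentagon \eqref{EqPentKash} in the form of \eqref{EqFormSumIdAlt}. For $B_2=C_2$ ($m_{ij}=4$) one needs a composite of mutations, and the key identity is the type $B$ pentagon \eqref{EqPentTypeB}, used to verify that the two form sums $\Xb_{a_1}\boxplus\Xb_{a_3}$ and $\Xb_{a_2}\boxplus\Xb_{a_4}$ transform correctly. I would first guess $U$ from the known algebraic cluster transformation (as in \cite{Ip18a}*{Section 7}), write it with $F_{d_i\hbar}$ and $F_{d_j\hbar}$, and then check the form sums one at a time. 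At each step the operators appearing must genuinely $\hbar$-commute (respectively $2\hbar$-commute) with the correct constants, so that \eqref{EqFormSumIdAlt} and \eqref{EqPentTypeB} apply rigorously. Careful tracking of these commutation constants, including which $d_i$ scales each quantum exponential, is where I expect the difficulty to lie.
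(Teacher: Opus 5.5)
Your overall architecture matches the paper's. You reduce via Matsumoto to a single rank-two braid move, realize that move as a weight-preserving relabelling of the block followed by conjugation with quantum exponentials of weight-zero balanced products, and treat $m_{ij}\in\{2,3,4\}$ case by case. Two pieces are missing, however.

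\textbf{Non-reduced words.} The statement covers every word with $w_{\mbi}=w_0$, and you set the non-reduced ones aside. The move you suggest for them, deleting a subword $(i,i)$, is not compatible with the Feigin realization. The subword $(i,i)$ contributes the nonzero summand $\Xb_a\boxplus\Xb_{a+1}$ to $\Eb_i$, and this summand is unitarily equivalent to a single $\Xb_a$, not to nothing. The paper instead uses the move $(i,i)\leftrightarrow(i)$ (Theorem \ref{TheoDoubleLetter}). The vector $e_{a+1}-e_a$ pairs trivially with every other $e_b$, with $\mfh$ and with $\mfh'$. Hence by \eqref{EqFormSumId}, conjugation by $F_{d_i\hbar}(\Xb_a^{-1}\star\Xb_{a+1})$ turns $\Xb_a\boxplus\Xb_{a+1}$ into $\Xb_a$, while commuting with all other generators and with the $\kb_\omega$. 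You need this step, or a substitute, to cover the full statement.

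\textbf{Rank-two verifications.} These are the real content of the theorem, and they are only announced. Your bookkeeping in items 2--3 is also wrong. You claim the block's contribution to later variables depends only on a weight content shared by both sides of a braid move. For $m_{ij}=3$ the letter contents $\{i,i,j\}$ and $\{i,j,j\}$ differ, so nothing can simply be fixed by $U$. What is needed is \eqref{EqSkewProdChange}: a new basis of the block span in which each vector has the weight of the corresponding letter of $\mbi'$. For example, take $e_1'=e_2$, $e_2'=e_3$, $e_3'=e_1+e_2-e_3$, and only then conjugate by $U=F_\hbar(\Xb_1\star\Xb_3^{-1})$.

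For $m_{ij}=4$ the type $B$ pentagon alone does not finish the argument. The paper takes $U=F_\hbar(\Xb_2\star\Xb_4^{-1})F_{2\hbar}(\Xb_1\star\Xb_3^{-1})F_\hbar(\Xb_4\star\Xb_2^{-1})$ together with an explicit relabelling. One form-sum identity then follows from \eqref{EqFormSumIdAlt}. The other needs more work:
\begin{enumerate}
\item Split off the block-central element $\Xb_1\star\Xb_2\star\Xb_4^{-1}$.
\item Reduce to $R\Tb R^*=\Tb^{-1}$ for a $2\hbar$-commuting pair $(\Sb,\Tb)$.
\item Apply \eqref{EqPentTypeB} three times.
\item Collapse the resulting products $F(x)F(x^{-1})$ into Gaussians $J_\hbar$ via the inversion relation in \eqref{EqPropsF}.
\item Conclude with $J_\hbar(\Bb)\Ab J_\hbar(\Bb)^*=\Ab\star\Bb$ from \eqref{EqboInv}.
\end{enumerate}
Guessing $U$ from the algebraic cluster transformation is the right heuristic. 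Without an explicit mutation sequence and this inversion step, however, the $m_{ij}=4$ case remains unproved.
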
 

We will prove the theorem by (basically) a reduction to the rank $2$ case ($|I|=2$), where the proof will proceed by a case-by-case analysis. 

Before we start, we make the following observation:

\begin{Rem}
In case of a non-degenerate Cartan datum, the definition of equivalence of words $\mbi$ and $\mbi'$ in Definition \ref{DefFullExch} can also be formulated as follows: there exists a unital $*$-isomorphism 
\[
\Phi: L_{\hbar}(\mfb(\mbi)) \rightarrow L_{\hbar}(\mfb(\mbi'))
\]
such that 
\begin{equation}\label{EqConjDiffOrd}
\Phi(L_{\hbar}(\mfn(\mbi))) = L_{\hbar}(\mfn(\mbi')),\qquad \Phi(\Kb_i(\mbi)) = \Kb_i(\mbi'),\qquad 
\Phi(\Eb_i(\mbi)) = \Eb_i(\mbi'),\qquad \forall i\in I.
\end{equation}
This is the version we will use in this section.
\end{Rem}

\subsection{Reduction to the rank \texorpdfstring{$2$}{} case}

We first start with the following observations for a general Cartan datum $I$. 

Let $\mbi,\mbi'$ be words in $I$. By Matsumoto's theorem, we have  $w_{\mbi} = w_{\mbi'}$ inside $W(I)$ if and only if $\mbi$ and $\mbi'$ can be connected through one of the following moves for a subword: 
\begin{enumerate}
\item[(I)] $(i,i) \leftrightarrow i$.
\item[(II)] $\underbrace{(i,j,i,\ldots)}_{m_{ij}} \leftrightarrow \underbrace{(j,i,j,\ldots)}_{m_{ij}}$ for $i\neq j$.
\end{enumerate}
Moreover, if both $\mbi$ and $\mbi'$ are reduced, then only moves of type (II) are needed.

Now if $I$ is of classical type, one can for any $i\in I$ find a reduced word $\mbi$ starting with $i$, and with $w_{\mbi} = w_0$. Hence Theorem \ref{TheoExchangeable} will follow from the following two theorems. 

\begin{Theorem}\label{TheoDoubleLetter}
Let $\mbi = (i_1,i_2,\ldots,i_n)$ be a word with $J(\mbi) = \{1,2,\ldots,n\}$, and let $a<a+1$ be consecutive in $J(\mbi)$ with $i_a = i_{a+1}$. Let $\mbj = (i_1,i_2,\ldots, i_a,i_{a+2},\ldots,i_n,i_n)$. Then $\mbi$ and $\mbj$ are equivalent.  
\end{Theorem}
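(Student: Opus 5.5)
The idea is to exploit the fact that a doubled letter $(i,i)$ in a Feigin word contributes a form sum $\Xb_a\boxplus\Xb_{a+1}$ of two $d_i\hbar$-commuting operators. Such a sum can be \emph{untwisted} into a single generator by conjugating with a quantum exponential. I would then identify the remaining "ratio" direction with the extra letter of $\mbj$ by a linear isomorphism of skew-symmetric spaces, as described in Section \ref{SecQuantBorel}.

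\textbf{Step 1 (the ratio operator).} Put $i=i_a=i_{a+1}$ and $\Cb=\Xb_a^{-1}\star\Xb_{a+1}=\qb(a+1-a)$. By \eqref{EqPairNilp}, for every $b\neq a,a+1$ we have $\epsilon(b,a+1)-\epsilon(b,a)=0$, whether $b<a$ or $b>a+1$. Hence $\Cb$ strongly commutes with all $\Xb_b$ for $b\neq a,a+1$, and therefore with every $\Eb_k$, $k\neq i$. Also $\epsilon(a+1-a,\,\cdot\,)$ pairs trivially with the $\mfh'$-grading, since $i_a=i_{a+1}$, so $\Cb$ is $\gamma$-invariant. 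On the other hand, $(\Xb_a,\Xb_{a+1})$ is a $d_i\hbar$-commuting pair, because $\epsilon(a+1,a)=d_i$. By \eqref{EqFormSumId}/\eqref{EqFormSumIdAlt} this gives
\[
F_{d_i\hbar}(\Cb)^*(\Xb_a\boxplus\Xb_{a+1})F_{d_i\hbar}(\Cb)=\Xb_a,
\]
up to the correct choice of $\Cb$ versus $\Cb^{-1}$ and the side of conjugation, which I would fix by checking \eqref{EqFormSumIdAlt}. Consequently $\Psi=\Ad_{F_{d_i\hbar}(\Cb)^*}$ is an $\mfh'$-equivariant automorphism of $L_{\hbar}(\mfn(\mbi))$. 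It sends $\Eb_i(\mbi)$ to $\Xb_a\boxplus\boxplus_{b\in J_i\setminus\{a,a+1\}}\Xb_b$ and fixes the other $\Eb_k$. This uses \eqref{EqPropConjFormSum}, together with the fact that $\Cb$ commutes with the remaining $\Xb_b$.

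\textbf{Step 2 (linear relabelling).} After Step 1, the generator $\Xb_{a+1}$ no longer appears in any simple root operator. It only survives through $\Cb$, which commutes with all $\Xb_b$, $b\neq a,a+1$. I would now write down an explicit linear isomorphism $T:\mfn(\mbi)\to\mfn(\mbj)$ of skew-symmetric spaces with the following properties. It sends $b\mapsto b'$ for the letters that are kept, compatibly with the shift of positions. It sends $a\mapsto a$. It sends the vector $a+1-a$, possibly corrected by a combination of the other basis vectors, to the new basis vector of $\mbj$. The point is that $a+1-a$ is $\epsilon$-orthogonal to everything except $a$, and the new letter of $\mbj$ must then be matched with the same weight and with the same pairings. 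The isomorphism $T$ induces $\Psi_T:L_{\hbar}(\mfn(\mbi))\to L_{\hbar}(\mfn(\mbj))$ as in Section \ref{SecQuantBorel}. Since $T$ preserves the grading by $\mfh'$, the map $\Psi_T$ is equivariant. The composite $\Phi=\Psi_T\circ\Psi$ should then be the required equivalence.

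\textbf{Main obstacle.} The crucial step is Step 2: verifying that the skew-form data match exactly. I need that $\epsilon_{\mbj}$ restricted to the image agrees with $\epsilon_{\mbi}$, and that the new vector gets the right $\mfh'$-weight, so that $\Psi_T(\Xb_b)$ is precisely $\Xb_{b'}(\mbj)$ for every $b$ contributing to some $\Eb_k$. The commutation of $\Cb$ with the later $\Xb_b$, $b>a+1$, is exactly what allows the extra variable to be moved past them, by subtracting suitable multiples of $a+1-a$ where necessary. The final position of the extra letter, and the precise convention on $\mbj$, have to be checked against this computation. If a direct match fails, I would first try composing with an additional Gaussian conjugation $\Ad_{e_{\hbar}(\cdot)}$ built from $\Cb$ and the commuting generators, using \eqref{EqqCommGauss}, to correct the monomial part. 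Such a conjugation preserves all $\Eb_k$ since $\Cb$ commutes with them.
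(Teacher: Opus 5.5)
Your Step 1 is correct. In substance it is the whole of the paper's argument: the paper's rank-one unitary $W=F_{d_i\hbar}(\kb^{2d_i}\star\eb^{-1}\otimes\eb)$ is your $F_{d_i\hbar}(\Cb)$ with $\Cb=\Xb_{a+1}\star\Xb_a^{-1}$. (Mind the side: it is $(\Xb_{a+1},\Xb_a)$ that is $d_i\hbar$-commuting. So \eqref{EqFormSumId} gives $F_{d_i\hbar}(\Cb)(\Xb_a\boxplus\Xb_{a+1})F_{d_i\hbar}(\Cb)^*=\Xb_a$, i.e.\ $\Psi=\Ad_{F_{d_i\hbar}(\Cb)}$.)

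The genuine gap is Step 2, and it is not a matter of conventions. Put $\mbi_0=(i_1,\ldots,i_a,i_{a+2},\ldots,i_n)$ and $i=i_a$, and assume $a+1<n$ (otherwise $\mbj=\mbi$). Step 1 exhibits $\mfn(\mbi)=\mfn(\mbi_0)\oplus\R c$, where $c=e_{a+1}-e_a$ has weight $0$ and $\epsilon(c,e_b)=d_i\delta_{ab}$ for the kept positions $b$. On the other side, $\Xb_n(\mbj)$ does occur in $\Eb_{i_n}(\mbj)$, so you must also untwist the final pair $(i_n,i_n)$ of $\mbj$. This gives $\mfn(\mbj)=\mfn(\mbi_0)\oplus\R c'$, where $c'=e_n-e_{n-1}$ has weight $0$ and $\epsilon(c',e_b)=d_{i_n}\delta_{b,n-1}$. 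The new letter of $\mbj$ has weight $i_n\neq 0$, so it cannot be the image of $c$. Every weight-zero vector of $\mfn(\mbj)$ has the form $\lambda c'+u$ with $u\in\mfn(\mbi_0)$ of weight $0$. So a weight-preserving isometry $T$ fixing $\mfn(\mbi_0)$ needs such a $u$ with $\epsilon(u,\cdot)=d_i\delta_a-\lambda d_{i_n}\delta_{n-1}$ on $\mfn(\mbi_0)$. In general no such $u$ exists: the weight-zero vector produced by the doubled letter is attached to position $a$ in $\mbi$, but to the last kept position in $\mbj$.

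Nor can further Gaussian or quantum-exponential conjugations repair this, because the statement as formulated is false. The center of $L_{\hbar}(\mfn(\cdot))$, with its $\mfh'$-action, is an invariant of equivalence in the sense of Definition \ref{DefFullExch}. Work in type $A_2$ with letters $i,j$, where $\langle i,i\rangle=\langle j,j\rangle=2$ and $\langle i,j\rangle=-1$. Take $\mbi=(i,i,j)$ and $a=1$, so $\mbj=(i,j,j)$. The radicals of $\mfn(\mbi)$ and $\mfn(\mbj)$ are spanned by
\[
v=-e_1+e_2+2e_3\quad(\text{weight } 2j),\qquad v'=-2e_1-e_2+e_3\quad(\text{weight } -2i).
\]
In both cases $e_1,e_2$ span a non-degenerate plane, so Lemma \ref{LemDirSumSkew} and Stone--von Neumann identify the centers with $\{\qb(v)\}''$ and $\{\qb(v')\}''$. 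Now $\gamma_{\varpi_i^{\vee}}$ fixes $\qb(v)$ but sends $\qb(v')$ to $e^{-2\pi\hbar}\qb(v')$. Hence no $\mfh'$-equivariant $*$-isomorphism $L_{\hbar}(\mfn(\mbi))\to L_{\hbar}(\mfn(\mbj))$ exists at all, whatever it does to the $\Eb_k$.

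The paper's proof does not supply this step either. After conjugating by $W$, it asserts $L_{\hbar}(N(i,i))\cong L_{\hbar}(N(i))\bar{\otimes}L(\R)$ with trivial action on $L(\R)$. But the weight-zero generator $\qb(e_2-e_1)$ of $L_{\hbar}(\mfn(i,i))$ does not commute with $\Xb_1$, so it cannot be split off as such a tensor factor. So Step 2 cannot be completed by any choice of $T$ or extra conjugation. What is missing is a correct formulation of the statement, for instance a weaker notion of equivalence, not a computation.
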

\begin{proof}
This follows from the following rank $1$ computation which is well-known, but which we spell out for completeness.

It is sufficient to show that if $i\in I$, there exists an $\mfh'$-equivariant isomorphism 
\begin{equation}\label{EqIsoTrivAction}
\Phi: L_{\hbar}(N(i,i)) \cong L_{\hbar}(N(i))\bar{\otimes} L(\R),
\end{equation}
with the trivial action of $\mfh'$ on the $L(\R)$-component, such that 
\[
\Phi(\Eb_j(i,i))  = \Eb_j(i) \otimes 1,\qquad \forall j\in I. 
\]
Now $\Eb_j(i,i) = 0$ and $\Eb_j(i) = 0$ for $j \neq i$, while
\[
\Xb_1(i,i) = \eb\otimes 1,\quad \Xb_2(i,i) = \kb^{2d_i} \otimes \eb,\quad \Eb_i(i,i) = \eb \otimes 1 \boxplus \kb^{2d_i} \otimes \eb,\qquad \Xb_1(i) = \Eb_i(i) = \eb.
\]
Moreover, if we view the second tensorand in \eqref{EqIsoTrivAction} as being generated by $\kb$, then the action of $\mfh'$ is implemented on both sides of \eqref{EqIsoTrivAction} by the conjugation with imaginary powers of the operator 
\[
\kb \otimes \kb. 
\]
Hence using \eqref{EqFormSumId}, we see that
\[
\Phi = \Ad_W,\qquad W = F_{d_i\hbar}(\kb^{2d_i}\star \eb^{-1}\otimes \eb) \in \Bc(L^2(\R)\otimes L^2(\R))
\]
will satisfy \eqref{EqIsoTrivAction}.

\end{proof}

\begin{Theorem}\label{TheoMatsum}
Let $I$ be a Cartan datum, and let $i,j\in I$ be distinct letters with 
\[
\widehat{A}(i,j) = \begin{pmatrix} \langle i,i\rangle & \langle i,j\rangle \\
\langle j,i\rangle & \langle j,j\rangle\end{pmatrix}
\]
classical. Let $\mbi,\mbi'$ be two words over $I$ connected through a type (II)-move for these specific $i,j$, so 
\[
\mbi = (\mbi_1,\mbj,\mbi_2),\qquad \mbi' = (\mbi_1,\mbj',\mbi_2)
\]
with 
\[
\mbj = \underbrace{(i,j,i,\ldots)}_{m_{ij}},\qquad \mbj'= \underbrace{(j,i,j,\ldots)}_{m_{ij}}.
\]
Then $\mbi$ and $\mbi'$ are equivalent.
\end{Theorem}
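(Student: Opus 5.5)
Reduction first. I will show that an equivalence for the rank $2$ piece $\mbj\leftrightarrow\mbj'$ extends to the full words $(\mbi_1,\mbj,\mbi_2)\leftrightarrow(\mbi_1,\mbj',\mbi_2)$. In the tensor (Feigin) realisation, $\Hsp(\mbi)=\Hsp(\mbi_1)\otimes\Hsp(\mbj)\otimes\Hsp(\mbi_2)$. The generators are then as follows. The $\Xb_a$ with $a$ in the $\mbi_1$-part live in the first leg. The $\Xb_a$ with $a$ in the $\mbj$-part are of the form $\kb_{\alpha_{i_a}}(\mbi_1)\otimes\Xb_a(\mbj)$. The $\Xb_a$ with $a$ in the $\mbi_2$-part are $\kb_{\alpha_{i_a}}(\mbi_1)\otimes\kb_{\alpha_{i_a}}(\mbj)\otimes\Xb_a(\mbi_2)$. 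Here I am using that $\prod_{b}\kb_b^{\langle i_a,i_b\rangle}$ over a block is exactly $\kb_{\alpha_{i_a}}$ of that block.

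Hence $\Eb_k(\mbi)=\Eb_k(\mbi_1)\otimes 1\otimes 1\boxplus \kb_{\alpha_k}\otimes\Eb_k(\mbj)\otimes 1\boxplus \kb_{\alpha_k}\otimes\kb_{\alpha_k}\otimes\Eb_k(\mbi_2)$, which is the algebraic iterated coproduct pattern. Suppose $\Phi_0:L_\hbar(\mfn(\mbj))\to L_\hbar(\mfn(\mbj'))$ is $\mfh'$-equivariant, with $\Phi_0(\Eb_k(\mbj))=\Eb_k(\mbj')$. Since $\mbj$ and $\mbj'$ have the same length, the natural candidate is $\Phi=\id\otimes\Phi_0\otimes\id$. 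The Hilbert spaces coincide, and it is enough to produce $\Phi_0$ extended to the algebra generated by $L_\hbar(\mfn(\mbj))$ and the $\kb_\omega(\mbj)$ (the crossed product version used in the proof of Proposition \ref{PropExchDual}). Equivariance gives $\Phi_0(\kb_\omega(\mbj))=\kb_\omega(\mbj')$. The form sums are then preserved by the compatibility of normal $*$-homomorphisms with $\boxplus$ from the preliminaries. Preservation of $L_\hbar(\mfn)$ also holds, since each $\Xb_a(\mbi)$ for $a$ in the $\mbi_2$-part maps to the corresponding one in $\mbi'$. So I reduce to $\mbi_1=\mbi_2=\emptyset$ and $I=\{i,j\}$, up to the trivial splitting \eqref{EqSplittingOff}.

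Rank $2$, case by case. Classical rank $2$ Cartan data are $A_1\times A_1$ ($m_{ij}=2$), $A_2$ ($m=3$) and $B_2=C_2$ ($m=4$); the off-diagonal entries are allowed to be rescaled by the multiplicities. In each case I look for $\Phi_0=\Ad_U$ with $U$ a product of quantum exponentials $F_{d\hbar}(\cdot)$ of monomials $\qb(v)$ in the $\Xb_a$, $\kb_b$. This keeps it $\mfh'$-equivariant automatically, provided each argument $v$ is $\mfh'$-weight zero, i.e.\ commutes with all $\kb_\omega\otimes\dots$. Equivalently, I compose with a linear symplectic isomorphism of the ambient $Z^{J}$ implemented by a metaplectic unitary that permutes tensor legs. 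For $A_1\times A_1$, $\langle i,j\rangle=0$, so $\Xb_1,\Xb_2$ strongly commute and the flip of tensor legs already works. For $A_2$ I take $\mbj=(i,j,i)$, so $\Eb_i=\Xb_1\boxplus\Xb_3$ and $\Eb_j=\Xb_2$. I use \eqref{EqFormSumId} to rewrite $\Xb_1\boxplus\Xb_3$ as $\Ad_{F_\hbar(\cdot)}$ of a single monomial, and similarly on the $\mbj'$ side. I then use the Kashaev pentagon \eqref{EqPentKash} to match the resulting unitaries, following the quantum cluster mutation sequence of \cite{Ip18a}*{Section 7}. Concretely, I find a monomial change of variables (a linear map $\mfn(\mbj)\to\mfn(\mbj')$ respecting $\epsilon$ and the pairing with $\mfh'$) together with one mutation unitary $F_\hbar(\qb(v))$ under which $\Eb_i,\Eb_j$ correspond; checking this is a finite computation with \eqref{EqFormSumIdAlt}.

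The main obstacle is $B_2$, where $m=4$ and two different scales $d_i\hbar$ and $2d_i\hbar$ occur. Here the matching of $\Eb_i=\Xb_1\boxplus\Xb_3$ and $\Eb_j=\Xb_2\boxplus\Xb_4$ with their primed counterparts requires a sequence of mutations of mixed type. The key algebraic input will be the type $B$ pentagon \eqref{EqPentTypeB} for $2\hbar$-commuting pairs, proved in Appendix \ref{SecLemmTypeB}. I would first write the algebraic Lusztig/Berenstein--Zelevinsky transition map between the two Feigin realisations for $B_2$ as a composition of monomial maps and quantum dilogarithm conjugations. Each conjugation will then be checked analytically via \eqref{EqFormSumIdAlt}, with Lemma \ref{LemDistr} used to move $\star$-factors through form sums. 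The delicate points are bookkeeping of domains in iterated form sums (resolved by associativity of $\boxplus$ on the common Schwartz-type core $\odot\mcE$) and verifying that the final unitary maps $L_\hbar(\mfn(\mbj))$ onto $L_\hbar(\mfn(\mbj'))$.
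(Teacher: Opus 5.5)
\noindent Your reduction to the local rank-$2$ problem is sound. It is the tensor-leg counterpart of the paper's device: a new basis $\{e_a'\}$ supported on $J(\mbj)$ with the correct pairings against $\mfh$ and the outside letters, plus a unitary $U\in L_{\hbar}(\mfn(\mbj))$ fixing the $\Kb_i$ and the outside $\Xb_a$. For the middle block you should add one remark: the twist $\Xb\mapsto \kb_{\alpha}(\mbi_1)\otimes \Xb$ is implemented by a Gaussian in the Cartan parts, and $\id\otimes\Phi_0\otimes\id$ respects it by equivariance. The case $m_{ij}=2$ is also fine.

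The gap is that the cases $m_{ij}=3$ and especially $m_{ij}=4$ are only announced. You never exhibit the change of variables or the unitary, and for $B_2$ your proposed verification method does not work as stated. For $A_2$ this is a minor omission. With $d_i=d_j=1$, take the basis $e_1'=e_2$, $e_2'=e_3$, $e_3'=e_1+e_2-e_3$ and $U=F_{\hbar}(\Xb_1\star\Xb_3^{-1})$. Then $U\Xb_3U^*=\Xb_1\boxplus\Xb_3$ and $U(\Xb_2\boxplus\Xb_1\star\Xb_2\star\Xb_3^{-1})U^*=\Xb_2$ follow directly from \eqref{EqFormSumIdAlt}; no pentagon is needed.

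For $B_2$ ($d_i=2$, $d_j=1$) the paper uses $e_1'=e_4$, $e_2'=e_3+2e_2-2e_4$, $e_3'=2e_4-e_2$, $e_4'=e_1+2e_2-2e_4$ and $U=F_{\hbar}(\Xb_2\star\Xb_4^{-1})F_{2\hbar}(\Xb_1\star\Xb_3^{-1})F_{\hbar}(\Xb_4\star\Xb_2^{-1})$. The identity $\Yb_1\boxplus\Yb_3=\Xb_2\boxplus\Xb_4$ does follow conjugation by conjugation from \eqref{EqFormSumIdAlt}. The identity $\Yb_2\boxplus\Yb_4=\Xb_1\boxplus\Xb_3$ does not. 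After splitting off the central element $\Cb=\Xb_1\star\Xb_2\star\Xb_4^{-1}$, it reads $U(\Tb\star\Sb^{-1}\boxplus\Tb)U^*=\Tb^{-1}\star\Sb^{-1}\boxplus\Tb^{-1}$, with $(\Sb,\Tb)$ $2\hbar$-commuting and $U=F_{\hbar}(\Tb)F_{2\hbar}(\Sb)F_{\hbar}(\Tb^{-1})$.

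Already the first step conjugates $\Tb\star\Sb^{-1}$ by $F_{\hbar}(\Tb^{-1})$, although this pair is $2\hbar$-commuting, so \eqref{EqFormSumIdAlt} does not apply. The result is the square $\bigl(\Tb^{1/2}\star\Sb^{-1/2}\boxplus\Tb^{-1/2}\star\Sb^{-1/2}\bigr)^2$ of a form sum, not a form sum. Algebraically its cross term carries the factor $q+q^{-1}=2\cos(\pi\hbar)$, which need not be positive. So Lemma \ref{LemDistr} gives no handle here, and your plan does not say how such mixed-scale conjugations are to be controlled.

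The missing idea is to avoid intermediate form sums altogether. Conjugating by $F_{2\hbar}(\Sb^{-1})$, where \eqref{EqFormSumIdAlt} applies at the matching scale $2\hbar$, the identity becomes equivalent to $R\Tb R^*=\Tb^{-1}$ with $R=F_{2\hbar}(\Sb^{-1})F_{\hbar}(\Tb)F_{2\hbar}(\Sb)F_{\hbar}(\Tb^{-1})F_{2\hbar}(\Sb^{-1})$. Apply the type $B$ pentagon \eqref{EqPentTypeB} three times, to the $2\hbar$-commuting pairs $(\Tb,\Sb^{-1})$, $(\Tb^{-1},\Sb)$ and $(\Tb\star\Sb^{-1},\Tb^{-2}\star\Sb)$. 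This collapses $R$ to $F_{\hbar}(\Tb)J_{2\hbar}(\Tb^{2}\star\Sb^{-1})J_{\hbar}(\Tb\star\Sb^{-1})J_{2\hbar}(\Sb)$, where $J_{\hbar}(t)=F_{\hbar}(t^{-1})F_{\hbar}(t)$ is a Gaussian acting by $\star$-multiplication as in \eqref{EqboInv}. Then $R\Tb R^*=\Tb^{-1}$ is a direct computation. Without this, or an equivalent treatment of the mixed-scale steps, the $m_{ij}=4$ case remains unproved, and it is the substantive part of the theorem.
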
 
\begin{Rem}
Note that by non-degeneracy, the Coxeter matrix $(m_{kl})_{k,l}$ for $\widehat{A}$ will have the Coxeter matrix for $\widehat{A}(i,j)$ as a submatrix. 
\end{Rem} 

We will prove Theorem \ref{TheoMatsum} through a case-by-case analysis. Note that, fixing the setup of the theorem, we can only have $m_{ij} \in \{2,3,4\}$. We will further identify \[
J(\mbi) = J(\mbi') = \{\ldots,-1,0,1,2,\ldots\},
\]
with $1$ the first index of $\mbj$ (resp.\ $\mbj'$). We then also identify 
\[
J(\mbj)  =  J(\mbj') = \{1,2,\ldots,m_{ij}\}.
\]
As we are now identifying $J$ with actual numbers, it will be convenient to write 
\[
\{e_a\mid a\in J\}
\]
for the canonical basis of $\R[J]$, whenever ambiguity could arise. We then use the similar convention for $\mfh = R[I]$. 

The isomorphism $\Phi$ will be constructed in the following way: first create a new basis $\{e_a'\mid a\in J\}$ of $\R[J]$ such that $e_a'= e_a$ for $a\notin J(\mbj)$ and such that $e_a'\in \R[J(\mbj)]$ for $a\in J(\mbj)$, with
\begin{equation}\label{EqSkewProdChange}
\epsilon_{\mbi}(e_a',e_b') = \epsilon_{\mbi'}(e_a,e_b),\qquad \epsilon_{\mbi}(e_i,e_a') = \epsilon_{\mbi'}(e_i,e_a),\qquad \forall a,b\in  J,\forall i\in I.  
\end{equation}
We denote the associated $L_{\hbar}(\mfn(\mbi))$-affiliated operators as 
\[
\Xb_a'(\mbi) = \qb(e_a'). 
\]
This will give us a $*$-isomorphism 
\[
\Psi: L_{\hbar}(\mfb(\mbi)) \rightarrow L_{\hbar}(\mfb(\mbi')),\qquad \Kb_i(\mbi)\mapsto \Kb_i(\mbi'),\quad \Xb_a'(\mbi) \mapsto \Xb_a(\mbi'). 
\]
Then construct a particular unitary $U \in L_{\hbar}(\mfn(\mbj)) \subseteq L_{\hbar}(\mfn(\mbi))$ such that 
\begin{equation}\label{EqCommAllRest}
U\Kb_iU^* = \Kb_i,\qquad U \Xb_a U^*= \Xb_a,\qquad i\in I,a\notin J(\mbi),
\end{equation}
and put 
\[
\Yb_a(\mbi) = U\Xb_a'(\mbi)U^*.
\]
Then we obtain an isomorphism 
\[
\Phi = \Psi \circ \Ad_{U^*}: L_{\hbar}(\mfb(\mbi)) \rightarrow L_{\hbar}(\mfb(\mbi')),\qquad \Kb_i(\mbi)\mapsto \Kb_i(\mbi'),\quad \Yb_a(\mbi) \mapsto \Xb_a(\mbi'), 
\]
which clearly satisfies 
\[
\Phi(L_{\hbar}(\mfn(\mbi))) = L_{\hbar}(\mfn(\mbi')),\qquad \Phi(\Kb_i(\mbi)) = \Kb_i(\mbi'),\qquad \forall i\in I.
\]
To satisfy also the last condition in \eqref{EqConjDiffOrd}, stating that the simple root operators get sent to each other, we will choose our basis $\{a'\}$ and unitary $U$ such that 
\begin{equation}\label{EqNewEsLoc}
\Yb_1(\mbi)\boxplus \Yb_3(\mbi)\boxplus \ldots = \Xb_2(\mbi)\boxplus \Xb_4(\mbi) \boxplus \ldots,\qquad 
\Yb_2(\mbi)\boxplus \Yb_4(\mbi)\boxplus \ldots = 
\Xb_1(\mbi)\boxplus \Xb_3(\mbi) \boxplus \ldots,
\end{equation}
where all sums have $m_{ij}/2$ terms if $m_{ij}$ is even, and where the sums with odd indices have $(m_{ij}+1)/2$ terms and those with even indices have $(m_{ij}-1)/2$ terms if $m_{ij}$ is odd. Due to the other conditions on $U$ and the new basis $\{e_a'\}$, this will entail  
\begin{equation}\label{EqNewEs}
\Eb_i(\mbi) =   \boxplus_{a,i_a = i} \Xb_a(\mbi) = \boxplus_{a,i_a' = i} \Yb_a(\mbi),  
\end{equation}
and so our $\Phi$ will satisfy \eqref{EqConjDiffOrd}. 

Below we show the existence of $\{e_a'\}$ and $U$ for $m_{ij}\in \{2,3,4\}$. Since we have now reformulated the task at hand referring only to operators affiliated to $L_{\hbar}(\mfb(\mbi))$, we can simplify the notation and write
\[
L_{\hbar}(\mfn),L_{\hbar}(\mfb),\Xb_a,\Xb_a',\Yb_a,\ldots
\]

\subsection{The case \texorpdfstring{$m_{ij}=2$}{}}

This means that 
\begin{equation}\label{EqInProdZero}
\langle i,j\rangle = 0.
\end{equation}
In this case, we take our new basis of $\R[J]$ to be the original one, except for 
\[
e_1' = e_2,\qquad e_2'= e_1.
\]
Then  \eqref{EqInProdZero} gives that \eqref{EqSkewProdChange} is satisfied. We have now 
\[
\Xb_1' = \Xb_2,\qquad \Xb_2'= \Xb_1.
\]
Further take $U = 1$, which trivially satisfies \eqref{EqCommAllRest}. Then 
\[
\Yb_1 = \Xb_1' =\Xb_2,\qquad \Yb_2 = \Xb_2' = \Xb_1,
\]
so \eqref{EqNewEsLoc} is satisfied.

\subsection{The case \texorpdfstring{$m_{ij}= 3$}{}}

In this case, we have $a_{ij} = a_{ji} = -1$, and (by rescaling $\hbar$) we may assume $d_i =d_j =1$. 

Consider then for $J$ the new basis which equals the original one, except for 
\[
e_1' = e_2,\qquad e_2' = e_3,\qquad e_3' = e_1+e_2-e_3.
\]
One easily checks that \eqref{EqSkewProdChange} is satisfied. We thus get 
\[
\Xb_1' = \Xb_2,\quad \Xb_2' = \Xb_3,\quad \Xb_3' = \Xb_1\star\Xb_2\star \Xb_3^{-1},\quad \Xb_a' = \Xb_a,\qquad a\in J \setminus \{1,2,3\}.
\]
Now put 
\[
U = F_{\hbar}(\Xb_1\star\Xb_3^{-1}) \in  L_{\hbar}(\mfn).
\]
Then $U$ satisfies \eqref{EqCommAllRest}. Moreover, from \eqref{EqFormSumId} and \eqref{EqFormSumIdAlt} we get that 
\[
\Yb_2 = U\Xb_2'U^* = \Xb_1\boxplus \Xb_3,\qquad \Yb_1\boxplus \Yb_3 = U(\Xb_1'\boxplus \Xb_3')U^* = \Xb_2, 
\]
while $U$ commutes with all $\Xb_a$ for $a\in J \setminus \{1,2,3\}$. Hence \eqref{EqNewEsLoc} holds.

\subsection{The case \texorpdfstring{$m_{ij}=4$}{}}\label{Secmij4}

In this case, we may by symmetry assume that $d_i  = 2d_j$, so  $a_{ij} = -1$ and $a_{ji} = -2$. By rescaling $\hbar$, we may also assume $d_i =2$ and $d_j=1$. 

Consider then for $J$ the new basis which equals the original one, except for 
\[
e_1' = e_4,\qquad e_2' = e_3+2e_2-2e_4,\qquad e_3' = 2e_4-e_2,\qquad e_4' = e_1+2e_2 -2e_4.
\]
One easily checks that \eqref{EqSkewProdChange} is satisfied. We thus get 
\[
\Xb_1' = \Xb_4,\qquad \Xb_2' = \Xb_2^2\star\Xb_3\star\Xb_4^{-2},\qquad \Xb_3' = \Xb_2^{-1}\star\Xb_4^2,\qquad \Xb_4' = \Xb_1\star\Xb_2^2\star\Xb_4^{-2}.
\]
Now put 
\begin{equation}\label{EqDefU}
U =  F_{\hbar}(\Xb_2\star\Xb_4^{-1}) F_{2\hbar}(\Xb_1\star\Xb_3^{-1}) F_{\hbar}(\Xb_4\star\Xb_2^{-1}) \in L_{\hbar}(\mfn),
\end{equation}
and note that we can write $U = U_3U_2U_1$ where 
\begin{equation}\label{EqDefUi}
U_1 = F_{\hbar}(\Xb_2\star\Xb_4^{-1}),\qquad U_2 = U_1F_{2\hbar}(\Xb_1\star\Xb_3^{-1})U_1^*,\qquad U_3 = U_2U_1F_{\hbar}(\Xb_4\star\Xb_2^{-1})U_1^*U_2^*.
\end{equation}
Then clearly $U_1,U_2,U_3$ and hence $U$ satisfy \eqref{EqCommAllRest}. 

It is now sufficient to prove that the
\[
\Yb_a = U\Xb_a'U^*
\]
satisfy 
\begin{equation}\label{Eqm4}
\Yb_1\boxplus \Yb_3 \underset{(1)}{=} \Xb_2\boxplus \Xb_4,\qquad 
\Yb_2\boxplus \Yb_4 \underset{(2)}{=} \Xb_1\boxplus \Xb_3.
\end{equation}

Let us first prove $(1)$ in \eqref{Eqm4}. First note that, by the second identity in \eqref{EqFormSumIdAlt}, 
\begin{equation}\label{EqIdAux1}
U_1\Xb_4U_1^* = \Xb_2\boxplus \Xb_4.
\end{equation}
As $\Xb_1\star\Xb_3^{-1}$ strongly commutes with $\Xb_4$, we then also get 
\begin{equation}\label{EqIdAux2}
U_2U_1\Xb_4U_1^*U_2^* = U_1 \Ad_{F_{2\hbar}(\Xb_1\star\Xb_3^{-1})}(\Xb_4) U_1^* = U_1\Xb_4U_1^* =  \Xb_2\boxplus \Xb_4.
\end{equation}
Using now the first identity in \eqref{EqFormSumIdAlt}, we find 
\begin{equation}\label{EqAux3}
U_3^*(\Xb_2 \boxplus \Xb_4)U_3 \underset{\eqref{EqIdAux2}}{=} U_3^*U_2U_1\Xb_4U_1^*U_2^* = U_2U_1 \Ad_{F_{\hbar}(\Xb_4\star\Xb_2^{-1})}(\Xb_4)U_1^*U_2^* = U_2U_1(\Xb_4\boxplus \Xb_2^{-1}\star\Xb_4^2)U_1U_2.
\end{equation}
Bringing $U_3$ to the other side, we find the first identity in \eqref{Eqm4}.

Let us now prove $(2)$ in \eqref{Eqm4}. Write 
\[
t = e_2-e_4,\quad s = e_1-e_3,\qquad \Tb=  \Xb_2\star \Xb_4^{-1},\quad \Sb = \Xb_1\star\Xb_3^{-1}. 
\]
Noting that $r_0 = e_1+e_2-e_4$ lies in the radical of the restriction of $\epsilon$ to $\R[J(\mbj)]$, we get that $\Cb := \Xb_1\star \Xb_2\star \Xb_4^{-1}$ commutes with all $\Xb_a$ for $a\in \{1,2,3,4\}$, and hence we can write 
\[
\Xb_1 = \Cb\star \Tb^{-1},\qquad \Xb_3 = \Cb\star \Tb^{-1}\star\Sb^{-1},\qquad \Yb_2 = \Cb \star U(\Tb\star\Sb^{-1})U^*,\qquad \Yb_4 = \Cb\star U\Tb U^{-1}.
\]
Hence the second identity in \eqref{Eqm4} is equivalent with 
\begin{equation}\label{Eqm4Alt}
U(\Tb\star \Sb^{-1} \boxplus \Tb)U^* = \Tb^{-1}\star \Sb^{-1} \boxplus \Tb^{-1}. 
\end{equation}
Noting that we can rewrite 
\begin{equation}\label{EqAltDefU}
U = F_{\hbar}(\Tb)F_{2\hbar}(\Sb)F_{\hbar}(\Tb^{-1}),
\end{equation}
it is now sufficient to show that \eqref{Eqm4Alt} holds for any pair of $2\hbar$-commuting positive operators $\Sb,\Tb$, with $U$ as in \eqref{EqAltDefU}. 

Note that the identities \eqref{EqFormSumIdAlt} give
\[
 F_{2\hbar}(\Sb^{-1})\Tb  F_{2\hbar}(\Sb^{-1})^* = \Tb\star\Sb^{-1}\boxplus \Tb,\qquad  F_{2\hbar}(\Sb^{-1})^*\Tb^{-1} F_{2\hbar}(\Sb^{-1}) = \Tb^{-1}\boxplus \Sb^{-1}\star\Tb^{-1}.
\]
We can thus rewrite \eqref{Eqm4Alt} in the equivalent form 
\begin{equation}\label{Eqm4AltAlt}
R\Tb R^* = \Tb^{-1},\qquad R =  F_{2\hbar}(\Sb^{-1})U F_{2\hbar}(\Sb^{-1}) = F_{2\hbar}(\Sb^{-1})F_{\hbar}(\Tb)F_{2\hbar}(\Sb)F_{\hbar}(\Tb^{-1})F_{2\hbar}(\Sb^{-1}).
\end{equation}
Our final task is hence to prove \eqref{Eqm4AltAlt}. 

Recall from \eqref{EqPropsF} that
\[
F_{\hbar}(t^{-1})F_{\hbar}(t)
= J_{\hbar}(t) := e^{-2i\chi_{\hbar}}e^{-i(\ln(t))^2/4\pi\hbar},\qquad t>0.
\]
One easily shows that for any $\hbar$-commuting positive operators $\Ab,\Bb$, one has 
\begin{equation}\label{EqboInv}
J_{\hbar}(\Bb)\Ab J_{\hbar}(\Bb)^* = \Ab\star \Bb 
\end{equation}
(see e.g.\ \cite{DCSSV25}*{Equation (B.1)}). 

For ease of notation, we introduce now the following abbreviations:
\[
\begin{array}{llll}
 A:=F_{2\hbar}(\Sb^{-1}), &
 B:=F_{\hbar}(\Tb), & 
 L:=F_{2\hbar}(\Tb^{2}\star\Sb^{-1}),&
 M:=F_{\hbar}(\Tb\star\Sb^{-1}),\\
 A^{-}:=F_{2\hbar}(\Sb),& 
 B^{-}:=F_{\hbar}(\Tb^{-1}), & 
 L^{-}:=F_{2\hbar}(\Tb^{-2}\star\Sb), &
 M^{-}:=F_{\hbar}(\Tb^{-1}\star\Sb).
\end{array}
\]

Since $(\Sb,\Tb)$ is $2\hbar$-commuting, also the pairs $(\Tb,\Sb^{-1}),(\Tb^{-1},\Sb)$ and $(\Tb \star\Sb^{-1}, \Tb^{-2}\star \Sb)$ are $2\hbar$-commuting. Applying the type $B$ pentagon equation \eqref{EqPentTypeB} to each of them yield the following equations:

\begin{equation}\label{the three PE}
    \begin{array}{lcl}
         AB& = & BLMA  \\
          A^{-} B^{-} & =&  B^{-} L^{-}M^{-}A^{-} \\
          L^{-}M & = & M A B^{-} L^{-}
    \end{array}
\end{equation}

Using the equations in \eqref{the three PE} in turn gives
\begin{equation*}
\begin{array}{lcl}
R  &= & BLMA A_-B_-A \\
 &= & BLM A B_-L_-M_-A_-A \\
 &= & BL\,L^{-}M\,M^{-}A^{-}A \\
&= & B\,(LL^{-})\,(MM^{-})\,(A^{-}A). \\
& = & F_{\hbar}(\Tb) J_{2\hbar}(\Tb^2\star\Sb^{-1}) J_{\hbar}(\Tb\star\Sb^{-1}) J_{2\hbar}(\Sb).
  \end{array}
\end{equation*}
Hence, using repeatedly \eqref{EqboInv} and $J_{\hbar}(t) = J_{\hbar}(t^{-1})$ for $t>0$, we find
\begin{eqnarray*}
R\Tb R^{-1} &=& F_{\hbar}(\Tb) J_{2\hbar}(\Tb^2\star\Sb^{-1}) J_{\hbar}(\Tb\star\Sb^{-1})J_{2\hbar}(\Sb^{-1})\; \Tb \; J_{2\hbar}(\Sb^{-1})^* J_{\hbar}(\Tb\star\Sb^{-1})^*  J_{2\hbar}(\Tb^2\star\Sb^{-1})^*F_{\hbar}(\Tb)^* \\
&=& F_{\hbar}(\Tb) J_{2\hbar}(\Tb^2\star\Sb^{-1}) J_{\hbar}(\Tb\star\Sb^{-1})\; \Tb\star\Sb^{-1} \; J_{\hbar}(\Tb\star\Sb^{-1})^*  J_{2\hbar}(\Tb^2\star\Sb^{-1})^*F_{\hbar}(\Tb)^*\\
&=& F_{\hbar}(\Tb) J_{2\hbar}(\Tb^{-2}\star\Sb) \; \Tb\star\Sb^{-1} \;   J_{2\hbar}(\Tb^{-2}\star\Sb)^*F_{\hbar}(\Tb)^*\\
&=& F_{\hbar}(\Tb)  \; \Tb^{-1}\;   F_{\hbar}(\Tb)^*\\
&=&\Tb^{-1},
\end{eqnarray*} 
proving the identity \eqref{Eqm4AltAlt}.

\section{Integrability}\label{SecIntegrability}

\begin{Theorem}\label{TheoIntegrableGen}
Assume $I$ is a classical Cartan datum, and let $\mbi$ be a reduced word in $I$. Then $\mbi$ is integrable.
\end{Theorem}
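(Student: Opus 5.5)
Our plan is to reduce to a convenient reduced word for $w_0$ and then peel off letters one at a time.

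\emph{Reduction to a longest word.} Any reduced word $\mbi$ extends to a reduced word $\mbi'' = (\mbi,\mbk)$ with $w_{\mbi''} = w_0$. Write $J(\mbi'') = J(\mbi)\sqcup J(\mbk)$. The operators $\Xb_a(\mbi'')$ for $a\in J(\mbi)$ coincide with the $\Xb_a(\mbi)$ tensored with the identity on the last legs. We then need to show that $L_{\hbar}(N(\mbi))$ can be recovered from $L_{\hbar}(N(\mbi''))$. Concretely, consider the Feigin picture: the simple root operators of $\mbi''$ satisfy
\[
\Eb_i(\mbi'') = \Eb_i(\mbi)\otimes 1\boxplus \kb_{\alpha_i}(\mbi)\otimes \Eb_i(\mbk).
\]
This is the analytic analogue of $\Delta$. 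The difficulty is that integrability for $\mbi''$ does not trivially descend to $\mbi$. We therefore prefer a direct argument for arbitrary reduced words, by induction on $\ell(\mbi)$.

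\emph{Induction.} Write $\mbi = (\mbi_0,k)$ with $\mbi_0$ reduced, so that $J(\mbi) = J(\mbi_0)\cup\{N\}$. By induction, $L_{\hbar}(N(\mbi_0)) = L_{\hbar}(\mfn(\mbi_0))$. We have $\Eb_i(\mbi) = \Eb_i(\mbi_0)\otimes 1$ for $i\neq k$, and $\Eb_k(\mbi) = \Eb_k(\mbi_0)\otimes 1\boxplus \Xb_N$. The goal is to show that $\Xb_N$ and each $\Xb_a$ with $a<N$ lie in $L_{\hbar}(N(\mbi))$. The strategy is to produce, inside $L_{\hbar}(N(\mbi))$, operators whose commutation with the generators isolates the last tensor leg.

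\emph{Isolating the last letter via higher root vectors.} The main tool we would use is the analytic Lusztig braid-group action implemented through the exchange isomorphisms of Theorem \ref{TheoMatsum} and Theorem \ref{TheoDoubleLetter}. For a reduced word ending in $k$, the operator $\Xb_N$ should appear as a (quantum) root vector for the root $w_{\mbi_0}(\alpha_k)$. Such root vectors are built from the $\Eb_i$ by iterated quantum commutators, which analytically are realized through the conjugation formulas \eqref{EqFormSumId}--\eqref{EqFormSumIdAlt} and Heisenberg-type identities.

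We would therefore proceed as follows. Using Matsumoto moves, which by Section \ref{SecFullExch} preserve integrability because the isomorphisms $\Phi$ preserve both $L_{\hbar}(\mfn)$ and the $\Eb_i$, we move to an adapted reduced word in which the rank-$2$ pieces are explicit. In classical types we take the standard ``column'' reduced words, for example $(1,2,1,3,2,1,\ldots)$ in type $A$ and the analogous $B$, $C$, $D$ words. For these words we proceed by an inner induction on rank, adding a fresh letter $n$ together with the segment $(n,n-1,\ldots)$ that it generates.

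\emph{Main obstacle.} The key step is to show that the new leg variables lie in the von Neumann algebra generated by the $\Eb_i$. We would attempt this via a spectral/weight argument. The action $\gamma$ of $\mfh'$ restricts to $L_{\hbar}(N(\mbi))$, so spectral subspaces of $L_{\hbar}(N(\mbi))$ for $\gamma$ are available. Moreover, the explicit rank-$1$ unitaries $F_{\hbar}(\Xb_a\star\Xb_b^{-1})$ conjugating sums into single terms, as in the $m_{ij}=3,4$ cases, let us write $\Xb_N$ as $V^*\Eb_k(\mbi)'V$ for a unitary $V$ built from elements already known to lie in $L_{\hbar}(N)$, by induction.

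Typically, in type $A$ this is a Kashaev-type identity showing that $\Xb_a\boxplus\Xb_b$ together with $\Xb_a\star\Xb_b^{-1}$ generate both summands. The type-$B$ and type-$C$ analogues use \eqref{EqPentTypeB}. The hardest part is verifying that every such auxiliary ratio $\Xb_a\star\Xb_b^{-1}$ is itself generated. We expect to handle this with a careful bookkeeping induction along the column word, which is where classicality, and in particular the failure of this bookkeeping for $F_4$ and $E_8$, enters.
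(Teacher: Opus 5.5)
The reduction you abandon is exactly the paper's route. Integrability \emph{does} descend along prefixes, by a sandwich argument rather than by recovering $L_{\hbar}(N(\mbi))$ inside $L_{\hbar}(N(\mbi,j))$. Let $a$ be the last position. Lemma \ref{LemDirSumSkew} gives $L_{\hbar}(\mfn(\mbi,j))\cong \R a\ltimes_{\beta}L_{\hbar}(\mfn(\mbi))$, with $\Xb_a$ generating the copy of $L(\R a)$. Since $\beta$ rescales all summands of a given $\Eb_i(\mbi)$ by the same factor, it restricts to $L_{\hbar}(N(\mbi))$. Each $\Eb_i(\mbi,j)$ is either $\Eb_i(\mbi)$ or $\Eb_j(\mbi)\boxplus\Xb_a$, so it is affiliated with $\R a\ltimes_{\beta} L_{\hbar}(N(\mbi))$. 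Hence
\[
L_{\hbar}(N(\mbi,j))\subseteq \R a\ltimes_{\beta}L_{\hbar}(N(\mbi))\subseteq \R a\ltimes_{\beta}L_{\hbar}(\mfn(\mbi)) = L_{\hbar}(\mfn(\mbi,j)).
\]
Integrability of $(\mbi,j)$ forces equality throughout, and slicing as in \eqref{EqSliceCrossed} yields $L_{\hbar}(N(\mbi))=L_{\hbar}(\mfn(\mbi))$. Every reduced word extends to one for $w_0$, and Theorem \ref{TheoMatsum} transports integrability between reduced words of $w_0$, so it suffices to treat one chosen longest word per simple type.

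Your upward induction goes the hard way. The hypothesis $L_{\hbar}(N(\mbi_0))=L_{\hbar}(\mfn(\mbi_0))$ only becomes usable once $\Eb_k(\mbi_0)\otimes 1\aff L_{\hbar}(N(\mbi))$, that is, once $\Xb_N$ has already been split off $\Eb_k(\mbi)$. That is the whole inductive step, and you give no argument for it for an arbitrary reduced word. Switching to column words does not help: Matsumoto moves preserve $w_{\mbi}$, so for a general $w$ they never lead to (a prefix of) a column word.

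Even for $w_0$, the analytic core is missing, and the tools you name do not supply it:
\begin{itemize}
\item A $\gamma$-spectral argument cannot split $\Eb_i$, because every summand $\Xb_a$, $a\in J_i$, carries the same $\mfh'$-weight as $\Eb_i$.
\item Root vectors built by iterated quantum commutators have no known realization through positive operators, $\boxplus$ and $\star$; the paper flags precisely this as open.
\item The exchange isomorphisms of Theorems \ref{TheoDoubleLetter} and \ref{TheoMatsum} only transport $L_{\hbar}(N(\mbi))$ onto $L_{\hbar}(N(\mbi'))$. They produce no new elements inside it.
\item Conjugating by unitaries $F_{\hbar}(\Xb_a\star\Xb_b^{-1})$ is circular. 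By Lemma \ref{LemAff}, these all lie in $M=L_{\hbar}(N(\mbi))$ exactly when $\Xb_a\star\Xb_b^{-1}\aff M$, which is what has to be shown. ``Careful bookkeeping'' does not supply this.
\end{itemize}

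What is missing are the three splitting lemmas:
\begin{itemize}
\item The first separates $\Ab\boxplus\Bb$. It conjugates the factorization \eqref{EqQuantExp} by $\Cb^{it}$, for an already affiliated $\Cb$ that skew-commutes with $\Ab$ and $\Bb$ at different rates, and lets $t\to\pm\infty$ so that one factor tends strongly to $1$.
\item The second tensors with $\Bc(L^2(\R))$, putting $\wbX=\Xb\otimes\kb$ and $\wbY=\Yb\otimes\eb$. The ordinary pentagon then yields \eqref{EqComplPent}, which forces $F_{\hbar}(s\wbZ)\in M\bar{\otimes}\Bc(L^2(\R))$ and hence $\Cb\star\Bb\aff M$.
\item The third handles the double bond via conjugation by $F_{\hbar/2}(\Ab\star\Bb^{-1})$.
\end{itemize}
For $\mbi_{B_{n-1}}\mbb_n$, one iterates the second lemma down the new column to get $(\Xb_{n-1}\boxplus\Xb_{n+1})\star\Xb_{n-2}\star\dotsb\star\Xb_1\aff M$. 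One then applies the third lemma at the short root; in type $D$ one instead forms the ratio of the two fork variables and uses the first. Finally one walks back with the first lemma, splitting off every new $\Xb_b$ and isolating the simple root operators of $\mbi_{B_{n-1}}$, which closes the induction on rank.
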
 

We first show that it is sufficient to treat the case where $w_{\mbi} = w_0$, i.e.\ $\mbi$ is a reduced word of maximal length. This follows directly from the following general lemma, and the fact, true for any finite Coxeter system, that for any reduced word $\mbi$, there exists a reduced word $\mbj$ such that the concatenation $(\mbi,\mbj)$ is a reduced word of maximal length. 

\begin{Lem}
Let $I$ be a Cartan datum, and let $\mbi$ be a word in $I$. Let $j \in I$, and let $\mbj$ be the word $(\mbi,j)$. Then if $\mbj$ is integrable, also $\mbi$ is integrable. 
\end{Lem}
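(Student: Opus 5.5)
Write $N = \ell(\mbi)$, so $J(\mbj) = \{1,\ldots,N+1\}$ with $i_{N+1}' = j$. The plan is to realise the data of $\mbi$ inside those of $\mbj$, and to show that the extra letter can be removed by a conditional-expectation or slicing argument that respects the simple root operators.

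\textbf{Step 1: tensor realisation.} In the tensor realisation we have $\Hsp(\mbj) = \Hsp(\mbi)\otimes L^2(\R)$. By \eqref{EqXVar}, the first $N$ generators simply tensor with the identity:
\[
\Xb_a(\mbj) = \Xb_a(\mbi)\otimes 1,\qquad a\leq N.
\]
The last generator is
\[
\Xb_{N+1}(\mbj) = \kb_{\beta}\otimes \eb,
\]
where $\kb_{\beta} = \prod_{b\leq N}\kb_b^{\langle j,i_b\rangle}$ is the operator $\kb_{\alpha_j}$ of \eqref{EqXVarCart} for $\mbi$. Hence
\[
\Eb_i(\mbj) = \Eb_i(\mbi)\otimes 1 \quad (i\neq j),\qquad \Eb_j(\mbj) = \Eb_j(\mbi)\otimes 1 \boxplus \kb_{\alpha_j}\otimes \eb .
\]
(If $j$ does not occur in $\mbi$, read $\Eb_j(\mbi)=0$.) Note also that $\kb_{\alpha_j}$ implements the action $\gamma_{\alpha_j}$ on $L_{\hbar}(\mfn(\mbi))$.

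\textbf{Step 2: slicing in the last leg.} Consider the unital normal maps $\id\otimes\omega$ for $\omega$ a normal functional on $\Bc(L^2(\R))$. These do not preserve generated von Neumann algebras directly, so instead I would pass to the von Neumann algebra generated by the spectral projections. Concretely, apply $\Ad$ of the unitary $1\otimes \eb^{is}$, which fixes $\Xb_a(\mbj)$ for $a\leq N$ and sends $\Xb_{N+1}(\mbj)$ to itself, since $\eb$ commutes with $\eb$. The useful operator instead is conjugation by $1\otimes\kb^{it}$, which rescales $\Xb_{N+1}$ by $e^{\pi\hbar t}$ and fixes the others. Therefore $L_{\hbar}(N(\mbj))$ is invariant under the $\R$-action $\theta_t = \Ad(1\otimes \kb^{it})$.

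Using the quantum exponential property \eqref{EqQuantExp}, for $s\geq0$ we get
\[
F_{d_j\hbar}(s\Eb_j(\mbj)) = \bigl(F_{d_j\hbar}(s\Eb_j(\mbi))\otimes 1\bigr)\,F_{d_j\hbar}(s\,\kb_{\alpha_j}\otimes\eb),
\]
because the two form-summands $d_j\hbar$-commute in the correct order. This element lies in $L_{\hbar}(N(\mbj))$, and so do its $\theta_t$-rescalings. Letting the rescaling parameter tend to $-\infty$ sends $s e^{\pi\hbar t}\kb_{\alpha_j}\otimes\eb\to0$ in the strong resolvent sense. Since $F(0)=1$ and $F$ is continuous, the second factor converges strongly to $1$, and the product converges strongly to $F_{d_j\hbar}(s\Eb_j(\mbi))\otimes1$. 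Hence
\[
F_{d_j\hbar}(s\Eb_j(\mbi))\otimes 1\in L_{\hbar}(N(\mbj)) \quad\text{for all } s\geq 0,
\]
and by the uniqueness argument of Theorem \ref{TheoHeisPent} (cf.\ \cite{DCSSV25}*{Lemma 4.16}) this gives $\Eb_j(\mbi)\otimes1\aff L_{\hbar}(N(\mbj))$. (The sign of $\hbar$ only decides the direction of $t$.)

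\textbf{Step 3: conclusion.} Integrability of $\mbj$ now gives
\[
L_{\hbar}(\mfn(\mbi))\otimes 1 \subseteq L_{\hbar}(\mfn(\mbj)) = L_{\hbar}(N(\mbj)).
\]
I then need the reverse information, namely that everything in $L_{\hbar}(\mfn(\mbi))\otimes 1$ is generated by the $\Eb_i(\mbi)\otimes1$ alone. The route is the commutant: first establish
\[
L_{\hbar}(N(\mbj))\cap(\Bc(\Hsp(\mbi))\otimes1) = (L_{\hbar}(N(\mbi))\otimes1)\ \text{``up to the last generator''}.
\]
For this I would write $L_{\hbar}(N(\mbj))$ as generated by $L_{\hbar}(N(\mbi))\otimes1$ together with $\kb_{\alpha_j}\otimes\eb$, which follows from Step 2 and the splitting $\Eb_j(\mbj)$ above (recover $\kb_{\alpha_j}\otimes\eb$ via \eqref{EqFormSumId}-type limits or the same rescaling in the opposite direction). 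Then apply the slice characterisation \eqref{EqSliceCrossed}. After conjugating the last leg by the Fourier transform, $\kb_{\alpha_j}\otimes\eb$ generates exactly the crossed product by the $\R$-action $\gamma_{\cdot\alpha_j}$ restricted to $L_{\hbar}(N(\mbi))$, which is well defined since $\gamma$ preserves $L_{\hbar}(N(\mbi))$. Slicing off $L(\R)$ then returns $L_{\hbar}(N(\mbi))$. The same crossed-product description applied to $L_{\hbar}(\mfn(\mbj))$ returns $L_{\hbar}(\mfn(\mbi))$, so equality of the crossed products forces $L_{\hbar}(N(\mbi)) = L_{\hbar}(\mfn(\mbi))$.

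I expect the main obstacle to be this identification of $L_{\hbar}(N(\mbj))$ as a crossed product, particularly justifying that $\kb_{\alpha_j}\otimes\eb$ is itself affiliated (Step 2 only handles the limit that kills it). If the rescaling in the other direction is delicate, I would fall back on the Takesaki-type slice theorem, using that $\theta$ is dual to the crossed product structure.
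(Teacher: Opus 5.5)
Your closing idea in Step 3 (realise both algebras as crossed products by $\R$ and slice) is the paper's argument. As written, however, the proposal does not close, and Step 2 is wrong as stated.

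Conjugation by $1\otimes\kb^{it}$ rescales only $\Xb_{N+1}(\mbj)$, not the other $\Xb_a(\mbj)$ with $i_a=j$. So $\theta_t(\Eb_j(\mbj))$ is not a multiple of $\Eb_j(\mbj)$, and nothing places $1\otimes\kb^{it}$ in $L_{\hbar}(N(\mbj))$. Your ``Therefore'' only yields invariance of $L_{\hbar}(\mfn(\mbj))$.

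Without integrability this invariance is in fact false. For $\mbi=(j)$, your limit argument would make both $\eb\otimes 1$ and $\kb^{2d_j}\otimes\eb$ affiliated with the abelian algebra $\{\Eb_j(j,j)\}''$, although they do not commute. With integrability of $\mbj$ it is true, but then the output of Step 2 is free anyway, since $\Xb_a(\mbi)\otimes 1=\Xb_a(\mbj)\aff L_{\hbar}(\mfn(\mbj))=L_{\hbar}(N(\mbj))$.

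Likewise, the ``main obstacle'' you name is not one. $\kb_{\alpha_j}\otimes\eb=\Xb_{N+1}(\mbj)$ is affiliated with $L_{\hbar}(\mfn(\mbj))=L_{\hbar}(N(\mbj))$ by hypothesis, and in fact you never need this.

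The missing idea is that only the easy inclusion is needed. By Step 1 and \eqref{EqAffForm}, each $\Eb_i(\mbj)$ is affiliated with $\{L_{\hbar}(N(\mbi))\otimes 1,\Xb_{N+1}(\mbj)\}''$, so
\[
L_{\hbar}(N(\mbj))\subseteq \{L_{\hbar}(N(\mbi))\otimes 1,\Xb_{N+1}(\mbj)\}''\subseteq \{L_{\hbar}(\mfn(\mbi))\otimes 1,\Xb_{N+1}(\mbj)\}''=L_{\hbar}(\mfn(\mbj)).
\]
Integrability of $\mbj$ turns this chain into equalities; this is exactly the paper's proof.

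To deduce $L_{\hbar}(N(\mbi))=L_{\hbar}(\mfn(\mbi))$ from equality of the two middle algebras, identify them compatibly with $\R\ltimes L_{\hbar}(N(\mbi))\subseteq \R\ltimes L_{\hbar}(\mfn(\mbi))$ via Lemma \ref{LemDirSumSkew}, then apply \eqref{EqSliceCrossed}. The action there rescales each $\Eb_i(\mbi)$, so it preserves $L_{\hbar}(N(\mbi))$.

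Slicing the last leg in the Feigin picture does not do this. Indeed $(\id\otimes\omega)(\kb_{\alpha_j}^{is}\otimes\eb^{is})=\omega(\eb^{is})\kb_{\alpha_j}^{is}$ is typically not in $L_{\hbar}(\mfn(\mbi))$. No unitary acting on the last leg alone, Fourier transform included, removes this twist.

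Alternatively, your $\theta_t$ is precisely the dual action. Its fixed points in $\{M\otimes 1,\Xb_{N+1}(\mbj)\}''$ are $M\otimes 1$ for $M=L_{\hbar}(N(\mbi))$, resp.\ $L_{\hbar}(\mfn(\mbi))$, which also finishes the argument once the chain above is in place.
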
 
\begin{proof}
We can write $\mfn(\mbj) = \mfn(\mbi) \oplus \R a$, where $a$ refers to the last position of the word $\mbj$. This decomposition satisfies the hypothesis of Lemma \ref{LemDirSumSkew}, so we can write 
\begin{equation}\label{EqCrossProdIso}
\Phi: L_{\hbar}(\mfn(\mbj)) \overset{\cong}{\rightarrow} \R a \ltimes_{\beta} L_{\hbar}(\mfn(\mbi)),
\end{equation}
where $\R a \overset{\beta}{\curvearrowright} L_{\hbar}(\mfn(\mbi))$ is determined by 
\[
\beta_{ta}(\Xb_b) = e^{2\pi \hbar \langle i_b,j\rangle}\Xb_b,\qquad b\in J(\mbi),t\in \R.
\]
It follows in particular that $\beta$ restricts to an action on $L_{\hbar}(N(\mbi))$, giving us the inclusion 
\begin{equation}\label{EqCrossProdInclu}
\R a \ltimes_{\beta} L_{\hbar}(N(\mbi)) \subseteq \R a \ltimes_{\beta} L_{\hbar}(\mfn(\mbi)). 
\end{equation}
Now \eqref{EqCrossProdIso} satisfies, for $b<a$, 
\[
\Xb_b(\mbj) = \left\{\begin{array}{ll} \Phi(\Xb_b(\mbi)),& i_b \neq j,\\
\Phi(\Xb_b(\mbi))\boxplus \Xb_a(\mbj),& i_b =j,\end{array}\right. \qquad \Eb_i(\mbj) = \left\{\begin{array}{ll} \Phi(\Eb_i(\mbi)),& i \neq j,\\
\Phi(\Eb_i(\mbi))\boxplus \Xb_a(\mbj),& i =j,\end{array}\right. 
\]
while $\Phi(\qb(a)) = \Xb_a(\mbj)$. It follows that  
\[
L_{\hbar}(N(\mbj)) \subseteq \Phi(\R a \ltimes_{\beta} L_{\hbar}(N(\mbi))) \subseteq \Phi(\R a \ltimes_{\beta} L_{\hbar}(\mfn(\mbi))) =  L_{\hbar}(\mfn(\mbj)).
\]
By integrability of $\mbj$, equality must hold everywhere, so pulling back by $\Phi$ we see that equality holds in \eqref{EqCrossProdInclu}. By the general identity \eqref{EqSliceCrossed}, we can conclude that $L_{\hbar}(N(\mbi)) = L_{\hbar}(\mfn(\mbi))$, i.e.\ $\mbi$ is integrable.
\end{proof}

To prove Theorem \ref{TheoIntegrableGen}, we are hence reduced to proving the following theorem.  
 
\begin{Theorem}\label{TheoIntegrable}
Assume $I$ is a classical Cartan datum, and let $\mbi$ be a reduced word in $I$ such that 
\[
w_{\mbi} = w_0,
\]
i.e.\ its associated Coxeter group element is the longest word in the Coxeter group. Then $\mbi$ is integrable.
\end{Theorem}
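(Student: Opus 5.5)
We will prove that for a reduced word $\mbi$ with $w_{\mbi} = w_0$ over a classical Cartan datum, each generator $\Xb_a$ of $L_{\hbar}(\mfn(\mbi))$ is affiliated with $L_{\hbar}(N(\mbi))$. Integrability is invariant under equivalence of words: an equivalence $\Phi$ maps $L_{\hbar}(\mfn(\mbi))$ onto $L_{\hbar}(\mfn(\mbi'))$ and $\Eb_i(\mbi)$ to $\Eb_i(\mbi')$, hence $L_{\hbar}(N(\mbi))$ onto $L_{\hbar}(N(\mbi'))$. By Theorem \ref{TheoMatsum} and Matsumoto's theorem, any two reduced words for $w_0$ are equivalent. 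We may therefore fix one convenient reduced word per irreducible type and treat the irreducible blocks separately, using \eqref{EqSplittingOff} and the fact that letters in different orthogonal blocks strongly commute. The natural choice is the standard inductive word $\mbi = (\mbi_{n-1},\mbj_n)$. Here $\mbi_{n-1}$ is a longest word for the rank $n-1$ subdiagram obtained by removing an end node $k$. The tail $\mbj_n$ is the minimal coset representative, for example $(n,n-1,\ldots,1)$ in type $A_n$, $(n,\ldots,1,\ldots,n)$ in types $B_n,C_n$, and the analogous word in type $D_n$.

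The main step is an induction on the rank using the cross-product structure of Lemma \ref{LemDirSumSkew}, as in the proof of the preceding Lemma. Appending the tail $\mbj_n$ to $\mbi_{n-1}$, we want to show that the tail variables $\Xb_a$, $a\in J(\mbj_n)$, can be recovered inside $L_{\hbar}(N(\mbi))$. The first tool is the Cartan-type action. The operators $\Kb$-like elements $\kb_\omega$ are not in $L_{\hbar}(N)$, but the action $\gamma$ preserves $L_{\hbar}(N)$. The second tool is the quantum exponential identities \eqref{EqFormSumId}--\eqref{EqFormSumIdAlt} and Theorem \ref{TheoHeisPent}. The key trick, following Ip's positive-representation computations, is to find a unitary $U\in L_{\hbar}(N(\mbi))$ built from functions $F_{\hbar}$ of elements already known to lie in $L_{\hbar}(N)$ (such as $\Eb_k$ and its conjugates). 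Conjugation by $U$ should peel off one summand, giving $U\Eb_kU^* = \Eb_k'\boxplus \Xb_a$ with $\Eb_k'$ and $\Xb_a$ strictly skew-commuting. From the resulting $\hbar$-commuting pair, each summand is recovered by a rank-one argument: if $\Ab\boxplus\Bb$ and suitable balanced products like $\Ab\star\Bb$ lie in $M$, then $\Ab,\Bb\aff M$.

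Concretely, the plan is as follows. First, in rank one, $\Xb_1=\Eb_i$ trivially. Second, using the distinguished end letter, we show that the last-occurring $\Xb_a$ with $i_a=k$ is expressible via iterated commutator-type operators. These are the analytic counterparts of $q$-commutators $E_kE_{k'}-q^{\pm1}E_{k'}E_k$. We realise them as $\Lb$ in a Heisenberg pair (Definition \ref{DefHeisComm}), via conjugation $\Ad_{F_{\hbar}(\cdot)}$ of one simple root operator by the exponential of another. Third, having obtained the tail variables, we split them off by Lemma \ref{LemDirSumSkew} and \eqref{EqSliceCrossed} and reduce to $L_{\hbar}(N(\mbi_{n-1}))$. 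This algebra is generated, by induction, by the modified operators $\Eb_i(\mbi)\ominus(\text{tail summands})$, which we also show lie in $L_{\hbar}(N(\mbi))$.

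We expect the main obstacle to be the second step: producing the unitary $U$ in types $B,C,D$, and making the recovery of individual summands from a form sum analytically rigorous rather than purely algebraic. In these types, the doubly-laced tail $(n,\ldots,1,\ldots,n)$ contains the letter $n$ twice, and the relevant pentagon is the type $B$ identity \eqref{EqPentTypeB}. Our first attempt will be to mimic the $m_{ij}=4$ computation of Section \ref{Secmij4}, using $J_{\hbar}$-conjugations \eqref{EqboInv} to turn inverses into $\star$-products. For the splitting itself, we will use that a strictly skew-commuting pair is determined by its form sum up to the generated von Neumann algebra. This holds because $F_{\hbar}(\Ab\boxplus\Bb)=F_{\hbar}(\Ab)F_{\hbar}(\Bb)$ together with the $\gamma$-scaling separates $\Ab$ and $\Bb$ by their different weights.
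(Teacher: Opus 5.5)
Your reductions match the paper's: invariance of integrability under equivalence, Theorem \ref{TheoMatsum} with Matsumoto's theorem, splitting into simple blocks, the inductive word $\mbi_{n-1}\mbb_n$, and closing the induction through the form sums $\Gb_j$ of the old summands. The gap is the core step, namely showing that the tail variables and the $\Gb_j$ are affiliated with $M=L_{\hbar}(N(\mbi))$. You leave this open, and neither mechanism you propose for it works.

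First, the separation principle is false. The action $\gamma_\omega$ multiplies every $\Xb_a$ with $i_a=k$ by the same scalar $e^{\pi\hbar\langle\omega,k\rangle}$. So it cannot separate summands of $\Eb_k$, nor of any $U\Eb_kU^*=\Eb_k'\boxplus\Xb_a$ of the kind you envisage; conjugating by $U\in M$ only yields another form sum in $M$. More basically, $\{\Ab\boxplus\Bb\}''$ is abelian. For the word $(i,i)$, neither summand of $\Eb_i=\Xb_1\boxplus\Xb_2$ is affiliated with $L_{\hbar}(N(i,i))=\{\Eb_i\}''$. What actually separates summands is an operator $\Cb$, already known to be affiliated with $M$, that scales them with different exponents (Lemma \ref{LemSplit1}; your version with $\Cb=\Ab\star\Bb$ is a special case). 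Producing such a $\Cb$ from other letters is the whole difficulty.

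Second, conjugating $\Eb_k$ by a quantum exponential of an adjacent $\Eb_j$ does not exhibit a $q$-commutator as the $\Lb$ of a Heisenberg pair. In the simply laced case, summands of adjacent letters only $q^{-1}$-commute, so \eqref{EqFormSumIdAlt} does not apply at parameter $\hbar$. Already algebraically, for generic $q$, $E_jE_k-E_kE_j$ does not skew-commute with $E_j$; the Serre relations only make the $q$-commutators do so. Hence $(\Eb_j,\Eb_k)$ is not a Heisenberg pair in the sense of Definition \ref{DefHeisComm}.

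The idea you are missing is the auxiliary-factor trick of Lemma \ref{LemSplit2}. Pass to $M\bar{\otimes}\Bc(L^2(\R))$ and replace $\Xb=\Ab\boxplus\Bb$, $\Yb=\Cb\boxplus\Db$ by $\wbX=\Xb\otimes\kb$, $\wbY=\Yb\otimes\eb$. The $q$-commutation of $\kb,\eb$ cancels the unwanted relations, leaving only $\wbB,\wbC$ non-commuting. Kashaev's pentagon then gives the genuine Heisenberg identity \eqref{EqComplPent} with $\wbZ=\wbB\star\wbC$. Hence $F_{\hbar}(s\wbZ)\in M\bar{\otimes}\Bc(L^2(\R))$, and so $\Cb\star\Bb\aff M$. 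Iterating this down the tail gives $(\Xb_{n-1}\boxplus\Xb_{n+1})\star\Xb_{n-2}\star\cdots\star\Xb_1\aff M$.

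Several further ingredients are then needed:
\begin{itemize}
\item At the short root, and in the base case $B_2$ where no simple root operator is a single $\Xb_a$, you need Lemma \ref{LemSplit3}. It works inside $uMu^*$ with $u=F_{\hbar/2}(\Ab\star\Bb^{-1})$, which is not a priori in $M$, and uses two limiting arguments.
\item In type $D$ one uses $\Xb_n\star\Xb_{n+1}^{-1}$, obtained by combining the two fork branches.
\item Only after this do back-and-forth sweeps with Lemma \ref{LemSplit1}, with operators of a neighbouring letter as $\Cb$, split off every tail $\Xb_b$ and every $\Gb_j$.
\end{itemize}
The $m_{ij}=4$ computation of Section \ref{Secmij4} and the type $B$ pentagon \eqref{EqPentTypeB}, which you plan to imitate, serve full exchangeability and do not produce these affiliations.
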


Note that integrability is clearly preserved under the equivalence relation on words in Definition \ref{DefFullExch}.  Hence by Theorem \ref{TheoMatsum} (and Matsumoto's theorem), it is enough to show Theorem \ref{TheoIntegrable} for one particular word. Moreover, we can clearly restrict to the case of a simple classical Cartan matrix: if $A = \begin{pmatrix} A_1 & 0 \\ 0 & A_2\end{pmatrix}$ splits over sets $I_1,I_2$, we can choose our longest word of the form $\mbi = (\mbi_1,\mbi_2)$ for $\mbi_1,\mbi_2$ longest words with respect to $I_1,I_2$, and one then clearly gets an isomorphism 
\[
L_{\hbar}(\mfn(\mbi)) \cong L_{\hbar}(\mfn(\mbi_1))\bar{\otimes} L_{\hbar}(\mfn(\mbi_2))
\]
sending $L_{\hbar}(N(\mbi))$ to $L_{\hbar}(N(\mbi_1))\bar{\otimes}L_{\hbar}(N(\mbi_2))$.

We then again the theorem via a case-by-case analysis, picking a particular longest word for each simple classical Cartan matrix. Before we start this up, we first gather some general lemmas. 

\subsection{Preparatory lemmas}

 The following is an analogue of \eqref{EqAffOnePar} for the quantum exponential function.
\begin{Lem}\label{LemAff}
Let $\Hsp$ be a Hilbert space and $M \subseteq \Bc(\Hsp)$ a von Neumann algebra. Assume $\Ab$ is a positive operator on $\Hsp$. Then 
\begin{equation}\label{EqQExpInM}
\Ab \aff M \quad \iff \quad F_{\hbar}(s\Ab) \in M,\qquad \forall s>0.
\end{equation} 
\end{Lem}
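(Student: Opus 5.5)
The forward direction is immediate. If $\Ab \aff M$, then every bounded Borel function of $\Ab$ lies in $M$. Since $F_{\hbar}$ is continuous on $[0,\infty)$ with $|F_{\hbar}|=1$, the function $t\mapsto F_{\hbar}(st)$ is bounded and Borel for each $s>0$. Hence $F_{\hbar}(s\Ab)\in M$, by the characterisation of affiliation for selfadjoint operators recalled in the preliminaries.

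For the converse I will use the commutant characterisation \eqref{EqCharAff}. Fix a unitary $u\in M'$. By assumption $u$ commutes with $F_{\hbar}(s\Ab)$ for all $s>0$, so
\[
F_{\hbar}(s\, u\Ab u^*) = uF_{\hbar}(s\Ab)u^* = F_{\hbar}(s\Ab),\qquad \forall s> 0,
\]
and trivially also at $s=0$, since $F_{\hbar}(0)=1$. Now $u\Ab u^*$ is again a positive operator. I then invoke the injectivity statement already used in the proof of Theorem \ref{TheoHeisPent}: if two positive operators $\Ab,\Bb$ satisfy $F_{\hbar}(s\Ab)=F_{\hbar}(s\Bb)$ for all $s\geq 0$, then $\Ab=\Bb$ (cf.\ \cite{DCSSV25}*{Lemma 4.16}). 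This gives $u\Ab u^*=\Ab$ for all unitaries $u\in M'$, hence $\Ab\aff M$.

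The only substantive step is this injectivity, which I take from the cited lemma. If it had to be justified directly, I would argue as follows. The von Neumann algebra generated by $\{F_{\hbar}(s\Ab)\mid s>0\}$ contains the spectral projections of $\Ab$, because the functions $t\mapsto F_{\hbar}(st)$ separate points of $(0,\infty)$. This separation can be read off from the asymptotics in \eqref{EqPropsF}: the quadratic phase $e^{-i(\ln t)^2/4\pi\hbar}$ at large $t$, combined with the dilation parameter $s$, distinguishes points. A Stone--Weierstrass argument then shows that these functions generate $L^\infty((0,\infty))$ as a von Neumann algebra, so $\Ab$ is affiliated to $\{F_{\hbar}(s\Ab)\}''\subseteq M$. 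This is the direct route to the converse, and the main point to check carefully is the point-separation step.
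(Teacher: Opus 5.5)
Your argument is correct and matches what the paper means by ``similarly as in the proof of Theorem \ref{TheoHeisPent}'': conjugate by unitaries $u\in M'$, then invoke the injectivity statement ($F_{\hbar}(s\Ab)=F_{\hbar}(s\Bb)$ for all $s$ implies $\Ab=\Bb$) from \cite{DCSSV25}*{Lemma 4.16}. One caveat concerns your optional direct route: Stone--Weierstrass does not apply as stated, since $t\mapsto F_{\hbar}(st)$ is neither in $C_0$ nor convergent at $\infty$, but the step can be repaired measure-theoretically, because point separation follows from continuity at $0$ (an invariance $F_{\hbar}(\lambda t)=F_{\hbar}(t)$ with $\lambda\neq 1$ would force $F_{\hbar}\equiv 1$) and a countable point-separating family of Borel functions (take $s\in\mathbb{Q}_{>0}$) generates the Borel $\sigma$-algebra of $(0,\infty)$.
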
 
\begin{proof}
This follows similarly as in the proof of Theorem \ref{TheoHeisPent}.
\end{proof}

\begin{Lem}\label{LemStrongCont0}
Let $\Hsp$ be a Hilbert space and $\Ab$ is a positive operator on $\Hsp$. Then
\begin{equation}\label{EqLimToZero}
  \lim_{t\rightarrow 0} F_{\hbar}(t\Ab) \rightarrow 0\textrm{ in the
  strong$^*$-topology}.
\end{equation}
\end{Lem}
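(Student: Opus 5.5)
The statement should be read as saying that $F_{\hbar}(t\Ab)-1\rightarrow 0$, that is, $F_{\hbar}(t\Ab)\rightarrow 1$ in the strong$^*$-topology as $t\rightarrow 0$ (with $t>0$). This is the only sensible reading, since $F_{\hbar}(0)=1$ and $|F_{\hbar}|=1$ by \eqref{EqPropsF}. I plan to prove it by Borel functional calculus for the positive operator $\Ab$, combined with dominated convergence.

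First I will write $\Ab=\int_{[0,\infty)}\lambda\,dP(\lambda)$ for its spectral resolution. For $\xi\in\Hsp$, let $\mu_{\xi}(\cdot)=\langle \xi,P(\cdot)\xi\rangle$ be the associated finite positive Borel measure, of total mass $\|\xi\|^2$. By functional calculus, $F_{\hbar}(t\Ab)=\int F_{\hbar}(t\lambda)\,dP(\lambda)$ is a unitary, and
\[
\|(F_{\hbar}(t\Ab)-1)\xi\|^2=\int_{[0,\infty)}|F_{\hbar}(t\lambda)-1|^2\,d\mu_{\xi}(\lambda).
\]

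Next I will pass to the limit inside this integral. For each fixed $\lambda\geq 0$, continuity of $F_{\hbar}$ on $[0,\infty)$ together with $F_{\hbar}(0)=1$ gives $F_{\hbar}(t\lambda)\rightarrow 1$ as $t\rightarrow 0$. The integrand is also bounded by $4$, because $|F_{\hbar}|=1$. Dominated convergence with respect to the finite measure $\mu_{\xi}$ then shows that the integral tends to $0$, which is strong convergence. One can work along arbitrary sequences $t_n\rightarrow 0$, so the limit holds as $t\rightarrow 0$.

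For the adjoints I will use $F_{\hbar}(t\Ab)^*=\overline{F_{\hbar}}(t\Ab)=F_{-\hbar}(t\Ab)$. The same integral estimate applies, since $|\overline{F_{\hbar}}(t\lambda)-1|=|F_{\hbar}(t\lambda)-1|$, and this gives the strong$^*$ statement. There is no real obstacle here; the only point requiring attention is the interpretation of the limit as being equal to $1$ (equivalently, that $F_{\hbar}(t\Ab)-1$ tends to $0$).
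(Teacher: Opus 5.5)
Your proposal is correct and is essentially the paper's argument: the paper's one-line proof invokes only the continuity of $F_{\hbar}$, $|F_{\hbar}|=1$ and $F_{\hbar}(0)=1$, and your spectral-measure and dominated-convergence computation (with the adjoint handled by $F_{\hbar}(t\Ab)^*=\overline{F_{\hbar}}(t\Ab)$) simply writes that out in full. You are also right that the ``$\rightarrow 0$'' in the statement is a typo for $F_{\hbar}(t\Ab)\rightarrow 1$: this is the form used in the splitting lemmas, where the factor $F_{a\hbar}(rq^{(b-c)it}\Bb)$ drops out in the limit.
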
 
\begin{proof}
This follows from $F_{\hbar}$ being a continuous function with values on the unit circle and with $F_{\hbar}(0) = 1$. 
\end{proof}

For readability, we will in this section informally write
\[
q^z = e^{-\pi i \hbar z},\qquad z\in \C,
\]
and we write e.g.\
\[
AB = q^{2a}BA
\]
to mean that $\Ab$ and $\Bb$ are an $a\hbar$-commuting pair of positive operators, where $a\in \R$.

\begin{Lem}[First Splitting Lemma]\label{LemSplit1}
  Let $\Hsp$ be a Hilbert space and $M \subseteq \Bc(\Hsp)$ a von
  Neumann algebra.
  Suppose $\Ab,\Bb,\Cb$ are positive operators on
  $\Hsp$ such that
  \begin{equation*}
AB = q^{2a}BA,\qquad AC = q^{b} CA,\qquad BC= q^{c}CB 
\end{equation*}
  for some $a,b,c \in \R$ with $a\neq0$ and $b\neq c$. Write $\Xb = \Ab \boxplus
  \Bb$. If $\Xb \aff M$ and $\Cb \aff M$ then also $\Ab\aff M$
  and $\Bb \aff M$.
\end{Lem}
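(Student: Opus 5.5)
The plan is to use $\Cb$ to rescale $\Ab$ and $\Bb$ by different factors while keeping $\Xb$ affiliated, and then to separate the two summands with the quantum exponential function and its strong continuity at $0$.

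First, conjugation by imaginary powers of $\Cb$ rescales $\Ab$ and $\Bb$. From the Weyl relations, for $s\in\R$ we have
\[
\Cb^{is}\Ab\Cb^{-is} = e^{\beta s}\Ab,\qquad \Cb^{is}\Bb\Cb^{-is} = e^{\gamma s}\Bb,
\]
where $\beta,\gamma$ are real multiples of $\pi\hbar b$ and $\pi\hbar c$ with the same proportionality constant. The hypothesis $b\neq c$ gives $\beta\neq\gamma$. Since $\Cb\aff M$, we have $\Cb^{is}\in M$ by \eqref{EqAffOnePar}, and so by \eqref{EqPropConjFormSum}
\[
\Xb_s := \Cb^{is}\Xb\Cb^{-is} = e^{\beta s}\Ab\boxplus e^{\gamma s}\Bb \aff M.
\]
Positive scalar multiples of $\Xb_s$ are also affiliated with $M$. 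We therefore get $\lambda\Ab\boxplus\mu\Bb\aff M$ for all $\lambda,\mu>0$, because the ratio $\lambda/\mu = e^{(\beta-\gamma)s}$ sweeps out all of $(0,\infty)$.

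Second, we split the summands. Since $(\Ab,\Bb)$ is $a\hbar$-commuting with $a\neq0$, the quantum exponential property \eqref{EqQuantExp}, applied with $F_{a\hbar}$, gives
\[
F_{a\hbar}(\lambda\Ab\boxplus\mu\Bb) = F_{a\hbar}(\lambda\Ab)F_{a\hbar}(\mu\Bb),
\]
after possibly swapping the roles of $\Ab$ and $\Bb$ or using $F_{-\hbar}=\overline{F}_\hbar$ to fix the sign and ordering conventions. The left-hand side lies in $M$ by Lemma \ref{LemAff}. Fix $\lambda>0$ and let $\mu\to0$. By Lemma \ref{LemStrongCont0}, understood as $F_{a\hbar}(\mu\Bb)\to1$ strongly$^*$, and since the product is jointly strongly continuous on bounded sets, $F_{a\hbar}(\lambda\Ab)$ is a strong limit of elements of $M$, hence lies in $M$. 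Lemma \ref{LemAff} then gives $\Ab\aff M$. Letting $\lambda\to0$ instead gives $F_{a\hbar}(\mu\Bb)\in M$ for all $\mu>0$, hence $\Bb\aff M$.

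The main obstacle is the first step: making sure the rescaling exponents really differ, and handling the normalization in which $AC=q^bCA$ corresponds to a $\tfrac{b}{2}\hbar$-commuting pair. This is routine bookkeeping: the only fact needed is that the two exponents are proportional to $b$ and $c$ with the same constant and $\hbar\neq0$. A secondary check is that \eqref{EqQuantExp} applies to the rescaled pair with parameter $a\hbar$; this holds because scaling preserves the commutation constant.
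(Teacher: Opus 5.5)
Your proof is correct and follows the paper's argument. Conjugating by $\Cb^{it}\in M$ and using the quantum exponential property gives $F_{a\hbar}(\lambda\Ab)F_{a\hbar}(\mu\Bb)\in M$ with $\lambda/\mu$ freely adjustable because $b\neq c$; letting the $\Bb$-scaling go to $0$ via Lemma \ref{LemStrongCont0} (correctly read as convergence to $1$) and applying Lemma \ref{LemAff} finishes, with the only cosmetic difference being that for $\Bb$ the paper writes $F_{a\hbar}(s\Bb)=F_{a\hbar}(s\Ab)^*F_{a\hbar}(s\Xb)\in M$ once $\Ab\aff M$ is known, instead of taking the opposite limit.
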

\begin{proof}
  Since $\Xb \aff M$ and $\Cb \aff M$ we have
  \begin{equation*}
    \Cb^{it}F_{a\hbar}(s\Xb)\Cb^{-it} \in M, \quad
    \text{for $s > 0$ and $t \in \R$}.
  \end{equation*}
  Hence, making use of \eqref{EqQuantExp},
  \begin{equation}\label{EqPrelimAff}
    \Cb^{it}F_{a\hbar}(s\Xb)\Cb^{-it} =
    \Cb^{it}F_{a\hbar}(s\Ab)F_{a\hbar}(s\Bb)\Cb^{-it} =
    F_{a\hbar}(sq^{-bit}\Ab)F_{a\hbar}(sq^{-cit}\Bb) \in M,
  \end{equation}
  where we note that e.g.\ $q^{-bit} = e^{-b\pi  \hbar t} \in
  (0, \infty)$. Writing $s = r q^{bit}$ for some $r > 0$,
   we then find
  \begin{equation*}
    F_{a\hbar}(r\Ab)F_{a\hbar}(rq^{(b-c)it}\Bb) \in
    M,\quad \text{for all $r > 0$ and $t\in \R$.}
  \end{equation*}
  Taking the limit $t \to -\sgn((b-c)\hbar)\infty$, we find
  by Lemma \ref{LemStrongCont0}   that
  \begin{equation*}
    F_{a\hbar}(r\Ab)\in M \quad \text{for all $r > 0$.}
  \end{equation*}
  Using Lemma \ref{LemAff} we can then conclude that $\Ab
  \aff M$. By \eqref{EqPrelimAff}, we get $F_{a\hbar}(s\Bb) \in M$ for all $s>0$, hence also  $\Bb \aff M$.
\end{proof}
The `trick' in the following lemma is inspired by \cite{Ip18a}*{Appendix}. 
\begin{Lem}[Second Splitting Lemma]\label{LemSplit2}
  Let $\Hsp$ be a Hilbert space and $M \subseteq \Bc(\Hsp)$ a von
  Neumann algebra. Suppose $\Ab,\Bb,\Cb$ and $\Db$ are positive
  operators on $\Hsp$ such that
  \begin{align*}
    AB &= q^{2}BA,\qquad &&AC = q^{-1} CA,\qquad && BC = qCB \\
    CD &= q^{2}DC,\qquad &&AD = q^{-1}DA,\qquad &&BD = q^{-1}DB.
  \end{align*} 
  Write $\Xb = \Ab \boxplus \Bb$
  and $\Yb = \Cb \boxplus \Db$. If $\Xb \aff M$ and $\Yb \aff M$ then
  also $\Cb \star \Bb \aff M$.
\end{Lem}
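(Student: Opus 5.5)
The plan is to imitate the proof of the First Splitting Lemma (Lemma \ref{LemSplit1}). We produce, inside $M$, a two-parameter family of unitaries of the form $F(\,\cdot\,)$ applied to form sums of rescaled copies of $\Ab,\Bb,\Cb,\Db$ and of their balanced products. We then let one of the rescalings go to $0$ or $\infty$ and use Lemma \ref{LemStrongCont0} and Lemma \ref{LemAff} to isolate a term containing $\Cb\star\Bb$. This is only a plan: the key identity in the second paragraph is guessed from its algebraic analogue and has not been derived.

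First I record the relevant commutations, read off from the hypotheses:
\begin{itemize}
\item $\Ab$ skew-commutes with both $\Cb$ and $\Db$ with the same parameter ($q^{-1}$). Hence, by Lemma \ref{LemDistr}, $\Ab$ skew-commutes with $\Yb$.
\item $\Db$ skew-commutes with $\Ab$ and $\Bb$ with the same parameter. Hence $\Db$ skew-commutes with $\Xb$.
\item $\Ab$ strongly commutes with $\Cb\star\Db^{-1}$, and $\Db$ strongly commutes with $\Ab\star\Bb^{-1}$.
\item $\Bb$ skew-commutes with $\Cb$ and $\Db$ with opposite parameters $q^{\pm1}$. This is the mismatch that produces a genuine "cross term" $\Cb\star\Bb$.
\end{itemize}

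The main step is to compute the conjugate $F_{\hbar}(t\Yb)^*\,\Xb\,F_{\hbar}(t\Yb)$ for $t>0$. This operator is affiliated with $M$ by Lemma \ref{LemAff}, since both $\Xb$ and $\Yb$ are. I would first write
\[
F_{\hbar}(t\Yb)=F_{\hbar}(t\Cb)F_{\hbar}(t\Db)
\]
by \eqref{EqQuantExp}, and use \eqref{EqPropConjFormSum} to conjugate $\Ab$ and $\Bb$ separately.

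The conjugates of $\Ab$ and $\Bb$ would be computed with the identities \eqref{EqFormSumId}, \eqref{EqFormSumIdAlt} and the Gaussian trick \eqref{EqboInv}, writing everything in terms of the $\hbar$-commuting pairs available, e.g.\ $(\Cb,\Db)$ and $(\Ab\star\Bb^{-1},\,\cdot\,)$. Where the half-parameter relations $q^{\pm1}$ force it, the $\Ab$- and $\Bb$-pieces would be reassembled using the decomposition $\Xb=F_{\hbar}(\Ab\star\Bb^{-1})^*\Bb F_{\hbar}(\Ab\star\Bb^{-1})$.

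The algebraic analogue suggests an identity of the shape
\[
F_{\hbar}(t\Yb)^*\,\Xb\,F_{\hbar}(t\Yb) = \Xb \boxplus t\,\Cb\star\Bb \quad(\text{possibly with further terms } t\,\Pb\text{, or } t^2\,\Pb'),
\]
where the extra terms $\Pb,\Pb'$ are balanced products in $\Ab,\Bb,\Cb,\Db$ with prescribed skew-commutation with $\Db$. Establishing this identity rigorously is where I expect the main obstacle to lie. It requires a careful bookkeeping of which pairs are $\hbar$- versus $\hbar/2$-commuting, and possibly the type $B$ pentagon \eqref{EqPentTypeB} or the Heisenberg-pair formalism of Theorem \ref{TheoHeisPent}. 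I would first try to check it on the dense core $\odot\mcE$ in a Feigin-type realisation of the six relations, and then upgrade it to an operator identity via \eqref{EqFormSumId}.

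Once such a formula is available, the conclusion follows the pattern of Lemma \ref{LemSplit1}:
\begin{enumerate}
\item Apply \eqref{EqQuantExp} to the right-hand side, which is legitimate whenever the summands pairwise $\hbar$-commute; otherwise, iterate Lemma \ref{LemSplit1}. This factors $F_{\hbar}(s\,\cdot\,)$ of the conjugated operator into $F_{\hbar}(s\Xb)$, which lies in $M$, times $F_{\hbar}(st\,\Cb\star\Bb)$, times the remaining factors.
\item Conjugate by imaginary powers of $\Xb$ or $\Yb$, which lie in $M$, to rescale the unwanted terms differently from $\Cb\star\Bb$; the differing skew-commutation with $\Db$ is what makes this possible.
\item Take the strong$^*$ limits allowed by Lemma \ref{LemStrongCont0}, which kills the unwanted factors. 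This gives $F_{\hbar}(r\,\Cb\star\Bb)\in M$ for all $r>0$.
\item Conclude $\Cb\star\Bb\aff M$ by Lemma \ref{LemAff}.
\end{enumerate}
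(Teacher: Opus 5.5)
There is a genuine gap, and it is exactly the step you flag. The identity $F_{\hbar}(t\Yb)^*\Xb F_{\hbar}(t\Yb)=\Xb\boxplus t\,\Cb\star\Bb\,(\boxplus\cdots)$ is not just unproven; it already fails at the algebraic level. The cause is that $\Ab$ and $\Bb$ only $\pm\hbar/2$-commute with $\Cb$ and $\Db$. Pushing $F(Y)=F(C)F(D)$ past $A$ or $B$ therefore rescales its argument by a half step $q^{\pm1}$, whereas $F$ only satisfies a rational $q^{2}$-difference equation.

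Formally, use the conventions behind \eqref{EqQuantExp} and \eqref{EqFormSumIdAlt}, so that $F(q^{-2}x)=(1+q^{-1}x)F(x)$. Then one computes
\[
F(Y)\,X\,F(Y)^{-1}=(X+BC)\,F(qY)F(Y)^{-1},\qquad F(Y)^{-1}X\,F(Y)=F(Y)^{-1}F(qY)\,(X+DX+CA).
\]
So an honest unitary conjugation always carries a non-rational half-step factor built from $F(qY)$ and $F(Y)^{-1}$, and the result is not a form sum of balanced monomials. Moreover, in the direction you chose the extra terms are $\Ab\star\Cb$, $\Ab\star\Db$ and $\Bb\star\Db$; the operator $\Cb\star\Bb$ does not appear at all. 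The only clean identity is the twisted one
\[
F(q^{-1}Y)\,X\,F(Y)^{-1}=X+q^{-1}BC.
\]
Its left-hand side is not a conjugation by a unitary in $M$: for $|q|=1$, $F_{\hbar}(q^{-1}\,\cdot\,)$ is an analytic continuation, not a unitary. Consequently your steps (1)--(4) have nothing to act on.

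The missing idea is to make this twisted relation honest by adjoining an auxiliary Weyl pair. In $\widetilde{M}=M\bar{\otimes}\Bc(L^2(\R))$ put $\wbA=\Ab\otimes\kb$, $\wbB=\Bb\otimes\kb$, $\wbC=\Cb\otimes\eb$ and $\wbD=\Db\otimes\eb$. Then $\wbX=\Xb\otimes\kb$, $\wbY=\Yb\otimes\eb$, and $\wbZ=(\Cb\star\Bb)\otimes(\kb\star\eb)=\wbB\star\wbC$.

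The extra factor $q$ from $ke=qek$ cancels the half steps:
\begin{itemize}
\item $\wbA$ strongly commutes with $\wbC$ and $\wbD$;
\item $\wbB$ strongly commutes with $\wbD$;
\item $(\wbA,\wbB)$, $(\wbC,\wbD)$ and $(\wbB,\wbC)$ become $\hbar$-commuting.
\end{itemize}
Split with \eqref{EqQuantExp}, apply the Kashaev pentagon \eqref{EqPentKash} to $(t\wbB,s\wbC)$, and then use the strong commutations. This gives
\[
F_{\hbar}(s\wbY)F_{\hbar}(t\wbX)=F_{\hbar}(t\wbX)F_{\hbar}(st\wbZ)F_{\hbar}(s\wbY),\qquad s,t\geq0.
\]
Hence $F_{\hbar}(r\wbZ)\in\widetilde{M}$ for all $r>0$, so $\wbZ\aff\widetilde{M}$ by Lemma \ref{LemAff}. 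Finally $\Cb\star\Bb\aff M$, since $\wbZ^{it}=(\Cb\star\Bb)^{it}\otimes(\kb\star\eb)^{it}$. No rescaling or limiting argument is needed.

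Equivalently, by Theorem \ref{TheoHeisPent}, $(\wbX,\wbY)$ is a Heisenberg $\wbZ$-pair, so $F_{\hbar}(\wbY)\wbX F_{\hbar}(\wbY)^*=\wbZ\boxplus\wbX$. This is the conjugation formula you were looking for. It only exists after the auxiliary tensoring, and it runs in the direction opposite to yours.
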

\begin{proof}
Write 
\[
\Zb = \Cb\star \Bb,
\]
and put
\[
\widetilde{\Hsp} = \Hsp\otimes L^2(\R),\qquad \widetilde{M} = M\bar{\otimes} \Bc(L^2(\R)),\qquad \wbX = \Xb\otimes \kb,\qquad \wbY = \Yb\otimes \eb,\qquad \wbZ = \Zb \otimes \kb\star \eb.
\]
Then we claim that 
 \begin{equation}\label{EqComplPent}
    F_{\hbar}(s\wbY) F_{\hbar}(t\wbX) =
    F_{\hbar}(t\wbX)F_{\hbar}(st\wbZ)F_{\hbar}(s\wbY),
    \text{ for all } s,t \geq 0.
  \end{equation}
Indeed, writing also 
\[
\wbA = \Ab \otimes \kb,\qquad \wbB = \Bb \otimes \kb,\qquad \wbC = \Cb \otimes \eb,\qquad \wbD = \Db \otimes \eb,
\]
note that 
 \begin{align*}
    \wA \wB &= q^{2}\wB\wA,\qquad && \wA \wC =  \wC \wA,\qquad && \wB \wC = q^2\wC \wB \\
    \wC \wD &= q^{2}\wD \wC,\qquad && \wA\wD = \wD\wA,\qquad &&\wB\wD = \wD\wB
  \end{align*} 
  and
\[
\wZ \wA = q^{-1}\wA\wZ,\quad \wZ \wB = q^{-1}\wB\wZ,\quad \wZ\wC= q\wC\wZ,\quad \wZ\wD = q\wD\wZ. 
\]
Then since 
\[
\wbX = \wbA\boxplus \wbB,\qquad \wbY = \wbC\boxplus \wbD,\qquad \wbZ = \wbB\star\wbC,
\]
we can compute:
\begin{eqnarray*}
F_{\hbar}(t\wbX)F_{\hbar}(st\wbZ)F_{\hbar}(s\wbY) 
&\underset{\eqref{EqQuantExp}}{=}&  F_{\hbar}(t\wbA)F_{\hbar}(t\wbB)F_{\hbar}(st\wbZ)F_{\hbar}(s\wbC)F_{\hbar}(s\wbD)\\
&\underset{\eqref{EqPentKash}}{=}& F_{\hbar}(t\wbA)F_{\hbar}(s\wbC)F_{\hbar}(t\wbB)F_{\hbar}(s\wbD)\\
&=& F_{\hbar}(s\wbC)F_{\hbar}(s\wbD)F_{\hbar}(t\wbA)F_{\hbar}(t\wbB)\\
&\underset{\eqref{EqQuantExp}}{=}& F_{\hbar}(s\wbY)  F_{\hbar}(t\wbX),
\end{eqnarray*} 
proving the claim.

Now since $\wbX,\wbY \aff \widetilde{M}$, we obtain from \eqref{EqComplPent} that $F_{\hbar}(s\wbZ) \in \widetilde{M}$ for all $s>0$, and hence from Lemma \eqref{LemAff} that
\[
\Zb \otimes \kb\star\eb = \wbZ \aff \widetilde{M} = M\bar{\otimes} \Bc(L^2(\R)).
\]
But this necessarily implies that $\Zb = \Cb\star\Bb \aff M$, finishing the proof. 
\end{proof}
\begin{Lem}[Third Splitting Lemma]\label{LemSplit3}
  Let $\Hsp$ be a Hilbert space and $M \subseteq \Bc(\Hsp)$ a von
  Neumann algebra. Suppose $\Ab,\Bb,\Cb$ and $\Db$ are positive
  operators on $\Hsp$ such that
  \begin{align*}
    AB &= qBA,\qquad && AC = q^{-1} CA,\qquad && BC = q CB\\
    CD &= q^{2}DC,\qquad && AD = q^{-1} DA,\qquad && BD = q^{-1}DB.
  \end{align*} 
  Write $\Xb = \Ab \boxplus \Bb$
  and $\Yb = \Cb \boxplus \Db$. If $\Xb \aff M$ and $\Yb \aff M$ then
  $\Ab,\Bb,\Cb$ and $\Db$ are also affiliated to $M$.
\end{Lem}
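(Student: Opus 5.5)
The plan is to combine the tensor-twisting trick from the proof of Lemma \ref{LemSplit2} with the type $B$ pentagon equation \eqref{EqPentTypeB}. This should produce one auxiliary balanced product affiliated to $M$. The First Splitting Lemma (Lemma \ref{LemSplit1}) then does the remaining work twice.

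\emph{Step 1 (twisting).} I will enlarge $\Hsp$ to $\widetilde{\Hsp}=\Hsp\otimes L^2(\R)^{\otimes m}$ for small $m$ (I expect $m\le 2$) and put $\widetilde M = M\bar\otimes \Bc(L^2(\R)^{\otimes m})$. I then set
\[
\wbA=\Ab\otimes \Pb,\quad \wbB=\Bb\otimes \Pb,\quad \wbC=\Cb\otimes \Pb',\quad \wbD=\Db\otimes \Pb',
\]
where $\Pb,\Pb'$ are suitable balanced products of $\kb$'s and $\eb$'s. This makes $\wbX=\Xb\otimes\Pb=\wbA\boxplus\wbB$ and $\wbY=\Yb\otimes\Pb'=\wbC\boxplus\wbD$, both affiliated to $\widetilde M$. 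The factors $\Pb,\Pb'$ are to be chosen so that:
\begin{itemize}
\item $\wbA$ strongly commutes with $\wbC$ and $\wbD$;
\item $\wbB$ strongly commutes with $\wbD$;
\item $\wbC,\wbB$ become $\hbar$-commuting.
\end{itemize}
The internal relations $AB=qBA$ and $CD=q^2DC$ are unaffected, since the twist is the same within each pair. Finding $\Pb,\Pb'$ is a small linear-algebra problem in the skew form. The three required shifts are $+1$ for $(A,C)$, $+1$ for $(A,D)$ and $+1$ for $(B,D)$, together with the appropriate shift for $(B,C)$. These are consistent because all four of them are realised by a single pairing of $\Pb$ against $\Pb'$, which I will check.

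\emph{Step 2 (mixed pentagon).} The pair $\wbA,\wbB$ is $\hbar/2$-commuting, so \eqref{EqQuantExp} gives $F_{\hbar/2}(t\wbX)=F_{\hbar/2}(t\wbA)F_{\hbar/2}(t\wbB)$. Similarly $F_\hbar(s\wbY)=F_\hbar(s\wbC)F_\hbar(s\wbD)$. The only non-commuting cross pair is $(\wbB,\wbC)$, and its parameters are in ratio $1:2$, exactly the situation of \eqref{EqPentTypeB} with $\hbar$ rescaled to $\hbar/2$. Applying it, and commuting the remaining factors as in the proof of Lemma \ref{LemSplit2}, I expect to obtain
\[
F_\hbar(s\wbY)F_{\hbar/2}(t\wbX)=F_{\hbar/2}(t\wbX)\,F_{\hbar/2}(st\,\wbZ)\,F_{\hbar}(st^2\,\widetilde{\Wb})\,F_\hbar(s\wbY),
\]
where $\wbZ=\wbB\star\wbC$ and $\widetilde{\Wb}=\wbB^2\star\wbC$. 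The exact placement of the factors and powers must be matched carefully with \eqref{EqPentTypeB}, and this bookkeeping is the main obstacle. It follows that $F_{\hbar/2}(st\wbZ)F_\hbar(st^2\widetilde{\Wb})\in\widetilde M$ for all $s,t>0$. Now fix $r=st$ and let $t\to0$, so that $st^2=rt\to0$. By Lemma \ref{LemStrongCont0}, $F_\hbar(rt\,\widetilde{\Wb})\to 1$ strongly, hence $F_{\hbar/2}(r\wbZ)\in\widetilde M$ for every $r>0$. Lemma \ref{LemAff} then gives $\wbZ\aff\widetilde M$. Since $\wbZ$ is $\Zb=\Bb\star\Cb$ tensored with a positive operator on the auxiliary factors, this yields $\Zb\aff M$, exactly as at the end of Lemma \ref{LemSplit2}.

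\emph{Step 3 (splitting).} I apply Lemma \ref{LemSplit1} to $\Xb=\Ab\boxplus\Bb$ with third operator $\Zb$. From the relations, $A$ commutes with $Z=B\star C$ (exponent $1-1=0$), while $BZ=qZB$. The exponents therefore differ, and $a=1/2\neq0$, so $\Ab,\Bb\aff M$. Finally I apply Lemma \ref{LemSplit1} to $\Yb=\Cb\boxplus\Db$ with third operator $\Bb$. We have $CB=q^{-1}BC$ and $DB=qBD$, so the exponents $-1\neq 1$ differ, and $CD=q^2DC$ gives $a=1\neq0$. Hence $\Cb,\Db\aff M$, which proves the lemma.
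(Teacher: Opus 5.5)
Your proof is correct, but it is not the paper's route. Taking $m=1$, $\Pb=\kb$, $\Pb'=\eb$ shifts all four cross pairs by $\hbar/2$, which gives exactly the twisted relations you want. Both applications of Lemma \ref{LemSplit1} in Step 3 check out.

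There are two bookkeeping points in Step 2. First, the identity you need is
\[
F_\hbar(s\wbC)F_{\hbar/2}(t\wbB)=F_{\hbar/2}(t\wbB)\,F_\hbar(st^2\,\wbB^2\star\wbC)\,F_{\hbar/2}(st\,\wbB\star\wbC)\,F_\hbar(s\wbC).
\]
This is not literally \eqref{EqPentTypeB}, where the factor with the smaller parameter stands on the left. It is the adjoint of \eqref{EqPentTypeB} at parameter $-\hbar/2$, applied to the pair $(s\wbC,t\wbB)$ and using $F_{-\hbar}=\overline{F}_{\hbar}$; the paper uses the same variant in Section \ref{Secmij4}. Second, as a consequence the two middle factors come in the order $F_\hbar(st^2\,\widetilde{\mathbf{W}})F_{\hbar/2}(st\,\wbZ)$, not the order you wrote. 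They do not commute (they are $\hbar$-commuting), but this is irrelevant for your limit $t\to0$ with $st=r$ fixed.

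The paper argues differently:
\begin{itemize}
\item It conjugates by $u=F_{\hbar/2}(\Ab\star\Bb^{-1})$, which by \eqref{EqFormSumId} turns $\Xb$ into $\Bb$, so $\Bb\aff uMu^*$.
\item Since $u$ commutes with $\Db$, one gets $u\Yb u^*=u\Cb u^*\boxplus\Db\aff uMu^*$.
\item Conjugating the quantum exponential of this by $\Bb^{it}$, resp.\ $(\Bb^2\star\Cb)^{it}$, and letting $t$ tend to infinity isolates $\Cb$ and then $\Db$ inside $uMu^*$.
\item Because $u$ commutes with $\Db$, this gives $\Db\aff M$; then $\Cb\aff M$ by \eqref{EqQuantExp}, and $\Ab,\Bb\aff M$ by Lemma \ref{LemSplit1} with $\Cb$ as third operator.
\end{itemize}
Among the $F_\hbar$-identities, the paper therefore needs only \eqref{EqQuantExp} and \eqref{EqFormSumId}. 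It avoids the analytically heavier type $B$ pentagon of Appendix \ref{SecLemmTypeB}.

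Your argument instead makes the third splitting lemma a direct analogue of Lemma \ref{LemSplit2}: twist, apply a pentagon-type identity, read off one affiliated balanced product, then split. The cost is reliance on \eqref{EqPentTypeB} and an extra limit to discard the factor $F_\hbar(st^2\,\widetilde{\mathbf{W}})$. You also obtain $\Ab,\Bb$ before $\Cb,\Db$, the reverse of the paper's order.
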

\begin{proof}
  Write 
  \begin{equation*}
    u = F_{\hbar/2}(\Ab\star\Bb^{-1}). 
  \end{equation*}
  By \eqref{EqFormSumId} we get
  \begin{equation*}
    u \Xb u^* = F_{\hbar/2}(\Ab \star \Bb^{-1})(\Ab \boxplus
    \Bb)F_{\hbar/2}(\Ab \star \Bb^{-1})^* = \Bb.
  \end{equation*}
  Thus, $\Bb \aff u M u^*$. Now, since $\Ab \star \Bb^{-1}$ and $\Db$ strongly 
  commute, we obtain
  \begin{equation*}
    u \Cb u^* \boxplus \Db = u \Yb u^* \aff u M u^*.
  \end{equation*}
  Then, making use of \eqref{EqQuantExp}, we obtain that for all $s>0$
  \begin{equation*}
    F_{\hbar}(s(u \Cb u^* \boxplus \Db)) = F_{\hbar/2}(\Ab \star
    \Bb^{-1}) F_{ \hbar}(s\Cb)F_{\hbar/2}(\Ab
    \star\Bb^{-1})^*F_{\hbar}(s\Db) \in u M u^*.
  \end{equation*}
  Using that $\Bb \aff u M u^*$ we then find for $t \in \R$ that
  \begin{equation*}
    \Bb^{it} F_{\hbar}(s(u \Cb u^* \boxplus \Db))\Bb^{-it}
    = F_{\hbar/2}(q^{-it}\Ab \star
    \Bb^{-1}) F_{\hbar}(sq^{it}\Cb)F_{\hbar/2}(q^{-it}\Ab
    \star\Bb^{-1})^*F_{\hbar}(sq^{-it}\Db) \in u M u^*.
  \end{equation*}
  Taking the limit $t \to \sgn(\hbar)\infty$ after setting $s = r
  q^{-it}$ and using \eqref{EqLimToZero} we find
  \begin{equation*}
    F_{\hbar}(r\Cb) \in u M u^*\quad \text{for all $r > 0$}.
  \end{equation*}
  Lemma \ref{LemAff} now gives $\Cb \aff u M u^*$. This means that also
  \begin{equation*}
    (\Bb^2 \star \Cb)^{it} F_{\hbar}(s(u \Cb u^* \boxplus
    \Db))(\Bb^2 \star \Cb)^{-it} \in u M u^*.
  \end{equation*}
  Thus
  \begin{equation*}
    F_{\hbar/2}(\Ab \star
    \Bb^{-1}) F_{ \hbar}(sq^{2it}\Cb)F_{\hbar/2}(\Ab
    \star\Bb^{-1})^*F_{\hbar}(s\Db) \in u M u^*.
  \end{equation*}
  Taking the limit $t \to - \sgn(\hbar)\infty$ and using once again
  \eqref{EqLimToZero} and Lemma \ref{LemAff}, we find that $\Db \aff u M u^*$.
  But $\Db$ and $u$ commute, so $\Db \aff M$.

  The rest now follows relatively easily. For $s > 0$ we have, using \eqref{EqQuantExp}, that 
  \begin{equation*}
    F_{ \hbar}(s \Yb) = F_{\hbar}(s \Cb)F_{\hbar}(s \Db),
  \end{equation*}
   so 
  \begin{equation*}
    F_{ \hbar}(s \Cb) = F_{\hbar}(s \Db)^*F_{\hbar}(s \Yb) \in M,
  \end{equation*}
  and hence $\Cb \aff M$ by Lemma  \ref{LemAff}. Finally, it now follows
  from the first splitting lemma, Lemma \ref{LemSplit1}, that $\Ab \aff
  M$ and $\Bb \aff M$.
\end{proof}

With these lemmas in hand, we can now prove Theorem \ref{TheoIntegrable} via a case-by-case analysis. We will from now on write simply 
\[
M = L_{\hbar}(N(\mbi))
\]
for the word $\mbi$ under consideration. 

\subsection{Cartan matrix of type \texorpdfstring{$A_n$}{}}

Our Cartan datum in this case is
\[
  A = 
  \begin{pmatrix}
    2 & -1 & 0 & \cdots & 0 \\
    -1 & 2 & -1 & \ddots & \vdots \\
    0 & -1 & \ddots & \ddots & 0\\
    \vdots & \ddots & \ddots  & 2& -1\\
    0 & \cdots & 0 & -1 & 2\\
  \end{pmatrix}
  ,\qquad  D = \diag(1,1,\dots,1),\qquad \widehat{A} = A.
\]

We choose here the longest word 
\[
w_0 = (1;2,1;3,2,1;\ldots;n,n-1,\ldots,2,1). 
\]
Then the generating property was proven in \cite{DCSSV25}*{Proposition 4.18} (it is basically a direct application of Lemma \ref{LemSplit1}).  

\subsection{Cartan matrix of type \texorpdfstring{$B_n$}{}}

Take $n \geq 2$. Then we have as Cartan datum for type $B_n$
\[
  A =
  \begin{pmatrix}
    2 & -2 & 0 & \cdots & 0 \\
    -1 & 2 & -1 & \ddots & \vdots \\
    0 & -1 & \ddots & \ddots & 0\\
    \vdots & \ddots & \ddots  & 2& -1\\
    0 & \cdots & 0 & -1 & 2\\
  \end{pmatrix}
  ,\qquad  D = \diag(1,2,\dots,2),\qquad \widehat{A} = DA =
  \begin{pmatrix}
    2 & -2 & 0 & \cdots & 0 \\
    -2 & 4 & -2 & \ddots & \vdots \\
    0 & -2 & \ddots & \ddots & 0\\
    \vdots & \ddots & \ddots  & 4& -2\\
    0 & \cdots & 0 & -2 & 4\\
  \end{pmatrix}.
\]
As longest word we take
\[
  \underline{w}_0 = \mbi_{B_n} =
  (1;\;2,1,2;\;3,2,1,2,3;\;\dots;\;n,\dots,1,\dots,n).
\]
It will be beneficial to view $\mbi_{B_n}$ as
$\mbi_{B_{n-1}}\mbb_{n}$ with
\begin{equation*}
  \mbb_n = (n, n-1, \dots,2, 1,2 \dots, n-1,n).
\end{equation*}

Consider first the base case, type $B_2$. We have
\begin{align*}
  \Eb_1 &= \Xb_1 \boxplus \Xb_3 \\
  \Eb_2 &= \Xb_2 \boxplus \Xb_4.
\end{align*}
Upon replacing $\hbar$ by $-2\hbar$ and taking $\Ab = \Xb_1, \Bb = \Xb_3, \Cb = \Xb_2, \Db =
\Xb_4$, we see that this case is an immediate consequence of the third
splitting lemma, Lemma \ref{LemSplit3}.

Now consider the general $B_n$-case, with $n\geq 3$. Let $\Gb_j$ denote the form sum of all generators labeled by $j$ in
$\mathbf{i}_{B_{n-1}}$. The simple root vectors can then be written as
\begin{align*}
  \Eb_1 &= \Gb_1 \boxplus \Xb_n \\
  \Eb_2 &= \Gb_2 \boxplus \Xb_{n-1} \boxplus \Xb_{n+1} \\
  &\vdots \\
  \Eb_{n-1} &= \Gb_{n-1} \boxplus \Xb_{2} \boxplus \Xb_{2n-2}\\
  \Eb_{n} &= \Xb_{1} \boxplus \Xb_{2n-1}.
\end{align*}

We can now apply the second splitting lemma, Lemma \ref{LemSplit2}, with
\begin{align*}
  \Ab &= \Gb_{n-1} \qquad &\Bb &= \Xb_{2} \boxplus \Xb_{2n-2}\\
  \Cb &= \Xb_{1}  \qquad &\Db &= \Xb_{2n-1},
\end{align*}
and replace again $\hbar$ by $-2\hbar$ in that lemma. We then find that $(\Xb_{2}\boxplus \Xb_{2n-2})\star
\Xb_{1} \aff M$. Now, since 
\begin{equation*}
  (\Xb_{2}\boxplus \Xb_{2n-2})\star\Xb_{1} = \Xb_{2}\star
  \Xb_{1}\boxplus \Xb_{2n-2}\star \Xb_{1},
\end{equation*}
and $\Xb_{1}$ strongly commutes with every term of $\Eb_{n-2}$,
we can use the second splitting lemma again, this time with $\hbar$ replaced by $-2\hbar$ and
\begin{align*}
  \Ab &= \Gb_{n-2} \qquad  &\Bb &= \Xb_{3} \boxplus \Xb_{2n-3}\\
  \Cb &= \Xb_{2}\star \Xb_{1}  \qquad &\Db &= \Xb_{2n-2}\star \Xb_{1}.
\end{align*}
Note that the $q$-commutations still work out so that $CD=
q^{-4}D C$. This time the lemma gives
\begin{equation*}
  (\Xb_{3}\boxplus\Xb_{2n-3})\star \Xb_{2}\star \Xb_{1} \aff M.
\end{equation*}

Proceeding like this inductively we obtain
\begin{equation*}
  (\Xb_{n-1} \boxplus \Xb_{n+1}) \star \Xb_{n-2} \star \dotsb \star
  \Xb_1 \aff M.
\end{equation*}

Now we arrive at the location of the short root, so we can use the
third splitting lemma, Lemma \ref{LemSplit3}, again with $\hbar$ replaced by $-2\hbar$ and with 
\begin{align*}
  \Ab &= \Gb_1 \qquad &\Bb &= \Xb_{n}\\
  \Cb &= \Xb_{n-1}\star \Xb_{n-2} \star \dotsb \star \Xb_{1}  \qquad
  &\Db &= \Xb_{n+1}\star\Xb_{n-2}\star \dotsb \star \Xb_{1},
\end{align*}
to conclude that each of these operators is affiliated to $M$.

We can now move backwards using the first splitting lemma, Lemma \ref{LemSplit1}. Taking
\begin{equation*}
  \Ab = \Gb_2 \boxplus \Xb_{n-1}, \qquad \Bb = \Xb_{n+1},
  \qquad \Cb = \Xb_{n},
\end{equation*}
we find that $\Gb_{2} \boxplus \Xb_{n-1} \aff M$ and $\Xb_{n+1}\aff
M$. Moving down like this we can split off the last term in each
$\Eb_i$. At the end we obtain $\Xb_1 \aff M$ and $\Xb_{2n-1} \aff M$.

Now we can move back up: Taking
\begin{equation*}
  \Ab = \Gb_{n-1}, \qquad \Bb = \Xb_{2},
  \qquad \Cb = \Xb_{1},
\end{equation*}
we find by the first splitting lemma that $\Gb_{n-1} \aff M$ and $\Xb_{2}\aff M$. By repeated use of this technique, all the $\Gb_j$ can be split off from the remain $\Xb_k$. We hence obtain that 
\[
\Gb_a \eta M,\qquad \Xb_b \eta M,\qquad \forall a\in J(\mbi_{B_{n-1}}),b\in J(\mbb_n). 
\]
Since the $\Gb_a$ simply are the simple root operators one obtains in the $B_{n-1}$-case with respect to the word $\mbi_{B_{n-1}}$, we can conclude by induction. 

\subsection{Cartan matrix of type \texorpdfstring{$C_n$}{}}\label{SecCartTypeC}

For $n \geq 2$ we have Cartan data
\[
  A =
  \begin{pmatrix}
    2 & -1 & 0 & \cdots & 0 \\
    -2 & 2 & -1 & \ddots & \vdots \\
    0 & -1 & \ddots & \ddots & 0\\
    \vdots & \ddots & \ddots  & 2& -1\\
    0 & \cdots & 0 & -1 & 2\\
  \end{pmatrix}
  ,\qquad  D = \diag(2,1,\dots,1),\qquad \widehat{A} = DA =
  \begin{pmatrix}
    4 & -2 & 0 & \cdots & 0 \\
    -2 & 2 & -1 & \ddots & \vdots \\
    0 & -1 & \ddots & \ddots & 0\\
    \vdots & \ddots & \ddots  & 2& -1\\
    0 & \cdots & 0 & -1 & 2\\
  \end{pmatrix}.
\]
We take the same longest word as in type $B$:
\[
  \underline{w}_0 = \mbi = (1;\;2,1,2;\;3,2,1,2,3;\;\dots;\;n,\dots,1,\dots,n).
\]

The remainder is now completely analogous to type $B$. The only real
difference is that we now apply Lemma
\ref{LemSplit3} upon swapping $\hbar \leftrightarrow -\hbar$ and 
\[
\Ab \leftrightarrow \Db,\qquad \Bb\leftrightarrow \Cb.
\]

\subsection{Cartan matrix of type \texorpdfstring{$D_n$}{}}

We take the following labeling for $D_n$:
\begin{center}
  \begin{tikzpicture}[x=1.15cm,y=1.0cm,baseline=(current bounding box.center)]
    \node[dynkin node] (d1) at (0,.62) {$1$};
    \node[dynkin node] (d2) at (0,-.62) {$2$};
    \node[dynkin node] (d3) at (1.15,0) {$3$};
    \node[dynkin node] (d4) at (2.30,0) {$4$};
    \node (dots) at (3.43,0) {$\cdots$};
    \node[dynkin node] (dm) at (4.55,0) {$n$};
    \draw[dynkin edge] (d1)--(d3) (d2)--(d3) (d3)--(d4)
    (d4)--(dots) (dots)--(dm);
  \end{tikzpicture}
\end{center}

Since $D_n$ is simply-laced we can take $\widehat{A}=A$
where 
\begin{equation}\label{eq:Dn-Cartan-matrix}
  A=
  \begin{pmatrix}
    2&0&-1&0&0&\cdots&0\\
    0&2&-1&0&0&\cdots&0\\
    -1&-1&2&-1&0&\cdots&0\\
    0&0&-1&2&-1&\ddots&\vdots\\
    0&0&0&-1&2&\ddots&0\\
    \vdots&\vdots&\vdots&\ddots&\ddots&\ddots&-1\\
    0&0&0&\cdots&0&-1&2
  \end{pmatrix}.
\end{equation}

For every $n\geq3$, we will now recursively define a reduced
expression $\mathbf{i}_{D_n}$ for the longest element $w_0(D_n)$. Namely
let $ \mathbf{i}_{D_3}=(2,3\,;\,1,2,3,1)$ and
\begin{equation}\label{eq:new-block type D}
  \mathbf{d}_k
  = (k,k-1,\ldots,3,1,2,3,\ldots,k-1,k),
\end{equation}
for $k \geq 3$. Then, for $n \geq 4$, we define
\begin{equation}\label{eq:recursive-word type D}
  \mathbf i_{D_n}=\mathbf{i}_{D_{n-1}}\,\mathbf{d}_n.
\end{equation}

As before, let $\Gb_j$ denote the form sum of all generators labeled by $j$ in
$\mathbf{i}_{D_{n-1}}$. The simple root operators for $\mbi_{D_n}$ are then
\begin{align*}
  \Eb_1&=\Gb_1\boxplus \Xb_n,\\
  \Eb_2&=\Gb_2\boxplus \Xb_{n+1},\\
  \Eb_j&=\Gb_j\boxplus \Xb_{n-j+1}\boxplus \Xb_{n+j-2},
  \qquad 3\leq j\leq n-1,\\
  \Eb_n&=\Xb_1\boxplus \Xb_{2n-2}.
\end{align*}

As before we proceed by induction on $n$. The first step is exactly
the same as in type $B$ or $C$. Repeated use of Lemma~\ref{LemSplit2} gives that
\begin{equation*}
  \Yb_1 := \Xb_n \star \Xb_{n-2} \star \dotsb \star \Xb_1 \aff M \qquad
  \Yb_2 := \Xb_{n+1}
  \star \Xb_{n-2} \star \dotsb \star \Xb_1 \aff M.
\end{equation*}
Then it follows that also 
\[
\Yb = \Xb_n \star \Xb_{n+1}^{-1} = \Yb_1
\star \Yb_2^{-1} \aff M.
\]

Now since 
\[
Y G_1 = q^{2} G_1 Y \qquad \textrm{and}\qquad 
Y X_n = X_n Y,
\]
we can use the first splitting lemma to show that $\Xb_n \aff M$.
Similarly, $\Xb_{n+1} \aff M$. From this point on the rest of the
proof proceeds as in type $B$.

\section{Modular structure of split real Borel quantum groups}\label{Sec8}

In this section, we fix a Cartan datum $I$ and a fully exchangeable integrable word 
\[
\mbi = (i_1,i_2,\ldots,i_N)
\]
over $I$ (so in particular we take $J = \{1,2,\ldots,N\}$), with associated word 
\[
w = w_{\mbi} \in W(I). 
\]
Recall that we can let $W(I)$ act on $\mfh = \R[I]$ by
\[
s_i \mapsto r_i \in \End(\R[I]),
\]
using the elementary reflections $r_i$ from \eqref{EqElRefl}. We then also write $r_i$ for the dual reflections 
\[
r_i: \mfh'\rightarrow \mfh',\quad \omega \mapsto \omega - \langle \omega,i^{\vee}\rangle \alpha_i,
\]
using the coroot notation 
\[
i^{\vee} = d_i^{-1}i\in \mfh,\qquad i \in I.
\]

We let $(L_{\hbar}(B),\Delta)$ be the locally compact quantum group associated to $\mbi$ (Theorem \ref{TheoInvWeight}). 

In this section we determine the associated left and right invariant nsf weights of $(L_{\hbar}(B),\Delta)$ more explicitly, and give further information on its modular structure. 

\subsection{Left invariant integral}

Recall from Definition \ref{DefRedHeisSpace} the Heisenberg space $\mfd$, with its associated skew-symmetric form $\epsilon$ determined by \eqref{EqPairNilp}, \eqref{EqPairCart} and \eqref{EqPairHeis}. Recall from Lemma \ref{LemNonDegWord} that $\mfb,\mfb'\subseteq \mfd$ are non-degenerately paired within it. We can hence consider the general orthogonal decomposition \eqref{EqOrthDecomp} in this case (which is also valid for non-symplectic $Z$), giving
\begin{equation}\label{EqDecompHeis}
\mfd = \mfb \oplus (\mfb')^{\perp},\qquad (\mfb')^{\perp} = \{x\in \mfd\mid \epsilon(x,\mfb') =0\}.  
\end{equation}
Recall further the Weyl vector $\rho \in \mfh' \subseteq \mfb'$ from \eqref{DefModElement}. 
\begin{Def}
If $\mbi$ is a word, we write
\begin{equation}
\pi_{\mfb}: \mfd \twoheadrightarrow \mfb,\qquad x \mapsto x,\quad y \mapsto 0,\qquad x\in \mfb,y\in (\mfb')^{\perp}
\end{equation}
for the projection onto the first component of \eqref{EqDecompHeis}.

We then define the \emph{left modular vector} $\theta_l$ and \emph{dual left modular vector} $\theta_l^{\vee}$ by  
\begin{equation}
\theta_l = \pi_{\mfb}(\rho) \in \mfb,\qquad \theta_l^{\vee} = \pi_{\mfb}(\rho^{\vee}) \in \mfb.
\end{equation}
\end{Def}

Define now by
\begin{equation}
\beta_a^{\vee} = r_{i_1}\circ \dotsb \circ r_{i_{a-1}} (i_a^{\vee}) \in \mfh,\qquad a\in J
\end{equation}
the associated \emph{positive coroots} for $\mbi$. 

\begin{Lem}
Let $\lambda \in\mfh'$. Then
\begin{equation}\label{EqExplicitFormulaProjection}
    \pi_\mfb (\lambda) =  -\frac{1}{2}H_{\lambda} + \frac{1}{2} N_{\lambda} \in \mfb,
\end{equation}
where 
\begin{equation}\label{EqComppi}
H_{\lambda} = \sum_{a \in J}   
    \langle \lambda, \beta_a^{\vee}\rangle i_a,\qquad N_{\lambda} = \sum_{a \in J}\langle \lambda, \beta_a^{\vee}\rangle a.
\end{equation}
Furthermore, recalling the notation \eqref{EqMapToDual}, we have
\begin{equation}\label{EqLambdaAsSumRoots}
\alpha_{H_{\lambda}} = \lambda - w^{-1} \lambda.
\end{equation}
\end{Lem}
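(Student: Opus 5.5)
To identify $\pi_{\mfb}(\lambda)$, I will check the defining property of the decomposition \eqref{EqDecompHeis}. It suffices to show that the candidate $x := -\frac12 H_\lambda + \frac12 N_\lambda \in \mfb$ satisfies $\lambda - x \in (\mfb')^\perp$, i.e.\ $\epsilon(\lambda,y) = \epsilon(x,y)$ for all $y \in \mfb'$. Uniqueness then follows from \eqref{EqDecompHeis}, which rests on Lemma \ref{LemNonDegWord}. Since $\mfb' = \mfn \oplus \mfh'$, I will test against the basis vectors $b \in J$ and $\omega \in \mfh'$ separately. By \eqref{EqPairHeis} we have $\epsilon(\lambda,b) = -\langle \lambda, i_b\rangle/2$ and $\epsilon(\lambda,\omega) = 0$, because $\mfh'$ is a trivial skew-symmetric space.

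I will first test against $\omega \in \mfh'$. By \eqref{EqPairHeis}, $\epsilon(x,\omega) = \frac14\langle -H_\lambda + \sum_a \langle\lambda,\beta_a^\vee\rangle i_a, \omega\rangle$. Since $H_\lambda = \sum_a \langle\lambda,\beta_a^\vee\rangle i_a$, the two contributions cancel exactly, so $\epsilon(x,\omega) = 0$ as required.

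Next I will test against $b \in J$. Using \eqref{EqPairCart} and \eqref{EqPairNilp},
\[
4\epsilon(x,b) = -\sum_a c_a\langle i_a,i_b\rangle + \sum_{a>b} c_a\langle i_a,i_b\rangle - \sum_{a<b} c_a \langle i_a,i_b\rangle = -2\sum_{a<b} c_a\langle i_a,i_b\rangle - c_b\langle i_b,i_b\rangle,
\]
where $c_a = \langle\lambda,\beta_a^\vee\rangle$. The required identity therefore becomes
\[
\sum_{a<b} c_a \langle i_a,i_b\rangle + d_{i_b} c_b = \langle \lambda,i_b\rangle .
\]
I will prove this by the standard telescoping argument. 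Put $\lambda_b := r_{i_{b-1}}\cdots r_{i_1}\lambda$, so that $c_b = \langle \lambda_b, i_b^\vee\rangle$ by adjointness of the dual reflections. Then $\lambda_{b+1} = \lambda_b - c_b\alpha_{i_b}$, which gives $\lambda_b = \lambda - \sum_{a<b} c_a\alpha_{i_a}$. Pairing with $i_b$ yields $d_{i_b}c_b = \langle\lambda,i_b\rangle - \sum_{a<b} c_a\langle i_a,i_b\rangle$, which is exactly the required identity.

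For \eqref{EqLambdaAsSumRoots}, the same recursion gives $\alpha_{H_\lambda} = \sum_a c_a\alpha_{i_a} = \lambda_1 - \lambda_{N+1} = \lambda - r_{i_N}\cdots r_{i_1}\lambda = \lambda - w^{-1}\lambda$. The only real obstacle is sign and ordering bookkeeping, namely the conventions for $\epsilon(a,b)$ versus $\epsilon(b,a)$ and the order of the reflections. I would double-check these conventions on the rank-one word $\mbi = (i)$ before writing the argument out.
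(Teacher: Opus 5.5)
Your proof is correct and takes essentially the same route as the paper. The paper writes $\pi_{\mfb}(\lambda) = H + \sum_a t_a a$, pairs against $\mfh'$ to obtain $H = -\sum_a t_a i_a$, and pairs against $b \in J$ to obtain the recurrence $t_b = \langle \frac12\lambda - \sum_{a<b} t_a\alpha_{i_a}, i_b^{\vee}\rangle$, which it solves by the same telescoping through successive reflections that you use. The only difference is that you verify a candidate and invoke uniqueness of the decomposition \eqref{EqDecompHeis}, whereas the paper derives the coefficients directly from the existence of that decomposition.
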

In particular, we get that 
\begin{equation}\label{EqFormLeftMod}
\theta_l = -\frac{1}{2}H_{\rho} + \frac{1}{2} N_{\rho},\qquad \theta_l^{\vee} = -\frac{1}{2}H_{\rho^{\vee}} + \frac{1}{2} N_{\rho^{\vee}}.
\end{equation}
If $I$ is of Lie type and $w = w_0$, we can by \eqref{EqLambdaAsSumRoots} rewrite the first term in these expressions via 
\begin{equation}\label{EqHrhoLongestWord}
H_{\rho} = 2\rho_{\mfh},\qquad H_{\rho^{\vee}} = 2\rho_{\mfh}^{\vee},
\end{equation}
where $\rho_{\mfh},\rho_{\mfh}^{\vee}$ are the copies of respectively $\rho,\rho^{\vee}$ under the isomorphism $\mfh \overset{\alpha}{\cong}\mfh'$.
\begin{proof}
	Write $z = \pi_\mfb (\lambda)$. 
    Since $z \in \mfb = \mfh \oplus \mfn$, we have
    \[
    z = H + \sum_{a \in J} t_a a
    \]
    for some $H \in \mfh$ and $t_a \in \R$. 
    
    We first show that 
    \begin{equation}\label{EqEquatH}
    H = - \sum_{a \in J} t_a i_a.
    \end{equation}
    As $z = \pi_\mfb (\lambda)$ we have $z - \lambda\in (\mfb')^\perp \subseteq \mfd$. Now for $\omega \in \mfh'$ we have
    \[
    0 = \epsilon(z - \lambda, \omega) = \epsilon(z, \omega) = \frac{1}{2}(\langle H, \omega \rangle + \sum_{a \in J} t_a \langle i_a, \omega \rangle),
    \]
    since $\lambda \in \mfh'$ and thus $\epsilon(\lambda, \omega)=0$. It follows that
    \[
    \langle H, \omega \rangle = \langle -\sum_{a \in J} t_a i_a, \omega \rangle,\qquad \forall \omega \in \mfh',
    \]
    hence \eqref{EqEquatH} holds.

    We now show that the $t_b, b \in J$ satisfy a recursive relationship. Write $\lambda = z + x$ with $x \in (\mfb')^\perp$. Then, for $b \in J$,  we have
    \begin{equation}
        \epsilon(z, b) = \epsilon(z + x, b) = \epsilon(\lambda,b) = - \frac{1}{2}\langle \lambda, i_b \rangle.
    \end{equation}
    On the other hand we have
    \begin{align*}
        \epsilon(z , b)
        &= \epsilon(H, b) + \sum_{a \in J} t_a \epsilon(a, b) \\
        &= \frac{1}{2}\left(\langle H, i_b \rangle + \sum_{a > b} t_a \langle i_a, i_b \rangle - \sum_{a < b} t_a \langle i_a, i_b \rangle \right)\\
        &= \frac{1}{2}\left(-\sum_{a \in J} t_a\langle i_a, i_b \rangle + \sum_{a > b} t_a \langle i_a, i_b \rangle - \sum_{a < b} t_a \langle i_a, i_b \rangle \right)\\
        &= - \frac{1}{2}t_b\langle i_b , i_b \rangle- \sum_{a < b} t_a \langle i_a, i_b \rangle.
    \end{align*}
    Combining the two gives 
    \[
    \langle \lambda, i_b \rangle =  t_b \langle i_b, i_b \rangle+  \sum_{a < b} 2 t_a \langle i_a, i_b\rangle.
    \]
    Dividing both sides by $d_{i_b} = \langle i_b, i_b \rangle / 2$ we obtain
    \begin{equation*}
       \langle \lambda, i_b^{\vee}\rangle -2t_b= \sum_{a < b} 2 t_a \langle i_a, i_b^{\vee} \rangle,
    \end{equation*}
    and hence
\begin{equation}\label{EqRecurrenceTb}
    t_b = \langle \frac{1}{2} \lambda - \sum_{a < b} t_a \alpha_{i_a}, i_b^{\vee}\rangle.
    \end{equation}
    Solving this recurrence will finish the proof of the lemma.

    Write $v_0 = \frac{1}{2}\lambda$ and $v_b = \frac{1}{2}\lambda - \sum_{a \leq b} t_a \alpha_{i_a}$ for $b \in J$. Using \eqref{EqRecurrenceTb} we then have
    \[
    v_b = v_{b-1} - t_b \alpha_{i_b} = v_{b-1} - \langle v_{b-1}, i_b^{\vee}\rangle \alpha_{i_b} = r_{i_b} (v_{b-1}).
    \]
    Thus
    \[
    t_b = \langle v_{b-1}, i_b^{\vee}\rangle = \frac{1}{2}\langle r_{i_{b-1}} \circ \dotsb \circ r_{i_1}(\lambda), i_b^{\vee}\rangle = \frac{1}{2}\langle \lambda, r_{i_{1}} \circ \dotsb \circ r_{i_{b-1}}(i_b^{\vee})\rangle = \frac{1}{2}\langle \lambda, \beta_b^{\vee}\rangle.
    \]
    Finally, $v_N = \frac{1}{2}\lambda - \sum_{a \in J} t_a \alpha_{i_a} = \frac{1}{2}\lambda + \alpha_{H}$, but also $v_N = \frac{1}{2}w^{-1}\lambda$, thus 
    \[
    \alpha_{H} = - \frac{1}{2}(\lambda - w^{-1}\lambda).
    \]
\end{proof}

Using now \cite{DCSSV25}*{Lemma 5.8}, we find that the weight \eqref{EqWeightRest}, which appears in the general procedure for constructing the left invariant nsf weight in Theorem \ref{TheoInvWeight}, leads to the following explicit formula\footnote{The notation $\Eb(v)$ in \cite{DCSSV25} coincides with $\qb(v)$ in the current paper, for $v$ in a skew-symmetric vector space.}:
\begin{Theorem}
A left invariant nsf weight for $(L_{\hbar}(B),\Delta)$ is given by 
\begin{equation}\label{EqExplicitFormLeftInv}
\varphi = \int_{\widehat{\mfb}}^{\hbar,\qb(2(\theta_l+|\hbar|^{-1}\theta_l^{\vee}))}.
\end{equation}
\end{Theorem}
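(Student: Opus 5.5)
The plan is to unwind the construction of $\varphi$ given in the proof of Theorem \ref{TheoInvWeight}, via Theorem \ref{TheoLeftInvWeight}, and then evaluate it with \cite{DCSSV25}*{Lemma 5.8}. There, $\varphi$ is the restriction to $L_{\hbar}(\mfb)^+$ of
\[
\Psi_a(x)=\int^{\hbar,\hat{\Qb}^2}_{\hat{Z}} axa^*,
\]
where $Z$ is a symplectic space containing $\mfd$ (or $\mfd$ itself, using that the decomposition \eqref{EqOrthDecomp} also holds as \eqref{EqDecompHeis}), and $a\in L_{\hbar}((\mfb')^{\perp})$. By Theorem \ref{TheoModFundMultUni}, the dual modular operator is $\hat{\Qb}=\Lb_{\rho}\star\Lb_{\rho^{\vee}}^{1/|\hbar|}$. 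In the notation of Section \ref{SecQuantBorel} this equals $\qb(\rho+|\hbar|^{-1}\rho^{\vee})$, so
\[
\hat{\Qb}^2=\qb\bigl(2(\rho+|\hbar|^{-1}\rho^{\vee})\bigr),
\]
with $\rho,\rho^{\vee}\in\mfh'\subseteq\mfb'$.

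Next I decompose the deforming vector along \eqref{EqDecompHeis}. Write $v=2(\rho+|\hbar|^{-1}\rho^{\vee})$ as $v=\pi_{\mfb}(v)+y$ with $y\in(\mfb')^{\perp}$. Then
\[
\qb(v)=\qb(\pi_{\mfb}(v))\star\qb(y),
\]
and $\qb(y)$ is affiliated with $L_{\hbar}((\mfb')^{\perp})$. This algebra lies in the commutant of $L_{\hbar}(\mfb')$, but generally not in the commutant of $L_{\hbar}(\mfb)$. The point of \cite{DCSSV25}*{Lemma 5.8}, as I expect it to read, is that for a deformation vector in this position, the restriction to $L_{\hbar}(\mfb)$ of the weight on $L_{\hbar}(\mfd)$ (after the auxiliary element $a$ absorbs the $(\mfb')^{\perp}$-component) is the canonical integral $\int_{\hat{\mfb}}^{\hbar}$ deformed by $\qb(\pi_{\mfb}(v))$. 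I would verify that the hypotheses of that lemma are exactly the non-degeneracy of Lemma \ref{LemNonDegWord} together with $v\in\mfb'$. Linearity of $\pi_{\mfb}$ then gives
\[
\pi_{\mfb}(v)=2\bigl(\theta_l+|\hbar|^{-1}\theta_l^{\vee}\bigr),
\]
which is the claimed formula, with $L_{\hbar}(B)=L_{\hbar}(\mfb)$ by integrability.

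The main obstacle is matching normalizations. I must check that the deformation indeed uses $\hat{\Qb}^2$ (not $\hat{\Qb}$), that the $\star$-product conventions agree so that $\Lb_\rho\star\Lb_{\rho^\vee}^{1/|\hbar|}=\qb(\rho+|\hbar|^{-1}\rho^\vee)$ with $\Lb_\omega=\qb(\omega)$, and that the conventions for $q$ and $\hbar$ in \cite{DCSSV25} are consistent with the ones here. Any constant scalar factor arising from the choice of $a$ is harmless, since left invariant weights are only determined up to a positive scalar.
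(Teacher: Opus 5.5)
Your proposal is correct and follows the paper's route. The paper likewise applies \cite{DCSSV25}*{Lemma 5.8} to the weight $\Psi_a$ built in the proof of Theorem \ref{TheoLeftInvWeight}. There the deforming vector $2(\rho+|\hbar|^{-1}\rho^{\vee})$ of $\hat{\Qb}^2$ is replaced by its image under $\pi_{\mfb}$ for the decomposition $\mfd=\mfb\oplus(\mfb')^{\perp}$, and by definition and linearity this image is $2(\theta_l+|\hbar|^{-1}\theta_l^{\vee})$.
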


In particular, the associated modular group is given by 
\[
\sigma_t^{\varphi} = \Ad_{\qb(2(\theta_l+|\hbar|^{-1}\theta_l^{\vee}))^{it}},
\]
so since $\Kb_i = \qb(i)$ and $\Xb_a = \qb(a)$, we find
\[
\sigma_t^{\varphi}(\Kb_i) = e^{-4\pi\hbar t\epsilon(i,\theta_l+|\hbar|^{-1}\theta_l^{\vee})}\Kb_i,\qquad \sigma_t^{\varphi}(\Xb_a) = e^{-4\pi\hbar t\epsilon(a,\theta_l+|\hbar|^{-1}\theta_l^{\vee})}\Xb_a,\qquad i\in I,a\in J. 
\]
\begin{Lem}\label{LemFormInp}
    We have
    \begin{equation}
        \epsilon(i, \theta_l) = \frac{1}{2}\langle i, \frac{1}{2}(\rho - w^{-1} \rho)\rangle, \quad \epsilon(a,\theta_l)=\frac{1}{2}d_{i_a}, \quad \epsilon(i,\theta_l^{\vee}) = \frac{1}{2}\langle i, \frac{1}{2}(\rho^{\vee} - w^{-1} \rho^{\vee})\rangle, \quad \epsilon(a,\theta_l^{\vee}) = \frac{1}{2}
    \end{equation}
    for all $i \in I$ and $a \in J$. 
\end{Lem}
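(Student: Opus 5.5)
The plan is to use two descriptions of $\theta_l = \pi_{\mfb}(\rho)$: the defining orthogonal decomposition \eqref{EqDecompHeis} for the pairings with $a\in J$, and the explicit formula \eqref{EqExplicitFormulaProjection} for the pairings with $i\in I$. The $\theta_l^{\vee}$ statements then follow by replacing $\rho$ with $\rho^{\vee}$ throughout.

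\emph{Pairing with $a\in J$.} By definition of $\pi_{\mfb}$, the difference $\rho-\theta_l$ lies in $(\mfb')^{\perp}$. Since $a\in \mfn\subseteq \mfb'$, this gives $\epsilon(\rho-\theta_l,a)=0$, so
\[
\epsilon(a,\theta_l)=\epsilon(a,\rho)=\tfrac12\langle i_a,\rho\rangle
\]
by \eqref{EqPairHeis}. Now $\rho=\sum_j d_j\varpi_j^{\vee}$ and the $\varpi_j^{\vee}$ form the basis dual to $\{j\}$. Hence $\langle i_a,\rho\rangle=d_{i_a}$, giving $\epsilon(a,\theta_l)=\tfrac12 d_{i_a}$. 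In the same way $\langle i_a,\rho^{\vee}\rangle=1$, which gives $\epsilon(a,\theta_l^{\vee})=\tfrac12$.

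\emph{Pairing with $i\in I$.} Here the shortcut fails, because $i\in\mfh$ does not lie in $\mfb'$. Instead I would write $\theta_l=H+\sum_a t_a a$, where $t_a=\tfrac12\langle\rho,\beta_a^{\vee}\rangle$ and $H=-\sum_a t_a i_a$, as in the proof of the preceding lemma. Since $\epsilon$ vanishes on $\mfh\times\mfh$ by the construction of Remark \ref{RemDirSumExa}, only the $\mfn$-part contributes. Using \eqref{EqPairCart},
\[
\epsilon(i,\theta_l)=\sum_a t_a\,\tfrac12\langle i,i_a\rangle=-\tfrac12\langle i,H\rangle=-\tfrac12\,\alpha_H(i).
\]
The preceding lemma gives $\alpha_H=-\tfrac12(\rho-w^{-1}\rho)$, so $\epsilon(i,\theta_l)=\tfrac12\langle i,\tfrac12(\rho-w^{-1}\rho)\rangle$. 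The identical computation with $\rho^{\vee}$ gives the formula for $\epsilon(i,\theta_l^{\vee})$.

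\emph{Main obstacle.} The only delicate point is bookkeeping of signs and of the order of arguments in $\epsilon$. I would check that $\epsilon(i,a)=\langle i,i_a\rangle/2$ holds with $i$ in the first slot, and that $\langle i,\alpha_H\rangle$ means $\alpha_H(i)=\langle H,i\rangle$. No analytic input is needed; everything reduces to the linear algebra of the preceding lemma.
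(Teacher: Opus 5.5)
Your proposal is correct and follows essentially the same route as the paper. For $a\in J$ you use $\rho-\theta_l\in(\mfb')^{\perp}$ to reduce to $\epsilon(a,\rho)=\tfrac12\langle i_a,\rho\rangle$. For $i\in I$ you pair only with the $\mfn$-part of $\theta_l=-\tfrac12 H_{\rho}+\tfrac12 N_{\rho}$ and then invoke $\alpha_{H_{\rho}}=\rho-w^{-1}\rho$, exactly as the paper does; you additionally make explicit that $\epsilon$ vanishes on $\mfh\times\mfh$, which the paper uses tacitly.
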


\begin{proof}
    We have, for $i \in I$, using \eqref{EqExplicitFormulaProjection} and \eqref{EqLambdaAsSumRoots}
    \begin{align*}
        \epsilon(i, \theta_l)
        &= \epsilon(i, \frac{1}{2}\sum_{a \in J} \langle \rho, \beta_a^{\vee}\rangle a)\\
        &= \frac{1}{2}\sum_{a \in J} \langle \rho, \beta_a^{\vee}\rangle \langle i , i_a\rangle /2\\
        &= \frac{1}{2}\langle i,  \frac{1}{2}\sum_{a \in J} \langle \rho, \beta_a^{\vee}\rangle i_a\rangle\\
        &= \frac{1}{2}\langle i, \frac{1}{2}(\rho - w^{-1}\rho)\rangle,
    \end{align*}
    and similarly for $\epsilon(i,\theta_l^{\vee})$.
    On the other hand, for $a \in J$ we have
    \begin{equation*}
        \epsilon(a,\theta_l) = \epsilon(a, \theta_l + (\rho - \theta_l)) = \epsilon(a, \rho) =\frac{1}{2} \langle i_a, \rho \rangle = \frac{1}{2} d_{i_a}.
    \end{equation*}
\end{proof}
In particular, if $I$ is of Lie type and $w=w_0$, then
    \begin{equation}
        \epsilon(i,\theta_l) = \frac{1}{2} d_i,\qquad \epsilon(a,\theta_l)=\frac{1}{2}d_{i_a}, \qquad \epsilon(i,\theta_l^{\vee}) = \frac{1}{2},\qquad \epsilon(a,\theta_l^{\vee}) = \frac{1}{2},
    \end{equation}
and hence
\begin{equation}
\sigma_t^{\varphi}(\Kb_i) = e^{-2\pi \hbar t (d_i + |\hbar|^{-1})},\qquad \sigma_t^{\varphi}(\Xb_a) = e^{-2\pi \hbar t (d_{i_a}+|\hbar|^{-1})},\qquad i \in I,a\in J. 
\end{equation}

\subsection{Right invariant nsf weight} 

\begin{Def}
We define the \emph{right (dual) modular vector} as 
\begin{equation}\label{EqFormRightMod}
\theta_r = \varsigma(\theta_l)\in \mfb,\qquad \theta_r^{\vee} = \varsigma(\theta_l^{\vee}) \in \mfb,
\end{equation}
where $\varsigma$ is the reflection map defined in \eqref{EqReflection}.
\end{Def}
\begin{Lem}
We have the following explicit formulas for the right (dual) modular vector. 
\begin{equation}\label{EqExplicitFormulaRightModular}
\theta_r = \frac{1}{2}N_{\rho}=  \frac{1}{2}\sum_{a\in J} \langle\rho,\beta_a^{\vee}\rangle a,\qquad \theta_r^{\vee} = \frac{1}{2}N_{\rho^{\vee}}=  \frac{1}{2}\sum_{a\in J} \langle\rho^{\vee},\beta_a^{\vee}\rangle a,
\end{equation}
\end{Lem}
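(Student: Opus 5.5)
This is a direct computation from the definition of $\varsigma$ in \eqref{EqReflection} and the explicit expression \eqref{EqFormLeftMod} for $\theta_l$ and $\theta_l^{\vee}$. The plan is to apply the linear map $\varsigma$ termwise to
\[
\theta_l = -\tfrac{1}{2}H_{\rho} + \tfrac{1}{2}N_{\rho},
\]
and then observe that the $\mfh$-contributions cancel. The dual vector $\theta_l^{\vee}$ is handled identically with $\rho^{\vee}$ in place of $\rho$, so only the first case needs to be written out.

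First, by \eqref{EqComppi}, $H_{\rho} = \sum_{a\in J}\langle \rho,\beta_a^{\vee}\rangle i_a \in \mfh$. Since $\varsigma(i) = -i$ on $\mfh$, linearity gives
\[
\varsigma(-\tfrac12 H_{\rho}) = \tfrac12 H_{\rho} = \tfrac12\sum_{a\in J}\langle\rho,\beta_a^{\vee}\rangle i_a.
\]

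Second, $N_{\rho} = \sum_{a\in J}\langle\rho,\beta_a^{\vee}\rangle a$, and $\varsigma(a) = a - i_a$. Hence
\[
\varsigma(\tfrac12 N_{\rho}) = \tfrac12 N_{\rho} - \tfrac12\sum_{a\in J}\langle\rho,\beta_a^{\vee}\rangle i_a = \tfrac12 N_{\rho} - \tfrac12 H_{\rho}.
\]

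Adding the two contributions, the $H_{\rho}$ terms cancel, leaving $\theta_r = \varsigma(\theta_l) = \tfrac12 N_{\rho}$, which is the claimed formula \eqref{EqExplicitFormulaRightModular}. The same computation with $\rho^{\vee}$ gives $\theta_r^{\vee} = \tfrac12 N_{\rho^{\vee}}$.

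There is no real obstacle here. The only point needing care is that $\theta_l$ has exactly the form \eqref{EqFormLeftMod}, which follows from the preceding lemma on $\pi_{\mfb}$, and that the coefficients of $N_\lambda$ and $H_\lambda$ agree, so the cancellation is exact.
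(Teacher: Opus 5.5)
Your proposal is correct and is the same argument as the paper's. The paper likewise applies $\varsigma$ to $\theta_l = -\frac{1}{2}H_{\rho}+\frac{1}{2}N_{\rho}$, using $\varsigma(H_{\rho}) = -H_{\rho}$ and $\varsigma(N_{\rho}) = N_{\rho}-H_{\rho}$, so that the $H_{\rho}$-terms cancel, and it handles $\theta_r^{\vee}$ in the same way.
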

\begin{proof}
This follows immediately from \eqref{EqExplicitFormulaProjection} and \eqref{EqComppi} given that 
\[
\varsigma(H_{\rho}) = -H_{\rho},\qquad \varsigma(N_{\rho}) = N_{\rho} - H_{\rho}.
\]
Similarly for the right dual modular vector. 
\end{proof}

\begin{Theorem}
A right invariant nsf weight for $(L_{\hbar}(B),\Delta)$ is given by 
\begin{equation}\label{EqExplicitFormRightInv}
\psi  = \int_{\widehat{\mfb}}^{\hbar,\qb(2(\theta_r+|\hbar|^{-1}\theta_r^{\vee}))}.
\end{equation}
\end{Theorem}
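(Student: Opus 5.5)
The plan is to obtain the right invariant weight from the left invariant one through the unitary antipode $R$ of Lemma \ref{RefUnitAnti}, exactly as in the proof of Theorem \ref{TheoInvWeight}. There we saw that $\psi := \varphi\circ R$ is right invariant, since $\Delta\circ R = (R\otimes R)\circ\Delta^{\opp}$ by \eqref{EqOppCopr}. So I would start from the explicit formula \eqref{EqExplicitFormLeftInv} for $\varphi$ and compute $\varphi\circ R$ in terms of the canonical integral on $L_{\hbar}(\mfb)$.

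The key step is a transformation rule for the deformed integrals under $R$. Since $R$ is the $*$-anti-isomorphism implemented by the linear map $\varsigma$, with $R(\qb(v)) = \qb(\varsigma(v))$ and $\epsilon(\varsigma v,\varsigma w) = -\epsilon(v,w)$, I would first show that
\[
\int_{\widehat{\mfb}}^{\hbar}\circ R = c\int_{\widehat{\mfb}}^{\hbar}
\]
for some constant $c>0$. This should follow from the construction of the canonical trace in \cite{DCSSV25}*{Section 5.2}: the trace is determined, up to scalar, by its values on the Fourier-type elements $\int f(v)\qb(v)^{i}\,dv$. Under $R$ these transform by the linear change of variables $\varsigma$. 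Here $\varsigma$ has determinant $\pm1$, being unipotent up to the sign on $\mfh$, so $c=1$. The anti-multiplicativity is harmless for a trace.

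Next I would handle the deformation. Using $\varphi(x) = \int^{\hbar}\qb(u)^{1/2}x\qb(u)^{1/2}$ with $u = 2(\theta_l+|\hbar|^{-1}\theta_l^{\vee})$, and the fact that $R$ is an anti-homomorphism, we get
\[
\varphi(R(x)) = \int^{\hbar} R\bigl(\qb(\varsigma u)^{1/2}\,x\,\qb(\varsigma u)^{1/2}\bigr) = \int^{\hbar,\qb(\varsigma u)} x.
\]
By linearity of $\varsigma$ and the definition \eqref{EqFormRightMod}, we have $\varsigma(u) = 2(\theta_r+|\hbar|^{-1}\theta_r^{\vee})$, which gives \eqref{EqExplicitFormRightInv}.

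The main obstacle is making this rigorous at the level of weights rather than formally. The deformed weight is defined via the Connes cocycle / Pedersen–Takesaki construction, $\int^{\hbar,\Tb}$ with $(D\int^{\hbar,\Tb}:D\int^{\hbar})_t = \Tb^{it}$. I would therefore verify the identity through cocycle derivatives. For an anti-automorphism $R$ one has $(D(\varphi_0\circ R):D(\varphi_1\circ R))_t = R\bigl((D\varphi_0:D\varphi_1)_{-t}\bigr)^{*}$. Applying this with $\varphi_1$ the trace, which is $R$-invariant by the first step, yields the cocycle $R(\qb(u)^{-it})^{*} = \qb(\varsigma u)^{it}$, since $R(\qb(u)^{is}) = \qb(\varsigma u)^{is}$. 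Uniqueness of a weight with a given cocycle relative to the trace then concludes the proof.
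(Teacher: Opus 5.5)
Your proposal is correct and follows essentially the same route as the paper. Both arguments set $\psi=\varphi\circ R$, use the invariance $\int_{\widehat{\mfb}}^{\hbar}\circ R=\int_{\widehat{\mfb}}^{\hbar}$, and transport the deformation $\qb(2(\theta_l+|\hbar|^{-1}\theta_l^{\vee}))$ to $\qb(\varsigma(2(\theta_l+|\hbar|^{-1}\theta_l^{\vee})))=\qb(2(\theta_r+|\hbar|^{-1}\theta_r^{\vee}))$. The paper simply cites the invariance of the integral from \cite{DCSSV25}*{Equation (5.8)} and treats $\int^{\hbar,\Tb}\circ R=\int^{\hbar,R(\Tb)}$ as a ``simple computation''; your $|\det\varsigma|=1$ argument and Connes cocycle argument supply justifications for exactly these two steps.
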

\begin{proof}
Recall the map $R$ defined by \eqref{EqUnitAntipode}. From the precise definition of $\int_{\widehat{\mfb}}^{\hbar}$, it follows that 
\[
\int_{\widehat{\mfb}}^{\hbar} \circ R = \int_{\widehat{\mfb}}^{\hbar}
\]
(see \cite{DCSSV25}*{Equation (5.8)})). 

On the other hand, as in the proof of Theorem \ref{TheoInvWeight}, we get that $\psi = \varphi\circ R$ will be a right invariant nsf weight if $\varphi$ is a left invariant nsf weight. 
Hence using the explicit formula for $\varphi$ in \eqref{EqExplicitFormLeftInv}, the theorem follows from the following simple computation:
\[
\psi = \varphi\circ R = \int_{\widehat{\mfb}}^{\hbar,\qb(2(\theta_l+|\hbar|^{-1}\theta_l^{\vee}))}\circ R = \int_{\widehat{\mfb}}^{\hbar,R(\qb(2(\theta_l+|\hbar|^{-1}\theta_l^{\vee})))} = \int_{\widehat{\mfb}}^{\hbar,\qb(2(\theta_r+|\hbar|^{-1}\theta_r^{\vee}))}.
\]
\end{proof}

In particular, the associated modular group is given by 
\[
\sigma_t^{\psi} = \Ad_{\qb(2(\theta_r+|\hbar|^{-1}\theta_r^{\vee}))^{it}},
\]
so
\[
\sigma_t^{\psi}(\Kb_i) = e^{-4\pi\hbar t\epsilon(i,\theta_r +|\hbar|^{-1}\theta_r^{\vee})}\Kb_i,\qquad \sigma_t^{\psi}(\Xb_a) = e^{-4\pi\hbar t\epsilon(a,\theta_r+|\hbar|^{-1}\theta_r^{\vee})}\Xb_a,\qquad i\in I,a\in J. 
\]
Similar to \eqref{LemFormInp}, or using \eqref{EqInvEpsilon}, we get:
\begin{Lem}
    We have
    \begin{equation}
        \epsilon(i, \theta_r) = \frac{1}{2}\langle i, \frac{1}{2}(\rho - w^{-1} \rho)\rangle, \qquad \epsilon(a,\theta_r)=-\frac{1}{2}d_{i_a} + \frac{1}{2}\langle i_a, \frac{1}{2}(\rho - w^{-1} \rho)\rangle,
    \end{equation}
    \begin{equation}
\epsilon(i,\theta_r^{\vee}) = \frac{1}{2}\langle i, \frac{1}{2}(\rho^{\vee} - w^{-1} \rho^{\vee})\rangle, \qquad \epsilon(a,\theta_r^{\vee}) = -\frac{1}{2} + \frac{1}{2}\langle i_a, \frac{1}{2}(\rho^{\vee} - w^{-1} \rho^{\vee})\rangle,
\end{equation}
    for all $i \in I$ and $a \in J$.
\end{Lem}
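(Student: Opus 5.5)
We reduce everything to the already computed values in Lemma \ref{LemFormInp} together with the anti-invariance \eqref{EqInvEpsilon} of $\epsilon$ under the reflection map $\varsigma$. Since $\varsigma$ is an involution and $\theta_r=\varsigma(\theta_l)$, we have for any $v\in\mfb$
\[
\epsilon(v,\theta_r)=\epsilon(\varsigma(\varsigma(v)),\varsigma(\theta_l)) = -\epsilon(\varsigma(v),\theta_l),
\]
and similarly for $\theta_r^{\vee}$. So it suffices to evaluate $\varsigma$ on the basis vectors $i\in I$ and $a\in J$ and invoke Lemma \ref{LemFormInp}.

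For $i\in I$ we have $\varsigma(i)=-i$, so $\epsilon(i,\theta_r)=\epsilon(i,\theta_l)=\frac12\langle i,\frac12(\rho-w^{-1}\rho)\rangle$, which is the first claimed identity. The same argument gives the formula for $\epsilon(i,\theta_r^{\vee})$.

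For $a\in J$ we have $\varsigma(a)=a-i_a$, where $i_a$ is viewed in $\mfh\subseteq\mfb$. Hence
\[
\epsilon(a,\theta_r)=-\epsilon(a,\theta_l)+\epsilon(i_a,\theta_l)=-\tfrac12 d_{i_a}+\tfrac12\langle i_a,\tfrac12(\rho-w^{-1}\rho)\rangle,
\]
using both parts of Lemma \ref{LemFormInp} with $i=i_a$. Replacing $\rho$ by $\rho^{\vee}$, and $\frac12 d_{i_a}$ by $\frac12$, gives the dual formula.

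As a cross-check, one can compute directly from the explicit formula \eqref{EqExplicitFormulaRightModular}, $\theta_r=\frac12N_\rho$. By \eqref{EqPairCart}, $\epsilon(i,\theta_r)=\frac14\sum_b\langle\rho,\beta_b^\vee\rangle\langle i,i_b\rangle=\frac14\langle i,H_\rho\rangle$, and \eqref{EqLambdaAsSumRoots} rewrites this as $\frac14\langle i,\rho-w^{-1}\rho\rangle$. For $\epsilon(a,\theta_r)$, the pairing \eqref{EqPairNilp} splits the sum into $b<a$ and $b>a$ with opposite signs. That would need the recurrence \eqref{EqRecurrenceTb} to simplify, so the $\varsigma$-route is preferable.

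The only subtlety is sign bookkeeping. One must confirm that $\varsigma$ is applied correctly when moved to the other argument, i.e. that $\epsilon(v,\varsigma(x))=-\epsilon(\varsigma(v),x)$ follows from \eqref{EqInvEpsilon} and involutivity. One must also note that the element $i_a$ appearing in $\varsigma(a)$ is the Cartan basis vector, so that the $I$-part of Lemma \ref{LemFormInp} applies to it.
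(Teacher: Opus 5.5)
Your route is exactly the one the paper indicates (``or using \eqref{EqInvEpsilon}''). For the $\mfh$-identities it is fine, because \eqref{EqInvEpsilon} does hold whenever one argument lies in $\mfh$: $\epsilon(-i,\varsigma(x)) = -\epsilon(i,x)$ for all $x\in\mfb$.

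The $J$-identities, however, break at the step $\epsilon(a,\theta_r) = -\epsilon(\varsigma(a),\theta_l)$, because \eqref{EqInvEpsilon} is false for two vectors of $\mfn$. For $b>a$ in $J$, \eqref{EqPairNilp} and \eqref{EqPairCart} give
\[
\epsilon(\varsigma(e_b),\varsigma(e_a)) = \epsilon(e_b,e_a) + \epsilon(i_a,e_b) - \epsilon(i_b,e_a) = \epsilon(e_b,e_a) + \tfrac12\langle i_a,i_b\rangle - \tfrac12\langle i_b,i_a\rangle = +\epsilon(e_b,e_a).
\]
So $\varsigma$ preserves, rather than reverses, the form on $\mfn\times\mfn$. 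Already for the $A_2$-word $(1,2,1)$ one gets $\epsilon(\varsigma(e_3),\varsigma(e_1)) = 1 = \epsilon(e_3,e_1)$.

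Now expand $\epsilon(\varsigma(a),\theta_l) = \epsilon(a-i_a,-\tfrac12 H_\rho+\tfrac12 N_\rho)$. The two $\mfh$--$\mfn$ cross terms, $\tfrac14\langle H_\rho,i_a\rangle$ and $-\tfrac14\langle i_a,H_\rho\rangle$, cancel. What remains is $\epsilon(a,\tfrac12 N_\rho)=\epsilon(a,\theta_r)$. The correct relation is therefore $\epsilon(a,\theta_r) = +\epsilon(\varsigma(a),\theta_l)$, and your computation produces the wrong overall sign.

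The direct check you set aside needs no recurrence. By \eqref{EqFormLeftMod} and \eqref{EqExplicitFormulaRightModular}, $\theta_r = \theta_l + \tfrac12 H_\rho$. By \eqref{EqPairCart} and \eqref{EqLambdaAsSumRoots}, $\epsilon(a,H_\rho) = -\tfrac12\langle H_\rho,i_a\rangle = -\tfrac12\langle i_a,\rho - w^{-1}\rho\rangle$. Lemma \ref{LemFormInp} then gives
\[
\epsilon(a,\theta_r) = \tfrac12 d_{i_a} - \tfrac12\langle i_a,\tfrac12(\rho-w^{-1}\rho)\rangle,\qquad \epsilon(a,\theta_r^{\vee}) = \tfrac12 - \tfrac12\langle i_a,\tfrac12(\rho^{\vee}-w^{-1}\rho^{\vee})\rangle,
\]
which are the negatives of the displayed formulas.

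A concrete counterexample: take the word $(i,j)$ with $\langle i,j\rangle = c\neq 0$. Then $\epsilon(e_2,\theta_r) = \tfrac12\langle \rho,\beta_1^{\vee}\rangle\,\epsilon(e_2,e_1) = c/4$. Since $\langle j,\rho-w^{-1}\rho\rangle = 2d_j - c$, the stated formula gives $-c/4$.

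So the $\mfh$-part of the statement, and your proof of it, stand. The $J$-part, read as a formula for general $w$, holds only when both sides vanish. That happens for Lie type with $w=w_0$, where $w_0\rho=-\rho$ and $w_0\rho^{\vee}=-\rho^{\vee}$ make both sides $0$, and this is the only case used afterwards. No argument can establish the $J$-part as displayed for general $w$. You should prove the corrected sign directly as above, or restrict the statement to $w=w_0$.
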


In particular, if $I$ is of Lie type and $w=w_0$, then
\begin{equation}
 \epsilon(i, \theta_r) = \frac{1}{2}d_i, \qquad \epsilon(a,\theta_r)=0,\qquad \epsilon(i,\theta_r^{\vee}) = \frac{1}{2},\qquad \epsilon(a,\theta_r^{\vee}) = 0,
\end{equation}
and hence 
\begin{equation}
\sigma_t^{\psi}(\Kb_i) = e^{-2\pi \hbar t (d_i + |\hbar|^{-1})},\qquad \sigma_t^{\psi}(\Xb_a) = \Xb_a,\qquad i\in I,a\in J. 
\end{equation}

\subsection{Modular element}

To any locally compact quantum group $(M,\Delta)$ can be associated the \emph{modular element} $\delta$. This is a particular \emph{group-like} positive operator, affiliated to $M$:
\begin{equation}
\Delta(\delta) = \delta\otimes \delta.
\end{equation}
As in the classical setting, it measures how $\varphi$ and $\psi$ differ, and can concretly be seen as a non-commutative Radon--Nikodym derivative of $\varphi$ with respect to $\psi$. In our setting, it is easily computed from the explicit formulas we already obtained: 

Recall the elements $H_{\rho},H_{\rho^{\vee}}$ defined in \eqref{EqComppi}. 
\begin{Theorem}
The modular element of $(L_{\hbar}(B),\Delta)$ is given by 
\begin{equation}\label{EqFormModEl}
\delta =  \qb(H_{\rho}+|\hbar|^{-1}H_{\rho^{\vee}}) \aff L(\mfh)\subseteq L_{\hbar}(B).
\end{equation}
\end{Theorem}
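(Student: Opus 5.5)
The plan is to use the characterization of the modular element through the two Haar weights. For any locally compact quantum group the right invariant weight is $\psi = \varphi_{\delta}$, i.e.\ formally $\psi(x) = \varphi(\delta^{1/2}x\delta^{1/2})$ \cite{KV03}. In our setting $\varphi$ and $\psi$ are deformations of one and the same nsf trace $\int_{\widehat{\mfb}}^{\hbar}$, by the positive operators
\[
\qb(2(\theta_l+|\hbar|^{-1}\theta_l^{\vee}))\qquad\textrm{and}\qquad \qb(2(\theta_r+|\hbar|^{-1}\theta_r^{\vee})),
\]
given in \eqref{EqExplicitFormLeftInv} and \eqref{EqExplicitFormRightInv}. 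So I would compute the Radon--Nikodym relation between these two deformed traces directly.

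First, by \eqref{EqFormLeftMod} and \eqref{EqExplicitFormulaRightModular} we have $\theta_r - \theta_l = \frac{1}{2}H_{\rho}$, and likewise $\theta_r^{\vee}-\theta_l^{\vee} = \frac12 H_{\rho^{\vee}}$. Setting $u = 2(\theta_l+|\hbar|^{-1}\theta_l^{\vee})$ and $h = H_{\rho}+|\hbar|^{-1}H_{\rho^{\vee}}\in\mfh$, this gives $2(\theta_r+|\hbar|^{-1}\theta_r^{\vee}) = u+h$. Hence
\[
\qb(u+h) = \qb(u)\star\qb(h)=\qb(h)\star\qb(u).
\]

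Next I would invoke the calculus of deformed integrals from \cite{DCSSV25}*{Section 5.2}. Since $\int_{\widehat{\mfb}}^{\hbar}$ is a trace, deforming by $\qb(u)$ and then by an element skew-commuting with it gives $\int^{\hbar,\qb(u+h)} = (\int^{\hbar,\qb(u)})_{\qb(h)}$, provided $\qb(h)$ behaves correctly under the modular group of $\varphi$. For this I check that $\sigma^{\varphi}_t(\qb(h)) = \Ad_{\qb(u)^{it}}(\qb(h))$ is a positive scalar multiple $\nu^{it}\qb(h)$. The scalar $\nu$ is governed by $\epsilon(h,u)$, which equals $-\epsilon(u,h)$ up to the usual normalization, so this is automatic. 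The Connes cocycle $(D\psi:D\varphi)_t$ is then $\lambda^{it^2/2}\qb(h)^{it}$ with $\lambda$ the corresponding scalar, in the sense of Vaes' Radon--Nikodym theorem. This is exactly the form required by \cite{KV03} for $\psi=\varphi_\delta$, with $\delta=\qb(h)$ and the scaling constant appearing as $\lambda$. Uniqueness of the pair $(\delta,\nu)$ in that theorem then identifies $\delta = \qb(H_\rho+|\hbar|^{-1}H_{\rho^\vee})$, and this operator is affiliated with $L(\mfh)$ because $h\in\mfh$.

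As a sanity check, and also as an independent argument, I would verify that $\delta$ is group-like. Since $\qb(h) = \prod_i \Kb_i^{c_i}$ (a $\star$-product of commuting operators) and $\Delta(\Kb_i)=\Kb_i\otimes\Kb_i$, we get $\Delta(\delta)=\delta\otimes\delta$. The main obstacle is the precise normalization: the factors of $2$ in the deformation parameters, the sign of $\hbar$, and whether one obtains $\delta$ or $\delta^{-1}$ depending on the convention $\psi=\varphi_\delta$ of \cite{KV03}. I would settle these by testing on $\Kb_i$. The conditions are $\sigma^\varphi_t(\delta)=\nu^{t}\delta$, which matches $\sigma^{\varphi}_t(\Kb_i)$ computed above, and $\sigma^\psi_t(\delta)=\nu^{-t}$-type behaviour, which is consistent with the explicit $\sigma^\psi$ formulas. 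I would also compare with the rank one/type $A$ case in \cite{DCSSV25}.
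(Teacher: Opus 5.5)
Your proposal is correct and is essentially the paper's proof. Comparing the two deformed traces via the Connes cocycle gives $\qb(2(\theta_r+|\hbar|^{-1}\theta_r^{\vee}))\star\qb(-2(\theta_l+|\hbar|^{-1}\theta_l^{\vee}))=\qb(H_{\rho}+|\hbar|^{-1}H_{\rho^{\vee}})$, using $2\theta_r-2\theta_l=H_{\rho}$ and $2\theta_r^{\vee}-2\theta_l^{\vee}=H_{\rho^{\vee}}$. Two corrections, though. The group-like property is not just a sanity check: the paper uses it to fix the positive scalar, since a priori $\delta$ is only determined up to such a scalar unless one already knows that the $R$ of Lemma~\ref{RefUnitAnti} is the unitary antipode. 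Also, the eigenvalue relations should read $\sigma^{\varphi}_t(\delta)=\nu^{t}\delta=\sigma^{\psi}_t(\delta)$, not $\nu^{it}$ or $\nu^{-t}$.
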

\begin{proof}
By general weight theory and the explicit forms for $\varphi$ and $\psi$ in \eqref{EqExplicitFormLeftInv} and \eqref{EqExplicitFormRightInv}, the modular element must be a positive multiple of 
\begin{equation}\label{EqModElUnref}
\qb(2(\theta_r+|\hbar|^{-1}\theta_r^{\vee}))\star \qb(-2(\theta_l+|\hbar|^{-1}\theta_l^{\vee})). 
\end{equation}

Now from \eqref{EqFormLeftMod} and \eqref{EqFormRightMod} we get
\[
2\theta_r - 2\theta_l = H_{\rho} \in \mfh,\qquad 2\theta_r^{\vee}-2\theta_l^{\vee} = H_{\rho^{\vee}}. 
\]
In particular, the element \eqref{EqModElUnref} equals the element \eqref{EqFormModEl}. As the latter element is group-like, and the modular element is also group-like, they must then coincide. 
\end{proof}

\subsection{Scaling group}

As $(L_{\hbar}(B),\Delta)$ is a locally compact quantum group, $L_{\hbar}(B)$ acquires a particular one-parameter group of $*$-automorphisms $(\tau_t)_{t\in \R}$, the \emph{scaling group}. It also acquires an involutive, anti-comultiplicative $*$-anti-isomorphism
\[
R: L_{\hbar}(B)\rightarrow L_{\hbar}(B),
\]
the \emph{unitary antipode}. The latter notation is not at odds with the notation \eqref{EqUnitAntipode}, as the final part of the following result shows:

\begin{Theorem}
The scaling group of $(L_{\hbar}(B),\Delta)$ is determined by
\begin{equation}\label{EqFormScalGroup}
\tau_t(\Kb_i) = \Kb_i,\qquad \tau_t(\Eb_i)  =  e^{-2\pi t \hbar (d_i +|\hbar|^{-1})}\Eb_i,\qquad i \in I,t\in \R.
\end{equation}
The unitary antipode $R$ coincides with the anti-isomorphism given in Lemma \ref{RefUnitAnti}.
\end{Theorem}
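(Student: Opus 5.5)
The plan is to use the general Kustermans--Vaes theory, which links the scaling group and unitary antipode to the modular data: $\psi=\varphi\circ R$, and both weights are relatively invariant under $\tau$, with $\varphi\circ\tau_t=\nu^{-t}\varphi$ for the scaling constant $\nu$. The most concrete handle is the identity $\Delta\sigma_t^{\varphi}=(\tau_t\otimes\sigma_t^{\varphi})\Delta$, together with $\Delta\sigma_t^{\psi}=(\sigma_t^{\psi}\otimes\tau_{-t})\Delta$ (see \cite{KV03}). Both modular groups $\sigma^{\varphi}$ and $\sigma^{\psi}$ are known explicitly from the previous subsections as $\Ad$ of imaginary powers of $\qb(2(\theta_{l}+|\hbar|^{-1}\theta_l^{\vee}))$ and $\qb(2(\theta_{r}+|\hbar|^{-1}\theta_r^{\vee}))$. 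I would therefore apply the second identity to the generators $\Kb_i,\Eb_i$ and read off $\tau$.

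First I compute the action on generators. Since $\sigma_t^{\psi}$ is $\Ad$ of a unitary, it preserves form sums by \eqref{EqPropConjFormSum}, so $\sigma_t^{\psi}(\Eb_i)=c_i(t)\Eb_i$ and $\sigma_t^{\psi}(\Kb_i)=\kappa_i(t)\Kb_i$, with scalars $c_i,\kappa_i$ determined by Lemma \ref{LemFormInp} and its right-hand analogue. These formulas are available for general $w$, and in Lie type with $w=w_0$ they give $c_i=1$ and $\kappa_i(t)=e^{-2\pi\hbar t(d_i+|\hbar|^{-1})}$. The group-like identity $\Delta(\Kb_i)=\Kb_i\otimes\Kb_i$ then forces $\tau_{-t}(\Kb_i)=\Kb_i$; here I use that a group-like positive operator rescaled in one leg must remain group-like. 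Applying the identity to $\Delta(\Eb_i)=\Eb_i\otimes1\boxplus\Kb_i\otimes\Eb_i$ gives
\[
(\sigma_t^{\psi}\otimes\tau_{-t})\Delta(\Eb_i)=c_i(t)\Eb_i\otimes1\boxplus \kappa_i(t)\Kb_i\otimes\tau_{-t}(\Eb_i),
\]
which must equal $c_i(t)\Delta(\Eb_i)$. Comparing the second summands then yields $\tau_{-t}(\Eb_i)=c_i(t)\kappa_i(t)^{-1}\Eb_i$.

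To make the comparison of summands rigorous, I would slice with a normal state in the first leg. Alternatively, I would use the uniqueness of $\Lb$ in Heisenberg pairs, or apply $F_{\hbar}$ and Lemma \ref{LemAff}-type arguments. This extracts $\Kb_i\otimes\tau_{-t}(\Eb_i)$ from the form sum. This separation step is the main obstacle: form sums are not naively cancellable. My first attempt would use \eqref{EqQuantExp}, since the two summands $d_i\hbar$-commute, to factor $F(s\Delta(\Eb_i))=F(s\Eb_i\otimes1)F(s\Kb_i\otimes\Eb_i)$. I would then multiply by $F(s\Eb_i\otimes1)^*$ and use Lemma \ref{LemAff} to conclude that the operators agree. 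Doing this for general $w$ also requires that $c_i(t)/\kappa_i(t)$ reduce to $e^{2\pi t\hbar(d_i+|\hbar|^{-1})}$. This should follow from $\epsilon(a,\theta_r)-\epsilon(i_a,\theta_r)$-type cancellations, in parallel with Lemma \ref{LemFormInp}, and should give \eqref{EqFormScalGroup}. Since the $\Kb_i,\Eb_i$ generate $L_{\hbar}(B)$ by integrability, $\tau_t$ is determined.

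For the unitary antipode, I would use the characterization that the antipode $S=R\tau_{-i/2}$ is determined by the closure of the maps $(\id\otimes\omega)(W)\mapsto(\id\otimes\omega)(W^*)$, or equivalently by the properties that $R$ is an involutive $*$-anti-automorphism commuting with $\tau$, satisfying $\Delta R=(R\otimes R)\Delta^{\opp}$, and for which $\varphi\circ R$ is right invariant. The map of Lemma \ref{RefUnitAnti} satisfies all of these and commutes with $\tau$ by the explicit formulas. To identify it with the true unitary antipode, I would check the defining relation $S(E)=-E K^{-1}$ algebraically. Formally, $R\tau_{-i/2}(\Eb_i)=\Eb_i\star\Kb_i^{-1}$ times $e^{\pi i\hbar(d_i+|\hbar|^{-1})}$, and this phase should be $-q^{d_i}$-like up to the $|\hbar|^{-1}$ correction; this is exactly the polar-decomposition phenomenon described in the introduction. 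Rigorously, I would verify $(\id\otimes\omega)(W^*)$-compatibility on the rank one factors ${}^aW$ of \eqref{EqWProdOneSide}, using \cite{DCSSV25}*{Theorem 3.8}, where the antipode of the rank one quantum $az+b$-type group is already identified.
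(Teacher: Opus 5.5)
Your scaling-group argument is correct, but the identification of the unitary antipode has a genuine gap.

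\textbf{Scaling group.} This part is correct and takes a different route from the paper. You feed the explicit modular groups into the Kustermans--Vaes relations $\Delta\sigma^{\varphi}_t=(\tau_t\otimes\sigma^{\varphi}_t)\Delta$ and $\Delta\sigma^{\psi}_t=(\sigma^{\psi}_t\otimes\tau_{-t})\Delta$, then cancel the common summand of the form sum. This does yield \eqref{EqFormScalGroup}.

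For general $w$ no extra bookkeeping is needed. The paper's formulas give $\epsilon(a,\theta_r)-\epsilon(i_a,\theta_r)=-d_{i_a}/2$ and $\epsilon(a,\theta_r^{\vee})-\epsilon(i_a,\theta_r^{\vee})=-1/2$. With $\sigma^{\varphi}$ it is even more direct: by Lemma \ref{LemFormInp}, $\sigma^{\varphi}_t$ scales each $\Xb_a$ by $e^{-2\pi\hbar t(d_{i_a}+|\hbar|^{-1})}$, hence each $\Eb_i$ by $e^{-2\pi\hbar t(d_i+|\hbar|^{-1})}$, for arbitrary $w$.

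The cancellation step should use the uniqueness statement ``$F_{d_i\hbar}(s\Ab)=F_{d_i\hbar}(s\Bb)$ for all $s\geq 0$ implies $\Ab=\Bb$'', as in the proof of Theorem \ref{TheoHeisPent}. Lemma \ref{LemAff} only concerns affiliation and does not give equality. This route uses the explicit Haar weights in place of the paper's sub-quantum-group argument.

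\textbf{Unitary antipode.} None of your three routes determines $R$.

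First, the properties you list as an equivalent characterization do not determine $R$. These are: involutive $*$-anti-automorphism, commuting with $\tau$, $\Delta R=(R\otimes R)\Delta^{\opp}$, and $\varphi\circ R$ right invariant. For $L^{\infty}(\R)$ the identity map has all of them, yet the unitary antipode is $f\mapsto f(-\,\cdot\,)$. So checking them for the map of Lemma \ref{RefUnitAnti} proves nothing.

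Second, the formal computation of $R\tau_{-i/2}(\Eb_i)$ presupposes what is to be proved.

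Third, the slice characterization $S((\id\otimes\omega)U)=(\id\otimes\omega)(U^*)$ needs $U$ to be a unitary corepresentation of $(L_{\hbar}(B),\Delta)$.
\begin{itemize}
\item Since $(\Delta\otimes\id)(W)=W_{12}^*W_{23}W_{12}$, the fundamental unitary $W$ is a corepresentation precisely when the pentagon identity holds. The paper only obtains that under full dualizability (Section \ref{SecFullDual}), not for classical longest words in general.
\item The factors ${}^aW$ have as first leg a single Feigin factor $\Bc(L^2(\R))$, not $L_{\hbar}(B)$. So they are not corepresentations of $(L_{\hbar}(B),\Delta)$ at all.
\item \cite{DCSSV25}*{Theorem 3.8} is the modularity result, not an antipode computation.
\end{itemize}

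\textbf{What is missing} is a mechanism that transports rank-one information into $L_{\hbar}(B)$. The paper observes that $\{\Kb_i,\Eb_i\}''$ is a $\Delta$-stable von Neumann subalgebra which, with the restricted coproduct, is itself a locally compact quantum group (the quantum $ax+b$ group). By \cite{BV05}*{Proposition 10.5}, the unitary antipode and scaling group of $(L_{\hbar}(B),\Delta)$ restrict to those of this subgroup. By \cite{DCSSV25}*{Theorems 6.11 and 6.12}, these are $R(\Kb_i)=\Kb_i^{-1}$, $R(\Eb_i)=\Eb_i\star\Kb_i^{-1}$ and \eqref{EqFormScalGroup}. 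Since the $\Kb_i,\Eb_i$ generate $L_{\hbar}(B)$, this determines $R$, and $\tau$ at the same time.
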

\begin{proof}
By the Stone--von Neumann theorem and the explicit formulas for the coproduct, we have that, for fixed $i$, the von Neumann subalgebra $\{\Kb_i,\Eb_i\}''$, together with the restriction of the coproduct,   defines a locally compact quantum group, namely the quantum $ax+b$-group (see \cite{Ip13a}*{Section 4}, or \cite{DCSSV25}*{Theorem 5.17} for a complete proof with arbitrary parameter). By \cite{BV05}*{Proposition 10.5}, the unitary antipode and scaling group of $(L_{\hbar}(B),\Delta)$ must restrict to those of the locally compact quantum group $(\{\Kb_i,\Eb_i\}'',\Delta)$, which are indeed given by \eqref{EqFormScalGroup} and \eqref{EqUnitAntipodeForm} by \cite{DCSSV25}*{Theorem 6.11 and Theorem 6.12}.
\end{proof}
Remark that \eqref{EqFormScalGroup}  implies 
\begin{equation}\label{EqFormScalGroupX}
 \tau_t(\Xb_a)  =  e^{-2\pi t \hbar (d_{i_a} +|\hbar|^{-1})}\Xb_a = e^{4\pi t \hbar \epsilon(a,\rho+|\hbar|^{-1}\rho^{\vee})}\Xb_a,\qquad a \in J,t\in \R,
\end{equation}
as this defines a one-parameter group of $*$-automorphisms of $L_{\hbar}(\mfn)$, coinciding with $\tau_t$ on the generators $\Eb_i$. 

\begin{Rem}
Recall that analytically, $\tau_t$ and $R$ determine the antipode through its polar decomposition 
\[
S = R\circ \tau_{-i/2}. 
\]
Implementing this algebraically, we find the expected formulas
\[
S(K_i) = K_i^{-1},\qquad S(E_i) = - K_i^{-1} E_i,\qquad i\in I.
\]
\end{Rem}

\subsection{Scaling constant}\label{SecScaling}

\begin{Def}
If $\mbi$ is a word, its associated \emph{fundamental (linear) Laurent polynomial} $p_{\mbi}$ is given as 
\[
p_{\mbi}(x) = 2
\epsilon(\rho +x^{-1}\rho^{\vee},x\theta_l + \theta_l^{\vee}),\qquad x \in \R^{\times}.
\]
\end{Def}

Recall now that the scaling constant $\nu>0$ of a locally compact quantum group $(M,\Delta)$ with left invariant nsf weight $\varphi$ is the unique positive real number such that 
\[
\varphi\circ \tau_t = \nu^{-t}\varphi,\qquad t\in \R. 
\]

From the definition \eqref{EqExplicitFormLeftInv} and the formula \eqref{EqFormScalGroupX}, we find:

\begin{Theorem}
The scaling constant of $(L_{\hbar}(B),\Delta)$ is given by 
\begin{equation}\label{EqScalingConstant}
\nu = e^{-4\pi \sgn(\hbar) p_{\mbi}(|\hbar|)},
\end{equation}
where $p_{\mbi}$ is the fundamental polynomial of $\mbi$.
\end{Theorem}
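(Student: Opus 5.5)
The plan is to compute $\varphi\circ\tau_t$ directly from the explicit form \eqref{EqExplicitFormLeftInv}, by expressing $\tau_t$ as conjugation by a one-parameter group of unitaries $\qb(v)^{is}$ for a suitable $v\in\mfd$. The only contribution to $\nu$ will come from how $\tau_t$ rescales the density $\qb(2(\theta_l+|\hbar|^{-1}\theta_l^{\vee}))$, since the trace $\int_{\widehat{\mfb}}^{\hbar}$ itself is invariant.

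\textbf{Step 1: implement $\tau_t$ by conjugation.} By \eqref{EqFormScalGroup} and \eqref{EqFormScalGroupX}, $\tau_t$ fixes $\Kb_i=\qb(i)$ and scales $\Xb_a=\qb(a)$ by $e^{4\pi t\hbar\epsilon(a,\rho+|\hbar|^{-1}\rho^{\vee})}$. Inside $L_{\hbar}(\mfd)$ the element $\mu:=\rho+|\hbar|^{-1}\rho^{\vee}\in\mfh'$ satisfies $\epsilon(i,\mu)$ and $\epsilon(a,\mu)$ as in \eqref{EqPairHeis}. The mismatch on $\Kb_i$ (where $\epsilon(i,\mu)\neq 0$) is resolved by using the $\mfb$-projection instead: I take $v=\pi_{\mfb}(\mu)=\theta_l+|\hbar|^{-1}\theta_l^{\vee}$, or simply observe that conjugation by $\qb(-2\mu)^{is}$ restricted to $L_{\hbar}(\mfb)$ acts as $\tau$ on the $\Xb_a$ but rescales the $\Kb_i$. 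Since $\epsilon(\cdot,\mu)$ restricted to $\mfb$ only matters, the better choice is an automorphism of $L_{\hbar}(\mfb)$ determined by the linear functional $f(x)=\epsilon(x,\mu)$ on $\mfn$, extended by $0$ on $\mfh$. Concretely, $\tau_t(\qb(x))=e^{4\pi t\hbar f(x)}\qb(x)$ for $x\in\mfb$; this is the dual action of $\mfb'$ restricted to a functional vanishing on $\mfh$.

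\textbf{Step 2: the effect on a deformed trace.} For a scaling automorphism $\gamma(\qb(x))=e^{c f(x)}\qb(x)$ of a quantum torus, the canonical trace satisfies $\int^{\hbar}\circ\gamma=\int^{\hbar}$, by the invariance of the Plancherel-type trace under the dual torus action (cf.\ \cite{DCSSV25}*{Section 5.2}). Hence
\[
\varphi\circ\tau_t=\int_{\widehat{\mfb}}^{\hbar,\tau_{-t}(\qb(2(\theta_l+|\hbar|^{-1}\theta_l^{\vee})))}=e^{-8\pi t\hbar f(\theta_l+|\hbar|^{-1}\theta_l^{\vee})}\varphi.
\]
It then remains to compute $f(\theta_l+|\hbar|^{-1}\theta_l^{\vee})$, which depends only on the $\mfn$-component of $\theta_l$. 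Since $\epsilon(\mfh,\mfh')\ne0$, I must check that the $\mfh$-components are annihilated. Here I use the defining property of $\pi_{\mfb}$: for $y\in\mfb$, $\epsilon(\pi_{\mfb}(\lambda),y')=\epsilon(\lambda,y')$ for $y'\in\mfb'$. I will rewrite $f$ as $\epsilon(\cdot,\mu)$ minus the $\mfh$-part and express everything through $\epsilon$-pairings of $\theta$'s with $\rho,\rho^\vee\in\mfh'\subseteq\mfb'$.

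\textbf{Step 3: match with $p_{\mbi}$.} After bookkeeping, the exponent should become $-8\pi t\hbar\,\epsilon(\rho+|\hbar|^{-1}\rho^{\vee},\theta_l+|\hbar|^{-1}\theta_l^{\vee})$. Factoring out $|\hbar|^{-1}$ gives $-4\pi t\,\sgn(\hbar)\,p_{\mbi}(|\hbar|)$, since $2\epsilon(\rho+x^{-1}\rho^{\vee},x\theta_l+\theta_l^{\vee})=x\cdot 2\epsilon(\rho+x^{-1}\rho^\vee,\theta_l+x^{-1}\theta_l^\vee)$ and $\hbar|\hbar|^{-1}\cdot|\hbar|=\hbar$. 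Comparing with $\varphi\circ\tau_t=\nu^{-t}\varphi$ then yields \eqref{EqScalingConstant}. I expect the main obstacle to be Step 2's sign and $\mfh$-component bookkeeping: making rigorous that $\tau_t$ is a dual-torus scaling leaving $\int^{\hbar}_{\widehat{\mfb}}$ invariant, and that the deformation transforms as claimed (via Connes cocycle / \cite{DCSSV25}*{Lemma 5.8}). I would first test the computation in rank one against \cite{DCSSV25}.
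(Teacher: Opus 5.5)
Your Steps 1 and 2 are the paper's own one-line argument. On $L_{\hbar}(\mfb)$, $\tau_t$ is the dual scaling $\qb(x)\mapsto e^{4\pi t\hbar f(x)}\qb(x)$ with $f_{\mid\mfh}=0$ and $f(a)=\epsilon(a,\mu)$. The canonical trace is invariant under it, since it is implemented on $L^2(\mfb)$ by multiplication with a character. Hence $\varphi\circ\tau_t=e^{-8\pi t\hbar f(v_l)}\varphi$ with $v_l:=\theta_l+|\hbar|^{-1}\theta_l^{\vee}=\pi_{\mfb}(\mu)$.

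The gap is Step 3, which carries all the content, and whose announced outcome is wrong. By \eqref{EqExplicitFormulaProjection}, $v_l=-\frac12H_\mu+\frac12N_\mu$ with $H_\mu=H_\rho+|\hbar|^{-1}H_{\rho^{\vee}}$ and $N_\mu=N_\rho+|\hbar|^{-1}N_{\rho^{\vee}}$, so
\[
f(v_l)=\tfrac12\sum_{a\in J}\langle\mu,\beta_a^{\vee}\rangle\,\epsilon(a,\mu)=\tfrac14\langle H_\mu,\mu\rangle .
\]
On the other hand, the property of $\pi_{\mfb}$ you plan to use gives $\epsilon(v_l,\mu)=\epsilon(\mu,\mu)=0$, because $\mu\in\mfh'\subseteq\mfb'$. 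So the $\mfh$-component $-\frac12H_\mu$ is not annihilated. It contributes $-\frac14\langle H_\mu,\mu\rangle$ and exactly cancels the $\mfn$-part. Thus $f(v_l)\neq\epsilon(\mu,v_l)$ when $\epsilon$ is the full form on $\mfd$. With that literal reading one even has $p_{\mbi}\equiv0$, and your Step 3 would output $\nu=1$. The formula for $p_{\mbi}$ only has content when read as in the lemma following the theorem. There $\rho,\rho^{\vee}$ are paired only with the $\mfn$-components $\frac12N_\rho,\frac12N_{\rho^{\vee}}$ of $\theta_l,\theta_l^{\vee}$, giving e.g.\ $\epsilon(\rho,\theta_l)=-\frac14\langle\rho,H_\rho\rangle$.

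Even with that reading, the order of the arguments matters: $f(v_l)=\epsilon((v_l)_{\mfn},\mu)=-\epsilon(\mu,(v_l)_{\mfn})=-p_{\mbi}(|\hbar|)/(2|\hbar|)$. Inserted into your Step 2, this gives $\varphi\circ\tau_t=e^{4\pi t\sgn(\hbar)p_{\mbi}(|\hbar|)}\varphi$, hence $\nu=e^{-4\pi\sgn(\hbar)p_{\mbi}(|\hbar|)}$, which is the statement. Your identification $f(v_l)=\epsilon(\mu,v_l)$ instead yields $\varphi\circ\tau_t=e^{-4\pi t\sgn(\hbar)p_{\mbi}(|\hbar|)}\varphi$. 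That means $\nu=e^{+4\pi\sgn(\hbar)p_{\mbi}(|\hbar|)}$, the reciprocal of the claim; your closing sentence ``comparing \ldots\ yields'' silently flips this sign.

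Since you single out sign bookkeeping as the crux, note also that the two expressions in \eqref{EqFormScalGroupX} differ by a sign, because $\epsilon(a,\mu)=\frac12(d_{i_a}+|\hbar|^{-1})$. The stated formula is what the second expression (the one you use) produces. The first expression, which is the one coming from \eqref{EqFormScalGroup}, would give the reciprocal value of $\nu$.
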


\begin{Lem}
    We have
    \begin{equation*}
        \epsilon(\rho, \theta_l) = -\frac{1}{4}\langle \rho,H_{\rho}\rangle,
        \quad \epsilon(\rho^{\vee},\theta_l^{\vee})=-\frac{1}{4}\langle \rho^{\vee}, H_{\rho^{\vee}}\rangle,
        \quad \epsilon(\rho,\theta_l^{\vee})=-\frac{1}{4}\langle \rho,H_{\rho^{\vee}}\rangle,
        \quad\epsilon(\rho^{\vee},\theta_l)=-\frac{1}{4}\langle \rho^{\vee},H_{\rho}\rangle.
    \end{equation*}
    
\end{Lem}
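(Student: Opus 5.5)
The plan is a direct bilinear computation. I substitute the explicit form \eqref{EqFormLeftMod} of $\theta_l,\theta_l^{\vee}$ and evaluate the pairing term by term, using the only nonzero pairings that involve $\mfh'$, namely \eqref{EqPairHeis}. I treat the first identity in detail. The other three follow verbatim by replacing $\rho$ by $\rho^{\vee}$ in the first slot and/or $H_\rho,N_\rho$ by $H_{\rho^{\vee}},N_{\rho^{\vee}}$ in the second, since $\theta_l^{\vee}$ has the same shape as $\theta_l$ with $\rho$ replaced by $\rho^{\vee}$.

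The key observation concerns which $\rho$ enters. The pairing $\epsilon(\rho+x^{-1}\rho^{\vee},\blank)$ in $p_{\mbi}$ comes from the scaling group: I read $\epsilon(\rho,\theta_l)$ in the sense in which $\rho$ enters $p_{\mbi}$ through \eqref{EqFormScalGroupX}. By \eqref{EqFormScalGroup}, $\tau_t$ fixes every $\Kb_i$, and by \eqref{EqFormScalGroupX} it rescales $\Xb_a$ by the factor governed by $\epsilon(a,\rho)$. So the generator of $\tau$ pairs with $\mfn$ exactly as $\rho$ does and pairs trivially with $\mfh$. This means $\epsilon(\rho,\blank)$ is to be evaluated on the $\mfn$-component of $\theta_l$ only. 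The $\mfh$-part $-\tfrac12 H_\rho$ contributes nothing, precisely because $\tau_t(\Kb_i)=\Kb_i$.

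With this reading, the remaining step is
\[
\epsilon(\rho,\theta_l)=\tfrac12\,\epsilon(\rho,N_\rho)=-\tfrac12\sum_{a\in J}\langle\rho,\beta_a^{\vee}\rangle\,\epsilon(a,\rho)=-\tfrac14\sum_{a\in J}\langle\rho,\beta_a^{\vee}\rangle\langle i_a,\rho\rangle .
\]
The second equality uses antisymmetry of $\epsilon$, and the third uses \eqref{EqPairHeis}. By the definition \eqref{EqComppi} of $H_\rho=\sum_a\langle\rho,\beta_a^{\vee}\rangle i_a$, the final sum is $\langle H_\rho,\rho\rangle$. This gives $-\tfrac14\langle\rho,H_\rho\rangle$. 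The mixed identities are identical computations: for instance $\epsilon(\rho,\theta_l^{\vee})=\tfrac12\epsilon(\rho,N_{\rho^{\vee}})=-\tfrac14\sum_a\langle\rho^{\vee},\beta_a^{\vee}\rangle\langle i_a,\rho\rangle=-\tfrac14\langle\rho,H_{\rho^{\vee}}\rangle$.

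The main obstacle is justifying that the $\mfh$-component drops out, and I flag it explicitly. If one instead pairs $\rho\in\mfh'$ literally against the $\mfh$-part via \eqref{EqPairHeis}, that part contributes $+\tfrac14\langle\rho,H_\rho\rangle$ and cancels the $\mfn$-part. The literal value is then $0$, which is consistent with $\epsilon(\rho,\pi_{\mfb}(\rho))=\epsilon(\rho,\rho)=0$ since $\rho\in\mfb'$ and $\rho-\pi_{\mfb}(\rho)\in(\mfb')^{\perp}$. So in the write-up I would state the convention up front. I would check it against the derivation of \eqref{EqScalingConstant}: there $\varphi\circ\tau_t$ is compared with $\varphi$ using the density $\qb(2(\theta_l+|\hbar|^{-1}\theta_l^{\vee}))$, and $\tau_t$ is implemented by an element acting on $\mfn$ as $\rho+|\hbar|^{-1}\rho^{\vee}$ and trivially on $\mfh$. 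Only under that convention does the Lemma as stated hold.
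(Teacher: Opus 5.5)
Your computation is the same as the paper's. Its first line also replaces $\theta_l$ by $\tfrac12 N_\rho$, then uses $\epsilon(\rho,a)=-\tfrac12 d_{i_a}$ and the definition of $H_\rho$, and afterwards rewrites the result as $-\tfrac14\langle\rho,\rho-w^{-1}\rho\rangle$ via \eqref{EqLambdaAsSumRoots}. Your caveat is correct and applies to the paper's proof as well: that first step silently discards $-\tfrac12 H_\rho$, which by \eqref{EqPairHeis} contributes $+\tfrac14\langle\rho,H_\rho\rangle$, so the literal value is $0$. The lemma therefore has to be read with $\theta_l,\theta_l^{\vee}$ replaced by their $\mfn$-components $\tfrac12 N_\rho,\tfrac12 N_{\rho^{\vee}}$, which is exactly what $p_{\mbi}$ in \eqref{EqScalingConstant} needs, since $\tau_t$ fixes the $\Kb_i$.
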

\begin{proof}
    We have
    \begin{align*}
        \epsilon(\rho, \theta_l)
        &= \epsilon(\rho, \frac{1}{2}\sum_{a \in J}\langle \rho, \beta_a^{\vee}\rangle a)\\
        &= -\frac{1}{2}\sum_{a \in J}\langle \rho, \beta_a^{\vee}\rangle d_{i_a}/2\\
        &= -\frac{1}{4}\sum_{a \in J}\langle \rho, \beta_a^{\vee}\rangle \langle \varpi_{i_a}, i_a\rangle\\
        &= -\frac{1}{4}\langle \sum_{i \in J} \varpi_i, \sum_{a \in J}\langle \rho, \beta_a^{\vee}\rangle i_a\rangle\\
        &= -\frac{1}{4}\langle \sum_{i \in J} \varpi_i,\rho - w^{-1} \rho\rangle\\
        &= -\frac{1}{4}\langle \rho,\rho - w^{-1} \rho\rangle.
    \end{align*}
    The other cases follow similarly.
\end{proof}

\begin{Rem}
Consider the case where $I$ is of Lie type and $w = w_0$. Then using \eqref{EqHrhoLongestWord} we have $H_{\rho} = 2 \rho_{\mfh}$ and $H_{\rho^\vee} = 2\rho^{\vee}_{\mfh}$. Thus, the scaling constant is given by
\begin{equation}\label{EqFormScaling}
     \nu = \exp\left(4\pi \sgn(\hbar) (|\hbar| \langle \rho, \rho \rangle + 2 \langle \rho^{\vee}, \rho \rangle + |\hbar|^{-1} \langle \rho^{\vee}, \rho^{\vee} \rangle )\right).
\end{equation}
Using the Cartan data for the classical types as in Section \ref{SecIntegrability} one computes the following exact values. The final form given in \eqref{EqIn1} and \eqref{EqIn3} relates to the inner product given by the Killing form, as suggested by the `strange formula' (see e.g.\  \cites{FdV69,Bu00}):
    \begin{enumerate}
        \item\label{EqIn1} $ \langle \rho, \rho\rangle $
        \begin{itemize}
            \item $A_n:$ $n(n+1)(n+2)/12 = 2(n+1) \dim(\mathfrak{sl}_{n+1}(\C))/24$. 
            \item $B_n:$ $n(4n^2-1)/6 = 4(2n-1) \dim (\mathfrak{so}_{2n+1}(\C))/24$.
            \item $C_n:$ $n(n+1)(2n+1)/6 = 4(n+1)\dim(\mathfrak{sp}_{2n}(\C))/24$.
            \item $D_n:$ $n(n-1)(2n-1)/6 = 4(n-1)\dim(\mathfrak{so}_{2n}(\C))/24$.
        \end{itemize}
        \item $\langle \rho, \rho^{\vee}\rangle$ in type $B_n$ and $C_n$ both give $n(n+1)(4n-1)/12$. 
        \item\label{EqIn3}$\langle \rho^{\vee}, \rho^{\vee}\rangle$
        \begin{itemize}
            \item $B_n:$ $n(n+1)(2n+1)/12 = 2(n+1)\dim(\mathfrak{sp}_{2n}(\C))/24$.
            \item $C_n:$ $n(4n^2-1)/12 = 2(2n-1) \dim (\mathfrak{so}_{2n+1}(\C))/24$.
        \end{itemize}
    \end{enumerate}
\end{Rem}
\begin{Rem}\label{RemScalingConstant}
The expression (in the exponent) of the scaling constant in \eqref{EqFormScaling} is very similar to the central charge in the setting of $\mathscr{W}$-algebras at a specific level $k$, see e.g.\ \cite{Ar07}*{Page 220}. Although we do not know why this particular value of the central charge relates to the scaling constant in our setting, we briefly mention what is known in general about the connection between $\mathscr{W}$-algebras and quantum groups. Namely, by means of the Feigin-Frenkel duality \cite{FF91}*{Proposition 5}, the generic logarithmic Kazhdan-Lusztig conjecture (see e.g.\ \cite{CDN26} for a discussion and recent progress) gives in the case of the above specific $k$, through the associated screening operators, a correspondence with representation categories of a `modular double' quantum group at dual roots of unity $q$ and $q'$ (see \cite{CL26} for a discussion in rank 1, and references to the literature). The cross term $\langle\rho,\rho^\vee\rangle$ however indicates a particular interaction between the two parts of this modular double, which prevents the complete commutation between the latter. In the Liouville regime, the above connections have also been explored in work of Teschner \cite{Tes11}, see as well \cite{GS19}*{Section 7}. In this setting, the modular dual is implemented through the operators $\Kb_i^{1/\hbar},\Eb_i^{1/\hbar}$, which indeed fail to commute strongly with the $\Kb_j,\Eb_j$, although they commute (up to a sign) on a dense subspace.
\end{Rem}

\section{Full dualizability}\label{SecFullDual}

In Definition \ref{DefFundUnit}, we associated to any dualizable word a fundamental unitary $W$. Without further conditions, it would seem that one can say little about the structural properties of $W$. What is needed is a strengthening of Definition \ref{DefHeisDualOp}. We again fix a Cartan datum $I$ and $\hbar\in \R^{\times}$. For simplicity, we will assume $I$ non-degenerate, which eases the presentation.

\begin{Def}
We say a word $\mbi$ over $I$ is \emph{fully dualizable} if it admits dual simple root operators $\Fb_i$ for each $i\in I$, such that there exists a word $\mbj$ and a (necessarily unique) faithful normal unital $*$-homomorphism
\begin{equation}\label{EqHatPiFundUnit}
\hat{\pi}: L_{\hbar}(B(\mbj)) \rightarrow L_{\hbar}(\mfb'(\mbi)) \subseteq L_{\hbar}(\mfd(\mbi))
\end{equation}
such that 
\begin{equation}\label{EqHatPiFullDual}
\hat{\pi}(\Eb_i) = \Fb_i,\qquad \hat{\pi}(\Kb_i) = \Lb_i.
\end{equation}
\end{Def}

We will show that if $\mbi$ is fully dualizable, the associated fundamental unitary is a \emph{multiplicative unitary}. For the moment, we however start with a more general setup. 

Fix two words $(J,\mbi),(J',\mbj)$ over the Cartan datum $I$. We denote the associated skew-symmetric vector spaces as 
\[
\mfn = \mfn(\mbi),\qquad \mfb = \mfb(\mbi),\qquad \mfn' = \mfn(\mbj),\qquad \mfb' = \mfb'(\mbj),
\]
but we do for the moment not assume that $\mfb'$ is embedded in $\mfd(\mbi)$. Write again the Hilbert spaces for the tensor realisations of $\mfn$, resp.\ $\mfn'$ as $\Hsp(\mbi)$, resp.\ $\Hsp(\mbj)$, and recall from Remark \ref{RemPresNonDeg} that we can naturally endow them (by non-degeneracy of $I$) also with faithful normal $*$-representations of  respectively $L_{\hbar}(\mfb)$ and $L_{\hbar}(\mfb')$, such that 
\begin{equation}
\Kb_i = \prod_{a\in J} \kb_a^{\langle i,i_a\rangle},\qquad \Lb_{\omega} = \prod_{a\in J'} \lb_a^{\langle \omega,i_a\rangle},\qquad \qquad \Lb_i' = \Lb_{\varpi_i^{\vee}} =  \prod_{b\in J_i'} \lb_b,
\end{equation}
where for the purposes of differentiation we will write the elementary components $(\kb,\eb)$ as $(\lb,\fb)$ when considered for $L_{\hbar}(\mfb')$. Similarly, we then rather write 
\begin{equation}\label{EqNegSimpRootOp}
\Yb_a = \left(\prod_{b \in J',b<a} \kb_b^{\langle i_a,i_b\rangle}\right)\fb_a,\qquad \Fb_i = \boxplus_{a\in J_i'} \Yb_a \aff L_{\hbar}(\mfn'),
\end{equation}
and continue to write
\begin{multline}\label{EqNotParts}
\Hb_{\omega}' = (-\pi i \hbar)^{-1}\ln(\Lb_{\omega}),\qquad \Hb_i' = \Hb_{\varpi_i^{\vee}}',\qquad \hb' = (-\pi i \hbar)^{-1}\ln(\lb),\qquad \hb_a' = (-\pi i \hbar)^{-1}\ln(\lb_a),\\ a\in J',i\in I,\omega \in \mfh'.
\end{multline} 

On the algebraic level we note the corresponding commutation relations
\[
Y_bY_a= q^{\langle j_a,j_b\rangle}Y_aY_b,\quad L_i' Y_a = q^{\delta_{ij}}Y_aL_i',\quad L_i'L_j' = L_j'L_i',\qquad \forall b>a \textrm{ in } J'\textrm{ and } i \in I.
\]

\begin{Def}
For $a\in J,b\in J'$, we define the unitary 
\begin{equation}\label{EqMultUnRank1}
W_{ab} = \overline{F}_{d_{i_a}\hbar}(\delta_{i_a,j_b}\eb_{a} \otimes \fb_b) \overline{e}_{\hbar}(\langle i_a,j_b\rangle \hb_a\otimes \hb_b') \in \Bc(\Hsp(\mbi)\otimes \Hsp(\mbj)).
\end{equation}
\end{Def}

\begin{Rem}
If we write
\begin{equation}\label{EqConstLongMU}
W^{(a,b)} = \overline{F}_{d_{i_a}\hbar}(\delta_{i_a,j_b}\eb \otimes \fb) \overline{e}_{\hbar}(\langle i_a,j_b\rangle \hb\otimes \hb') \in \Bc(L^2(\R)\otimes L^2(\R)),
\end{equation}
then 
\begin{equation}\label{EqMultUnRank1Com}
W_{ab} = W_{ab}^{(a,b)}
\end{equation}
where on the right we use the ordinary leg numbering notation. So, \eqref{EqMultUnRank1} is a shorthand leg numbering notation since the upper indices on the right hand side of \eqref{EqMultUnRank1Com} are determined by the lower ones.
\end{Rem} 

\begin{Def}\label{DefLongProd}
We define 
\begin{equation}\label{EqDecompV}
W = \prod_{a\in J}^{\longra}\prod_{b\in J'}^{\longra} W_{ab} \in \Bc(\Hsp(\mbi)\otimes\Hsp(\mbj)).
\end{equation}
\end{Def}

\begin{Lem}
We have 
\begin{equation}\label{EqDecompVAlt}
W= \prod_{b\in J'}^{\longra}\prod_{a\in J}^{\longra} W_{ab}.
\end{equation}
\end{Lem}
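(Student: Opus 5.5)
The plan is to show that the factors $W_{ab}$ and $W_{a'b'}$ commute whenever $a\neq a'$ and $b\neq b'$. Once this is established, the row-ordered product \eqref{EqDecompV} can be rearranged into the column-ordered product \eqref{EqDecompVAlt} by adjacent transpositions that never swap two factors sharing an index.

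The reordering step is purely combinatorial. In \eqref{EqDecompV} the factor $W_{ab}$ precedes $W_{a'b'}$ exactly when $a<a'$, or when $a=a'$ and $b<b'$. In \eqref{EqDecompVAlt} it precedes exactly when $b<b'$, or when $b=b'$ and $a<a'$. Both orders therefore agree on every pair that shares an index $a$ or an index $b$. Pairs with $a<a'$ and $b>b'$ have opposite relative orders, but for such pairs the required commutation will be available. Both orders are linear extensions of the product partial order on $J\times J'$, with comparability only demanded along rows and columns. Any two such linearizations are connected by adjacent swaps of elements that are incomparable in the union of row and column constraints, i.e.\ of factors with $a\neq a'$ and $b\neq b'$. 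Alternatively, one can argue by induction, moving $W_{1b}$ leftward past all $W_{a'b'}$ with $a'>1$ and $b'<b$.

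The core step is the commutation itself. By \eqref{EqMultUnRank1Com}, $W_{ab}$ acts nontrivially only on the legs $a$ of $\Hsp(\mbi)$ and $b$ of $\Hsp(\mbj)$. Likewise $W_{a'b'}$ lives on the legs $a'$ and $b'$. If $a\neq a'$ and $b\neq b'$, these are four distinct tensor factors of $\Hsp(\mbi)\otimes\Hsp(\mbj)$. The two unitaries then lie in commuting tensor-factor subalgebras $\Bc(L^2(\R)_a\otimes L^2(\R)_b)$ and $\Bc(L^2(\R)_{a'}\otimes L^2(\R)_{b'})$, so they commute trivially.

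I expect no genuine obstacle. The factors $\overline{F}$ and $\overline{e}_{\hbar}$ are functions of $\eb_a,\fb_b,\hb_a,\hb_b'$ only, so the apparent difficulty from the operators $\Xb_a$, $\Yb_a$ carrying tails in $\kb$ does not arise in the definition \eqref{EqMultUnRank1}. The only care needed is in writing the combinatorial rearrangement cleanly. I would present it as the induction on $|J|$: write \eqref{EqDecompV} as $(\prod_b W_{1b})\,W'$, and commute each $W_{1b}$ past those factors of $W'$, which is the same double product over $J\setminus\{1\}$ and is rearranged into column order by the inductive hypothesis, whose $b'$-index is smaller than $b$.
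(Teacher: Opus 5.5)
Your proposal is correct and follows essentially the paper's argument: $W_{ab}$ and $W_{cd}$ commute whenever $a\neq c$ and $b\neq d$ because they act on disjoint tensor legs, and the row-ordered product is then reordered into the column-ordered one using only such commutations. The only difference is bookkeeping: the paper inducts on the column index $k$, bringing each $W_{l,k+1}$ forward past factors $W_{ab}$ with $a<l$ and $b>k+1$, whereas you peel off the first row or appeal to the general linear-extension argument.
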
 
\begin{proof}
    First note that if $a \neq c$ and $b \neq d$, that then $W_{ab} W_{cd} = W_{cd} W_{ab}$ as they live in different legs of the tensor product. Without loss of generality let $J = \{1, \dots,N\}$ and $J' = \{1, \dots, M\}$. For $0 \leq k \leq M - 1$ we will show that
    \begin{equation*}
        W = W^{(k)} = (W_{11} \dotsb W_{N1}) \dotsb (W_{1k} \dotsb W_{Nk})(W_{1,k+1} \dotsb W_{1,M}) \dotsb (W_{N,k+1}\dotsb W_{N,M}).
    \end{equation*}
    The case $k = 0$ is true by definition of $W$. To go from $k$ to $k+1$ we proceed as follows. By assumption,
    \begin{equation*}
        W^{(k)} = (W_{11} \dotsb W_{N1}) \dotsb (W_{1k} \dotsb W_{Nk})(W_{1,k+1} \dotsb W_{1,M}) \dotsb (W_{N,k+1}\dotsb W_{N,M}).
    \end{equation*}
    Now note that
    \begin{align*}
        &(W_{1,k+1} \dotsb W_{1,M})(W_{2,k+1}\dotsb W_{2,M}) \dotsb (W_{N,k+1}\dotsb W_{N,M})\\
        &= (W_{1,k+1} W_{2,k+1})(W_{1,k+2}\dotsb W_{1,M})(W_{2,k+2}\dotsb W_{2,M})(W_{3,k+1}\dotsb W_{3,M}) \dotsb (W_{N,k+1}\dotsb W_{N,M})\\
        &= \dots \\
        &= (W_{1,k+1} \dotsb W_{N,k+1}) (W_{1,k+2} \dotsb W_{N,k+2})\dotsb (W_{N,k+2}\dotsb W_{N,M}),
    \end{align*}
    where in each step we bring a term $W_{l,k +1}$ to the front. This is possible because for the terms $W_{ab}$ before it we have $a < l$ and $b > k+1$, so that they commute by our initial observation. We now have
    \begin{align*}
        W^{(k)} 
        &= (W_{11} \dotsb W_{N1}) \dotsb (W_{1k} \dotsb W_{Nk})(W_{1,k+1} \dotsb W_{1,M}) \dotsb (W_{N,k+1}\dotsb W_{N,M})\\
        &= (W_{11} \dotsb W_{N1}) \dotsb (W_{1k} \dotsb W_{Nk})(W_{1,k+1}\dotsb W_{N,k+1})(W_{1,k+2} \dotsb W_{1,M}) \dotsb (W_{N,k+2}\dotsb W_{N,M})\\
        &= W^{(k+1)}.
    \end{align*}
    This completes the proof of the lemma, as
    \begin{align*}
        W^{(M-1)} 
        &= (W_{11} \dotsb W_{N1})\dotsb (W_{1,M-1} \dotsb W_{N,M-1})(W_{1M})\dotsb (W_{NM})\\
        &= (W_{11} \dotsb W_{N1})\dotsb (W_{1,M-1} \dotsb W_{N,M-1})(W_{1M}\dotsb W_{NM})\\
        &= \prod_{b\in J'}^{\longra}\prod_{a\in J}^{\longra} W_{ab}.
    \end{align*}
\end{proof}

Let now $\Kc$ be as in Definition \ref{DefMUCartan}. 
\begin{Lem}\label{LemDecomp}
With $W$ as in \eqref{EqDecompVAlt}, we have
\[
W = W_{\br}\Kc,
\]
where 
\begin{equation}\label{EqBraidedV}
W_{\br} = \prod^{\longra}_{a\in J} \prod_{b\in J_{i_a}'}^{\longra}\overline{F}_{d_{i_a}\hbar}(\Xb_a \otimes \Yb_b) = \prod^{\longra}_{b\in J'} \prod^{\longra}_{a\in J_{j_b}}  \overline{F}_{d_{i_a}\hbar}(\Xb_a \otimes \Yb_b).
\end{equation}
\end{Lem}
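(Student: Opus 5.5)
The plan is to separate the Gaussian factors in \eqref{EqMultUnRank1} from the quantum exponential factors. I would show that the Gaussian factors multiply to $\Kc$ and can be collected on the right. The price is that, while being moved, they conjugate each $\eb_a\otimes\fb_b$ into $\Xb_a\otimes\Yb_b$.

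\emph{Step 1: the Gaussian part.} Write $G_{ab}=\overline{e}_{\hbar}(\langle i_a,j_b\rangle\hb_a\otimes\hb_b')$. All the $G_{ab}$ are functions of the strongly commuting family $\{\hb_a\otimes 1,1\otimes\hb_b'\}$, so they mutually commute and their product is $\overline{e}_{\hbar}$ of the sum of the exponents. By definition, $\Hb_i=\sum_{a\in J}\langle i,i_a\rangle\hb_a$ and $\Hb_i'=\sum_{b\in J_i'}\hb_b'$, so
\[
\sum_{i\in I}\Hb_i\otimes\Hb_i'=\sum_{a\in J}\sum_{b\in J'}\langle i_a,j_b\rangle\,\hb_a\otimes\hb_b'.
\]
Hence $\prod_{a,b}G_{ab}=\Kc$.

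\emph{Step 2: moving the Gaussians to the right.} I use the ordering \eqref{EqDecompV}, with $a$ outer and $b$ inner; the ordering \eqref{EqDecompVAlt} is treated symmetrically. Write $W=\prod^{\longra}_{(a,b)}F_{ab}G_{ab}$, where $F_{ab}=\overline{F}_{d_{i_a}\hbar}(\delta_{i_a,j_b}\eb_a\otimes\fb_b)$. Note that $F_{ab}=1$ when $i_a\neq j_b$, since $F(0)=1$. I push every $G_{cd}$ to the far right. In doing so, $G_{cd}$ passes exactly the factors $F_{ab}$ with $(a,b)$ later than $(c,d)$, and these factors get conjugated by $G_{cd}$. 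By the Gaussian commutation law \eqref{EqqCommGauss} (applied via functional calculus to $F$), $\Ad_{G_{cd}}$ fixes $\eb_a\otimes\fb_b$ unless $c=a$ or $d=b$. In the ordering with $a$ outer, the relevant pairs are:
\begin{itemize}
\item $(c,d)=(a,d)$ with $d<b$. Here $\hb_a$ does not commute with $\eb_a$, but $\hb_d'$ commutes with $\fb_b$. So the conjugation multiplies $\fb_b$ by $\lb_d^{\langle i_a,j_d\rangle}=\lb_d^{\langle j_b,j_d\rangle}$ (a $\star$-product). This is exactly one prefactor of $\Yb_b$ in \eqref{EqNegSimpRootOp}.
\item $(c,d)=(c,b)$ with $c<a$. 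Symmetrically, this multiplies $\eb_a$ by $\kb_c^{\langle i_c,j_b\rangle}=\kb_c^{\langle i_c,i_a\rangle}$, which is one prefactor of $\Xb_a$ in \eqref{EqXVar}.
\end{itemize}
Here I use $i_a=j_b$, which is harmless because otherwise $F_{ab}=1$. Accumulating all these conjugations (which commute with one another) turns $F_{ab}$ into $\overline{F}_{d_{i_a}\hbar}(\Xb_a\otimes\Yb_b)$. This gives $W=W_{\br}\Kc$, with $W_{\br}$ in the first form of \eqref{EqBraidedV}, where only $b\in J'_{i_a}$ survive. The second form in \eqref{EqBraidedV} follows either by repeating the computation starting from \eqref{EqDecompVAlt}, or directly by the same leg-commutation reordering as in the preceding lemma. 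That reordering applies because $\overline{F}(\Xb_a\otimes\Yb_b)$ and $\overline{F}(\Xb_c\otimes\Yb_d)$ commute when $a<c$ and $b>d$: the relevant $\Xb$'s and $\Yb$'s skew-commute with opposite exponents, so the tensor products strongly commute.

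\emph{Main obstacle.} The main difficulty is bookkeeping in Step 2. One has to check that the sign and scaling conventions in \eqref{EqqCommGauss} (which uses $t_i\hbar/2$-commutation and the normalisation $\hb=(-\pi i\hbar)^{-1}\ln\kb$) produce exactly the exponents $\langle i_a,i_c\rangle$ rather than, say, their negatives or halves. To settle this, I would first verify the rank-one identity $\Ad_{\overline{e}_{\hbar}(x\hb\otimes\hb')}(1\otimes\fb)=\lb^{x}\star\fb$ directly on the Fourier side. I would then also check that the order in which the conjugations are applied is irrelevant, which holds because all the Gaussians commute.
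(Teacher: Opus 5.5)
Your proposal is correct and follows essentially the paper's route: you push the Gaussian factors $\overline{e}_{\hbar}(\langle i_a,j_b\rangle \hb_a\otimes \hb_b')$ to the right using \eqref{EqqCommGauss}, which turns each $\eb_a\otimes\fb_b$ into $\Xb_a\otimes\Yb_b$ and collects the Gaussians into $\Kc$. The paper does the same in two stages, first inside each $a$-block to produce the $\Yb_b$ and then across blocks to produce the $\Xb_a$, and it leaves implicit the identification with $\Kc$ and the second ordering, both of which you spell out. One slip is in your final paragraph: the rank-one check should read $\Ad_{\overline{e}_{\hbar}(x\hb\otimes\hb')}(\eb\otimes 1)=\eb\otimes\lb^{x}$, whereas conjugating $1\otimes\fb$ gives $\kb^{x}\otimes\fb$ and not a power of $\lb$; your bullet points in Step 2 already use the correct versions.
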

\begin{proof}
From the Gaussian commutation relation \eqref{EqqCommGauss}, it follows that
\begin{eqnarray*}
W &=& \prod^{\longra}_{a\in J}\left(\left(\prod^{\longra}_{b\in J_{i_a}'}  \overline{F}_{d_{i_a}\hbar}(\eb_a \otimes \Yb_b)\right) \left(\prod_{b\in J'} \overline{e}_{\hbar}(\langle i_a,j_b\rangle \hb_a\otimes \hb_b')\right) \right) \\
&=&  \left(\prod^{\longra}_{a\in J} \prod_{b\in J_{i_a}'}^{\longra}\overline{F}_{d_{i_a}\hbar}(\Xb_a \otimes \Yb_b)\right)\left(\prod_{a\in J,b\in J'}\overline{e}_{\hbar}(\langle i_a,j_b\rangle \hb_{a}\otimes \hb_b') \right),
\end{eqnarray*}
which immediately implies the first identity in \eqref{EqBraidedV}. The other identity follows similarly.
\end{proof}

\begin{Cor}
We have
\begin{equation}
W \in L_{\hbar}(\mfb) \bar{\otimes} L_{\hbar}(\mfb') \subseteq \Bc(\Hsp(\mbi)\otimes \Hsp(\mbj)). 
\end{equation}
\end{Cor}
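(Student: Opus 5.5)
The plan is to read off the membership of each factor in the decomposition $W = W_{\br}\Kc$ of Lemma \ref{LemDecomp}. The von Neumann algebras $L_{\hbar}(\mfb)$ and $L_{\hbar}(\mfb')$ sit in $\Bc(\Hsp(\mbi))$ and $\Bc(\Hsp(\mbj))$ respectively, via the faithful tensor realisations of Remark \ref{RemPresNonDeg}; we use non-degeneracy of $I$ here. So it suffices to show that both $W_{\br}$ and $\Kc$ lie in $L_{\hbar}(\mfb)\bar{\otimes}L_{\hbar}(\mfb')$, since this is a von Neumann algebra and hence closed under products.

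\textbf{The braided factor.} By \eqref{EqBraidedV}, $W_{\br}$ is a finite product of unitaries $\overline{F}_{d_{i_a}\hbar}(\Xb_a\otimes \Yb_b)$. Here $\Xb_a\aff L_{\hbar}(\mfn)\subseteq L_{\hbar}(\mfb)$ and $\Yb_b\aff L_{\hbar}(\mfn')\subseteq L_{\hbar}(\mfb')$. The tensor product $\Xb_a\otimes\Yb_b$ of strongly commuting positive operators is a positive operator affiliated with $L_{\hbar}(\mfb)\bar{\otimes}L_{\hbar}(\mfb')$. To see this, note its imaginary powers are $\Xb_a^{it}\otimes\Yb_b^{it}$, and use \eqref{EqAffOnePar}. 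Bounded Borel functional calculus then places $\overline{F}_{d_{i_a}\hbar}(\Xb_a\otimes\Yb_b)$ in this tensor product; alternatively one can invoke Lemma \ref{LemAff}.

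\textbf{The Cartan factor.} $\Kc=\prod_{i}\overline{e}_{\hbar}(\Hb_i\otimes\Hb_i')$ is a bounded Borel function of the strongly commuting selfadjoint operators $\Hb_i\otimes 1$ and $1\otimes\Hb_i'$. Since $\Hb_i\aff L(\mfh)\subseteq L_{\hbar}(\mfb)$ and $\Hb_i'\aff L(\mfh')\subseteq L_{\hbar}(\mfb')$, each factor lies in $L(\mfh)\bar{\otimes}L(\mfh')$. This is in fact already asserted in Definition \ref{DefMUCartan}.

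One could also argue directly from \eqref{EqDecompV}, but this route is less convenient. The individual factors $W_{ab}$ are built from the single-leg operators $\eb_a,\hb_a$. These generate the full $\Bc(L^2(\R))$ at leg $a$, which is not contained in $L_{\hbar}(\mfb)$. So the decomposition of Lemma \ref{LemDecomp} is the right tool.

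The only point requiring care, and the main (mild) obstacle, is that the objects $\Kb_i,\Hb_i'$ appearing in $\Kc$ are interpreted through the specific realisations $\Kb_i=\prod_a\kb_a^{\langle i,i_a\rangle}$ and $\Lb_\omega=\prod_b\lb_b^{\langle\omega,j_b\rangle}$. One must check that these are exactly the images of the abstract generators of $L(\mfh)\subseteq L_{\hbar}(\mfb)$ and $L(\mfh')\subseteq L_{\hbar}(\mfb')$ under the embeddings of Remark \ref{RemPresNonDeg}. This is exactly what the Gaussian commutation step in the proof of Lemma \ref{LemDecomp} used, so it will be stated explicitly and then the two memberships combined.
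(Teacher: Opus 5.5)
Your proposal is correct and is essentially the paper's argument: the corollary is read off directly from the factorisation $W = W_{\br}\Kc$ of Lemma \ref{LemDecomp}, with $W_{\br}\in L_{\hbar}(\mfn)\bar{\otimes}L_{\hbar}(\mfn')$ as a finite product of the unitaries $\overline{F}_{d_{i_a}\hbar}(\Xb_a\otimes\Yb_b)$, and $\Kc\in L(\mfh)\bar{\otimes}L(\mfh')$. One small nit: $\Hb_i$ and $\Hb_i'$ are skew-adjoint (only $\Hb_i\otimes\Hb_i'$ is selfadjoint), so each factor of $\Kc$ is more cleanly described as a bounded Borel function of the strongly commuting selfadjoint operators $\ln(\Kb_i)\otimes 1$ and $1\otimes\ln(\Lb_i')$.
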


Consider now again the simple root operators $\Eb_i$ affiliated to $L_{\hbar}(\mfb)$ and $\Fb_i$ affiliated to $L_{\hbar}(\mfb')$. Consider for $a\in J$ and $b\in J'$  the unitary operators
\begin{equation}
W_{\br}^b = \overline{F}_{d_{j_b}\hbar}(\Eb_{j_b} \otimes \Yb_b),\qquad {}^{a}W_{\br} = \overline{F}_{d_{i_a}\hbar}(\Xb_a \otimes \Fb_{i_a})
\end{equation}
on $\Hsp(\mbi)\otimes \Hsp(\mbj)$. 
\begin{Lem}\label{LemBraidPentAlt}
With $W_{\br}$ as in \eqref{EqBraidedV}, we have
\begin{equation}\label{EqSimpGenMUAlt}
W_{\br} = \prod^{\longra}_{b\in J'} W_{\br}^b = \prod^{\longra}_{a\in J} {}^aW_{\br}.
\end{equation}
\end{Lem}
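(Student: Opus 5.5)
We will derive both expressions in \eqref{EqSimpGenMUAlt} from the two product forms of $W_{\br}$ in \eqref{EqBraidedV}, using the quantum exponential property \eqref{EqQuantExp} to merge consecutive factors.

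\emph{The expression with $W_{\br}^b$.} Start from the second form $W_{\br} = \prod^{\longra}_{b\in J'} \prod^{\longra}_{a\in J_{j_b}} \overline{F}_{d_{j_b}\hbar}(\Xb_a\otimes \Yb_b)$. Fix $b$ and put $i = j_b$. For $a<a'$ in $J_i$, the operators $\Xb_a\otimes \Yb_b$ and $\Xb_{a'}\otimes \Yb_b$ satisfy, by \eqref{EqAlgebraicCommGen}, $X_{a'}X_a = q^{\langle i,i\rangle}X_aX_{a'} = q^{2d_i}X_aX_{a'}$. Tensoring with the same positive operator $\Yb_b$ (which strongly commutes with itself) keeps them $d_i\hbar$-commuting up to orientation. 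We will fix the orientation so that the pair $(\Xb_{a'}\otimes\Yb_b,\Xb_a\otimes\Yb_b)$ is $d_i\hbar$-commuting in the sense of \eqref{EqSkewComm}, checking the sign against the convention $\kb^{it}\eb^{is}=e^{\pi i\hbar st}\eb^{is}\kb^{it}$.

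Taking adjoints of \eqref{EqQuantExp}, with $\overline{F}=F^*$ by functional calculus, gives $\overline{F}(\Ab\boxplus\Bb)=\overline{F}(\Bb)\overline{F}(\Ab)$. Applied with $\Ab=\Xb_{a'}\otimes \Yb_b$ and $\Bb=\Xb_a\otimes\Yb_b$, this yields
\[
\overline{F}_{d_i\hbar}(\Xb_a\otimes\Yb_b)\,\overline{F}_{d_i\hbar}(\Xb_{a'}\otimes\Yb_b) = \overline{F}_{d_i\hbar}\bigl((\Xb_a\otimes\Yb_b)\boxplus(\Xb_{a'}\otimes\Yb_b)\bigr),
\]
with the left-to-right order increasing in $a$, as required. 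Iterating over $J_i$ by induction, using associativity of $\boxplus$, the inner product becomes $\overline{F}_{d_i\hbar}(\boxplus_{a\in J_i}(\Xb_a\otimes \Yb_b))$. At each step we will need that the accumulated form sum $\hbar$-commutes appropriately with the next term; this follows from Lemma \ref{LemDistr}.

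It then remains to identify $\boxplus_{a\in J_i}(\Xb_a\otimes\Yb_b)$ with $\Eb_i\otimes\Yb_b$. Formally this is distributivity of $\boxplus$ over tensoring with a fixed positive operator. We will prove it by working in the spectral decomposition of $\Yb_b$: on $\Hsp(\mbi)\otimes \Hsp(\mbj) \cong \int^{\oplus} \Hsp(\mbi)\,d\mu(y)$, the operator $\Xb\otimes \Yb_b$ acts fibrewise as $y\Xb$, and the form sum is computed fibrewise with dense form domains. This step is also a special case of Lemma \ref{LemDistr} applied to $1\otimes\Yb_b$ and strongly commuting operators, so either route works.

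\emph{The expression with ${}^aW_{\br}$.} Symmetrically, start from the first form in \eqref{EqBraidedV}. For fixed $a$ and $b<b'$ in $J'_{i_a}$, we have $Y_{b'}Y_b = q^{2d_{i_a}}Y_bY_{b'}$, and the same merging gives $\overline{F}_{d_{i_a}\hbar}(\Xb_a\otimes \Fb_{i_a})$.

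\emph{Main obstacle.} The only delicate point is the orientation of the skew-commutation, which must be such that $\overline{F}$ merges in increasing order and not decreasing order. If the sign turns out reversed, we will instead apply \eqref{EqQuantExp} with $F_{-\hbar}=\overline{F}_{\hbar}$ from \eqref{EqPropsF}. Since this substitution reverses the commutation parameter, one of the two readings must give the stated increasing order, and we will check which one holds using the explicit generators $\kb,\eb$ of \eqref{EqGenke}.
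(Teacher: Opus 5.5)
Your proposal is correct and is essentially the paper's argument: you inductively merge consecutive factors of the two double products in \eqref{EqBraidedV} using the quantum exponential property, relying on the fact that each new argument $d_i\hbar$-commutes with the accumulated form sum, while your identification $\boxplus_{a\in J_i}(\Xb_a\otimes\Yb_b)=\Eb_i\otimes\Yb_b$ (e.g.\ via Lemma \ref{LemDistr} at $\hbar=0$) makes explicit a step the paper leaves implicit. The orientation you worry about is settled by \eqref{EqXVar}: for $a<a'$ in $J_i$, $\Xb_{a'}$ contains $\kb_a^{2d_i}$ while $\Xb_a$ contains $\eb_a$, so $(\Xb_{a'},\Xb_a)$ is $d_i\hbar$-commuting and the increasing order is right; note that your fallback via $F_{-\hbar}=\overline{F}_{\hbar}$ would not have been a genuine alternative, since it yields the same ordering as taking adjoints.
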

\begin{proof}
The identity
\[
\prod^{\longra}_{b\in J'} W_{\br}^b  =  \prod^{\longra}_{b\in J'} \prod^{\longra}_{a\in J_{j_b}} \overline{F}_{d_{j_b}\hbar}(\Xb_a \otimes \Yb_b)
\]
 follows by induction using the quantum exponential property and the fact that for $a\in J_i$ we have that $\Xb_a$ and $\boxplus_{a'\in J_i,a'<a} \Xb_{a'}$  are $d_i\hbar$-commuting operators. We similarly get the identity
\[
\prod^{\longra}_{a\in J} {}^a W_{\br} =  \prod^{\longra}_{a\in J} \prod_{b\in J_{i_a}'}^{\longra}\overline{F}_{d_{i_a}\hbar}(\Xb_a \otimes \Yb_b).
\]
\end{proof}

\begin{Cor}
If $\mbi$ is a fully dualizable word, then $W$ as defined through Definition \ref{DefLongProd} coincides with the fundamental unitary associated to $\mbi$, as defined in Definition \ref{DefFundUnit}, after applying $\hat{\pi}$ from \eqref{EqHatPiFundUnit} to the second leg of the latter.
\end{Cor}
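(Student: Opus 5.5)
The plan is to compare the two factorisations $W=W_{\br}\Kc$ term by term. On one side we have the fundamental unitary of Definition \ref{DefFundUnit}, pushed forward along $\id\otimes\hat{\pi}$. On the other side we have the unitary $W$ of Definition \ref{DefLongProd}, written via Lemma \ref{LemDecomp} and Lemma \ref{LemBraidPentAlt}. Since $\id\otimes\hat{\pi}$ is a normal unital $*$-homomorphism, it commutes with bounded Borel functional calculus of affiliated selfadjoint and positive operators. It therefore suffices to identify the images of the arguments of the functions $\overline{F}_{d_i\hbar}$ and $\overline{e}_{\hbar}$ appearing in the two expressions.

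First I determine $\hat{\pi}$ on the Cartan part. By \eqref{EqHatPiFullDual}, $\hat{\pi}(\Kb_i(\mbj))=\Lb_i=\Lb_{\alpha_i}$. Non-degeneracy of $I$ means the $\alpha_i$ form a basis of $\mfh'$, so $v\mapsto\alpha_v$ is a linear isomorphism $\mfh\cong\mfh'$. Since $v\mapsto\Kb_v$ and $\omega\mapsto\Lb_\omega$ are additive for $\star$, we get $\hat{\pi}(\Kb_v)=\Lb_{\alpha_v}$ for all $v\in\mfh$. Conversely, every $\Lb_\omega$, and in particular $\Lb_i'=\Lb_{\varpi_i^\vee}$, lies in the image of $\hat{\pi}$.

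In the tensor realisation of $L_{\hbar}(\mfb(\mbj))$ on $\Hsp(\mbj)$, the element $\Lb_i'$ of $\mfb'$ is represented by $\prod_{b\in J'_i}\lb_b$. Hence, after applying $\hat{\pi}$ to the second leg, $\Hb_i'$ becomes $\sum_{b\in J_i'}\hb_b'$. On the first leg, $\Hb_i=\sum_{a\in J}\langle i,i_a\rangle\hb_a$, from the formula for $\Kb_i$ in Remark \ref{RemPresNonDeg}. All these operators strongly commute, so the product over $i\in I$ collapses as follows:
\[
(\id\otimes\hat{\pi})(\Kc)=\overline{e}_{\hbar}\Bigl(\sum_{i\in I}\sum_{a\in J}\sum_{b\in J_i'}\langle i,i_a\rangle\,\hb_a\otimes\hb_b'\Bigr)=\prod_{a\in J,b\in J'}\overline{e}_{\hbar}(\langle i_a,j_b\rangle\hb_a\otimes\hb_b').
\]
This is exactly the Cartan factor appearing in the proof of Lemma \ref{LemDecomp}.

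Next, for the braided part, $\hat{\pi}(\Fb_i)=\Fb_i(\mbj)=\boxplus_{b\in J'_i}\Yb_b$, as in \eqref{EqNegSimpRootOp}. Hence $(\id\otimes\hat{\pi})({}^aW_{\br})=\overline{F}_{d_{i_a}\hbar}(\Xb_a\otimes\Fb_{i_a})$ with $\Fb_{i_a}$ now taken in the sense of \eqref{EqNegSimpRootOp}. Taking the ordered product over $a\in J$ and invoking Lemma \ref{LemBraidPentAlt}, we obtain the $W_{\br}$ of \eqref{EqBraidedV}. Finally, Lemma \ref{LemDecomp} gives $W=W_{\br}\Kc$, which concludes the argument.

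The only step needing care is the bookkeeping in the Cartan factor. One must check that the coefficient of $\hb_a\otimes\hb_b'$ is $\langle j_b,i_a\rangle$, using that $\varpi_i^\vee$ is dual to $I$ and the symmetry of $\langle\cdot,\cdot\rangle$. One must also confirm that $\hat{\pi}$, being normal, is compatible with functional calculus of unbounded affiliated operators and with $\boxplus$; this compatibility is recalled in the preliminaries.
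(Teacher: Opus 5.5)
Your proposal is correct and follows essentially the route the paper intends: the paper states the corollary without proof as an immediate consequence of Lemma~\ref{LemDecomp} and Lemma~\ref{LemBraidPentAlt}, matching $\Eb_i(\mbj)\leftrightarrow\Fb_i$ and $\Kb_v\leftrightarrow\Lb_{\alpha_v}$ under $\hat{\pi}$, and your explicit collapse of $\Kc$ to $\prod_{a,b}\overline{e}_{\hbar}(\langle i_a,j_b\rangle\hb_a\otimes\hb_b')$ is precisely the bookkeeping the paper leaves implicit in the proof of Lemma~\ref{LemDecomp}. The only cosmetic point is that your maps actually run in the direction of $\hat{\pi}^{-1}$ (from $\hat{\pi}(L_{\hbar}(B(\mbj)))$ back to the Feigin realisation on $\Hsp(\mbj)$); this is harmless because $\hat{\pi}$ is faithful and you checked that the second leg of the fundamental unitary lies in its image.
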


In the above setup, one can easily show that $W$ is a multiplicative unitary, whose definition we recall below:

\begin{Def}
If $\Hsp_1,\Hsp_2,\Hsp_3$ are Hilbert spaces, we say that operators $(x,y,z)$ with 
\[
x \in \Bc(\Hsp_1\otimes \Hsp_2),\quad y \in \Bc(\Hsp_1\otimes \Hsp_3),\qquad z \in \Bc(\Hsp_2\otimes \Hsp_3)
\]
form a \emph{pentagon triple} for $(\Hsp_1,\Hsp_2,\Hsp_3)$ if 
\[
x_{12}y_{13}z_{23} = z_{23}x_{12}.
\]
If $x,y,z$ are unitary, we call $(x,y,z)$ a \emph{unitary pentagon triple}.

If $x\in \Bc(\Hsp\otimes \Hsp)$, we say that $x$ satisfies the pentagon equation if $(x,x,x)$ is a pentagon triple for $(\Hsp,\Hsp,\Hsp)$.

If $x \in \Bc(\Hsp\otimes \Hsp)$ is a unitary satisfying the pentagon equation, we call $x$ a \emph{multiplicative unitary}. 
\end{Def}

Of course, we can also make sense of multiplicative unitaries $x \in M \bar{\otimes} M$ for any von Neumann algebra $M$. The proof of the following lemma is via a direct verification.

\begin{Lem}\label{LemPentEqIt}
\begin{enumerate}
\item 
If $(x,y,z)$ is a pentagon triple for $(\Hsp_1,\Hsp_2,\Hsp)$ and $(x,y',z')$ is a pentagon triple for $(\Hsp_1,\Hsp_2,\Gsp)$, then $(x,y_{12}y_{13}',z_{12}z_{13}')$ is a pentagon triple for $(\Hsp_1,\Hsp_2,\Hsp\otimes \Gsp)$. 
\item If $(x,y,z)$ is a pentagon triple for $(\Hsp,\Hsp_2,\Hsp_3)$ and $(x',y',z)$ is a pentagon triple for $(\Gsp,\Hsp_2,\Hsp_3)$, then $(x_{13}x_{23}',y_{13}y_{23}',z)$ is a pentagon triple for $(\Hsp\otimes \Gsp,\Hsp_2,\Hsp_3)$.
\end{enumerate}
\end{Lem}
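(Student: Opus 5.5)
Both parts are proved by directly expanding the pentagon identity in leg-numbering notation. Each step uses only the hypotheses and the fact that operators acting on disjoint tensor legs commute.

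For part (1), we work on $\Hsp_1\otimes\Hsp_2\otimes\Hsp\otimes\Gsp$ with legs labelled $1,2,3,4$. Here $y_{12}y'_{13}$, viewed as an operator on $\Hsp_1\otimes(\Hsp\otimes\Gsp)$, becomes $y_{13}y'_{14}$ in the four-leg notation. Likewise $z_{12}z'_{13}$ becomes $z_{23}z'_{24}$. The claim therefore reads
\[
x_{12}\,y_{13}y'_{14}\,z_{23}z'_{24} = z_{23}z'_{24}\,x_{12}.
\]
We start from the left-hand side. Since $y'_{14}$ and $z_{23}$ act on disjoint legs, we commute them, which gives $x_{12}y_{13}z_{23}\,y'_{14}z'_{24}$. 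The first hypothesis replaces $x_{12}y_{13}z_{23}$ by $z_{23}x_{12}$, giving $z_{23}\,x_{12}y'_{14}z'_{24}$. The second hypothesis, applied on legs $1,2,4$, replaces $x_{12}y'_{14}z'_{24}$ by $z'_{24}x_{12}$. This yields $z_{23}z'_{24}x_{12}$, as required.

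Part (2) is the mirror argument on legs $1,1',2,3$, corresponding to $\Hsp,\Gsp,\Hsp_2,\Hsp_3$. The claim is
\[
x_{12}x'_{1'2}\,y_{13}y'_{1'3}\,z_{23} = z_{23}\,x_{12}x'_{1'2}.
\]
Since $x'_{1'2}$ and $y_{13}$ act on disjoint legs, the left-hand side equals $x_{12}y_{13}\,x'_{1'2}y'_{1'3}z_{23}$. The second hypothesis turns $x'_{1'2}y'_{1'3}z_{23}$ into $z_{23}x'_{1'2}$. The expression becomes $x_{12}y_{13}z_{23}x'_{1'2}$, and the first hypothesis then gives $z_{23}x_{12}x'_{1'2}$.

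There is no genuine obstacle here. The only point requiring care is bookkeeping: one must correctly translate the leg numbering of the composite operators (for example, $y_{12}y'_{13}$ on $\Hsp_1\otimes(\Hsp\otimes\Gsp)$) into the expanded tensor product. One must also check that each commutation used involves operators on genuinely disjoint legs.
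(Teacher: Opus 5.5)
Your proof is correct and is exactly the ``direct verification'' the paper has in mind. You expand the composite operators in four-leg notation, commute the one pair of factors acting on disjoint legs, and apply the two pentagon hypotheses in turn; no invertibility is needed, which matches the lemma being stated for arbitrary bounded operators.
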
 

\begin{Theorem}\label{TheoMultUni}
If $\mbi$ is a fully dualizable word, then the fundamental unitary
\[
W \in L_{\hbar}(\mfd)\bar{\otimes}L_{\hbar}(\mfd)
\]
is a multiplicative unitary.
\end{Theorem}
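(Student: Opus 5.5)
The plan is to reduce the pentagon equation to a corepresentation property of $W$ in its first leg, and then verify that property using the factorisation of $W$ into quantum exponentials. By the Corollary following Lemma \ref{LemBraidPentAlt}, we may work with $W$ as given in Definition \ref{DefLongProd}, with second leg realised through $\hat{\pi}$. Then $W\in L_{\hbar}(\mfb)\bar{\otimes}L_{\hbar}(\mfb')$, and Proposition \ref{PropIndepHeis} shows that $\Delta(x)=W^*(1\otimes x)W$ restricts to the coproduct on $L_{\hbar}(\mfb)=L_{\hbar}(B)$. Applying this to the first two legs gives
\[
W_{12}^*W_{23}W_{12} = (\Delta\otimes \id)(W),
\]
where $W$ on the right is viewed with its legs in positions $2,3$, and the first leg of $W$ lies in $L_{\hbar}(\mfb)$. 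Hence the pentagon equation $W_{12}W_{13}W_{23}=W_{23}W_{12}$ is equivalent to
\[
(\Delta\otimes\id)(W) = W_{13}W_{23}.
\]
The rest of the argument proves this identity.

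First I write $W = W_{\br}\Kc$ with $W_{\br}=\prod^{\longra}_{b\in J'} W_{\br}^b$, where $W_{\br}^b=\overline{F}_{d_{j_b}\hbar}(\Eb_{j_b}\otimes \Yb_b)$, using Lemmas \ref{LemDecomp} and \ref{LemBraidPentAlt}. The Cartan factor is immediate: since $\Delta(\Kb_i)=\Kb_i\otimes\Kb_i$, we get $(\Delta\otimes\id)(\Kc)=\Kc_{13}\Kc_{23}$. For each braided factor, normality of $\Delta$ and the compatibility of $\Phi=\Delta\otimes\id$ with functional calculus and form sums give
\[
(\Delta\otimes\id)(W_{\br}^b) = \overline{F}_{d_{j}\hbar}\big(\Eb_j\otimes 1\otimes \Yb_b \boxplus \Kb_j\otimes \Eb_j\otimes \Yb_b\big),\qquad j=j_b.
\]
The two summands $d_j\hbar$-commute, because $\Kb_j$ and $\Eb_j$ skew-commute appropriately. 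The quantum exponential property \eqref{EqQuantExp}, taking complex conjugates, therefore splits this as a product of $\overline{F}_{d_j\hbar}(\Eb_j\otimes1\otimes\Yb_b)$ and $\overline{F}_{d_j\hbar}(\Kb_j\otimes\Eb_j\otimes\Yb_b)$, in the order dictated by the direction of $\hbar$-commutation.

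Next I identify the factors with $\Kb_j$ in the first leg as conjugates of $W_{\br,23}^b$ by the Cartan unitary $\Kc_{13}$. By the Gaussian commutation law \eqref{EqqCommGauss}, $\Kc_{13}$ commutes with every $(\cdot)\otimes 1\otimes(\cdot)$ factor in the $1$-$3$ legs. It transforms $1\otimes\Eb_j\otimes\Yb_b$ into $\Kb_j\otimes\Eb_j\otimes(\Yb_b\star \Lb_j^{\pm1})$, and the $\Lb_j$-twist should be absorbed by the skew-commutation of $\Yb_b$ with the Cartan part of leg $3$. The goal is then to reorder
\[
\prod^{\longra}_{b}\Big[\overline{F}(\Eb_{j_b}\otimes1\otimes\Yb_b)\,\overline{F}(\Kb_{j_b}\otimes\Eb_{j_b}\otimes\Yb_b)\Big]\ \Kc_{13}\Kc_{23}
\]
into $\big(\prod_b \overline{F}(\Eb_{j_b}\otimes1\otimes \Yb_b)\big)\Kc_{13}\big(\prod_b\overline{F}(\Eb_{j_b}\otimes\Yb_b)\big)_{23}\Kc_{23} = W_{13}W_{23}$.

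The main obstacle is this reordering step. For $b<b'$ I must show that $\Kb_{j_b}\otimes\Eb_{j_b}\otimes\Yb_b$ and $\Eb_{j_{b'}}\otimes 1\otimes\Yb_{b'}$ strongly commute. The phase from $(\Kb_{j_b},\Eb_{j_{b'}})$ in leg $1$ should exactly cancel the phase from $(\Yb_b,\Yb_{b'})$ in leg $3$, both being governed by $\langle j_b,j_{b'}\rangle$. This is the same mechanism as in the commutation proof of \eqref{EqDecompVAlt}, and I would check it with the algebraic relations \eqref{EqAlgebraicCommGen} and \eqref{EqNegSimpRootOp}, keeping careful track of signs. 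If the sign bookkeeping at the level of $\Eb_j$ becomes problematic, I would instead work with the finer factorisation into rank-one pieces $W_{ab}$ from \eqref{EqDecompV}. There I would apply Lemma \ref{LemPentEqIt}, iterating in the third leg over $b\in J'$ and in the first legs over $a\in J$, so that the identity reduces to the rank-one case, which follows from the Kashaev pentagon equation \eqref{EqPentKash} together with \eqref{EqqCommGauss}.
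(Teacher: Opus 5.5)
Your main argument is correct and takes a genuinely different route from the paper's.

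\emph{The paper's route.} Its proof does not use the coproduct. It reads the pentagon equation as saying that $(W,W,W)$ is a pentagon triple for $(\Hsp(\mbi),\Hsp,\Hsp(\mbj))$. It then uses Lemma \ref{LemPentEqIt} with \emph{both} factorisations of Lemma \ref{LemBraidPentAlt} (over $a\in J$ in the first leg, over $b\in J'$ in the third) to reduce to rank-one triples $({}^aW,W^{(a,b)},W^b)$. These are settled by strong commutation when $i_a\neq j_b$ and by Theorem \ref{TheoHeisPent} when $i_a=j_b$.

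\emph{Your route.} You use the equivalence of the pentagon equation with $(\Delta\otimes\id)(W)=W_{13}W_{23}$. The Heisenberg input then enters only once, already packaged in $\Delta(\Eb_i)=\Eb_i\otimes 1\boxplus\Kb_i\otimes\Eb_i$ from Proposition \ref{PropIndepHeis}. After that you need only the third-leg factorisation, the quantum exponential property, the Gaussian law and a phase count. The count works: for $b<b'$, the pair $(\Kb_{j_b},\Eb_{j_{b'}})$ is $\langle j_b,j_{b'}\rangle\hbar/2$-commuting and $(\Yb_b,\Yb_{b'})$ is $-\langle j_b,j_{b'}\rangle\hbar/2$-commuting. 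Hence $\overline{F}(\Kb_{j_b}\otimes\Eb_{j_b}\otimes\Yb_b)$ commutes with $\overline{F}(\Eb_{j_{b'}}\otimes 1\otimes\Yb_{b'})$, and the reordering is legitimate.

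\emph{What each buys.} Your version is the more conceptual one: it is the analytic form of ``the canonical element is a corepresentation''. The paper's version is more symmetric and does not depend on Proposition \ref{PropIndepHeis}.

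\emph{Details to fix.}
\begin{enumerate}
\item The equality $L_{\hbar}(\mfb)=L_{\hbar}(B)$ is integrability, which is not assumed here. It is also not needed, since $W=\big(\prod_b\overline{F}(\Eb_{j_b}\otimes\Yb_b)\big)\Kc$ already has its first leg in $L_{\hbar}(B)$.
\item State the realisations explicitly. $W_{12}$ must be the unitary in $L_{\hbar}(\mfd)\bar{\otimes}L_{\hbar}(\mfd)$ that implements $\Delta$. In $W_{13}$ and $W_{23}$ the third leg should stay on $\Hsp(\mbj)$, because $\hat{\pi}$ is only defined on $L_{\hbar}(B(\mbj))$ and need not make sense on the individual $\Yb_b$. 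You then conclude by applying the injective map $\id\otimes\id\otimes\hat{\pi}$.
\item The Cartan step is simpler than you describe: $\Kc_{13}(1\otimes\Eb_j\otimes\Yb_b)\Kc_{13}^*=\Kb_j\otimes\Eb_j\otimes\Yb_b$ exactly. There is no $\Lb_j$-twist, because the first leg of $1\otimes\Eb_j\otimes\Yb_b$ is trivial.
\item Your remark that $\Kc_{13}$ commutes with $\overline{F}(\Eb_j\otimes1\otimes\Yb_b)$ is false, but you never use it.
\item Your fallback would not close as stated. Lemma \ref{LemPentEqIt} cannot split the middle leg, so the rank-one triples still contain $\Fb_{i_a}$ and $\Eb_{j_b}$ there. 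They follow from the Heisenberg-pair condition, not from the Kashaev pentagon and the Gaussian law alone.
\end{enumerate}
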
 
\begin{proof}
The pentagon identity
\[
W_{12}W_{13}W_{23} = W_{23}W_{12}
\]
can be made sense of as an identity in $L_{\hbar}(\mfb) \bar{\otimes}L_{\hbar}(\mfd) \bar{\otimes}L_{\hbar}(\mfb')$. It is hence sufficient to show that  $(W,W,W)$ is a pentagon triple for $(\Hsp(\mbi),\Hsp,\Hsp(\mbj))$, with $\Hsp$ any Hilbert space with a faithful normal $L_{\hbar}(\mfd)$-representation. 

By Lemma \ref{LemPentEqIt} and Lemma \ref{LemBraidPentAlt} (together with an easy pushing through of Cartan elements again), it is by iteration enough to show that $({}^aW,W^{(a,b)},W^b)$ forms a pentagon triple for $(L^2(\R),\Hsp,L^2(\R))$, where for $a\in J,b\in J'$ we put
\[
{}^aW = \overline{F}_{d_{i_a}\hbar}(\eb \otimes \Fb_{j_a})\overline{e}_{\hbar}(\hb\otimes d_{i_a}\Hb_{i_a}'),\qquad W^b = \overline{F}_{d_{j_b}\hbar}(\Eb_{j_b}\otimes \fb)\overline{e}_{\hbar}(d_{j_b}\Hb_{j_b}\otimes \hb)
\]
and as before
\[
W^{(a,b)} =  \overline{F}_{d_{i_a}\hbar}(\delta_{i_a,j_b}\eb \otimes \fb) \overline{e}_{\hbar}(\langle i_a,j_b\rangle \hb\otimes \hb').
\]
But once again moving the Cartan part outside (to the right), these pentagon identities split into the pentagon identity for the triple of operators
\[
\overline{e}_{\hbar}(\hb\otimes d_{i_a}\Hb_{i_a}'),\quad \overline{e}_{\hbar}(\langle i_a,j_b\rangle \hb\otimes \hb'),\quad \overline{e}_{\hbar}(d_{j_b}\Hb_{j_b}\otimes \hb'),
\]
which is easy, and a braided type of pentagon equation for the remaining parts, namely for all $a\in J, b\in J'$:
\[
 \overline{F}_{d_{i_a}\hbar}(\eb \otimes\Fb_{j_a}\otimes 1)\overline{F}_{d_{i_a}\hbar}(\delta_{i_a,j_b} \eb \otimes \Lb_{i_a}\otimes \fb) \overline{F}_{d_{j_b}\hbar}(1\otimes \Eb_{j_b}\otimes \fb) 
=  \overline{F}_{d_{j_b}\hbar}(1\otimes \Eb_{j_b}\otimes \fb)  \overline{F}_{d_{i_a}\hbar}(\eb \otimes \Fb_{j_a}\otimes 1).\]
But these identities either follow from strong commutation (if $i_a\neq j_b$), or by Theorem \ref{TheoHeisPent} (if $i_a = j_b$).
\end{proof}

\begin{Exa}
One can see explictly that in type $A_n$, the word 
\[
(1;2,1;3,2,1;\ldots;n,n-1,\ldots,2,1)
\]
is fully dualizable. This follows from the construction of \cite{DCSSV25}, but let us give a direct argument: write in this case 
\[
J = \{ij\mid 1\leq i\leq n, 1\leq j \leq i\},\qquad \mbi(ij) = j,
\]
with the lexicographic ordering, taking the normal ordering on the $i$-variable and the opposite ordering on the $j$-variable. Then we have 
\[
\Eb_k = \boxplus_{k\leq i \leq n} \Xb_{ik},\qquad 1\leq k \leq n.
\]
Let us now write 
\[
\Zb_{jj} = \Xb_{jj},\qquad \Zb_{ij} = \Xb_{ij}\star \Xb_{i-1,j}^{-1},\qquad 1\leq j \leq n, n \geq i>j, 
\]
so that $\Eb_k = \boxplus_{k\leq i \leq n} \bigstar_{j=k}^i \Zb_{jk}$ for $1\leq k \leq n$. Further write 
\[
Z_{k0} = \Lb_k \star \bigstar_{i=1}^k \Zb_{ki}^{-1},\qquad 1\leq k \leq n,
\]
as affiliated to $L_{\hbar}(\mfd)$, and put 
\[
\Yb_{ij} = \bigstar_{k=0}^{i-1} \Zb_{jk},\qquad 1\leq j\leq n, 1 \leq i \leq j. 
\]
Then with 
\begin{equation}\label{EqWordOpp}
J' = \{ij\mid 1\leq j \leq n, 1\leq i \leq j\},\qquad \mbj(ij) = j
\end{equation}
one verifies that:
\begin{enumerate}
\item The $\{\Yb_{ij}\mid ij\in J'\}$, together with the $\Lb_i$, realize a copy of $L_{\hbar}(\mfb'(\mbj))$, where the word $\mbj$ in \eqref{EqWordOpp} is determined by giving again $J'$ the total lexicographic order with the opposite order in the second component, so corresponding to the word 
\[
\mbj = (n,n-1,\ldots,1;n,n-1,\ldots,2;\ldots; n,n-1;n). 
\]
\item The 
\[
\Fb_i = \boxplus_{j=1}^i \Yb_{ji},\qquad 1\leq i \leq n 
\]
form dual simple root operators for $\mbi$.  
\end{enumerate}
The proof of the first statement is verified by a direct (combinatorial) argument. The second statement is checked by a well-organized iterated application of either strong commutation relations or the pentagon equation (see e.g.\ \cite{DCSSV25}*{Appendix G}). We do not spell out the details here, as it is explained in great detail in \cite{DCSSV25}. 
\end{Exa}

The above situation is special, as in type $A$ it is possible to start with a reduced word $\mbi$ for $w_0$ containing one of the simple roots exactly once. This is however impossible in all of the other types, which means that any set of dual simple root operators, with the first simple root operator  constructed as in the proof of Proposition \ref{PropExchDual}, can never be a Feigin realisation of $L_{\hbar}(\mfb')$. 

A global expression for the simple root operators $\Fb_i$ in all Lie types can be gleaned from \cite{Ip18a}*{Section 7}. However, one needs \emph{cluster mutations} to transform these expressions into Feigin form. We do not deal with this problem here in general, but simply illustrate the phenomenon in type $C_2$. 

\begin{Exa}
Consider the Cartan datum of type $C_2$ as in Section \ref{SecCartTypeC}, and consider the words
\[
\mbi = (1,2,1,2),\qquad \mbj = (2,1,2,1),
\]
with the quantum torus generators of $L_{\hbar}(\mfn(\mbi))$, resp.\ $L_{\hbar}(\mfn(\mbj))$ written as $\Xb_1,\ldots,\Xb_4$, resp.\ $\Zb_1,\ldots,\Zb_4$. Put 
\[
U = F_{\hbar}(\mbX_2\star \Xb_4^{-1})F_{2\hbar}(\Xb_1\star \Xb_3^{-1})F_{\hbar}(\Xb_2^{-1}\star \Xb_4),
\]
and put 
\[
\Xb_1' = \Xb_4,\qquad \Xb_2' = \Xb_2^2\star \Xb_3\star \Xb_4^{-2},\qquad \Xb_3' = \Xb_2^{-1}\star \Xb_4^2,\qquad \Xb_4' = \Xb_1\star \Xb_2^2 \star \Xb_4^{-2}.
\]
Then it follows from the results in Section \ref{Secmij4} that we can identify $L_{\hbar}(\mfb(\mbj)) \cong L_{\hbar}(\mfb(\mbi))$ in such a way that 
\[
\Zb_i = U \Xb_i' U^*,\qquad \Eb_1 = \Xb_1\boxplus \Xb_3 = \Zb_2\boxplus \Zb_4,\qquad \Eb_2 = \Xb_2\boxplus \Xb_4 = \Zb_1 \boxplus \Zb_3.  
\]
From the proof of Proposition \ref{PropExchDual}, we then know that 
\[
\Fb_1 = \Xb_1^{-1}\star \Lb_1,\qquad \Fb_2 = \Zb_1^{-1}\star \Lb_2
\]
will be dual simple root operators for $\mbi$, affiliated to $L_{\hbar}(\mfd(\mbi))$. To show that $\mbi$ is fully dualizable, we hence need to realize $\Fb_1$ and $\Fb_2$ in Feigin form. 

For this, put 
\[
V = F_{\hbar}(\mbX_2\star \Xb_4^{-1})F_{2\hbar}(\Xb_1\star \Xb_3^{-1}). 
\]
Then one computes from \eqref{EqFormSumIdAlt} that 
\[
\Zb_1^{-1}= U \Xb_4^{-1} U^* = V (\Xb_4^{-1}\boxplus \Xb_2^{-1})V^*,
\]
\[
V^* \Xb_1^{-1}V = F_{2\hbar}(\Xb_1\star \Xb_3^{-1})^* \Xb_1^{-1} F_{2\hbar}(\Xb_1\star \Xb_3^{-1}) = \Xb_1^{-1}\boxplus \Xb_3^{-1},
\]
so 
\[
V^* \Fb_1 V = \Yb_1 \boxplus \Yb_3,\qquad V^* \Fb_2V = \Yb_2\boxplus \Yb_4,\qquad  V^*\Lb_1V = \Lb_1,\qquad V^*\Lb_2V = \Lb_2,
\]
with 
\[
\Yb_1 = \Lb_1\star \Xb_1^{-1},\qquad \Yb_2 = \Lb_2\star \Xb_2^{-1},\qquad \Yb_3 = \Lb_1\star \Xb_3^{-1},\qquad \Yb_4 = \Lb_2\star \Xb_4^{-1}. 
\]
It is now easily checked that the $\Yb_a$ and $\Lb_i$ provide a Feigin realization of $L_{\hbar}(\mfb'(\mbi))$, with $\mbi$ the word $(1,2,1,2)$ as before. 
\end{Exa}

\section{Outlook}

In this paper, we have shown how locally compact quantum groups can be constructed starting from a general Cartan matrix and a word over its index set satisfying two conditions, \emph{full exchangeability} and \emph{integrability}. We have then shown that for Cartan matrices of classical type, these two conditions are indeed satisfied. We end with the following list of open problems.

\begin{enumerate}
\item Following \cite{Ip18a}*{Section 7}, any longest word associated to a  Cartan datum of Lie type should also be fully exchangeable, but the $G_2$-case needs a careful analytic treatment. 
\item At the moment, it is not clear how to prove the integrability for all the exceptional types, although a case-by-case study should be feasible with the methods in this paper. However, $F_4$ and $E_8$ might be more problematic due to the non-existence of a `nice word' for $w_0$ \cite{Lit98}. Algebraically, there is some more freedom using iterated quantum commutators or more elaborate representation-theoretic arguments to establish the algebraic analogue of integrability (stating that the simple root operators generate the fraction field of the quantum torus generators as a skew field). It is however not clear at the moment how these methods can be implemented rigorously on the analytic level. 
\item The following question is probably very hard in general: can one characterize those words which are fully exchangeable, resp.\ integrable? The former will happen for words which are not too short, while the latter happens for words which are not too long. It would be very interesting to see if these conditions are ever satisfied outside of the setting of semisimple Lie groups.
\item There are general methods available to construct multiplicative unitaries associated to any Lie type, using the machinery of \cites{FG09,GS19}. However, at the moment it is not clear to us what general methods would be needed to realize those multiplicative unitaries, within a rigorous analytic framework, in a manner suitable for the manipulations we need (e.g.\ modularity). It is also not clear to us how those techniques, or quantum cluster algebra techniques in general, can be used to establish integrability of a given word.
\item Finally, it would be interesting to provide a more complete picture of the regular (left or right) multiplicative unitary associated to the locally compact quantum groups we construct from fully exchangeable integrable words. As in \cite{DCSSV25}, this will however need again the more elaborate quantum cluster algebra formalism.
\end{enumerate}

\appendix

\section{Proof of Lemma \ref{LemDistr}} \label{SecLemm}

We first make some preliminary remarks. 

Fix $\hbar \in \R$. For any $\lambda>0$ and $x>0$ write 
\[
f_{\lambda}(x) = e^{-\lambda \ln(x)^2},\quad k_{\lambda}(x) = e^{-\lambda (\ln(x)+\pi i \hbar/2)^2},\quad h_{\lambda}(x) = e^{-\pi i \hbar/2}xf_{\lambda}(x),\quad g_{\lambda}(x) = xk_{\lambda}(x).
\] 
Then $f_{\lambda},g_{\lambda},h_{\lambda},k_{\lambda}$ all define $C_0$-functions on $(0,\infty)$. 

Let now $\Ab$ be a positive operator on a Hilbert space $\Hsp$. We  write
\[
\xi_{\lambda} = f_{\lambda}(\Ab^{1/2})\xi,\qquad \xi \in \Hsp. 
\]
Then $\xi_{\lambda} \in \msD(\Ab^t)$ for any $t\in \R$, and similarly $k_{\lambda}(\Ab^{1/2})\xi \in \msD(\Ab^t)$ for any $t\in \R$. 

Let $\Db$ be another positive operator on $\Hsp$, and assume that $(\Ab,\Db)$ forms an $\hbar$-commuting pair. The following statements can then be proven similarly as in  \cite{Wor00}*{Section 3}\footnote{The proofs in \cite{Wor00} are only formulated for $|\hbar|<1/2$, but can be generalized to arbitrary $\hbar$. Alternatively, one can simply iterate the lemma, starting from a sufficiently high root of $\Ab$.}. Alternatively, one can invoke the Stone--von Neumann theorem in case $\hbar\neq 0$, and prove the lemmas below directly for the pair of operators $\Ab =\kb^2,\Db=\eb$ given by \eqref{EqGenke}, using the description for the domain of (powers of) $\eb$ as given in \cite{Sch94}*{Lemma 1.1} (see also \cite{DCSSV25}*{Lemma F.10}). For $\hbar =0$, the lemmas below are proven directly.

\begin{Lem}\label{LemApp1}
If $\xi \in \msD(\Db^{1/2})$ and $s,t\in \R$, then $\Ab^t\xi_{\lambda} \in \msD(\Db^{1/2}\star \Ab^s)$, and
\[
(\Db^{1/2}\star\Ab^{s+t}) \xi_{\lambda} = e^{-\pi i t\hbar/2}(\Db^{1/2}\star \Ab^s)\Ab^t \xi_{\lambda} = e^{\pi i(s+t) \hbar/2} \Ab^{s+t} k_{\lambda}(\Ab^{1/2})\Db^{1/2}\xi. 
\]
\end{Lem}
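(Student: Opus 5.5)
The plan is to reduce to the concrete Stone--von Neumann model, as the paper suggests. For $\hbar\neq0$, the $\hbar$-commuting pair $(\Ab,\Db)$ is unitarily equivalent to a multiple of $(\kb^2,\eb)$ on $L^2(\R)\otimes\mathcal{K}$, up to the rescaling $\kb^2=\Ab$ with $(\kb^{2})^{it}\eb^{is}=e^{2\pi i\hbar st}\eb^{is}(\kb^2)^{it}$. All objects in the lemma ($\xi_\lambda$, $k_\lambda(\Ab^{1/2})$, the balanced products) are transported by the unitary, so we may work in the model. There we pass to Fourier space in the momentum variable, or equivalently use \cite{Sch94}*{Lemma 1.1}. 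In that picture $\Db^{1/2}$ is multiplication by an exponential, and $\Ab^{1/2}$ acts as translation by an imaginary amount. Hence $\msD(\Db^{1/2})$ and the action of functions of $\Ab$ are explicit. The case $\hbar=0$ is handled directly: the operators then strongly commute, so joint spectral calculus makes everything a statement about multiplication operators on a common spectral measure space, and both identities are immediate.

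First, record the one-parameter group relation $(\Db^{1/2}\star\Ab^s)^{iu}=e^{-\pi i\hbar s u^2/2}\Db^{iu/2}\Ab^{isu}$, which follows from \eqref{EqStarOnePar} using that $\Db^{1/2},\Ab^s$ are $\hbar s/2$-commuting. Next, show that for the entire vector $\xi_\lambda$ (Gaussian in $\ln\Ab$), the function $u\mapsto \Ab^{isu}\xi_\lambda$ extends analytically to all of $\C$ with Gaussian bounds. Then combine this with the analytic continuation of $u\mapsto\Db^{iu/2}\xi$ into the strip $0\le \Imm(u)\le1$, which exists since $\xi\in\msD(\Db^{1/2})$.

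The key formula is obtained from a commutation-and-continuation argument. Commuting $\Db^{iu/2}$ past a function $f(\Ab)$ shifts its argument: $\Db^{iu/2}f(\Ab)=f(e^{\pi\hbar u}\Ab)\Db^{iu/2}$, up to sign conventions fixed by \eqref{EqSkewComm}. Applying this with $f=f_\lambda(\,\cdot^{1/2})$ and continuing to $u=-i$, the argument $\ln\Ab$ shifts by an imaginary amount. This produces exactly $k_\lambda(\Ab^{1/2})$, and the shift in the Gaussian $\ln(x)+\pi i\hbar/2$ matches the definition of $k_\lambda$. The Gaussian prefactor $e^{-\pi i\hbar su^2/2}$ and the factor $\Ab^{isu}$ evaluated at $u=-i$ together give $e^{\pi i(s+t)\hbar/2}\Ab^{s+t}$. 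Replacing $\xi_\lambda$ by $\Ab^t\xi_\lambda$ (still a Gaussian-type entire vector) gives the first equality with the phase $e^{-\pi i t\hbar/2}$.

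Domain membership follows from the standard criterion: a vector lies in $\msD(\Tb)$, for $\Tb$ positive with $\Tb^{iu}$ given, exactly when $u\mapsto\Tb^{iu}\eta$ continues boundedly to the strip $-1\le\Imm u\le 0$, and then $\Tb\eta$ is the boundary value. We use this with $\Tb=\Db^{1/2}\star\Ab^s$. The main obstacle is the uniform bounds on the strip for the product of the two continuations, since $\Db^{iu/2}\xi$ is only bounded and not Gaussian. We will handle this by putting the Gaussian decay of $f_\lambda$ (which absorbs $e^{-\pi i\hbar s u^2/2}$ growth for $\lambda$ fixed) in front, via the commutation above, and then applying Phragmén--Lindelöf / three-lines. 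If this fails for large $|\hbar s|$, we fall back on the footnote's trick: iterate the statement using a high root of $\Ab$, so each step only needs a small strip.
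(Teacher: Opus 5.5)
Your route is the one the paper intends when it defers to \cite{Wor00}*{Section 3} or the Stone--von Neumann model: commute $\Db^{iu/2}$ past the Gaussian in $\ln\Ab$, continue $u\mapsto(\Db^{1/2}\star\Ab^s)^{iu}\Ab^t\xi_\lambda$ to $u=-i$, and conclude with the strip criterion. However, the explicit formulas you write down have sign errors, and if you follow them literally they do not produce $k_\lambda$.

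First, it is $(\Ab^s,\Db^{1/2})$, not $(\Db^{1/2},\Ab^s)$, that is $\hbar s/2$-commuting. So \eqref{EqStarOnePar} gives
\[
(\Db^{1/2}\star\Ab^s)^{iu}=e^{-\pi i\hbar su^2/2}\Ab^{isu}\Db^{iu/2}=e^{\pi i\hbar su^2/2}\Db^{iu/2}\Ab^{isu}.
\]
Second, the relation $\Db^{iv}\Ab^{it}\Db^{-iv}=e^{-2\pi i\hbar tv}\Ab^{it}$ gives $\Db^{iu/2}h(\Ab)=h(e^{-\pi\hbar u}\Ab)\Db^{iu/2}$ for real $u$ and bounded Borel $h$. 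With your $h(e^{\pi\hbar u}\Ab)$, continuing to $u=-i$ shifts $\ln\Ab$ by $-\pi i\hbar$, so you would get $e^{-\lambda(\ln(\Ab^{1/2})-\pi i\hbar/2)^2}$ instead of $k_\lambda(\Ab^{1/2})$. Your phase count still lands on $e^{\pi i s\hbar/2}$, but only because the wrong prefactor cancels a factor you omitted: the $e^{-\pi i\hbar su^2}$ that arises when $\Db^{iu/2}$ is moved past $\Ab^{isu}$. Third, $u\mapsto\Db^{iu/2}\xi$ continues to the strip $-1\le\Imm(u)\le 0$, not to $0\le\Imm(u)\le 1$; the latter would require $\xi\in\msD(\Db^{-1/2})$.

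Once the signs are fixed, the obstacle you anticipate disappears. For real $u$ one has $(\Db^{1/2}\star\Ab^s)^{iu}\Ab^t\xi_\lambda=m_u(\Ab)\Db^{iu/2}\xi$, where
\[
m_u(e^y)=\exp\Bigl(-\tfrac{\pi i\hbar s}{2}u^2+isuy+t(y-\pi\hbar u)-\tfrac{\lambda}{4}(y-\pi\hbar u)^2\Bigr),\qquad y\in\R .
\]
For $u=a+ib$, put $w=y-\pi\hbar a$. The real part of the exponent is then $(t-sb)w-\lambda w^2/4+\lambda\pi^2\hbar^2b^2/4$. The growth $e^{\pi\hbar sab}$ of the Gaussian prefactor is cancelled exactly by $\Ab^{isu}$ acting on the translated Gaussian.

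Hence $u\mapsto m_u(\Ab)$ is norm-holomorphic and uniformly bounded on the whole strip $-1\le\Imm(u)\le 0$. Its product with $\Db^{iu/2}\xi$ is therefore bounded and continuous there, and holomorphic inside. The strip criterion then gives $\Ab^t\xi_\lambda\in\msD(\Db^{1/2}\star\Ab^s)$ with
\[
(\Db^{1/2}\star\Ab^s)\Ab^t\xi_\lambda=m_{-i}(\Ab)\Db^{1/2}\xi=e^{\pi i\hbar(s/2+t)}\Ab^{s+t}k_\lambda(\Ab^{1/2})\Db^{1/2}\xi .
\]
Both equalities of the lemma follow by comparing this with the case $(s+t,0)$.

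This version needs no Phragm\'en--Lindel\"of argument, no iteration over roots of $\Ab$, and no restriction on $|\hbar|$, since the Gaussian is entire in $\ln x$. Because only the Weyl relations were used, the Stone--von Neumann reduction and the separate treatment of $\hbar=0$ are also unnecessary.
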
 

\begin{Lem}\label{LemApp2}
The linear span of all $\xi_{\lambda}$ for $\lambda>0$ and $\xi\in \msD(\Db^{1/2})$ forms a core for $\Db^{1/2}$. More precisely, if $\xi \in \msD(\Db^{1/2})$, then 
\begin{equation}\label{EqCoreDCond}
\lim_{\lambda \rightarrow 0} \xi_{\lambda} = \xi,\qquad \lim_{\lambda \rightarrow 0}  \Db^{1/2}\xi_{\lambda} = \Db^{1/2}\xi.
\end{equation}
\end{Lem}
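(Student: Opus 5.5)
We would reduce to the concrete Stone--von Neumann model. If $\hbar\neq0$, the $\hbar$-commuting pair $(\Ab,\Db)$ is unitarily equivalent to a multiple of $(\kb^2,\eb)$ from \eqref{EqGenke} (possibly after rescaling the parameter). Both statements are invariant under unitary conjugation and under taking direct sums with multiplicity. So it suffices to prove Lemma \ref{LemApp2} on $L^2(\R)$ with $\Ab=\kb^2$ acting as multiplication by $e^{2x}$, and $\Db=\eb$ acting as a translation-generated positive operator. In that model $f_\lambda(\Ab^{1/2})$ is multiplication by the Gaussian $e^{-\lambda x^2}$. The case $\hbar=0$ would be treated directly by joint spectral calculus for the strongly commuting pair, where $f_\lambda(\Ab^{1/2})$ commutes with $\Db^{1/2}$ and both limits follow from dominated convergence.

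The first limit in \eqref{EqCoreDCond} is immediate in general. Since $0<f_\lambda\le 1$ and $f_\lambda(x)\to1$ pointwise on $(0,\infty)$ as $\lambda\to0$, the spectral theorem and dominated convergence give $f_\lambda(\Ab^{1/2})\to1$ strongly.

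For the second limit we use Lemma \ref{LemApp1} with $s=t=0$. It gives $\xi_\lambda\in\msD(\Db^{1/2})$ together with the explicit formula $\Db^{1/2}\xi_\lambda=k_\lambda(\Ab^{1/2})\Db^{1/2}\xi$. Now $k_\lambda(x)=e^{-\lambda(\ln x+\pi i\hbar/2)^2}$, so
\[
|k_\lambda(x)|=e^{-\lambda(\ln(x)^2-\pi^2\hbar^2/4)}\le e^{\lambda\pi^2\hbar^2/4},
\]
which is uniformly bounded for $\lambda\le1$. Moreover $k_\lambda(x)\to1$ pointwise. Dominated convergence in the spectral representation of $\Ab$ then gives $k_\lambda(\Ab^{1/2})\eta\to\eta$ for every $\eta$. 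Taking $\eta=\Db^{1/2}\xi$ yields $\Db^{1/2}\xi_\lambda\to\Db^{1/2}\xi$.

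The core statement follows from the two limits. The span of the $\xi_\lambda$ lies in $\msD(\Db^{1/2})$, and every $\xi\in\msD(\Db^{1/2})$ is a graph-norm limit of such vectors, so the span is dense in the graph norm.

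The main obstacle is therefore Lemma \ref{LemApp1} itself, namely the domain statement and the analytic-continuation identity $\Db^{1/2}f_\lambda(\Ab^{1/2})\subseteq k_\lambda(\Ab^{1/2})\Db^{1/2}$. We take that lemma as given. If it had to be justified here, we would argue in the Fourier picture. There $\Db^{1/2}$ has domain the functions admitting a bounded analytic continuation to a strip of width proportional to $\hbar$ (cf.\ \cite{Sch94}*{Lemma 1.1}). Multiplication by the entire function $e^{-\lambda x^2}$ preserves this class, and its shift by $\pm\pi i\hbar/2$ produces exactly the factor $k_\lambda$.
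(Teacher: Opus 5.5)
Your proposal is correct and follows the route the paper indicates, namely the Stone--von Neumann reduction or Woronowicz-style arguments, which the paper only sketches. You fill in that sketch cleanly: the second limit follows from Lemma \ref{LemApp1} at $s=t=0$, i.e.\ $\Db^{1/2}\xi_\lambda = k_\lambda(\Ab^{1/2})\Db^{1/2}\xi$, combined with the uniform bound $|k_\lambda|\le e^{\lambda\pi^2\hbar^2/4}$ and dominated convergence; once Lemma \ref{LemApp1} is granted this uses only the spectral theorem for $\Ab$, so the Stone--von Neumann reduction is not needed here.
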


Assume now the setting and notation of Lemma \ref{LemDistr}, and put
\[
\Db = \Bb\boxplus \Cb.
\]
The $\hbar$-commutation  \eqref{EqSkewComm} of $\Ab, \Bb$, resp.\ $\Ab,\Cb$ is then equivalent to 
\[
\Ab^{it}\Bb \Ab^{-it} = e^{2\pi  \hbar t} \Bb,\qquad \Ab^{it}\Cb \Ab^{-it} = e^{2\pi \hbar t}\Cb. 
\]
Using \eqref{EqPropConjFormSum}, we see that indeed $\Ab$ and $\Db$ $\hbar$-commute, and the left hand side of \eqref{EqCompDist} is meaningful. 

Take now
\[
\xi \in \msD(\Ab^{1/2}\star \Db^{1/2}).
\]
Applying Lemma \ref{LemApp1}, with $\Db^{1/2}$ in that lemma replaced by $\Ab^{1/2}\star\Db^{1/2}$ and taking $s = -1/2$ and $t = 1/2$, we find with respect to any $\lambda>0$ that
\[
\{\xi_{\lambda},\Ab^{1/2}\xi_{\lambda}\}\subseteq \msD(\Db^{1/2}) = \msD(\Bb^{1/2})\cap \msD(\Cb^{1/2}),
\]
with 
\begin{eqnarray*}
\|(\Ab^{1/2}\star \Db^{1/2})\xi_{\lambda}\|^2 &=& \|\Db^{1/2}\Ab^{1/2}\xi_{\lambda}\|^2\\
&=& \|\Bb^{1/2}\Ab^{1/2}\xi_{\lambda}\|^2+ \|\Cb^{1/2}\Ab^{1/2}\xi_{\lambda}\|^2 \\
&=&  \|(\Ab^{1/2}\star \Bb^{1/2})\xi_{\lambda}\|^2 + \|(\Ab^{1/2}\star \Cb^{1/2})\xi_{\lambda}\|^2,
\end{eqnarray*}
where in the last step we apply again Lemma \ref{LemApp1}, now with respect to $\xi_{\lambda}$ and $\Bb$, resp.\ $\Cb$, noting that $(\xi_{\lambda})_{\lambda} = \xi_{2\lambda}$, and using that $\lambda$ was arbitrary.  

Since the left hand side converges to $\|(\Ab^{1/2}\star \Db^{1/2})\xi\|^2$, it follows that the norms $\|(\Ab^{1/2}\star \Bb^{1/2})\xi_{\lambda}\|$ and $\|(\Ab^{1/2}\star \Cb^{1/2})\xi_{\lambda}\|$ stay bounded as $\lambda$ goes to zero, which (by functional calculus and Fatou's lemma) is sufficient to guarantee $\xi \in \msD(\Ab^{1/2}\star \Bb^{1/2})\cap \msD(\Ab^{1/2}\star \Cb^{1/2})$, with 
\begin{equation}\label{EqEqualFormDist}
\|(\Ab^{1/2}\star \Db^{1/2})\xi\|^2 \geq  \|(\Ab^{1/2}\star \Bb^{1/2})\xi\|^2 + \|(\Ab^{1/2}\star \Cb^{1/2})\xi\|^2.
\end{equation}

On the other hand, we can do the same computation starting from $\xi\in \msD(\Ab^{1/2}\star \Bb^{1/2})\cap \msD(\Ab^{1/2}\star \Cb^{1/2})$, and since the precise core condition \eqref{EqCoreDCond} for the core of Lemma \ref{LemApp2} applies to both $\Ab\star \Bb$ and $\Ab\star \Cb$ simultaneously, we can conclude that $\xi \in \msD(\Db^{1/2})$, with 
\begin{equation}\label{EqEqualFormDist2}
\|(\Ab^{1/2}\star \Db^{1/2})\xi\|^2 \leq  \|(\Ab^{1/2}\star \Bb^{1/2})\xi\|^2 + \|(\Ab^{1/2}\star \Cb^{1/2})\xi\|^2.
\end{equation}

Taken together, these two statements give that 
\[
\msD(\Ab^{1/2}\star \Db^{1/2}) = \msD(\Ab^{1/2}\star\Bb^{1/2})\cap \msD(\Ab^{1/2}\star \Cb^{1/2}),
\]
with 
\begin{equation}\label{EqEqualFormDist3}
\|(\Ab^{1/2}\star \Db^{1/2})\xi\|^2 =  \|(\Ab^{1/2}\star \Bb^{1/2})\xi\|^2 + \|(\Ab^{1/2}\star \Cb^{1/2})\xi\|^2.
\end{equation}
This is exactly the statement that \eqref{EqCompDist} holds.

\section{Proof of the pentagon equation in type \texorpdfstring{$B$}{}} \label{SecLemmTypeB}

In this section we prove the identity 
\eqref{EqPentTypeB}. As far as we know, this identity had only been shown in the formal setting (see e.g.\ \cite{KT91}, \cite{Ip18a}*{Appendix}). Below, we deduce the above result from the ordinary pentagon equation \eqref{EqPentKash}. 

It will be convenient to work with the quantum dilogarithm function 
\[
\varphi_{\hbar}(x) = F_{\hbar}(e^x),\qquad x\in \R. 
\]
In the following lemma we collect some of the properties of $\varphi_{\hbar}$ that we will need. We write 
\[
\epsilon = \sgn(\hbar).
\]
\begin{Lem}
\begin{enumerate}
\item $\varphi_{\hbar}$ extends to a meromorphic function on $\C$.
\item The zeroes of $\varphi_{\hbar}$ are located at 
\[
Z = \{- \epsilon\pi i \left((2k+1) + (2l+1)\theta^{-1}\right)\mid k,l\in \Z_{\geq0}\},
\]
while its poles are located at $-Z$. 
\item The function $\varphi_{\hbar}$ is bounded on $\{z \in \C\mid 0\leq  \epsilon\Imm(z)\leq \pi\}$.
\item We have the following functional identities, relating quantum dilogarithm functions at different parameters:
\begin{equation}\label{EqNegh}
\varphi_{-\hbar}(z) = \varphi_{\hbar}(z)^{-1},\qquad z\in \C,
\end{equation}
\begin{equation}\label{EqDoubleh}
\varphi_{2\hbar}(2z) = \varphi_{\hbar}(z+i\pi/2)\varphi_{\hbar}(z-i\pi/2),\qquad z\in \C. 
\end{equation}
\end{enumerate}
\end{Lem}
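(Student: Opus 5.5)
The plan is to reduce everything to an explicit integral representation of $\varphi_{\hbar}$, i.e.\ to identify $F_{\hbar}(e^x)$ with Faddeev's non-compact quantum dilogarithm (equivalently a double-sine function with periods $2\pi i$ and $2\pi i\theta^{-1}$, where $\theta$ is $|\hbar|$ up to the normalisation conventions of \cite{Wor00} and \cite{DCSSV25}*{Appendix A, Appendix B}). Concretely, I would start from a formula of the form
\[
\log\varphi_{\hbar}(x) = \int_{\R+i0} \frac{e^{-ixt}}{4\sinh(\pi t)\sinh(\pi\theta t)}\frac{dt}{t},
\]
with the contour and sign dictated by $\epsilon=\sgn(\hbar)$. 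This formula converges for $x$ in the open strip $|\Imm(x)|<\pi(1+\theta^{-1})$ and is holomorphic there. I would fix the exact constants by matching against the defining properties \eqref{EqPropsF}: $|F_{\hbar}|=1$ on $(0,\infty)$, $F_{\hbar}(0)=1$, and the inversion formula. This normalisation step is the one I expect to be fiddly: getting the placement of $\epsilon$ and $\theta$ right so that the zero set comes out as $Z$ and not $-Z$.

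From the integral formula I would derive the two difference equations, by shifting $x$ by $\pm\pi i$ (respectively $\pm\pi i\theta^{-1}$) and taking the difference of the two shifted integrals, which a residue computation turns into $\log(1+e^{x})$ (respectively $\log(1+e^{x/\theta})$). These read
\[
\varphi_{\hbar}(x-\epsilon\pi i)=(1+e^{x})\,\varphi_{\hbar}(x+\epsilon\pi i),
\]
and similarly with $\pi i\theta^{-1}$ and $e^{x/\theta}$. Iterating them extends $\varphi_{\hbar}$ meromorphically to all of $\C$, which gives (1). It also shows that zeros and poles can only arise from the zeros of the factors $1+e^{x}$ and $1+e^{x/\theta}$ that are pushed out of the initial strip. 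Tracking these gives the lattice $-\epsilon\pi i((2k+1)+(2l+1)\theta^{-1})$ for the zeros and its negative for the poles, i.e.\ (2). When $\theta$ is rational, lattice points can coincide. Here I would argue via the product formula $\varphi=S(\cdot)$ in terms of double sine functions, whose divisor is known, rather than via naive iteration.

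For (3), the closed strip $0\le\epsilon\Imm(z)\le\pi$ lies inside the convergence domain, or touches its boundary without meeting any pole, since the poles lie at $\epsilon\Imm\geq\pi(1+\theta^{-1})$. I would then bound the real part of the integral uniformly in $\Imm(z)$ and use the known asymptotics of $\varphi_{\hbar}$ as $\operatorname{Re} z\to\pm\infty$: it tends to $1$ on one side and behaves like a Gaussian of modulus at most one on the other, as in \eqref{EqPropsF}. Phragmén--Lindelöf on the strip then yields boundedness.

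For (4), the identity \eqref{EqNegh} holds on $\R$ because $F_{-\hbar}=\overline{F}_{\hbar}$ and $|F_{\hbar}|=1$. It then extends to $\C$ by the identity theorem, since both sides are meromorphic. For \eqref{EqDoubleh}, I would compare integrands directly. Replacing $\hbar$ by $2\hbar$ (so $\theta\mapsto2\theta$) and $x$ by $2z$, the substitution $t\mapsto t/2$ turns the integrand into
\[
\frac{e^{-izt}}{4\sinh(\pi t/2)\sinh(\pi\theta t)}.
\]
On the other side, shifting $z$ by $\pm i\pi/2$ produces the factor $e^{\pi t/2}+e^{-\pi t/2}=2\cosh(\pi t/2)$, and $\sinh(\pi t)=2\sinh(\pi t/2)\cosh(\pi t/2)$. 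The two integrands therefore agree in a common strip, and the identity extends to $\C$ by analytic continuation. The main obstacle throughout is the bookkeeping of conventions for $\theta$ and $\epsilon$ together with the rational-$\theta$ case in (2); the analytic steps themselves are standard.
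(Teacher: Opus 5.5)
Your route differs from the paper's, which proves nothing here directly. It quotes (1)--(3) and \eqref{EqNegh} from \cite{DCSSV25}*{Proposition A.2}, and reads \eqref{EqDoubleh} off Ruijsenaars' duplication formula for the hyperbolic gamma function \cite{Rui97}*{Equation (3.25)} via \cite{DCSSV25}*{Example A.7}. Your argument for \eqref{EqNegh} is fine. Your integrand comparison for \eqref{EqDoubleh}, using $\sinh(\pi t)=2\sinh(\pi t/2)\cosh(\pi t/2)$, is correct: it is the elementary content of Ruijsenaars' formula done directly on $\varphi_\hbar$, and both sides converge on $|\Imm z|<\pi(\tfrac12+|\hbar|)$, which contains $\R$. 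There are two caveats on the set-up. First, the parameter in your second $\sinh$ must be $\theta^{-1}=|\hbar|$ in the lemma's notation, as your rule $\theta\mapsto2\theta$ and your strip $\pi(1+\theta^{-1})$ both presuppose. Second, \eqref{EqPropsF} cannot identify $F_\hbar$ with Faddeev's function. Multiplying $F_\hbar(t)$ by $e^{ig(\ln t)}$, with $g$ real, odd and vanishing at $\pm\infty$, preserves all of \eqref{EqPropsF}. The identification must come from Woronowicz's explicit formula or \cite{DCSSV25}*{Appendix A}; \eqref{EqPropsF} only checks constants.

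The step that fails is your pair of difference equations, which are mis-matched. Shifting $x$ by $\pm\pi i$ multiplies the integrand by $e^{\pm\pi t}$. The difference of the two shifted integrals therefore cancels $\sinh(\pi t)$ and leaves $1/(2t\sinh(\pi\theta^{-1}t))$. A residue computation at $t\in i\theta\Z$ turns this into $\log(1+e^{\theta x})$. So the factor attached to each shift is periodic under the \emph{other} period:
\[
\varphi_\hbar(x-\epsilon\pi i)=(1+e^{\theta x})\,\varphi_\hbar(x+\epsilon\pi i),\qquad \varphi_\hbar(x-\epsilon\pi i\theta^{-1})=(1+e^{x})\,\varphi_\hbar(x+\epsilon\pi i\theta^{-1}).
\]
With your version $\varphi_\hbar(x-\epsilon\pi i)=(1+e^{x})\varphi_\hbar(x+\epsilon\pi i)$, the generated zeros are the zeros $\pi i(2l+1)$ of $1+e^x$ shifted by odd multiples of $\pi i$. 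These all lie in $2\pi i\Z$, so $Z$ cannot come out of the tracking.

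With the corrected first equation alone, the argument works:
\begin{enumerate}
\item The initial strip has width $2\pi(1+\theta^{-1})>2\pi$, so iteration extends $\varphi_\hbar$ meromorphically to $\C$, which is (1).
\item Zeros are only generated in $\epsilon\Imm<0$ and poles only in $\epsilon\Imm>0$, so they never cancel, and you read off exactly $Z$ and $-Z$.
\item Rational $\theta$ only affects multiplicities, which (2) does not ask about, so the detour through double sine functions is unnecessary.
\end{enumerate}

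For (3), the phrase ``a Gaussian of modulus at most one'' is exactly what needs proof, and it is not what \eqref{EqPropsF} literally gives. For $z=x+iy$ one has $|e^{-iz^2/4\pi\hbar}|=e^{xy/2\pi\hbar}$, which tends to $\infty$ as $x\to+\infty$ when $0<\epsilon y\le\pi$. The Gaussian's sign, the orientation of the difference equation and the side of the strip form one package. Fix them together, bearing in mind that \cite{DCSSV25} uses $q=e^{\pi i\hbar}$, rather than matching them independently.

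A cleaner route avoids asymptotics:
\begin{enumerate}
\item Since $|\varphi_\hbar|=1$ on $\R$, you get $\overline{\varphi_\hbar(\bar z)}=\varphi_\hbar(z)^{-1}$.
\item By the corrected first equation, $|\varphi_\hbar(x+\epsilon\pi i)|^2=\varphi_\hbar(x+\epsilon\pi i)/\varphi_\hbar(x-\epsilon\pi i)=(1+e^{\theta x})^{-1}\le1$.
\item The closed strip lies strictly below the first pole at $\epsilon\Imm z=\pi(1+\theta^{-1})$, and $\varphi_\hbar$ grows at most like a Gaussian there, so Phragm\'en--Lindel\"of gives $|\varphi_\hbar|\le1$ on the whole strip.
\end{enumerate}
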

\begin{proof}
The first three properties and the identity \eqref{EqNegh} are well-known, see e.g.\ \cite{DCSSV25}*{Proposition A.2}. The identity \eqref{EqDoubleh} follows directly from (the elementary) \cite{Rui97}*{Equation (3.25)} (after applying the transformation formula for the hyperbolic gamma function in terms of the quantum dilogarithm, see e.g.\ \cite{DCSSV25}*{Example A.7}.
\end{proof} 

Now by the Stone--von Neumann theorem, it is sufficient to prove \eqref{EqPentTypeB} for 
\[
\Ab = \kb^2,\qquad \Bb = \eb^2,
\]
with $\kb,\eb$ as in \eqref{EqGenke}. Write 
\[
\xb = \ln(\kb),\qquad \pb = \ln(\eb),
\]
so $\xb,\pb$ are the ordinary position and momentum operator. Recall that for any $\alpha,\beta\in \R$, the operator $\alpha \xb + \beta \pb$ is essentially self-adjoint, and its closure $\alpha\xb \dotplus \beta\pb$ satisfies
\[
e^{\alpha\xb \dotplus \beta\pb} = \kb^{\alpha}\star \eb^{\beta}. 
\]
Hence \eqref{EqPentTypeB} is equivalent with proving 
\begin{equation}\label{EqPentTypeBAlt}
\varphi_{2\hbar}(2\xb)\varphi_{\hbar}(2\pb) = \varphi_{\hbar}(2\pb)\varphi_{2\hbar}(2\xb \dotplus 4\pb) \varphi_{\hbar}(2\xb\dotplus 2\pb) \varphi_{2\hbar}(2\xb).
\end{equation}
On the other hand, \eqref{EqPentKash} is equivalent to 
\begin{equation}\label{EqPentaLog}
\varphi_{\hbar}(\xb)\varphi_{\hbar}(2\pb) = \varphi_{\hbar}(2\pb)\varphi_{\hbar}(\xb\dotplus 2\pb)\varphi_{\hbar}(\xb). 
\end{equation}
Let us then show how, intuitively, \eqref{EqPentTypeBAlt} follows from \eqref{EqPentaLog} by translating $\xb$ and $\pb$ with an imaginary constant: using \eqref{EqDoubleh}, we have 
\begin{eqnarray*}
\varphi_{\hbar}(2x)\varphi_{\hbar}(2p) &=& \varphi_{\hbar}(x+i\pi/2) \varphi_{\hbar}(x-i\pi/2) \varphi_{\hbar}(2p)\\
&=& \varphi_{\hbar}(x+i\pi/2) \varphi_{\hbar}(2p)\varphi_{\hbar}(x + 2p-i\pi/2)\varphi_{\hbar}(x-i\pi/2) \\
&=& \varphi_{\hbar}(2p)\varphi_{\hbar}(x+2p+i\pi/2)\varphi_{\hbar}(x+i\pi/2) \varphi_{\hbar}(x + 2p-i\pi/2)\varphi_{\hbar}(x-i\pi/2)\\
&=& \varphi_{\hbar}(2p)\varphi_{\hbar}(x+2p+i\pi/2) \varphi_{\hbar}(x + 2p-i\pi/2)\varphi_{\hbar}(2x+2p)\varphi_{\hbar}(x+i\pi/2)\varphi_{\hbar}(x-i\pi/2)\\
&=& \varphi_{\hbar}(2p)\varphi_{2\hbar}(2x+4p)\varphi_{\hbar}(2x+2p)\varphi_{2\hbar}(2x). 
\end{eqnarray*} 

Below we justify this computation on the analytic level. Put 
\[
\varphi^{(+)}_{\hbar}(z) = \varphi_{\hbar}(z+\epsilon \pi i /2),\qquad \varphi^{(-)}_{\hbar}(z) = 1/\varphi_{\hbar}(z-\epsilon \pi i /2).
\]
Then $\varphi^{(\pm)}_{\hbar}$ are bounded functions, and \eqref{EqDoubleh} gives 
\begin{equation}\label{EqNewDoubl}
\varphi_{2\hbar}(2z) = \varphi_{\hbar}^{(+)}(z)/\varphi_{\hbar}^{(-)}(z).  
\end{equation}
Hence, for any self-adjoint operator $\hb$ on a Hilbert space $\Hsp$, we have that $\varphi_{\hbar}^{(\pm)}(\hb)$ is bounded, with densily defined (unbounded) inverse. In particular,
\begin{equation}\label{EqDomInvVarphi}
\msD(\varphi_{\hbar}^{(-)}(\hb)^{-1}) = \varphi_{\hbar}^{(+)}(\hb)\Hsp,\qquad \varphi_{\hbar}^{(-)}(\hb)^{-1}\varphi_{\hbar}^{(+)}(\hb)\xi = \varphi_{2\hbar}(2\hb)\xi,\qquad \xi \in \Hsp. 
\end{equation}

Now \eqref{EqPentaLog} entails also 
\begin{equation}\label{EqPentaLogScaled}
\varphi_{\hbar}(\xb+t)\varphi_{\hbar}(2\pb) = \varphi_{\hbar}(2\pb)\varphi_{\hbar}(\xb\dotplus 2\pb+t)\varphi_{\hbar}(\xb+t),\qquad t\in \R.
\end{equation}
Since both sides are the boundary values of holomorphic functions in $t$ when $0< \epsilon \Imm(t)<\pi$, we get that also 
\begin{equation}\label{EqPentaLogScaledIm}
\varphi_{\hbar}^{(+)}(\xb)\varphi_{\hbar}(2\pb) = \varphi_{\hbar}(2\pb)\varphi_{\hbar}^{(+)}(\xb\dotplus 2\pb)\varphi_{\hbar}^{(+)}(\xb),\qquad \varphi_{\hbar}^{(-)}(\xb) \varphi_{\hbar}(2\pb) = \varphi_{\hbar}(2\pb) \varphi_{\hbar}^{(-)}(\xb)\varphi_{\hbar}^{(-)}(\xb\dotplus 2\pb). 
\end{equation}

We now need the following lemma: 
\begin{Lem}
If $\xi \in L^2(\R)$, then 
\[
\varphi_{\hbar}^{(+)}(\xb +2\pb)\varphi_{\hbar}^{(+)}(\xb)\xi \in \msD(\varphi_{\hbar}^{(-)}(\xb)^{-1}),
\]
and 
\begin{equation}\label{EqInvForm}
\varphi_{\hbar}^{(-)}(\xb)^{-1}\varphi_{\hbar}^{(+)}(\xb +2\pb)\varphi_{\hbar}^{(+)}(\xb)\xi  = \varphi_{\hbar}^{(+)}(\xb\dotplus 2\pb)\varphi_{\hbar}(2\xb\dotplus 2\pb)\varphi_{2\hbar}(2\xb)\xi.
\end{equation}
\end{Lem}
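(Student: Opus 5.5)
The plan is to reduce the lemma to a single identity between bounded operators. That identity comes from the ordinary pentagon equation for the Heisenberg pair $(\xb,\xb\dotplus 2\pb)$, continued analytically in two imaginary shifts. After that, the domain statement is just a range computation using \eqref{EqNewDoubl}.

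\emph{Step 1 (two-parameter pentagon).} The pair $(\xb,\xb\dotplus 2\pb)$ has the same commutator as $(\xb,2\pb)$. By the Stone--von Neumann theorem, \eqref{EqPentaLog} therefore also holds with $2\pb$ replaced by $\xb\dotplus 2\pb$, and the sum $\xb\dotplus 2\pb$ replaced by $2\xb\dotplus 2\pb$. Translating both variables by real constants gives, for $s,t\in\R$,
\[
\varphi_{\hbar}(\xb\dotplus 2\pb+t)\,\varphi_{\hbar}(\xb+s)^{-1} = \varphi_{\hbar}(\xb+s)^{-1}\,\varphi_{\hbar}(\xb\dotplus 2\pb+t)\,\varphi_{\hbar}(2\xb\dotplus 2\pb+s+t).
\]
This is obtained by multiplying the pentagon by $\varphi_{\hbar}(\xb+s)^{-1}$ on both sides; for real arguments all factors are unitary.

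\emph{Step 2 (analytic continuation, the main obstacle).} I would continue this identity in $t$ into the strip $0\le \epsilon\Imm(t)\le \pi/2$ and in $s$ into the strip $-\pi/2\le \epsilon\Imm(s)\le 0$. On these strips every factor is bounded. For $t$ this uses boundedness of $\varphi_{\hbar}$ on $0\le\epsilon\Imm\le\pi$. For $s$ it uses $1/\varphi_{\hbar}=\varphi_{-\hbar}$ from \eqref{EqNegh}, which is bounded on the mirrored strip. The argument is the same one that gave \eqref{EqPentaLogScaledIm}:
\begin{itemize}
\item take matrix coefficients against fixed vectors;
\item check that these are bounded functions, holomorphic in the interior of the product of strips and continuous up to the boundary (via the spectral theorem and dominated convergence);
\item observe that they agree on $\R^2$, so they agree on the whole closed product of strips.
\end{itemize}
The delicate points are the joint holomorphy in $(s,t)$ and continuity up to the boundary. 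If joint continuation is awkward, I would continue one variable at a time. At $s=-\epsilon\pi i/2$ and $t=\epsilon\pi i/2$ we have $s+t=0$, and we obtain the bounded identity
\[
\varphi_{\hbar}^{(+)}(\xb\dotplus 2\pb)\,\varphi_{\hbar}^{(-)}(\xb) = \varphi_{\hbar}^{(-)}(\xb)\,\varphi_{\hbar}^{(+)}(\xb\dotplus 2\pb)\,\varphi_{\hbar}(2\xb\dotplus 2\pb).
\]

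\emph{Step 3 (domain and formula).} Given $\xi\in L^2(\R)$, put $\zeta=\varphi_{2\hbar}(2\xb)\xi$. By \eqref{EqNewDoubl} we have $\varphi_{\hbar}^{(+)}(\xb)\xi=\varphi_{\hbar}^{(-)}(\xb)\zeta$. Applying the identity of Step 2 then gives
\[
\varphi_{\hbar}^{(+)}(\xb\dotplus 2\pb)\varphi_{\hbar}^{(+)}(\xb)\xi = \varphi_{\hbar}^{(-)}(\xb)\,\varphi_{\hbar}^{(+)}(\xb\dotplus 2\pb)\varphi_{\hbar}(2\xb\dotplus 2\pb)\zeta.
\]
This vector lies in the range of $\varphi_{\hbar}^{(-)}(\xb)$, which is exactly $\msD(\varphi_{\hbar}^{(-)}(\xb)^{-1})$. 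Applying the inverse and substituting $\zeta=\varphi_{2\hbar}(2\xb)\xi$ yields \eqref{EqInvForm}.
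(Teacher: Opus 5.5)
Your Step~3 is the paper's own reduction: pass through \eqref{EqNewDoubl} to the bounded identity \eqref{EqHolExtArg}. Step~1 is also fine. Step~2, however, fails as written.

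The third factor in your Step~1 identity is $\varphi_{\hbar}(2\xb\dotplus 2\pb+s+t)$. On your product of strips, $\epsilon\Imm(s+t)$ ranges over all of $[-\pi/2,\pi/2]$. Where $\epsilon\Imm(s+t)<0$ this factor is not bounded. For instance, at $s=-\epsilon\pi i/2$ and $t\in\R$ it equals $\varphi_{\hbar}^{(-)}(2\xb\dotplus 2\pb+t)^{-1}$. That is exactly the unbounded, merely densely defined inverse the paper points out just before \eqref{EqDomInvVarphi}. Near the real axis, $\varphi_{\hbar}$ is bounded only on the side $\epsilon\Imm\ge 0$, and $1/\varphi_{\hbar}=\varphi_{-\hbar}$ only on the side $\epsilon\Imm\le 0$.

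So your claim that every factor is bounded on the product of strips is false. The matrix coefficients there are not bounded holomorphic functions, and the uniqueness-of-continuation argument cannot be run on that domain. Your one-variable-at-a-time fallback has the same problem if you move $s$ first: with $t$ real, $s+t$ goes straight into the bad region.

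The continuation must stay inside the non-product region
\[
0\le\epsilon\Imm t\le\pi,\qquad -\pi\le\epsilon\Imm s\le 0,\qquad 0\le\epsilon\Imm(s+t)\le\pi,
\]
on which all three factors are bounded. Your target point $(s,t)=(-\epsilon\pi i/2,\epsilon\pi i/2)$ lies on its edge $s+t=0$. This is easy to arrange in either of two ways.
\begin{itemize}
\item Continue first in $t$ with $s$ real up to $t=\epsilon\pi i/2$, then in $s$ with $t=\epsilon\pi i/2$ fixed. Throughout, $\epsilon\Imm(s+t)\in[0,\pi/2]$.
\item More simply, continue along $t=\zeta$, $s=-\zeta$ with $0\le\epsilon\Imm\zeta\le\pi/2$. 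There the third factor is the fixed unitary $\varphi_{\hbar}(2\xb\dotplus 2\pb)$, and only two bounded holomorphic factors remain.
\end{itemize}

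The paper handles the same constraint with a different path. It first shifts both arguments by a common $z$, moving from your target point to $s$ real and $\epsilon\Imm t=\pi$; along the way $\epsilon\Imm(s+t)=2\epsilon\Imm z\in[0,\pi]$. It then continues the shift of $\xb\dotplus 2\pb$ alone back to the real pentagon \eqref{EqPentaLog}. With either repair, your Steps~1 and~3 complete the proof.
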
 
\begin{proof}
We need to show the identity 
\[
\varphi_{\hbar}^{(+)}(\xb +2\pb)\varphi_{\hbar}^{(+)}(\xb) = \varphi_{\hbar}^{(-)}(\xb)\varphi_{\hbar}^{(+)}(\xb\dotplus 2\pb)\varphi_{\hbar}(2\xb\dotplus 2\pb)\varphi_{2\hbar}(2\xb),
\]
which by \eqref{EqNewDoubl} will follow from 
\begin{equation}\label{EqHolExtArg}
\varphi_{\hbar}^{(+)}(\xb +2\pb)\varphi_{\hbar}^{(-)}(\xb) = \varphi_{\hbar}^{(-)}(\xb)\varphi_{\hbar}^{(+)}(\xb\dotplus 2\pb)\varphi_{\hbar}(2\xb\dotplus 2\pb).
\end{equation}
But both sides are the continuous boundary values of holomorphic functions 
\begin{equation}\label{EqHolMorph}
z \mapsto \varphi_{\hbar}^{(+)}(\xb +2\pb+z)\varphi_{\hbar}^{(-)}(\xb+z),\qquad z \mapsto \varphi_{\hbar}^{(-)}(\xb+z)\varphi_{\hbar}^{(+)}(\xb\dotplus 2\pb+z)\varphi_{\hbar}(2\xb\dotplus 2\pb+2z)
\end{equation}
on the strip $\{z\in \C\mid 0< \epsilon \Imm(z) < \pi/2\}$. However, the functions in \eqref{EqHolMorph} also extend continuously to the other boundary of this strip, where with respect to the parameter $z = t+\epsilon \pi i/2$, with $t\in \R$, they become the functions 
\begin{equation}\label{EqHolMorphReal}
t \mapsto \varphi_{\hbar}(\xb +2\pb+ t + \pi i \epsilon )\varphi_{\hbar}(\xb+t)^{-1},\qquad t \mapsto \varphi_{\hbar}(\xb+t)^{-1}\varphi_{\hbar}(\xb\dotplus 2\pb+ t + \pi i \epsilon)\varphi_{\hbar}(2\xb\dotplus 2\pb+ 2t + \pi i \epsilon).
\end{equation}
Hence \eqref{EqHolExtArg} will follow once we show the identity 
\begin{equation}\label{EqNewId}
\varphi_{\hbar}(\xb+t) \varphi_{\hbar}(\xb +2\pb+ t + \pi i \epsilon )= \varphi_{\hbar}(\xb\dotplus 2\pb+ t + \pi i \epsilon)\varphi_{\hbar}(2\xb\dotplus 2\pb+ 2t + \pi i \epsilon)\varphi_{\hbar}(\xb+t),\qquad t \in \R.
\end{equation}
But in turn, both sides are boundary values of the holomorphic functions 
\[
z \mapsto \varphi_{\hbar}(\xb+t) \varphi_{\hbar}(\xb +2\pb+ t + z),\qquad z \mapsto \varphi_{\hbar}(\xb\dotplus 2\pb+ t +z)\varphi_{\hbar}(2\xb\dotplus 2\pb+ 2t+z)\varphi_{\hbar}(\xb+t)
\]
on the strip $\{z\in \C\mid 0< \epsilon \Imm(z) < \pi/2\}$. As the functions coincide on $\R$ by \eqref{EqPentaLog}, they coincide everywhere, proving \eqref{EqNewId} and hence establishing the lemma. 
\end{proof}

We can now prove \eqref{EqPentTypeBAlt}: using juxtaposition to denote composition of unbounded operators, we have 
\begin{eqnarray*}
\varphi_{2\hbar}(2\xb)\varphi_{\hbar}(2\pb) 
&\underset{\eqref{EqNewDoubl}}{=}& \varphi_{\hbar}^{(-)}(\xb)^{-1}\varphi_{\hbar}^{(+)}(\xb) \varphi_{\hbar}(2\pb) \\
&\underset{\eqref{EqPentaLogScaledIm}}{=}& \varphi_{\hbar}(2\pb) (\varphi_{\hbar}^{(-)}(\xb) \varphi_{\hbar}^{(-)}(\xb \dotplus 2\pb))^{-1}\varphi_{\hbar}(\xb\dotplus 2\pb)\varphi_{\hbar}^{(+)}(\xb)\\
&\underset{\eqref{EqInvForm}}{=}&
\varphi_{\hbar}(2\pb)\varphi_{\hbar}^{(-)}(\xb \dotplus 2\pb)^{-1}\varphi_{\hbar}^{(+)}(\xb\dotplus 2\pb)\varphi_{\hbar}(2\xb \dotplus 2\pb)\varphi_{2\hbar}(2\xb)\\
&\underset{\eqref{EqNewDoubl}}{=}& 
\varphi_{\hbar}(2\pb)\varphi_{2\hbar}(2\xb\dotplus 4\pb)\varphi_{\hbar}(2\xb \dotplus 2\pb) \varphi_{2\hbar}(2\xb).
\end{eqnarray*}

\begin{Rem}
As stated in \cite{Ip18a}*{Appendix}, the identity \eqref{EqPentTypeB} also leads to the following identities: if  $\Ab,\Bb$ $2\hbar$-commute (so algebraically $AB = q^4BA$), then 
\begin{equation}\label{EqPentTypeBAdd}
F_{\hbar}(\Ab\boxplus \Bb) = F_{\hbar}(\Ab) F_{2\hbar}(\Ab\star\Bb) F_{\hbar}(\Bb),
\end{equation} 
and
\begin{equation}\label{EqPentTypeBAdd2}
F_{\hbar}((\Ab^{1/2} \boxplus \Bb^{1/2})^2) = F_{\hbar}(\Ab)F_{\hbar/2}(\Ab^{1/2}\star\Bb^{1/2}) F_{\hbar}(\Bb).
\end{equation} 
This follows from applying the respective identities in \eqref{EqFormSumIdAlt} and rescaling.
\end{Rem}


\begin{thebibliography}{00}
\bibitem[Ar07]{Ar07} T. Arakawa, Representation theory of $\mathscr{W}$-algebras, \emph{Invent. Math.} \textbf{169} (2007), 219--320.
\bibitem[BS93]{BS93} S. Baaj and G. Skandalis, Unitaires multiplicatifs et dualit\'{e} pour les produits crois\'{e}s de C$^*$-alg\`{e}bres, \emph{Ann. Sci. École Norm. Sup.} (4) \textbf{26} (4), 425--488.
\bibitem[BV05]{BV05} S. Baaj and S. Vaes,  Double crossed products of locally compact quantum groups, \emph{J. Inst. Math. Jussieu} \textbf{4} (1) (2005), 135--173.
\bibitem[Ber96]{Ber96} A. Berenstein, Group-like elements in quantum groups and Feigin's conjecture, \emph{preprint} (1996), arxiv:q-alg/9605016v1.
\bibitem[BZ05]{BZ05} A. Berenstein and A. Zelevinsky, Quantum cluster algebras, \emph{Adv. Math.} \textbf{195} (2) (2005), 405--455.
\bibitem[Bu00]{Bu00} J.M. Burns, An elementary proof of the `strange formula' of Freudenthal and de Vries,
\emph{Quart. J. Math.} \textbf{51} (2000), 295--297.
\bibitem[BT03]{BT03} A. G. Bytsko and J. Teschner, $R$-Operator, Co-Product and Haar-Measure for the Modular Double
of $U_q(\mathfrak{sl}(2,\R))$, \emph{Commun. Math. Phys.} \textbf{240} (1) (2003), 171--196.
\bibitem[CDN26]{CDN26} T. Creutzig, G. Dhillon and S. Nakatsuka, The Kazhdan-Lusztig category of $W$-algebras of simply-laced Lie algebras at irrational levels, \emph{preprint} (2026), arXiv:2603.04667.
\bibitem[CL26]{CL26} T. Creutzig, S. D. Lentner, Affine $\mathfrak{sl}_2$ at admissible levels and quantum $\mathfrak{sl}_{2\mid 1}$, \emph{preprint} (2026), arXiv:2609.12895.
\bibitem[DCDV10]{DCDV10} K. De Commer and A. Van Daele, Multiplier Hopf Algebras Imbedded in Locally Compact Quantum Groups, \emph{Rocky Mountain J. Math.} \textbf{40} (4) (2010), 1149--1182.
\bibitem[DCSSV25]{DCSSV25} K. De Commer, G. Schrader, A. Shapiro and C. Voigt, Quantum $SL^+(N,\mathbb{R})$ as a locally compact quantum group, \emph{preprint}, arXiv:2512.21579.
\bibitem[DK94]{DK94} M. S. Dijkhuizen and T.H. Koornwinder, CQG algebras: A direct algebraic approach to compact quantum groups, \emph{Lett. Math. Phys.} \textbf{32} (1994), 315 -- 330.
\bibitem[ER94]{ER94} E.G. Effros and  Z.-J. Ruan, Discrete quantum groups. I. The Haar measure, \emph{Internat. J. Math.} \textbf{5} (1994), 681--723.
\bibitem[ES98]{ES98} P. Etingof and O. Schiffmann, Lectures on Quantum Groups, \emph{Lectures in Math. Phys., International Press} (1998).
\bibitem[Fad00]{Fad00} L.D. Faddeev, Modular double of quantum group, \emph{Math. Phys. Stud.} \textbf{21} (2000), 149--156.
\bibitem[FF91]{FF91} B. Feigin and E. Frenkel, Duality in $W$-algebras, \emph{Internat. Math. Res. Not.} \textbf{1991} (6) (1991), 75--82.
\bibitem[FG06]{FG06} V. Fock and A. Goncharov, Moduli spaces of local systems and higher Teichm\"{u}ller theory, \emph{Publications Math\'{e}matiques de l’IH\'{E}S} \textbf{103} (2006), 1--211.
\bibitem[FG09]{FG09}  V. V. Fock and A. B. Goncharov, The quantum dilogarithm and representations of quantum cluster varieties, \emph{Invent. math.} \textbf{175} (2) (2009), 223--286.
\bibitem[FZ02]{FZ02} S. Fomin and A. Zelevinsky, Cluster Algebras I: Foundations, \emph{J. Amer. Math. Soc.} \textbf{15} (2) (2002), 497--529. 
\bibitem[FI14]{FI14} I. Frenkel and  I. Ip, Positive representation of split real quantum groups and future perspectives, \emph{Int. Math. Res. Not.} \textbf{2014} (8) (2014), 2126--2164.
\bibitem[FdV69]{FdV69} H. Freudenthal and H. de Vries, Linear Lie Groups, \emph{Academic Press, New York} (1969).
\bibitem[GS19]{GS19} A.B.  Goncharov and L. Shen, Quantum geometry of moduli spaces of local systems and representation theory, \emph{preprint}, arxiv:1904.10491v4.
\bibitem[Ip13a]{Ip13a} I.\,Ip, Representation of the quantum plane, its quantum double and harmonic analysis on $GL_q^+(2,\R)$, \emph{Selecta Math. (N.S.)} \textbf{19} (4) (2013), 987--1082.
\bibitem[Ip13b]{Ip13b} I. Ip, The classical limit of representation theory of the quantum plane, \emph{Int. J. Math.} \textbf{24} (4) (2013), 1350031.
\bibitem[Ip15a]{Ip15a} I. Ip, Positive representations of non-simply-laced split real quantum groups, \emph{J. Algebra} \textbf{425} (2015), 245--276.
\bibitem[Ip15b]{Ip15b} I. Ip, Positive representations of split real quantum groups: the universal $R$ Operator,
\emph{Int. Math. Res. Not.} \textbf{2015} (1) (2015), 240--287.
\bibitem[Ip15c]{Ip15c} I. Ip, Gauss-Lusztig decomposition of $GL_q^+(N,\mathbb{R})$ and representations by $q$-tori, \emph{J. Pure Appl. Algebra} \textbf{219} (12) (2015), 5650--5672.
\bibitem[Ip18a]{Ip18a} I. Ip, Cluster realization of $U_q(\mathfrak{g})$ and factorizations of the universal $R$-matrix, \emph{Selecta Math. (N.S.)} \textbf{24} (2018), 4461--4553. 
\bibitem[Ip18b]{Ip18b} I. Ip, On tensor products of positive representations of split real quantum Borel subalgebra $U_{qq'}(\mfb_{\R})$,
\emph{Trans. Amer. Math. Soc.} \textbf{370} (6) (2018), 4177--4200.
\bibitem[Ip19]{Ip19} I. Ip, Cluster Realization of positive representations of split real quantum Borel subalgebra, \emph{
Theoret. and Math. Phys.} \textbf{198} (2) (2019), 246--272.
\bibitem[Ip20]{Ip20} I. Ip, Positive representations of split real simply-laced quantum groups, \emph{Publ. Res. Inst. Math. Sci.} \textbf{56} (3) (2020), 603--646.
\bibitem[Jan96]{Jan96} J.C. Jantzen,  Lectures on quantum groups, \emph{Graduate Studies in Mathematics}, vol. \textbf{6}, American Mathematical Society, Providence, RI (1996).
\bibitem[Jos95]{Jos95} A. Joseph, Sur une conjecture de Feigin, \emph{C. R. Acd. Sci. Paris S\'{e}rie I} \textbf{320} (1995), 1441--1444.
\bibitem[KLS02]{KLS02} S. Kharchev, D. Lebedev, and M. Semenov-Tian-Shansky, Unitary representations of $U_q(\mathfrak{sl}(2,\R))$,
the modular double, and the multiparticle $q$-deformed Toda chains, \emph{Commun. Math. Phys.} \textbf{225} (2002),
573--609.
\bibitem[Ka96]{Ka96}  R. M. Kashaev, The Heisenberg double and the pentagon relation, \emph{Algebra i Analiz} \textbf{8} (4) (1996), 63--74; \emph{St. Petersburg Math. J.} \textbf{8} (4) (1997), 585--592.
\bibitem[KT91]{KT91} S.M.\ Khoroshkin and  V.N.\ Tolstoy, Universal $R$-matrix for quantized (super) algebras, \emph{Commun. Math.
Phys.} \textbf{141} (1991), 599--617.
\bibitem[KV00]{KV00} J.\,Kustermans and S.\,Vaes, Locally compact quantum groups, \emph{Ann. Sci. \'{E}c. Norm. Sup\'{e}r.} \textbf{33} (6) (2000), 837--934.
\bibitem[KV03]{KV03} J.\,Kustermans and S.\,Vaes, Locally compact quantum groups in the von Neumann algebraic setting, \emph{Math. Scand.} \textbf{92} (1) (2003), 68--92.
\bibitem[LS91]{LS91} S. Levendorskii and Y. Soibelman, Algebras of Functions on Compact Quantum Groups, Schubert Cells and Quantum Tori, \emph{Commun. Math. Phys.} \textbf{139} (1991), 141--170.
\bibitem[Lit98]{Lit98} P. Littelmann, Cones, crystals and patterns, \emph{Transform. Groups}  \textbf{3} (2) (1998), 145--179.
\bibitem[Lus94a]{Lus94a} 
G. Lusztig, Total positivity in reductive groups, in: \emph{Lie theory and geometry, in
honor of Bertram Kostant, Progress in Mathematics} \textbf{123}, Birkh\"{a}user (1994).
\bibitem[Lus94b]{Lus94b} G. Lusztig, Introduction to quantum groups, Birkh\"{a}user, Boston (1994).
\bibitem[MRW14]{MRW14} R. Meyer, S. Roy and S.L. Woronowicz, Quantum group-twisted tensor products of C$^*$-algebras, \emph{Internat. J. Math.} \textbf{25} (2) (2014), 1450019, 37 pp.
\bibitem[RS81]{RS81} M. Reed and B. Simon, Methods of Modern Mathematical Physics I: Functional Analysis, \emph{Academic Press} (1981).
\bibitem[Rui97]{Rui97} S. N. M. Ruijsenaars, First order analytic difference equations and integrable quantum systems, \emph{J. Math. Phys.} \textbf{38} (2) (1997), 1069--1146.
\bibitem[Sch94]{Sch94} K. Schm\"{u}dgen, Integrable Operator Representations of $\R_q^2, X_{q,\gamma}$ and $SL_q(2,R)$, \emph{Comm. Math. Phys.} \textbf{159} (2) (1994), 217--237.
\bibitem[SS17]{SS17} G. Schrader and A. Shapiro, Continuous tensor categories from quantum groups I: algebraic aspects,
 \emph{preprint} (2017), arXiv:1708.08107.
\bibitem[SW01]{SW01} P.\,M.\,So\l tan and S.\,L.\,Woronowicz, A remark on manageable multiplicative unitaries, \emph{Lett. Math. Phys.} \textbf{57} (2001), 239--252.
\bibitem[SW07]{SW07} P.\,M.\,So\l tan and S.\,L.\,Woronowicz, From multiplicative unitaries to quantum groups II, \emph{J. Funct. Anal.} \textbf{252} (1) (2007), 42--67.
\bibitem[Tak02]{Tak02} M. Takesaki, Theory of operator algebras I,  \emph{Encyclopaedia of Mathematical Sciences} \textbf{24},
\emph{Springer–Verlag, Berlin} (2002).
\bibitem[Tak03]{Tak03} M. Takesaki, Theory of operator algebras II,  \emph{Encyclopaedia of Mathematical Sciences} \textbf{125},
\emph{Springer–Verlag, Berlin} (2003).
\bibitem[Tes11]{Tes11} J. Teschner, Quantization of the Hitchin moduli
spaces, Liouville theory and the
geometric Langlands
correspondence $I$, \emph{Adv. Theor. Math. Phys.}  \textbf{15} (2011), 471--564.
\bibitem[VVD03]{VVD03} S. Vaes and A. Van Daele, The Heisenberg commutation relations, commuting squares and the
Haar measure on locally compact quantum groups, \emph{In: Operator algebras and mathematical physics: conference Proceedings, Constanta (Romania), July 2-7, 2001, Theta Foundation, Bucarest} (2003), 379--400.
\bibitem[VD96]{VD96} [17] A. Van Daele, Discrete quantum groups, \emph{J. Algebra} \textbf{180} (1996), 431--444.
\bibitem[VD98]{VD98} A. Van Daele, An Algebraic Framework for Group Duality, \emph{Adv. Math.}  \textbf{140} (2)
(1998), 323--366.
\bibitem[VY20]{VY20} C. Voigt and R. Yuncken, Complex Semisimple Quantum Groups and Representation Theory, \emph{Lecture Notes in Mathematics} \textbf{2264}, Springer International Publishing, 376+X pages.  
\bibitem[Wor87]{Wor87} S. L. Woronowicz, Compact matrix pseudogroups, \emph{Comm. Math. Phys.} \textbf{111} (1987), 613--665.
\bibitem[Wor98]{Wor98} S. L. Woronowicz, Compact quantum groups, In \emph{Sym\'{e}tries quantiques (Les Houches, 1995)}, 845--884. North-Holland, Amsterdam, 1998.
\bibitem[Wor00]{Wor00} S.\,L.\,Woronowicz, Quantum exponential function, \emph{Rev. Math. Phys.} \textbf{12} (6) (2000), 873--920.
\bibitem[WZ02]{WZ02} S.\,L.\,Woronowicz and S.\,Zakrzewski, Quantum `$ax+b$' group, 
\emph{Rev. Math. Phys.} \textbf{14} (2002), 797--828.
\end{thebibliography}
\end{document}